\newtheorem{thm}{Theorem}
\newtheorem{cor}[thm]{Corollary}
\newtheorem{lem}[thm]{Lemma}
\newtheorem{prop}[thm]{Proposition}
\newtheorem{defn}[thm]{Definition}
\theoremstyle{definition}
\theoremstyle{remark}
\newtheorem{rem}[thm]{Remark}
\newcommand{\R} {\mathbb{R}}
\newcommand{\C} {\mathbb{C}}
\newcommand{\N} {\mathbb{N}}
\newcommand{\Z} {\mathbb{Z}}
\newcommand{\E} {\mathbb{E}}
\newcommand{\T} {\mathbb{T}}
\newcommand{\p} {\mathbb{P}}
\newcommand{\im}{\mathrm{Im }\,}
\newcommand{\re}{\mathrm{Re }\,}
\newcommand{\dd}{\mathrm{d}}
\newcommand{\ii}{\mathrm{i}}
\renewcommand{\a}{\mathrm{a}}
\renewcommand{\b}{\mathrm{b}}
\newcommand{\deq}{\mathrel{\mathop:}=}
\newcommand{\beq}{ \begin{equation} }
\newcommand{\eeq}{ \end{equation} }
\newcommand{\wt}{\widetilde}
\newcommand{\caD}{{\mathcal D}}
\newcommand{\caE}{{\mathcal E}}
\newcommand{\caF}{{\mathcal F}}
\newcommand{\caG}{{\mathcal G}}
\newcommand{\caL}{{\mathcal L}}
\newcommand{\caO}{{\mathcal O}}
\newcommand{\caP}{{\mathcal P}}
\newcommand{\caR}{{\mathcal R}}
\newcommand{\caX}{{\mathcal X}}
\newcommand{\caY}{{\mathcal Y}}
\newcommand{\fb}{{\mathfrak b}}
\newcommand{\lone}{\mathbbm{1}}
\newcommand{\e}[1]{\mathrm{e}^{#1}}
\DeclareMathOperator{\Tr}{Tr}
\DeclareMathOperator{\supp}{supp}
\numberwithin{equation}{section} 
\numberwithin{thm}{section}
\begin{document}
\begin{minipage}{0.85\textwidth}
\vspace{3cm}
\end{minipage}
\begin{center}
\large\bf
Extremal Eigenvalues and Eigenvectors of Deformed Wigner Matrices
\end{center}

\vspace{1.5cm}
\begin{center}
\begin{minipage}{0.45\textwidth}
\begin{center}

Ji Oon Lee\\
\small Department of Mathematical Sciences\\
{\it Korea Advanced Institute of Science and Technology }\\
jioon.lee@kaist.edu
\end{center}
\end{minipage}
\begin{minipage}{0.45\textwidth}
\begin{center}
Kevin Schnelli\\
\small School of Mathematics\\
{\it Institute for Advanced Study }\\
kschnelli@math.ias.edu
\end{center}
\end{minipage}
\end{center}

\vspace{1.5cm}

 \begin{abstract}
We consider random matrices of the form $H = W + \lambda V$, $\lambda\in\R^+$, where $W$ is a real symmetric or complex Hermitian Wigner matrix of size $N$ and $V$ is a real bounded diagonal random matrix of size $N$ with i.i.d.\ entries that are independent of $W$. We assume subexponential decay of the distribution of the matrix entries of $W$ and we choose $\lambda \sim 1$, so that the eigenvalues of $W$ and $\lambda V$ are typically of the same order. Further, we assume that the density of the entries of $V$ is supported on a single interval and is convex near the edges of its support. In this paper we prove that there is $\lambda_+\in\R^+$ such that the largest eigenvalues of $H$ are in the limit of large $N$ determined by the order statistics of $V$ for $\lambda>\lambda_+$. In particular, the largest eigenvalue of $H$ has a Weibull distribution in the limit $N\to\infty$ if $\lambda>\lambda_+$. Moreover, for $N$ sufficiently large, we show that the eigenvectors associated to the largest eigenvalues are 
partially localized 
for $\lambda>\lambda_+$, while they are completely delocalized for $\lambda<\lambda_+$. Similar results hold for the lowest eigenvalues.

 \end{abstract}

 \vspace{15mm}
 
 {
 \textit{AMS Subject Classification (2010)}: 15B52, 60B20, 82B44

 \textit{Keywords}: Random matrix, Local semicircle law, Delocalization, Localization
 
 \vspace{3mm}
\textit{\today}
\vspace{3mm}
 }

\section{Introduction}

The universality of random matrices is usually divided into bulk and edge universalities. Edge universality concerns the distribution of the extreme eigenvalues. It is known that the extreme eigenvalues of a large class of Wigner matrices exhibit universal limiting behavior. The limiting distribution of the largest eigenvalue was first identified by Tracy and Widom~\cite{TW1, TW2} for the Gaussian ensembles. Edge universality for Wigner matrices has first been proved by Soshnikov~\cite{So1} (see also~\cite{SiSo1}) for real symmetric and complex Hermitian ensembles with symmetric distributions. The symmetry assumption on the entries' distribution was partially removed in~\cite{PeSo1,PeSo2}. Edge universality without any symmetry assumption
was proved in~\cite{TV2} under the condition that the distribution of the matrix elements has subexponential decay and
its first three moments match those of the Gaussian distribution. For Wigner matrices with arbitrary symmetry
class, edge universality was proven in~\cite{EKYY2} under the assumption that the entries have $12+\epsilon$ moments. Recently, a necessary and sufficient condition for the edge universality of Wigner matrices was given in~\cite{LY}.

The distribution of the largest eigenvalues of a random diagonal matrix $V$ whose entries $(v_i)$ are i.i.d.\ real random variables is given by the order statistics of $(v_i)$. The Fisher-Tippett-Gnedenko theorem (see e.g.~\cite{HF}) thus implies that the limiting distribution of the largest eigenvalue of $V$ belongs either to the Gumbel, Fr\'{e}chet or Weibull family. 

In this paper, we consider the interpolation between Wigner matrices and real diagonal random matrices. Let~$W$ be an~$N\times N$ real symmetric or complex Hermitian Wigner matrix whose centered entries have variance $N^{-1}$ and subexponential decay. Let $V$ be an $N\times N$ real diagonal random matrix whose entries are bounded i.i.d.\ random variables. For $\lambda\in\R^+$ we set
\begin{align} \label{interpolation}
H = (h_{ij}) \deq \lambda V + W\,,\qquad (1\le i,j\le N)\,.
\end{align}
The matrices $V$ and $W$ are normalized in the sense that the eigenvalues of~$V$ and~$W$ are of order one.

If $W$ belongs to the Gaussian Unitary ensemble (GUE), the model~\eqref{interpolation} is called the deformed GUE. It was shown in~\cite{J2,S1} that the edge eigenvalues of the deformed GUE are governed by the Tracy-Widom distribution for $\lambda\ll N^{-1/6}$. At $\lambda\sim N^{-1/6}$ the fluctuations of the edge eigenvalues change from the Tracy-Widom to a Gaussian distribution. More precisely, Johansson showed in~\cite{J2} that the limiting distribution of the edge eigenvalues for $\lambda={\alpha}{N^{-1/6}}$ is given by the convolution of the Tracy-Widom and the centered Gaussian distribution, with variance depending on $\alpha$. These results have not been established for the Gaussian orthogonal ensemble (GOE) or for general Wigner matrices.

In the present paper, we consider the edge behavior of the deformed model~\eqref{interpolation} in the regime $\lambda\sim 1$ with $W$ a real symmetric or complex Hermitian matrix. We show that there is, for certain $V$, yet another transition for the limiting behavior of the largest eigenvalues of $H$ as $\lambda$ varies. For simplicity, we assume that the distribution of the entries of $V$ is centered and given by the density
\begin{align}\label{babyjacobi}
\mu(v)\deq Z^{-1} (1+v)^{\a}(1-v)^{\b} d(v)\lone_{[-1, 1]}(v)\,,
\end{align}
where $-1\le \a, \b <\infty$, $d$ is a strictly positive $C^1$-function and $Z$ is a normalization constant. We primarily focus on the choices $\a,\b>1$. From our first main result, Theorem~\ref{thm:main}, it follows that there are $N$-independent constants $\lambda_+\equiv\lambda_{+}(\mu)>1$ and $L_+\equiv L_+(\mu,\lambda)>2$, such that, for $\b>1$ and $\lambda> \lambda_+$, the largest eigenvalue $\mu_1$ of $H$ satisfies
 \begin{align}\label{possible1}
  \lim_{N\to\infty}\mathbb{P}( N^{1/(\b+1)} (L_+-\mu_1) \leq x)=G_{\b+1}(x)\,,\qquad \b>1\,,\quad\lambda>\lambda_+\,,
 \end{align}
where $G_{\b+1}$ is a {\it Weibull distribution} with parameter $\b+1$; see~\eqref{Weibull}. 

However, if $\lambda<\lambda_+$, then there are $N$-independent constants $ L_+\equiv  L_+(\mu,\lambda)$ and $c\equiv c(\mu,\lambda)$, such that
\begin{align}\label{possible2}
 \lim_{N\to\infty}\mathbb{P}( N^{1/2}( L_+ -\mu_1) \leq x)=\Phi_c(x)\,,\qquad \b>1\,,\quad \lambda<\lambda_+\,,
\end{align}
where $\Phi_c$ denotes the cumulative distribution function of the centered {\it Gaussian distribution} with variance~$c$; see Appendix C. We remark that neither~\eqref{possible1} nor~\eqref{possible2} depend on the symmetry type of the Wigner matrix $W$.

The appearance of the Weibull distribution in the model~\eqref{interpolation} is indeed expected when~$\lambda$ grows sufficiently fast with $N$, since in this case the diagonal matrix dominates the spectral properties of $H$. However, it is quite surprising that the Weibull distribution already appears for $\lambda$ order one, since the local behavior of the eigenvalues in the bulk of the deformed model mainly stems from the Wigner part, and the contribution from the random diagonal part is limited to mesoscopic fluctuations of the eigenvalues; see~\cite{LS}.

Having identified two possible limiting distributions of the largest eigenvalues, it is natural to ask about the behavior of the associated eigenvectors. Before considering the deformed model, we recall that the eigenvectors of Wigner matrices with subexponential decay are completely delocalized, as was proved by Erd\H{o}s, Schlein and Yau \cite{ESY1, ESY2}. 

For deformed Wigner matrices we show that the eigenvectors of the largest eigenvalues are {\it partially localized} in the regime where the edge behavior~\eqref{possible1} holds. More precisely,  we prove that one component of the ($\ell^2$-normalized) eigenvectors associated with eigenvalues at the extreme edge carries a weight of order one, while the other components each carry a weight of order $o(1)$; see Theorem~\ref{thm:local}. If, however, the edge behavior~\eqref{possible2} holds, all eigenvectors are completely delocalized. 
Although we do not prove it explicitly, we claim that the bulk eigenvectors of the model~\eqref{interpolation} with~\eqref{babyjacobi} for the choice of $\mu$, are completely delocalized (for any choice of $\lambda\sim 1$). This can be proved with the very same methods as in~\cite{LS}. To understand the transition from partial localization to delocalization, further efforts are required.

The phenomenology described above is reminiscent of the one for heavy-tailed Wigner matrices. For instance, consider real symmetric Wigner matrices whose entries' distribution function decays as a power law, i.e.,
\begin{align} \label{alpha-stable}
\p (|h_{ij}| > x) = L(x) x^{-\alpha}\,,\qquad (1\le i,j\le N)\,,
\end{align}
for some slowly varying function $L(x)$. It was proved by Soshnikov~\cite{So} that the linear statistics of the largest eigenvalues are Poissonian for $\alpha < 2$; in particular, the largest eigenvalue has a Fr\'{e}chet limit distribution. Later, Auffinger, Ben Arous and P\'ech\'e~\cite{ABP} showed that the same conclusions hold for $2 \leq \alpha < 4$ as well. Recently, it was proved by Bordenave and Guionnet~\cite{BG} that the eigenvectors of models satisfying~\eqref{alpha-stable} are weakly delocalized for $1 < \alpha < 2$. For $0<\alpha<1$, it is conjectured~\cite{Cizeau} that there is a sharp ``metal-insulator'' transition. In~\cite{BG} it is proved that the eigenvectors of sufficiently large eigenvalues are weakly localized for $0<\alpha<2/3$.

To clarify the terminology ``partial localization'' we remark that it is quite different from the usual notion of localization for random Schr\"{o}dinger operators. The telltale signature of localization for random Schr\"{o}dinger operators is an exponential decay of off-diagonal Green function entries, which implies the absence of diffusion, spectral localization etc. For the Anderson model in dimensions $d\ge 3$ such an exponential decay was first obtained by Fr\"ohlich and Spencer~\cite{FS} using a multiscale analysis. Later, a similar bound was presented by Aizenman and Molchanov \cite{AM} using fractional moments. Due to the mean-field nature of the Wigner matrix $W$, there is no notion of distance for the deformed model~\eqref{interpolation}. Instead, our localization result states that most of the mass of the eigenvectors is concentrated on a few sites, whose locations are independent and uniformly distributed. This result agrees with the predictions of formal perturbation theory.

Yet, there are some similarities with the Anderson model in $d\ge 3$: In the Anderson model localization occurs where the density of states is (exponentially) small~\cite{FS}; this is known to happen close to the spectral edges or for large disorder. Further it is strongly believed that the Anderson model admits extended states, i.e., the generalized eigenvectors in the bulk are expected to be delocalized. Moreover, it was proven by Minami~\cite{Minami} that the local eigenvalue statistics of the Anderson model can be described by a Poisson point process in the strong localization regime. It is also conjectured that the local eigenvalue statistics in the bulk are given by the GOE statistics, respectively GUE statistics if time-reversal symmetry is broken. We remark that there are some partial results on bulk universality for the deformed model~\eqref{interpolation}; see~\cite{S1,OV}.

We also mention that the localization result we prove in this paper also differs from that for random band matrices, where all the eigenvectors are localized, even in the bulk. We refer to \cite{Sc,EK,EKYY3} for more details on the localization/delocalization for random band matrices.

Next, we outline the proofs of our main results. It was first shown by Pastur~\cite{P} that the empirical eigenvalue distribution of the deformed model~\eqref{interpolation} converges to a deterministic distribution in the limit $N\to\infty$ under some weak assumption on $\lambda V$. However, this limiting eigenvalue distribution, referred to as the {\it deformed semicircle law} in the following, is in general different from Wigner's semicircle law and depends on the limiting distribution of $\lambda V$. The deformed semicircle law can be defined in terms of a functional equation for the Stieltjes transforms of the limiting eigenvalue distributions of~$\lambda V$ and~$W$~\cite{P}. Restricting the discussion to the special case when the entries of $V$ follow the centered Jacobi distribution in~\eqref{babyjacobi} with $\b>1$, we showed in~\cite{LS} that the deformed semicircle law, henceforth denoted by $\mu_{fc}$, is 
supported on a single interval and shows either of the following behavior close to the upper edge: 
\begin{align}\label{the cases}
 \mu_{fc}(E)\sim\begin{cases} \sqrt{\kappa_E}\,,\quad&\textrm{ for }\quad\lambda<\lambda_+\,,\\
(\kappa_E)^{\b}\,,&\textrm{ for }\quad \lambda>\lambda_+\,,
\end{cases}
\end{align}
for $E\in\supp\mu_{fc}$, $E\ge 0$, where $\kappa_E$ denotes the distance from $E$ to the upper endpoint of the support of~$\mu_{fc}$; see Lemma~\ref{general case - large lambda} below. In case the square root behavior prevails, we are going to show that the largest eigenvalue of $H$ satisfies~\eqref{possible2}, whereas in case we have a ``convex decay'' with $\b>1$,~\eqref{possible1} is satisfied.

 In a first step, we derive a {\it local law} for the empirical eigenvalue density:  Under some moment conditions the convergence of the empirical eigenvalue distribution to the semicircle law also holds on very small scales. Denoting by $G(z)=(H-z)^{-1}$, $z\in\C^+$, the Green function or resolvent of $H$, convergence of the empirical eigenvalue distribution on scale $\eta$ around an energy $E\in\R$ is equivalent to the convergence of the averaged Green function $m(z)=N^{-1}\Tr G(z)$, $z=E+\ii\eta$. In a series of papers~\cite{ESY1,ESY2,ESY3} Erd\H{o}s, Schlein and Yau showed that the semicircle law for Wigner matrices also holds down to the optimal scale $1/N$, up to logarithmic corrections. In~\cite{EYY2} a ``fluctuation average lemma'' was introduced that yielded optimal bounds on the convergence of $m(z)$ for Wigner matrices in the bulk~\cite{EYY2} and up to the edge~\cite{EYY} on scales $\eta\gg N^{-1}$. Below this scale the eigenvalue density remains fluctuating even for large $N$. In~\cite{EYY} the 
Green function $G(z)$ and its average $m(z)$ have been used to prove edge universality for generalized Wigner matrices. In~\cite{LS} we derived a {\it local deformed semicircle law} for the deformed ensemble~\eqref{interpolation} under the assumption that $\mu_{fc}$ has a square root behavior at the endpoints. In the present paper, we derive a local law at the {\it extreme edge} in case $\mu_{fc}$ shows a convex decay at the edge. We propose, however, a slightly different path than the one taken in~\cite{LS}: We condition on the random variables~$(v_i)$ and show that~$m(z)$ converges, for ``typical'' realizations of~$(v_i)$, on scale~$\sim N^{-1/2}$; see Proposition~\ref{prop:step 2_4}. In particular, we show that the typical eigenvalue spacing at the extreme edge is of order $N^{-1/(\b+1)}\gg N^{-1/2}$; as is suggested by the convex decay 
in~\eqref{the cases}. Similar to the Wigner case, see e.g.,~\cite{EYY}, this is accomplished by deriving a self-consistent equation for~$m(z)$. However, the analysis of the self-consistent equation is quite different from the Wigner case, due to the absence of the usual stability bound; see~\cite{LS}.

 In a second step, we can use the self-averaging property of the Wigner matrix $W$, to show that the {\it imaginary part} of $m$ can be controlled on scales much smaller than $N^{-1/2}$: the technical input here is the ``fluctuation averaging lemma''~\cite{EYY,EKY,EKYY4}. Our proof relies on the basic strategy of \cite{EKYY4}. However, in our setup the diagonal entries of $G$ are not uniformly bounded, which requires several changes to previous arguments. To complete the proof of our first main result, Theorem~\ref{thm:local}, we note that the imaginary part of $m$ can be written as
\begin{align}
 \im m(E+\ii\eta)=\frac{1}{N}\sum_{\alpha=1}^N\frac{\eta}{(\mu_\alpha-E)^2+\eta^2}\,,\quad\qquad (E\in\R\,,\eta>0)\,,
\end{align}
where $(\mu_\alpha)$ are the eigenvalues of $H$. Thus, having control on the left side for $\eta\ll N^{-1/2}$ allows tracking the individual eigenvalues at the extreme edge, where their typical spacing much bigger than~$N^{-1/2}$.  

Finally, we point out the main steps in the proof of the partial localization of eigenvectors; see Theorem~\ref{thm:local} for precise results. It is well-known that information on the averaged Green function $m(z)$ can be translated via the Helffer-Sj\"{o}strand formula to information on the density of states; see, e.g.,~\cite{ERSY}. Since the typical eigenvalue spacing at the edge is, for the case at hand, much larger than~$N^{-1/2}$, the Helffer-Sj\"{o}strand formula also allows to translate information on the diagonal Green function entries $(G_{ii}(z))$ into information on the eigenvectors at the edge. Relying on estimates on the Green function, we can then prove ``partial localization'' of the eigenvectors at the edge.

The paper is organized as follows: In Section~\ref{Definition and Results}, we introduce the precise definition of the model and state the main results of the paper. In Section \ref{prelim}, we collect basic notations and identities for the resolvent of $H$. In Section~\ref{mfc and hat mfc}, we prove the first main result of the paper, Theorem \ref{thm:main}, using estimates on the Stieltjes transform of the deformed semicircle measure. (See also Corollary \ref{cor:main}.) In Sections \ref{location} and \ref{sec:Zlemma}, we prove important lemmas on the location of the extreme eigenvalues, including the local law, which have crucial roles in the proof of Theorem~\ref{thm:main}. In Section~\ref{sec:local}, we prove the second main result of the paper, Theorem \ref{thm:local}, on the partial localization of the eigenvectors at the edge. Proofs of some technical lemmas are collected in the Appendices A, B and C.

{\it Acknowledgements:}
We thank Horng-Tzer Yau for numerous helpful discussions and remarks. We are also grateful to Paul Bourgade, L\'{a}szl\'o Erd\H{o}s and Antti Knowles for discussions and comments. Ji Oon Lee is partially supported by the Basic Science Research Program of the National Research Foundation of Korea, Grant 2011-0013474. The stay of Kevin Schnelli at IAS is supported by The Fund For Math.

\section{Definition and Results} \label{Definition and Results}

In this section, we define our model and state our main results.

\subsection{Deformed semicircle law}
For a (probability) measure, $\omega$, on $\R$, we define its Stieltjes transform by
\begin{align}
 m_{\omega}(z)\deq\int_{\R}\frac{\dd\omega(x)}{x-z}\,,\quad\quad\quad (z\in\C^+)\,.
\end{align}
Note that $m_{\omega}(z)$ is an analytic function in the upper half plane, satisfying $\im m_{\omega}(z)\ge 0$, $z\in \C^+$.

As first shown in~\cite{P}, the Stieltjes transform of the limiting spectral distribution of the interpolating model~\eqref{interpolation} satisfies the equation
\begin{align}\label{the functional equation}
m_{fc}(z)=\int_\R\frac{\dd\mu(x)}{\lambda v-z-m_{fc}(z)}\,,\quad\quad \im m_{fc}(z)\ge 0\,,\qquad\quad (z\in\C^+)\,,
\end{align}
where $\mu$ is the distribution of the i.i.d.\ random variables $(v_i)$. Equation~\eqref{the functional equation} is often called the Pastur relation. It is shown in~\cite{P,B} that~\eqref{the functional equation} has a unique solution. Moreover, it is easy to check that $\limsup_{\eta\searrow 0} \im m_{fc}(E+\ii\eta)<\infty$, thus $m_{fc}(z)$ determines an absolutely continuous probability measure on $\R$, whose density, $\mu_{fc}$, is given by
\begin{align}\label{stieltjes inversion}
 \mu_{fc}(E)=\frac{1}{\pi}\lim_{\eta\searrow 0} \im m_{fc}(E+\ii\eta)\,,\quad\quad (E\in\R)\,.
\end{align}
The measure $\mu_{fc}$ has been studied in details in~\cite{B}; for example, it was shown that $\mu_{fc}$ is an analytic function inside its support.

\begin{rem}
 Setting $\lambda=0$, ~\eqref{the functional equation} reduces to 
\begin{align}
 m_{fc}(z)=-\frac{1}{z+m_{fc}(z)}\,,\quad\quad \im m_{fc}(z)\ge 0\,,\quad\quad (z\in\C^+)\,,
\end{align}
and one immediately checks that in this case $\mu_{fc}$ is given, as expected, by the standard semicircular measure, $\mu_{sc}$, which is characterized by the density $ \mu_{sc}(E)=\frac{1}{2\pi}\sqrt{(4-E^2)_+}$.
\end{rem}

\begin{rem}
The measure $\mu_{fc}$ is often called the {\it additive free convolution} of the semicircular law and the measure~$\mu$ (up to the scaling by $\lambda$). More generally, the additive free convolution of two (probability) measures $\omega_1$ and $\omega_2$, usually denoted by $\omega_1\boxplus\omega_2$, is defined as the distribution of the sum of two freely independent non-commutative random variables, having distributions $\omega_1$, $\omega_2$ respectively; we refer to~\cite{VDN,NS,HP,AGZ}. Similarly to~\eqref{the functional equation}, the free convolution measure $\omega_1\boxplus\omega_2$ can be described in terms of a set of functional equations for the Stieltjes transforms; see~\cite{PV,CG, BeB07}. For a discussion of regularity properties of $\omega_1\boxplus\omega_2$ we refer to~\cite{BBG}.
 
Free probability theory turned out to be a natural setting for studying global laws for such ensembles; see, e.g.,~\cite{VDN,AGZ}. For more recent treatments, including local laws, we refer to~\cite{K,CDFF,BBCF}.
\end{rem}

\subsection{Definition of the model}
\begin{defn} \label{assumption wigner}
Let $W$ be an $N\times N$ random matrix, whose entries, $(w_{ij})$, are independent, up to the symmetry constraint $w_{ij}=\overline{w_{ji}}$, centered, real (complex) random variables with variance $N^{-1}$ and with subexponential decay, i.e.,
\begin{align} \label{eq.C0}
\p \left (\sqrt{N} |w_{ij}|>x \right) \leq C_0 \e{-x^{1/\theta}},
\end{align}
for some positive constants $C_0$ and $\theta>1$. In particular, if $(w_{ij})$ are complex random variables, 
\begin{align} \label{complex wigner}
\E w_{ij}=0\,,\qquad \E|w_{ij}|^2 = \frac{1}{N}\,, \qquad \E w_{ij}^2 = 0\,, \qquad \E|w_{ij}|^p \leq C \frac{(\theta p)^{\theta p}}{N^{p/2}}\,,\quad (p \geq 3)\,;
\end{align}
if $(w_{ij})$ are real random variables,
\begin{align}
 \E w_{ij}=0\,,\qquad \E w_{ij}^2 = \frac{1+\delta_{ij}}{N}\,, \qquad \E|w_{ij}|^p \leq C \frac{(\theta p)^{\theta p}}{N^{p/2}}\,,\quad (p \geq 3)\,.
\end{align}

\end{defn}

Let $V$ be an $N\times N$ diagonal random matrix, whose entries $(v_i)$ are real, centered, i.i.d.\ random variables, independent of $W=(w_{ij})$, with law $\mu$. More assumptions on $\mu$ will be stated below. Without loss of generality, we assume that the entries of $V$ are ordered,
\begin{align}\label{the ordering}
v_1 \ge v_2 \ge \ldots \ge v_N.
\end{align}
For $\lambda \in \R^+$, we consider the random matrix
\begin{align} \label{thematrix}
H = (h_{ij}) \deq \lambda V + W\,.
\end{align}
We choose for simplicity $\mu$ as a Jacobi measure, i.e., $\mu$ is described in terms of its density
\begin{align} \label{jacobi measure}
\mu(v)= Z^{-1} (1+v)^{\a}(1-v)^{\b} d(v)\lone_{[-1, 1]}(v)\,,
\end{align}
where $\a,\b>-1$, $d\in C^1 ([-1, 1])$ such that $d(v)>0$, $v \in [-1, 1]$, and $Z$ is an appropriately chosen normalization constant. We assume, for simplicity of the arguments, that~$\mu$ is centered, but this condition can easily be relaxed. We remark that the measure~$\mu$ has support $[-1,1]$, but we observe that varying $\lambda$ is equivalent to changing the support of~$\mu$. Since~$\mu$ is absolutely continuous, we may assume that~\eqref{the ordering} holds with strict inequalities.

\subsection{Edge behavior of $\mu_{fc}$}
Properties of $\mu_{fc}$ with the special choice~\eqref{jacobi measure} for $\mu$ and with $\lambda\sim 1$, have been studied in~\cite{LS}; see also~\cite{B,S2}. For example, the support of~$\mu_{fc}$ consists of a single interval. For $E\in\R$, we denote by $\kappa_E$ the distance to the endpoints support of~$\mu_{fc}$, i.e.,
\begin{align}\label{kappa}
 \kappa_E\deq \min\{|E-L_-|,|E-L_+|\}\,,\qquad \supp\,\mu_{fc}=[L_-,L_+]\,,\quad\qquad (E\in\R)\,.
\end{align}
In the following we will often abbreviate $\kappa\equiv \kappa_E$.

In the present paper, we are mainly interested in the limiting behavior of the largest, respectively smallest, eigenvalues of the interpolating matrix~\eqref{thematrix}, for $\lambda\sim 1$. 
For concreteness, we focus on the upper edge and comment on the lower edge in Remark~\ref{remark about lower edge}. The following lemma is taken from~\cite{LS}; see also~\cite{B,S2,ON} for statement~(1).

\begin{lem} \label{general case - large lambda}
Let $\mu$ be a centered Jacobi measure defined in \eqref{jacobi measure} with $\b>1$. Define
\begin{align}\label{definition of lambda+}
\lambda_+ \deq \left( \int_{-1}^1 \frac{\mu(v) \dd v}{(1-v)^2} \right)^{1/2}, \qquad \tau_+ \deq \int_{-1}^1 \frac{\mu(v) \dd v}{1-v}\,.
\end{align}
Then, there exist $L_- < 0 <L_+$ such that the support of~$\mu_{fc}$ is $[L_-,L_+]$. Moreover,
\begin{itemize}
\item [(1)] if $\lambda<\lambda_+$, then for $0 \leq \kappa \leq L_+$,
\begin{align}
C^{-1}\sqrt{\kappa} \leq \mu_{fc}(L_+-\kappa) \leq C \sqrt\kappa\,,
\end{align}
for some $C \geq 1$;

\item[(2)] if $\lambda>\lambda_+$, then $L_+ = \lambda+(\tau_+/\lambda)$ and, for $0 \leq \kappa \leq L_+$,
\begin{align} \label{exponent beta}
C^{-1}{\kappa}^{\b} \leq \mu_{fc}(L_+-\kappa) \leq C \kappa^{\b},
\end{align}
for some $C \geq 1$. Moreover, $L_+$ satisfies $L_++m_{fc}(L_+)=\lambda$.
\end{itemize}

\end{lem}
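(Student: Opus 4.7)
The plan is to analyze the Pastur equation~\eqref{the functional equation} directly on the real line outside the spectrum, and via a real/imaginary-part decomposition just inside the spectrum. The support being a single interval $[L_-,L_+]$ was established in~\cite{LS}, so I focus on identifying $L_+$ and on the edge asymptotics~(1)--(2). For real $E > L_+$, $m_{fc}(E)$ is real, and the substitution $\zeta = E + m_{fc}(E)$ recasts~\eqref{the functional equation} into
\begin{equation*}
E(\zeta) \deq \zeta + \int \frac{\dd\mu(v)}{\zeta - \lambda v}, \qquad \zeta > \lambda.
\end{equation*}
Since $\b > 1$ ensures integrability of $(1-v)^{\b-2}$ near $v = 1$, this integral extends differentiably through $\zeta = \lambda$, with
\begin{equation*}
E'(\zeta) = 1 - \int \frac{\dd\mu(v)}{(\zeta - \lambda v)^2}, \qquad E''(\zeta) = 2\int \frac{\dd\mu(v)}{(\zeta - \lambda v)^3} > 0,
\end{equation*}
and $E'(\lambda) = 1 - \lambda_+^2/\lambda^2$. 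Convexity together with the sign of $E'(\lambda)$ drives the dichotomy.

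In the supercritical case $\lambda > \lambda_+$, one has $E'(\lambda) > 0$, so $\zeta \mapsto E(\zeta)$ is strictly monotone on $[\lambda, \infty)$ with image $[E(\lambda), \infty)$. This identifies $L_+ = E(\lambda) = \lambda + \tau_+/\lambda$, and the fixed-point relation $L_+ + m_{fc}(L_+) = \lambda$ is immediate. For the edge asymptotic~\eqref{exponent beta} I take $E = L_+ - \kappa$, write $m_{fc}(E + \ii 0) = m_r + \ii m_i$, and set $\zeta = E + m_r$, $\epsilon = \lambda - \zeta$. Separating~\eqref{the functional equation} into real/imaginary parts (using $m_i \neq 0$) yields
\begin{equation*}
\int \frac{\dd\mu(v)}{(\lambda v - \zeta)^2 + m_i^2} = 1, \qquad m_r = \int \frac{(\lambda v - \zeta)\,\dd\mu(v)}{(\lambda v - \zeta)^2 + m_i^2}.
\end{equation*}
Splitting the first integral into a near part on $\{|1-v|\le \delta\}$ and a bulk part (with $\delta$ small but fixed), using $\mu(v) = C(1-v)^\b(1+o(1))$ near $v=1$ with $C = Z^{-1}2^\a d(1)$, and rescaling $1-v = (\epsilon/\lambda)u$, the near part contributes $(\pi C/\lambda^{\b+1})\,\epsilon^\b/m_i\,(1+o(1))$ via $\int \dd u/((u-1)^2+a^2)=\pi/a$, while the bulk part converges to $\lambda_+^2/\lambda^2$. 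Matching the total to $1$ forces $m_i = c_1\epsilon^\b(1+o(1))$ with $c_1 = \pi C/[\lambda^{\b+1}(1-\lambda_+^2/\lambda^2)]$. An algebraic manipulation of $\kappa = \tau_+/\lambda + \epsilon + m_r$ — using the normalization of the first integral to cancel the leading $\epsilon$-terms — yields the clean identity $\kappa = (\epsilon^2 + m_i^2)\,I$ with $I = \int \dd\mu(v)/[\lambda(1-v)((\lambda v-\zeta)^2+m_i^2)]$. A second near/bulk analysis gives $I = (1-\lambda_+^2/\lambda^2)/\epsilon \cdot (1+o(1))$, hence $\kappa = (1-\lambda_+^2/\lambda^2)\epsilon(1+o(1))$. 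Combining, $\epsilon \asymp \kappa$ and $\mu_{fc}(L_+-\kappa) = m_i/\pi \asymp \kappa^\b$, establishing~\eqref{exponent beta}.

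In the subcritical case $\lambda < \lambda_+$, $E'(\lambda) < 0$; since $E'(\zeta) \to 1$ as $\zeta \to \infty$ and $E''>0$, there is a unique $\zeta_+\in(\lambda,\infty)$ with $E'(\zeta_+)=0$, giving $L_+ = E(\zeta_+)$ as a nondegenerate minimum of the analytic function $E$, with $E(\zeta) - L_+ = \tfrac12 E''(\zeta_+)(\zeta-\zeta_+)^2 + O((\zeta-\zeta_+)^3)$. Inverting in the complex plane yields a square-root branch of $\zeta(E)$ at $L_+$, whence $\im m_{fc}(E+\ii 0) = \im(\zeta(E) - E) \asymp \sqrt{L_+-E}$ for $E < L_+$ near $L_+$, which is statement~(1). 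The main obstacle is the clean leading-order extraction in the supercritical case: one must rigorously justify that the bulk portions of both integrals converge to their limits with quantitative error, and that corrections to $\mu(v) \approx C(1-v)^\b$ — controlled by $d \in C^1([-1,1])$ — generate only subleading contributions as $\epsilon \to 0$. The two hypotheses $\b > 1$ and $d \in C^1$ are both indispensable: the former for finiteness of $\lambda_+$ and validity of the near-edge Taylor expansion, the latter for controlling the subleading density profile.
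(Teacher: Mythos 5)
The paper does not prove this lemma itself: it states that the result ``is taken from~\cite{LS}; see also~\cite{B,S2,ON} for statement~(1)'', so there is no internal proof to compare against. That said, your route — introducing the subordination coordinate $\zeta = E + m_{fc}(E)$ so that outside the spectrum $E(\zeta) = \zeta + \int\dd\mu(v)/(\zeta-\lambda v)$, reading the dichotomy off the sign of $E'(\lambda) = 1 - \lambda_+^2/\lambda^2$, and using the convexity of $E$ on $(\lambda,\infty)$ — is exactly the standard method used in the cited references, and the main steps check out. In particular: the identification $L_+ = E(\lambda) = \lambda + \tau_+/\lambda$ with $L_+ + m_{fc}(L_+)=\lambda$ in the supercritical case follows from strict monotonicity of $E$ on $[\lambda,\infty)$; your algebraic identity $\kappa = (\epsilon^2+m_i^2)\,I$ with $I = \int \dd\mu(v)\bigl/\bigl[\lambda(1-v)\bigl((\lambda v-\zeta)^2+m_i^2\bigr)\bigr]$ is correct (the numerator collapses because $(\lambda v-\zeta)^2 + \lambda(1-v)[2\epsilon-\lambda(1-v)] = \epsilon^2$); and the subcritical square-root behavior indeed comes from inverting $E$ at the nondegenerate critical point $\zeta_+\in(\lambda,\infty)$.

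Two points you should make explicit in a full write-up. First, your argument establishes the asymptotics as $\kappa\to 0$; the uniform two-sided bounds for all $0\le\kappa\le L_+$ also require the elementary observation that $\mu_{fc}$ is continuous and strictly positive on any compact subinterval of $(L_-,L_+)$, which makes the global statement follow from the edge asymptotics by compactness. Second, when you write $E''(\zeta) = 2\int\dd\mu(v)/(\zeta-\lambda v)^3$, this is only finite for $\zeta>\lambda$ when $1<\b\le 2$; you only use it there, but it is worth saying so, since a reader might otherwise worry about convexity at the boundary point $\zeta=\lambda$ itself. The Lorentzian extraction you flag as the ``main obstacle'' is indeed the step that demands care (one must control the principal-value correction to $\int u^\b\,\dd u/((u-1)^2+a^2)\approx\pi/a$, the tail for $u\gg 1$ whose behavior switches character at $\b=2$, and the error from replacing $\mu(v)$ by $C(1-v)^\b$ using $d\in C^1$), but these are routine to fill in with the scaling $1-v=(\epsilon/\lambda)u$, $a=m_i/\epsilon$ that you already set up; the self-consistent balance $m_i\asymp\epsilon^\b$, and hence $\epsilon\asymp\kappa$, is robust across the whole range $\b>1$.
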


\begin{rem}
Since
\begin{equation}
\tau_+ =\int_{-1}^1 \frac{\mu (v) \dd v}{1-v} > \int_{-1}^1 (1+v) \mu (v) \dd v = 1,
\end{equation}
we find that $L_+ > \lambda + (1/\lambda) \geq 2$. Similarly, we also have that $L_- \leq -2$.
\end{rem}

\begin{rem} \label{remark about lower edge}
For $\a>1$, the analogue statements to Lemma~\ref{general case - large lambda} hold for the lower endpoint $L_-$ of the support of~$\mu_{fc}$, with $\lambda_+$ and $\tau_+$ replaced by
\begin{align}
\lambda_- \deq \left( \int_{-1}^1 \frac{\mu(v) \dd v}{(1+v)^2} \right)^{1/2}\,,\qquad \tau_- \deq \int_{-1}^1 \frac{\mu(v) \dd v}{1+v} \,.
\end{align}

\end{rem}

\begin{rem}
When $-1 < \a, \b < 1$, there exist, for any $\lambda\in\R^+$, $L_-<0<L_+$, such that $\supp\,\mu_{fc}=[L_-,L_+]$. Moreover, for any $\lambda\in\R^+$, there exists $C\ge1$ such that
\begin{align}\label{squarerrot behavior}
 C^{-1}\sqrt{\kappa_E}\le\mu_{fc}(E)\le C\sqrt{\kappa_E}\,,\quad\quad E\in[L_-,L_+]\,.
\end{align}

If $\a < 1 < \b$ or $\b < 1 < \a$, the analogous statement to \eqref{squarerrot behavior} holds only at the lower edge or at the upper edge, respectively. These results can be proved using the methods of~\cite{S2}; see~\cite{LS} for more details.

\end{rem}

In~\cite{LS}, spectral properties of the interpolating matrix~\eqref{thematrix} have been analyzed in detail under the assumption that \eqref{squarerrot behavior} holds, i.e., it was assumed that either $\lambda$ is sufficiently small or $\a,\b\le 1$.

\subsection{Main results}
Denote by $(\mu_i)$ the ordered eigenvalues of the matrix $H=\lambda V+W$,
 
$$
\mu_1 \geq \mu_2 \geq \ldots \geq \mu_N\,.
$$
In the following, we fix some $n_0\in\N$, independent of $N$, and consider the largest eigenvalues $(\mu_i)_{i=1}^{n_0}$ of~$H$. All our results also apply mutatis mutandis to the smallest eigenvalues $(\mu_i)_{i=N-n_0}^N$ of $H$ as can readily be checked.

\subsubsection{Eigenvalue statistics}
The first main result of the paper shows that the locations of the extreme eigenvalues are determined by the order statistics of the diagonal elements $(v_i)$. Recall that we denote by $\mu$ the distribution of the (unordered) centered random variables~$(v_i)$.

\begin{thm} \label{thm:main}
Let $W$ be a real symmetric or complex Hermitian Wigner matrix, satisfying the assumptions in Definition~\ref{assumption wigner}. Assume that the distribution~$\mu$ is given by~\eqref{jacobi measure} with $b>1$ and fix some $\lambda>\lambda_+$; see~\eqref{definition of lambda+}.  Let $n_0>10$ be a fixed constant independent of $N$, denote by $\mu_i$ the $i$-th largest eigenvalue of $H=\lambda V+W$ and let $1\le k<n_0$.  Then the joint distribution function of the $k$ largest rescaled eigenvalues,
\begin{align}\label{converging expression 1}
\p \left( N^{1/(\b+1)} (L_+ - \mu_1)\le s_1,\, N^{1/(\b+1)} (L_+ - \mu_2)\le s_2,\, \ldots,\, N^{1/(\b+1)} (L_+ - \mu_k)\le s_k \right)\,,
\end{align}
converges to the joint distribution function of the $k$ largest rescaled order statistics of $(v_i)$,
\begin{align}\label{converging expression}
\p \left(C_\lambda N^{1/(\b+1)} (1-v_1)\le s_1,\,C_\lambda N^{1/(\b+1)} (1-v_2)\le s_2,\, \ldots,\,C_\lambda N^{1/(\b+1)}  (1-v_k)\le s_k \right)\,,
\end{align}
as $N \to \infty$, where $C_\lambda= \frac{\lambda^2 - \lambda_+^2}{\lambda} $. In particular, the cumulative distribution function of the rescaled largest eigenvalue $N^{1/(\b+1)} (L_+ - \mu_1)$ converges to the cumulative distribution function of the Weibull distribution,
\begin{align}\label{Weibull}
G_{\b+1}(s)\deq 1 - \exp \left( -\frac{C_{\mu} s^{\b+1}}{(\b+1)} \right)\,,
\end{align} 
where
$$
C_{\mu} \deq \left( \frac{\lambda}{\lambda^2 - \lambda_+^2} \right)^{\b+1} \lim_{v \to 1} \frac{\mu(v)}{(1-v)^{\b}}\,.
$$
\end{thm}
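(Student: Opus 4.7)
The plan is to reduce the theorem to classical extreme-value statistics for the i.i.d.\ sample $(v_i)$ via a resolvent identity. Conditioning on the realization of $V$, the Schur complement formula gives
\begin{align*}
G_{ii}(z)=\frac{1}{\lambda v_i+w_{ii}-z-Z_i(z)}\,,\qquad Z_i(z)\deq\sum_{j,k\ne i}w_{ij}G^{(i)}_{jk}(z)w_{ki}\,,
\end{align*}
so eigenvalues of $H$ appear as zeros of the denominator. The local deformed semicircle law (Proposition~\ref{prop:step 2_4}), combined with the fluctuation-averaging lemma applied to $Z_i$, guarantees that with high probability and uniformly in an $N^{-1/(\b+1)+\epsilon}$-neighborhood of $L_+$, one has $Z_i(z)-m_{fc}(z)=O(N^{-1/2+\epsilon})$. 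Since $w_{ii}=O(N^{-1/2+\epsilon})$, every eigenvalue $\mu_\alpha$ in that neighborhood satisfies $\lambda v_{i(\alpha)}=\mu_\alpha+m_{fc}(\mu_\alpha)+O(N^{-1/2+\epsilon})$ for some index $i(\alpha)$. Because $\b>1$, the typical gap between top order statistics, $\sim N^{-1/(\b+1)}$, dominates the error $N^{-1/2+\epsilon}$, so $\alpha\mapsto i(\alpha)$ is an order-preserving bijection between the top eigenvalues of $H$ and the top entries of $V$.

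Next, I would linearize $z\mapsto z+m_{fc}(z)$ at $L_+$. From Lemma~\ref{general case - large lambda} we have $L_++m_{fc}(L_+)=\lambda$, and differentiating the Pastur relation~\eqref{the functional equation} at $z=L_+$ (where the denominator reduces to $\lambda(v-1)$) yields
\begin{align*}
m_{fc}'(L_+)=\frac{\lambda_+^2}{\lambda^2-\lambda_+^2}\,,\qquad 1+m_{fc}'(L_+)=\frac{\lambda^2}{\lambda^2-\lambda_+^2}\,.
\end{align*}
Writing $z=L_+-\xi$ with $\xi=O(N^{-1/(\b+1)+\epsilon})$ and Taylor expanding, the pole equation above becomes
\begin{align*}
L_+-\mu_\alpha=C_\lambda(1-v_{i(\alpha)})+O(N^{-1/2+\epsilon})+O(\xi^2)\,,\qquad C_\lambda=\frac{\lambda^2-\lambda_+^2}{\lambda}\,.
\end{align*}
Since $\xi\asymp N^{-1/(\b+1)}$ and $2/(\b+1)>1/(\b+1)$ for $\b>1$, both error terms are $o(N^{-1/(\b+1)})$ after multiplication by $N^{1/(\b+1)}$, yielding the deterministic identity $N^{1/(\b+1)}(L_+-\mu_\alpha)=C_\lambda N^{1/(\b+1)}(1-v_{i(\alpha)})+o(1)$ for each of the top $k$ eigenvalues.

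The joint convergence~\eqref{converging expression 1}$\to$\eqref{converging expression} is then immediate, and the Weibull limit~\eqref{Weibull} follows from standard extreme-value theory: for $(v_i)$ i.i.d.\ with density $\mu(v)\sim c_1(1-v)^{\b}$ near $v=1$, where $c_1=\lim_{v\to 1}\mu(v)/(1-v)^{\b}$,
\begin{align*}
\p\bigl(N^{1/(\b+1)}(1-v_1)>s\bigr)=\Bigl(1-\int_{1-s N^{-1/(\b+1)}}^1\mu(v)\dd v\Bigr)^N\longrightarrow\exp\Bigl(-\frac{c_1 s^{\b+1}}{\b+1}\Bigr)\,,
\end{align*}
and the deterministic factor $C_\lambda$ produces $C_\mu=c_1/C_\lambda^{\b+1}$ as in the theorem. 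The joint law of the top $k$ rescaled order statistics converges to that of $k$ independent exponentials with appropriate rates, giving the full claim.

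The main obstacle is the local law, not the deterministic linearization above. The usual stability estimate for the self-consistent equation for $m(z)$ degrades precisely in the regime $\lambda>\lambda_+$ because $\mu_{fc}$ decays only as $\kappa^{\b}$ rather than $\sqrt\kappa$ near $L_+$; see~\eqref{the cases}. A new analysis conditional on a ``typical'' realization of $V$, together with a fluctuation-averaging argument adapted to the fact that the diagonal entries $G_{ii}$ are no longer uniformly bounded near the extreme edge, is required to push the averaged law down to scale $\eta\sim N^{-1/2+\epsilon}$ and then further control $\im m$ on sub-$N^{-1/2}$ scales in order to isolate individual eigenvalues at the gap scale $N^{-1/(\b+1)}$.
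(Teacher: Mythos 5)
Your deterministic calculations are correct: the linearization $z+m_{fc}(z)=\lambda-\tfrac{\lambda^2}{\lambda^2-\lambda_+^2}(L_+-z)+\caO\bigl((\log N)(\kappa+\eta)^{\min\{\b,2\}}\bigr)$ matches Lemma~\ref{mfc estimate}, the resulting relation $L_+-\mu_k=C_\lambda(1-v_k)+o(N^{-1/(\b+1)})$ is exactly Proposition~\ref{prop:main}, and the EVT reduction to the Weibull law is standard. You also correctly flag that the local law in the convex-decay regime is the genuine obstruction, including the loss of a uniform bound on $|G_{ii}|$ near $L_+$.

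The gap is the step from the Schur complement to ``an order-preserving bijection between the top eigenvalues of $H$ and the top entries of $V$.'' Saying eigenvalues ``appear as zeros of the denominator'' of $G_{ii}$ is a heuristic: the function $z\mapsto \lambda v_i+w_{ii}-z-m^{(i)}(z)-Z_i(z)$ has one zero between each pair of consecutive eigenvalues of $H^{(i)}$, but this by itself neither certifies that the zero near $\lambda v_k$ is the $k$-th largest eigenvalue of $H$, nor excludes spurious eigenvalues in the $\kappa_0$-neighborhood of $L_+$, nor guarantees injectivity of $\alpha\mapsto i(\alpha)$. Even granting your $\caO(N^{-1/2+\epsilon})$ estimate, you need a separate device to pin down individual eigenvalues, and that device must operate below the scale $N^{-1/2}$ where the local law itself saturates. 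The paper does this differently: it introduces $\widehat m_{fc}$ (the Stieltjes transform of $\widehat\mu\boxplus\mu_{sc}$ with $\widehat\mu=N^{-1}\sum_i\delta_{\lambda v_i}$), proves $|m-\widehat m_{fc}|\lesssim N^{-1/2}$ and $|\widehat m_{fc}-m_{fc}|\lesssim N^{-1/2}$ separately, and then uses the fluctuation-averaging estimates of Lemma~\ref{lem:step 4} to control $\im m$ at scale $\eta_0=N^{-1/2-\epsilon}$, establishing that $\im m(E+\ii\eta_0)\sim\eta_0$ away from the solutions $\widehat E_k$ of $E+\re\widehat m_{fc}(E+\ii\eta_0)=\lambda v_k$ (Lemma~\ref{lem:step 5}) while $\im m\gg\eta_0$ at some $\widetilde z_k$ within $N^{-1/2+3\epsilon}$ of $\widehat E_k$ (Lemma~\ref{lem:step 6_2}). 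Reading this off against $\im m(E+\ii\eta_0)=N^{-1}\sum_\alpha\eta_0/((\mu_\alpha-E)^2+\eta_0^2)$ and inducting with Cauchy interlacing on $H,H^{(1)},\ldots$ is what yields Proposition~\ref{prop:mu_k}, i.e.\ the bijection you assert. Your outline, as written, does not supply a substitute for this argument.
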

In Section~\ref{location} we obtain estimates on the speed of convergence of~\eqref{converging expression 1}; see Corollary~\ref{cor:main}.

\begin{rem} \label{rem:Gaussian}
 For $\lambda>\lambda_+$, the typical size of the fluctuations of the largest eigenvalues is of order $ N^{-1/(\b+1)}$ (with $\b>1$) as we can see from Theorem \ref{thm:main}. For $\lambda < \lambda_+$, on the other hand, the fluctuations for the largest eigenvalue become, in the limit $N\to\infty$, Gaussian with standard deviation of order $N^{-1/2}$. (See Appendix C for more detail.)

\end{rem}

\begin{rem}
Theorem \ref{thm:main} shows that the extreme eigenvalues of $H$ become, for $\lambda>\lambda_+$, uncorrelated in the limit $N\to\infty$. In fact, extending the methods presented in this paper (by choosing $n_0\lesssim N^{1/(\b+1)}$), one can show that the point process defined by the (unordered) rescaled extreme eigenvalues of $H$ converges in distribution to an inhomogeneous Poisson point process on $\R^+$ with intensity function determined by $\lambda$ and $\mu$.
\end{rem}

\subsubsection{Eigenvectors behavior}
Our second main result asserts that the eigenvectors associated with the largest eigenvalues are ``partially localized'' for $\lambda>\lambda_+$. We denote by $(u_k(j))_{j=1}^N$ the components of the eigenvector~$u_k$ associated to the eigenvalue $\mu_k$. All eigenvectors are normalized as $\sum_{j=1}^N|u_k(j)|^2=\|u_k\|_2^2=1$.

\begin{thm} \label{thm:local}
Let $W$ be a real symmetric or complex Hermitian Wigner matrix satisfying the assumptions in Definition~\ref{assumption wigner}. Assume that the distribution $\mu$ is given by~\eqref{jacobi measure} with $b>1$ and fix some $\lambda>\lambda_+$; see~\eqref{definition of lambda+}. Let $n_0>10$ be a fixed constant independent of $N$. Then there exist constants $\delta, \delta',\sigma>0$, depending only on $\b$, $\lambda$ and~$\mu$, such that
\begin{align} \label{mass_k}
\p\left(\left||u_k (k)|^2 - \frac{\lambda^2 - \lambda_+^2}{\lambda^2}\right|> N^{-\delta} \right)\le N^{-\sigma}\,,\quad\qquad (1\le k\le n_0-1)\,,
\end{align}
and
\begin{align} \label{mass_j}
\p\left(|u_k (j)|^2 > \frac{N^{\delta'}}{N^{\phantom{\delta'}}}\frac{1}{\lambda^2 |v_k - v_j|^2}\right)\le N^{-\sigma}\,,\quad\qquad (1\le j\le N\,,1\le k\le n_0-1\,, j\neq k)\,.
\end{align}
\end{thm}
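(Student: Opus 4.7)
The plan is to analyse the eigenvector via a Schur-complement argument at the index $k$, and then to invoke the local law at the extreme edge (established in the preceding sections) to identify the limiting behaviour. Let $H^{(k)}$ denote the $(N-1)\times(N-1)$ minor of $H$ obtained by removing the $k$-th row and column, set $w^{(k)}\deq (w_{ik})_{i\neq k}$, and $G^{(k)}(z)\deq (H^{(k)}-z)^{-1}$. The eigenvalue equation $Hu_k=\mu_k u_k$, together with $u_k(k)\neq 0$ (which holds with high probability in view of Theorem~\ref{thm:main}), gives
\[
u_k(j)\;=\;-u_k(k)\bigl[G^{(k)}(\mu_k)\,w^{(k)}\bigr]_j\qquad(j\neq k),\qquad|u_k(k)|^{-2}\;=\;1+\bigl\langle w^{(k)},\,G^{(k)}(\mu_k)^2\,w^{(k)}\bigr\rangle.
\]
Since $w^{(k)}$ is independent of $G^{(k)}$, the eigenvector analysis is reduced to estimating linear and quadratic forms in independent subexponential entries against the minor resolvent at the real spectral point $z=\mu_k$.

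For \eqref{mass_k}, the large-deviation bound for quadratic forms in independent subexponential entries (conditioning on $H^{(k)}$) yields $\langle w^{(k)},G^{(k)}(\mu_k)^2 w^{(k)}\rangle = N^{-1}\tr G^{(k)}(\mu_k)^2+O(N^{-\delta})$ with probability $1-O(N^{-\sigma})$. Since $N^{-1}\tr G^{(k)}(z)^2=\partial_z(N^{-1}\tr G^{(k)}(z))$, the local law $N^{-1}\tr G^{(k)}(z)\to m_{fc}(z)$ then gives $N^{-1}\tr G^{(k)}(\mu_k)^2\approx m_{fc}'(\mu_k)$. A short calculation obtained by differentiating the Pastur equation \eqref{the functional equation} at $z=L_+$, together with the identity $L_++m_{fc}(L_+)=\lambda$ from Lemma~\ref{general case - large lambda} and the definition of $\lambda_+$, gives $m_{fc}'(L_+)=\lambda_+^2/(\lambda^2-\lambda_+^2)$; hence $(1+m_{fc}'(\mu_k))^{-1}=(\lambda^2-\lambda_+^2)/\lambda^2+O(N^{-\delta})$, proving \eqref{mass_k}.

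For \eqref{mass_j}, the large-deviation bound for linear forms gives $|[G^{(k)}(\mu_k)\,w^{(k)}]_j|^2\le N^{-1+\epsilon}\sum_{i\neq k}|G^{(k)}_{ji}(\mu_k)|^2$ with probability $1-O(N^{-\sigma})$. The local-law estimates on the off-diagonal Green function entries reduce this to $N^{-1+\epsilon}|G^{(k)}_{jj}(\mu_k)|^2$ up to negligible terms. The diagonal approximation $G^{(k)}_{jj}(\mu_k)\approx(\lambda v_j-\mu_k-m_{fc}(\mu_k))^{-1}$, combined with the eigenvalue location relation $\mu_k+m_{fc}(\mu_k)\approx\lambda v_k$ (the fixed-point equation appearing in the proof of Theorem~\ref{thm:main}), yields $|G^{(k)}_{jj}(\mu_k)|\approx (\lambda|v_j-v_k|)^{-1}$. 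Combining this with the trivial bound $|u_k(k)|\le 1$ then gives \eqref{mass_j}.

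The main technical hurdle will be the passage from the complex local law (which is available for $\im z\gtrsim N^{-1/2+\epsilon}$) to the evaluation of $G^{(k)}$ at the real point $z=\mu_k$. This is bridged by combining the rigidity from Theorem~\ref{thm:main}, which ensures that the distance from $\mu_k$ to the spectrum of $H^{(k)}$ is at least $N^{-1/(\b+1)-\epsilon}\gg N^{-1/2}$, with the fluctuation-averaging refinement (following \cite{EKYY4}) that controls $\im m$ on scales $\eta\ll N^{-1/2}$; this is the novel technical input needed to justify the real-line manipulations used above.
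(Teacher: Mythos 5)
Your Schur-complement route at the index $k$ is a genuine alternative to the paper's argument: the paper instead works with the weighted spectral measure $\rho_j=\sum_\alpha|u_\alpha(j)|^2\delta_{\mu_\alpha}$, represents $|u_k(j)|^2=\int f_k\,\dd\rho_j$ for a smooth cutoff $f_k$ of width $\sim N^{-1/(\b+1)-\epsilon'}$, and expands via the Helffer--Sj\"ostrand formula against $G_{jj}(z)=g_j^\circ(z)+G_{jj}^\Delta(z)$, shifting the real-line evaluation to an integral over a region of~$\C^+$ where the local law applies. Your identity $|u_k(k)|^{-2}=1+\langle w^{(k)},G^{(k)}(\mu_k)^2w^{(k)}\rangle$ and the linear-form bound for $u_k(j)$ are correct, and the conversion of~\eqref{mass_j} along these lines essentially parallels the paper's Lemma on $\im G_{jj}(\mu_k+\ii\eta_0)$ (Lemma~\ref{tapiridae}), so that half is essentially fine once the real-line evaluation is handled.

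The gap is in the step ``the local law $N^{-1}\tr G^{(k)}(z)\to m_{fc}(z)$ then gives $N^{-1}\tr G^{(k)}(\mu_k)^2\approx m_{fc}'(\mu_k)$''. Two issues. First, $\mu_k$ lies strictly inside $\supp\mu_{fc}$, so the boundary value $m_{fc}'(\mu_k+\ii0^+)$ is complex, with $\im m_{fc}'=\pi\mu_{fc}'$; the real, positive quantity $\tfrac1N\tr G^{(k)}(\mu_k)^2=\tfrac1N\sum_\alpha(\mu_\alpha^{(k)}-\mu_k)^{-2}$ converges to $m_{fc}'(L_+)=\lambda_+^2/(\lambda^2-\lambda_+^2)$, the limit from \emph{outside} the support, and this identification is possible only because the spacing of the extreme eigenvalues of $H^{(k)}$ near $\mu_k$ is of order $N^{-1/(\b+1)}\gg N^{-1/2}$. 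Second and more seriously, the convergence cannot be obtained by differentiating the local law. The available estimate $|m^{(k)}(z)-\widehat m_{fc}(z)|\le N^{-1/2+2\epsilon}$ on $\caD_\epsilon'$ has an intrinsic scale $\eta\gtrsim N^{-1/2-\epsilon}$; a Cauchy estimate on a disk of that radius gives $|\partial_z m^{(k)}(z)-\widehat m_{fc}'(z)|\lesssim N^{-1/2+2\epsilon}/\eta\sim N^{3\epsilon}$, which is $\gtrsim1$, not $o(1)$. The way through is to avoid the derivative entirely: using the Ward identity one writes $\tfrac1N\tr G^{(k)}(\mu_k+\ii\eta_0)^2\approx\eta_0^{-1}\im m^{(k)}(\mu_k+\ii\eta_0)$, and the needed estimate $\eta_0^{-1}\im m^{(k)}(\mu_k+\ii\eta_0)=\lambda_+^2/(\lambda^2-\lambda_+^2)+O(N^{-\delta})$ requires precisely the fluctuation-averaged self-consistent-equation analysis of $\im m^{(k)}$ at scale $\eta_0\ll N^{-1/2}$ (Lemma~\ref{lem:step 6_1} and the estimates of Subsections~\ref{aux estimate 1}--\ref{aux estimate 2}), plus a separate argument to pass from $\eta_0$ to $\eta=0$ using the $N^{-1/(\b+1)-\epsilon}$ gap to the spectrum of $H^{(k)}$. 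You flag ``a hurdle'' at the end, but as written the proposal attributes the normalization constant to a naive differentiation that does not deliver the requisite precision; this is exactly the place where the paper's Helffer--Sj\"ostrand integration (or an equivalent Ward-identity reduction) is doing real work and cannot be omitted.

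Subject to that: the approach is sound in outline, and if you replace ``differentiate the local law'' with ``apply the Ward identity and the improved bound $\im m^{(k)}(\mu_k+\ii\eta_0)\sim\eta_0$ with identified constant'', it would constitute a valid alternative proof.
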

In Section~\ref{sec:local} we obtain explicit expressions for the constants $\delta, \delta',\sigma>0$.

\begin{rem} \label{rem:bulk}
In the preceding paper~\cite{LS}, we proved that all eigenvectors are completely delocalized when $\lambda < \lambda_+$. This shows the existence of a  sharp transition from the partial localization to the complete delocalization regime. We say that an eigenvalue $\mu_i$ is in the bulk of the spectrum of $H$ if $i\in[\epsilon N,(1-\epsilon)N]$, for any (small) $\epsilon>0$ and sufficiently large $N$. Following the proof in~\cite{LS}, can prove that the eigenvectors associated to eigenvalues in the bulk are completely delocalized if $\lambda > \lambda_+$.

Assuming that $\mu$ is given by~\eqref{jacobi measure} with $\b\le 1$ (and $\a\le 1$), we showed in~\cite{LS} that all eigenvectors of $H$ are completely delocalized up to the edge. 
\end{rem}

\begin{rem}
Theorems \ref{thm:main} and \ref{thm:local} remain valid for deterministic potentials $V$, provided the entires $(v_i)$ satisfy some suitable assumptions; see Definition~\ref{v assumptions} in Section~\ref{mfc and hat mfc} for details.
\end{rem}

\begin{rem} \label{rem:perturbation}
From \eqref{mass_k} we find 
$$
\sum_{j:j \neq k}^N |u_k (j)|^2 = \frac{\lambda_+^2}{\lambda^2} + o(1)\,,\qquad (1\le k \le n_0-1)\,,
$$
which is in accordance with the fact that \eqref{mass_j} holds and that, typically,
$$
\frac{1}{N} \sum_{j:j \neq k}^N \frac{1}{\lambda^2 |v_k - v_j|^2} = \frac{\lambda_+^2}{\lambda^2} + o(1)\,,\qquad  (1\le k \le n_0-1)\,,
$$
where we used~\eqref{converging expression}. 
\end{rem}
In the remaining sections, we prove Theorems~\ref{thm:main} and~\ref{thm:local}. We state our proofs for complex Hermitian matrices. The real symmetric case can be dealt with in the same way. 
\section{Preliminaries} \label{prelim}

In this section, we collect basic notations and identities.

\subsection{Notations}
For high probability estimates we use two parameters $\xi \equiv \xi_N$ and $\varphi \equiv \varphi_N$: We let
\begin{align} \label{eq.xi}
\xi = 10 \log\log N\,, \qquad \varphi=(\log N)^C,
\end{align}
for some fixed constant $C \geq 1$.

\begin{defn}
We say an event $\Omega$ has $(\xi,\nu)$-high probability, if
$$
\p (\Omega^c) \leq \e{-\nu(\log N)^{\xi}}\,,
$$
for $N$ sufficiently large. Similarly, for a given event $\Omega_0$ we say an event $\Omega$ holds with $(\xi,\nu)$-high probability on $\Omega_0$, if
$$
\p(\Omega_0\cap\Omega^c) \leq \e{-\nu(\log N)^{\xi}}\,,
$$
for $N$ sufficiently large.
\end{defn}

For brevity, we occasionally say an event holds with high probability, when we mean with $(\xi,\nu)$-high probability. We do not keep track of the explicit value of $\nu$ in the following, allowing $\nu$ to decrease from line to line such that $\nu>0$. From our proof it becomes apparent that such reductions occur only finitely many times.

We define the resolvent, or \emph{Green function}, $G(z)$, and the averaged Green function, $m(z)$, of $H$ by
\begin{align}
G(z) = (G_{ij}(z)) \deq \frac{1}{H-z} = \frac{1}{\lambda V+W-z}\,,\qquad m(z) \deq \frac{1}{N} \Tr G(z)\,, \qquad\quad( z \in \C^{+})\,.
\end{align} 
Frequently, we abbreviate $G \equiv G(z)$, $m\equiv m(z)$, etc. We refer to $z$ as spectral parameter and often write $z=E+\ii\eta$, $E\in\R$, $\eta>0$.

We will use double brackets to denote the index set, i.e., for $n_1, n_2 \in \R$,
$$
\llbracket n_1, n_2 \rrbracket \deq [n_1, n_2] \cap \Z\,.
$$

We use the symbols $\caO(\,\cdot\,)$ and $o(\,\cdot\,)$ for the standard big-O and little-o notation. The notations $\caO$, $o$, $\ll$, $\gg$, refer to the limit $N \to \infty$ unless otherwise stated, where the notation $a \ll b$ means $a=o(b)$. We use $c$ and $C$ to denote positive constants that do not depend on $N$. Their value may change from line to line. Finally, we write $a \sim b$, if there is $C \geq 1$ such that $C^{-1}|b| \leq |a| \leq C |b|$, and, occasionally, we write for $N$-dependent quantities $a_N \lesssim b_N$, if there exist constants $C, c >0$ such that $|a_N| \leq C(\varphi_N)^{c\xi}|b_N|$.

\subsection{Minors}
Let $\T\subset \llbracket1, N \rrbracket$. Then we define $H^{(\T)}$ as the $(N-|\T|)\times(N-|\T|)$ minor of $H$ obtained by removing all columns and rows of $H$ indexed by $i\in\T$. Note that we do not change the names of the indices of $H$ when defining $H^{(\T)}$.
More specifically, we define an operation $\pi_i$, $i\in \llbracket1, N \rrbracket$, on the probability space by
\begin{align}
 (\pi_i(H))_{kl}\deq\lone(k\not=i)\lone(l\not=i)h_{kl}\,.
\end{align}
Then, for $\T\subset \llbracket1, N \rrbracket$, we set $\pi_{\T}\deq\prod_{i\in\T}\pi_i$ and define
\begin{align}
 H^{(\T)}\deq((\pi_{\T}(H)_{ij})_{i,j\not\in\T}\,.
\end{align}
The Green functions $G^{(\T)}$, are defined in an obvious way using $H^{(\T)}$.  Moreover, we use the shorthand notation
\begin{align}
 \sum_{i}^{(\T)}\deq\sum_{\substack{i=1\\i\not\in\T}}^N\,\,,\qquad \qquad\sum_{i\not=j}^{(\T)}\deq\sum_{\substack{i=1,\, j=1\\ i\not=j\,,\,i,j\not\in\T}}^N\,,
\end{align}
abbreviate $(i)=(\{i\})$, $(\T i)=(\T\cup\{i\})$ and use the convention $(\T\backslash i)= (\T\backslash \{i\})$, if $i\in\T$, $(\T\backslash i)=(\T)$, else.  In Green function entries $(G_{ij}^{(\T)})$ we refer to $\{i,j\}$ as lower indices and to $\T$ as upper indices.

Finally, we set
\begin{align}
 m^{(\T)}\deq\frac{1}{N}\sum_{i}^{(\T)}G_{ii}^{(\T)}\,.
\end{align}
Here, we use the normalization $N^{-1}$, instead $(N-|\T|)^{-1}$, since it is more convenient for our computations.

\subsection{Resolvent identities}
The next lemma collects the main identities between resolvent matrix elements of $H$ and $H^{(\T)}$.

\begin{lem}
Let $H=H^*$ be an $N\times N$ matrix. Consider the Green function $G(z)\equiv G\deq(H-z)^{-1}$, $ z\in\C^+$. Then, for $i,j,k,l\in\llbracket 1,N\rrbracket$, the following identities hold:
\begin{itemize}
 \item[-] {\it Schur complement/Feshbach formula:}\label{feshbach} For any $i$,
\begin{align}\label{schur} 
 G_{ii}=\frac{1}{h_{ii}-z-\sum_{k,l}^{(i)}{h_{ik} G_{kl}^{(i)}}h_{li}}\,.
\end{align}
\item[-] For $i\not=j$,
\begin{align}\label{twosided}
 G_{ij}=-G_{ii}G_{jj}^{(i)}\left(h_{ij}-\sum_{k,l}^{(ij)}h_{ik}G_{kl}^{(ij)}h_{lj}\right)\,.
\end{align}
\item[-] For $i\not=j$,
\begin{align}\label{onesided}
 G_{ij}=-G_{ii}\sum_{k}^{(i)}h_{ik}G_{kj}^{(i)}=-G_{jj}\sum_{k}^{(j)} G_{ik}^{(j)} h_{kj}\,.
\end{align}

\item[-] For $i,j\not=k$,
\begin{align}\label{basic resolvent}
 G_{ij}=G_{ij}^{(k)}+\frac{G_{ik}G_{kj}}{G_{kk}}\,.
\end{align}

\item[-]{\it Ward identity:} For any $i$,
\begin{align}\label{ward}
 \sum_{j=1}^N|G_{ij}|^2=\frac{1}{\eta}\im G_{ii}\,,
\end{align}
where $\eta=\im z$.
\end{itemize}
\end{lem}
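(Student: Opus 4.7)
The identities in this lemma are all standard; the plan is to derive them from the block/Schur decomposition of $H-z$ and, for the Ward identity, from the spectral theorem. I would state at the outset that, since $H$ is Hermitian, all manipulations are well-defined for $z\in\C^+$ because $H-z$ is invertible there.

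For the Feshbach/Schur formula \eqref{schur}, I would write $H-z$ in block form by separating the $i$-th row and column from the $(N-1)\times(N-1)$ minor $H^{(i)}-z$. Applying the standard block inverse formula for an invertible $2\times 2$ block matrix yields the $(i,i)$-entry of $(H-z)^{-1}$ as the reciprocal of the Schur complement $h_{ii}-z-\mathbf{h}_i^*(H^{(i)}-z)^{-1}\mathbf{h}_i$, where $\mathbf{h}_i$ is the $i$-th column of $H$ with the $i$-entry removed. Writing this inner product out in components gives \eqref{schur}. The two-sided identity \eqref{twosided} and the one-sided identity \eqref{onesided} come from the off-diagonal block of the same inversion formula: the $(i,j)$-entry for $i\neq j$ equals $-G_{ii}$ times a matrix-vector product involving $G^{(i)}$ and the relevant column of $H$; iterating (i.e.\ applying Schur once more to remove the index $j$) produces the two-sided version with $G^{(ij)}$ explicitly.

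For the basic resolvent identity \eqref{basic resolvent}, the cleanest route is to treat $H-z$ and its minor $H^{(k)}-z$ (extended by zeros in row/column $k$) as a rank-two perturbation differing only in the $k$-th row and column. Alternatively, one can compare the Schur formulas for $G_{ij}$ obtained by eliminating $k$: rewriting $G_{ij}$ using \eqref{schur}/\eqref{onesided} applied to the index $k$ and isolating the contribution of $G_{kk}$ yields the claimed correction term $G_{ik}G_{kj}/G_{kk}$. A telegraphic algebraic verification goes as follows: the matrix identity $G = G^{(k)} + G\,e_k\,(\cdots)\,e_k^*\,G$ for appropriate scalar, together with the identity $(G e_k)_i = G_{ik}$ and $e_k^* G = (G_{kj})$, reduces to \eqref{basic resolvent} after identifying the scalar with $G_{kk}^{-1}$.

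Finally, for the Ward identity \eqref{ward}, I would invoke the spectral decomposition $H=\sum_{\alpha=1}^N\mu_\alpha u_\alpha u_\alpha^*$, so that
\begin{equation}
G_{ij}(z)=\sum_{\alpha=1}^N\frac{u_\alpha(i)\,\overline{u_\alpha(j)}}{\mu_\alpha-z}.
\end{equation}
Summing $|G_{ij}|^2$ over $j$ and using $\sum_j u_\alpha(j)\overline{u_\beta(j)}=\delta_{\alpha\beta}$ collapses the double sum into $\sum_\alpha |u_\alpha(i)|^2/|\mu_\alpha-z|^2$. Comparing with $\im G_{ii}(z)=\sum_\alpha |u_\alpha(i)|^2\,\eta/|\mu_\alpha-z|^2$ gives the identity. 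No step here is really an obstacle; the only mild care is in \eqref{basic resolvent}, where one must ensure $G_{kk}\neq 0$, which follows from $\im z>0$ via $\im G_{kk}\ge \eta/\|H-z\|^2>0$.
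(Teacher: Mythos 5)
Your proof sketch is correct, and it follows the standard route. Note that the paper itself does not supply a proof of this lemma; it only refers the reader to \cite{EKYY1}. The arguments you outline---the block/Schur-complement inversion for \eqref{schur}, \eqref{twosided}, and \eqref{onesided}, an algebraic comparison of $G$ with $G^{(k)}$ for \eqref{basic resolvent}, and the spectral decomposition together with orthonormality of eigenvectors for \eqref{ward}---are precisely the standard derivations that appear in the cited reference. Two small remarks: in the derivation of \eqref{twosided} from \eqref{onesided}, be careful to isolate the $k=j$ term in $\sum_k^{(i)}h_{ik}G_{kj}^{(i)}$ (which produces the leading $h_{ij}G_{jj}^{(i)}$) before applying \eqref{onesided} again to $G_{kj}^{(i)}$ for $k\neq j$; and for \eqref{basic resolvent}, the cleaner version of your ``rank-two perturbation'' idea is simply to read off $G_{ij}$, $i,j\neq k$, from the block inversion with respect to the single index $k$, which gives $G_{ij}=G^{(k)}_{ij}+(G^{(k)}\mathbf h_k)_i\,G_{kk}\,(\mathbf h_k^*G^{(k)})_j$, and then to identify $(G^{(k)}\mathbf h_k)_i=-G_{ik}/G_{kk}$ and $(\mathbf h_k^*G^{(k)})_j=-G_{kj}/G_{kk}$ via \eqref{onesided}.
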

For a proof we refer to, e.g.,~\cite{EKYY1}.

\begin{lem}\label{cauchy interlacing} There is a constant $C$ such that, for any $z \in \C^+$, $i\in\llbracket 1,N\rrbracket$, we have
\begin{align}\label{m minus m^a}
 |m(z)-m^{(i)}(z)|\le \frac{C}{N\eta}\,.
\end{align}
\end{lem}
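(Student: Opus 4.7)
\medskip

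The name of the lemma points directly to the approach: exploit Cauchy interlacing. Since $H^{(i)}$ is obtained from $H$ by removing the $i$-th row and column, the ordered eigenvalues $(\mu_\alpha)_{\alpha=1}^{N}$ of $H$ and $(\mu_\alpha^{(i)})_{\alpha=1}^{N-1}$ of $H^{(i)}$ satisfy the interlacing relations $\mu_1 \ge \mu_1^{(i)} \ge \mu_2 \ge \mu_2^{(i)} \ge \cdots \ge \mu_{N-1}^{(i)} \ge \mu_N$. Consequently, if we denote the (non-normalized) counting functions $N_H(x) \deq \#\{\alpha : \mu_\alpha \le x\}$ and $N_{H^{(i)}}(x) \deq \#\{\alpha : \mu_\alpha^{(i)} \le x\}$, interlacing gives the pointwise estimate $|N_H(x) - N_{H^{(i)}}(x)| \le 1$ for all $x \in \R$.

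Next, note that by the definition of $m$ and $m^{(i)}$ in the excerpt, both are normalized by $1/N$, so
\[
m(z) - m^{(i)}(z) = \frac{1}{N}\int_{\R}\frac{\dd(N_H - N_{H^{(i)}})(x)}{x-z}.
\]
I would then integrate by parts. Since $1/(x-z) \to 0$ as $|x|\to\infty$ and $N_H - N_{H^{(i)}}$ is bounded, the boundary terms vanish and one obtains
\[
m(z) - m^{(i)}(z) = \frac{1}{N}\int_{\R}\frac{N_H(x) - N_{H^{(i)}}(x)}{(x-z)^2}\,\dd x.
\]
Combining the interlacing bound $|N_H - N_{H^{(i)}}| \le 1$ with the elementary integral $\int_{\R} \dd x / ((x-E)^2 + \eta^2) = \pi/\eta$, where $z = E + \ii \eta$, yields $|m(z) - m^{(i)}(z)| \le \pi/(N\eta)$, which is the claim with $C = \pi$.

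There is essentially no obstacle here; the only minor point to check is that the $1/N$ (rather than $1/(N-1)$) normalization in the definition of $m^{(i)}$ is consistent with the spectral representation $m^{(i)}(z) = \frac{1}{N}\sum_{\alpha=1}^{N-1}(\mu_\alpha^{(i)}-z)^{-1}$, which it is directly from the definition. As a sanity check, an alternative route via resolvent identities also works: writing
\[
m(z) - m^{(i)}(z) = \frac{1}{N}G_{ii}(z) + \frac{1}{N}\sum_{j\neq i}\bigl(G_{jj}(z) - G_{jj}^{(i)}(z)\bigr),
\]
applying \eqref{basic resolvent} to get $G_{jj} - G_{jj}^{(i)} = G_{ji}G_{ij}/G_{ii}$, and using the Ward identity \eqref{ward} together with the trivial bound $|G_{ii}|\le 1/\eta$ gives the same estimate (with a slightly worse constant), confirming the result.
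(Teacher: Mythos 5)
Your proof is correct and takes essentially the same approach the paper invokes: the paper simply states that the lemma follows from the Cauchy interlacing property and refers to the Erd\H{o}s survey for details, and your counting-function/integration-by-parts argument (with the pointwise bound $|N_H - N_{H^{(i)}}| \le 1$ from interlacing and the elementary integral $\int_\R \dd x/((x-E)^2+\eta^2)=\pi/\eta$) is precisely the standard proof found there. Your secondary resolvent-identity sanity check is also sound, though it gives a worse constant as you note.
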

 The lemma follows from Cauchy's interlacing property of eigenvalues of $H$ and its minor $H^{(i)}$. For a detailed proof we refer to~\cite{E}. For $\T\subset\llbracket 1,N\rrbracket$, with, say, $|\T|\le 10$, we obtain $|m-m^{(\T)}|\le \frac{C}{N\eta}$.
\subsection{Large deviation estimates}
We collect here some useful large deviation estimates for random variables with slowly decaying moments.
\begin{lem}\label{lemma.LDE}
 Let $(a_i)$ and $(b_i)$ be centered and independent complex random variables with variance~$\sigma^2$ and having subexponential decay
\begin{equation}
 \mathbb{P}\left(|a_i|\ge x\sigma\right)\le C_0\,\mathrm{e}^{-x^{1/\theta}}\,,\qquad\mathbb{P}\left(|b_i|\ge x\sigma\right)\le C_0\,\mathrm{e}^{-x^{1/\theta}}\,,
\end{equation}
for some positive constants $C_0$ and $\theta>1$. For $i,j\in\llbracket 1,N\rrbracket$, let $A_i\in\C$ and $B_{ij}\in\C$. Then there exists a constant $c_0$, depending only on $\theta$ and $C_0$, such that for $1<\xi\le 10\log\log N$ and $\varphi_N=(\log N)^{c_0}$ the following estimates hold.
\begin{itemize}
 \item[(1)]

\begin{align}
\mathbb{P}\left( \left|\sum_{i=1}^N A_ia_i   \right|\ge (\varphi_N)^{\xi}\sigma\left(\sum_{i=1}^N|A_i|^2\right)^{1/2}\right)&\le \mathrm{e}^{-(\log N)^{\xi}}\,,\label{LDE1} \\[1mm]
\mathbb{P}\left(\left|\sum_{i=1}^N\overline{a}_i B_{ii}a_i-\sum_{i=1}^N \sigma^2 B_{ii}\right|\ge (\varphi_N)^{\xi}\sigma^2\left(\sum_{i=1}^N|B_{ii}|^2\right)^{1/2}\right)&\le \mathrm{e}^{-(\log N)^{\xi}}\,,\label{LDE2}\\[1mm]
\mathbb{P}\left(\left|\sum_{i\not=j}^N\overline{a}_i B_{ij}a_j\right|\ge(\varphi_N)^{2\xi}\sigma^2\left(\sum_{i\not=j}|B_{ij}|^2  \right)^{1/2}\right)&\le\mathrm{e}^{-(\log N)^{\xi}}\label{LDE3}\,,
\end{align}
for $N$ sufficiently large;
\item[(2)]
\begin{align}\label{LDE4}
 \mathbb{P}\left(\left|\sum_{i,j} \overline{a}_i B_{ij}b_j\right|\ge(\varphi_N)^{2\xi}\sigma^2\left(\sum_{i,j}|B_{ij}|^2\right)^{1/2}\right)\le\mathrm{e}^{-(\log N)^{\xi}}\,,
\end{align}
for $N$ sufficiently large.
\end{itemize}

\end{lem}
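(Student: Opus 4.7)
The plan is to apply the moment method: for each estimate, I would bound a high moment $\E|S|^p$ of the relevant sum $S$, then invoke Markov's inequality $\mathbb{P}(|S|\ge t)\le t^{-p}\E|S|^p$, and finally optimize in $p$. The key analytic input from the subexponential decay hypothesis is the moment bound
\begin{equation*}
\E|a_i|^p \le C\,\sigma^p (\theta p)^{\theta p}, \qquad (p\ge 1),
\end{equation*}
obtained by integrating the tail estimate and recognising a Gamma integral. The target probability $e^{-(\log N)^\xi}$ will come from choosing $p$ as a suitable power of $\log N$, and the constant $c_0$ in $\varphi_N=(\log N)^{c_0}$ will be picked large enough (depending only on $\theta$ and $C_0$) so that $(\varphi_N)^{\xi p}$ dominates the factorial-type factor $(\theta p)^{\theta p}$.

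For the linear estimate \eqref{LDE1}, I would apply Rosenthal's inequality for sums of independent centred random variables,
\begin{equation*}
\E\Bigl|\sum_i A_i a_i\Bigr|^p \le C_p\Bigl[\Bigl(\sum_i|A_i|^2\sigma^2\Bigr)^{p/2}+\sum_i|A_i|^p\,\E|a_i|^p\Bigr],
\end{equation*}
with an explicit constant $C_p$ of order $p^p$. Using $\sum_i|A_i|^p\le(\sum_i|A_i|^2)^{p/2}$ for $p\ge 2$ and the moment bound above yields $\E|S|^p\le C(\theta p)^{\theta p}\sigma^p(\sum_i|A_i|^2)^{p/2}$. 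For the diagonal quadratic form \eqref{LDE2}, I would rewrite the expression as $\sum_i B_{ii}(|a_i|^2-\sigma^2)$, a sum of centred independent variables; since $|a_i|^2-\sigma^2$ inherits subexponential decay (with parameter $2\theta$), exactly the same Rosenthal argument applies.

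The genuinely bilinear bounds \eqref{LDE3} and \eqref{LDE4} are the main obstacle, since the summands are no longer independent. My plan is to handle \eqref{LDE4} first by conditioning on $(b_j)$: given these, $\sum_{i,j}\overline{a}_i B_{ij}b_j=\sum_i \overline{a}_i(\sum_j B_{ij}b_j)$ is a linear sum in the independent $(a_i)$, so \eqref{LDE1} yields the bound with $\sum_i|\sum_j B_{ij}b_j|^2$ in place of $\sum_{ij}|B_{ij}|^2$; a second application of \eqref{LDE2}--\eqref{LDE3} to the inner quadratic form in $(b_j)$ completes the estimate. For the genuine diagonal-free quadratic form \eqref{LDE3}, I would invoke the decoupling inequality of de la Pe\~na--Gin\'e to reduce $\sum_{i\ne j}\overline{a}_iB_{ij}a_j$ to $\sum_{i,j}\overline{a}_i B_{ij}a'_j$ with an independent copy $(a'_j)$, thereby reducing to the case \eqref{LDE4} just treated (with a factor $(\varphi_N)^\xi$ absorbed into the extra step).

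Finally, I would choose $p=\lfloor(\log N)^\xi\rfloor$. Then $\log\E|S|^p-p\log\bigl((\varphi_N)^\xi\sigma A\bigr)\le \theta p\log(\theta p)-\xi p\log\varphi_N+O(1)=\bigl(\theta\xi-\xi c_0+o(1)\bigr)(\log N)^\xi\log\log N$, and choosing $c_0>\theta$ renders this $\le -(\log N)^\xi$ for $N$ large. The technical core is thus the trade-off between the loss $(\theta p)^{\theta p}$ from the fat subexponential moments and the gain $(\varphi_N)^{\xi p}$ from the threshold; I expect the careful constant chasing in this optimisation, together with the decoupling step in \eqref{LDE3}, to be the part requiring the most care, while the use of Rosenthal's inequality is by now routine.
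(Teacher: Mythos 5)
The paper does not prove this lemma---it simply cites~\cite{EYY1}---so there is no in-paper argument to compare against. Your plan (subexponential-tail moment bounds, Rosenthal for the linear and diagonal sums, conditioning plus decoupling for the bilinear ones, then Markov with $p\sim(\log N)^{\xi}$) is a coherent and genuinely different route from the direct combinatorial high-moment expansion used in~\cite{EYY1}. The bookkeeping is sound: taking $c_0$ a bit larger than $\theta$ does make $(\varphi_N)^{\xi p}$ dominate both $(\theta p)^{\theta p}$ and the $p^{O(p)}$ Rosenthal constant once $p\sim(\log N)^{\xi}$, and your reduction in \eqref{LDE2} to a sum of independent centred variables $B_{ii}(|a_i|^2-\sigma^2)$ with tail parameter $2\theta$ is correct.

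There is, however, a genuine circularity in the bilinear estimates. You propose to prove \eqref{LDE4} by conditioning on $(b_j)$ and invoking \eqref{LDE1}, then controlling the resulting random norm $\sum_i\bigl|\sum_j B_{ij}b_j\bigr|^2=\sum_{j,k}\bigl(\sum_i B_{ij}\overline{B_{ik}}\bigr)b_j\overline{b_k}$ by ``\eqref{LDE2}--\eqref{LDE3}''; but \eqref{LDE3} is then deduced from \eqref{LDE4} by decoupling, so as written neither bilinear bound has a self-contained proof. The circle must be broken, and it can be: the inner kernel $C=B^{*}B$ is positive semidefinite, so after diagonalizing $C=\sum_{\alpha}\lambda_{\alpha}u_{\alpha}u_{\alpha}^{*}$ with $\lambda_{\alpha}\ge 0$ one has $\sum_{j,k}C_{jk}b_j\overline{b_k}=\sum_{\alpha}\lambda_{\alpha}|\langle u_{\alpha},b\rangle|^2$, and Jensen's inequality gives
\begin{equation*}
\E\Bigl[\sum_{\alpha}\lambda_{\alpha}|\langle u_{\alpha},b\rangle|^2\Bigr]^p\le(\Tr C)^{p-1}\sum_{\alpha}\lambda_{\alpha}\,\E|\langle u_{\alpha},b\rangle|^{2p}\,,
\end{equation*}
with each $\E|\langle u_{\alpha},b\rangle|^{2p}$ bounded by the same Rosenthal estimate you used for \eqref{LDE1} (since $\|u_{\alpha}\|_2=1$). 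This yields $\E(\cdot)^p\lesssim(\sigma^2\Tr C)^p(\theta p)^{O(\theta p)}$ and closes \eqref{LDE4}, hence \eqref{LDE3} via decoupling, without the forward reference. Alternatively, one can estimate the $2p$-th moments of the bilinear and quadratic forms directly by expanding and counting partitions, as in~\cite{EYY1}, which avoids the issue entirely.
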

For a proof we refer to~\cite{EYY1}. We now choose $C$ in~\eqref{eq.xi} such that $C\ge c_0$.

Finally, we point out the difference between the random variables $(w_{ij})$ and $(v_i)$: From~\eqref{eq.C0}, we obtain
\begin{align}\label{bound on wij}
 |w_{ij}|\le \frac{(\varphi_N)^{\xi}}{\sqrt{N}}\,,
\end{align}
with $(\xi,\nu)$-high probability, whereas $v_i\in[-1,1]$, almost surely.

\section{Proof of Theorem \ref{thm:main}} \label{mfc and hat mfc}

In this section, we outline the proof of Theorem~\ref{thm:main}. We first fix the diagonal random entries $(v_i)$ and consider~$\widehat \mu_{fc}$, the deformed semicircle measure with fixed $(v_i)$. The main tools we use in the proof are Lemma \ref{mfc estimate}, where we obtain a linear approximation of~$m_{fc}$, and Lemma \ref{hat bound}, which estimates the difference between $m_{fc}$ and~$\widehat m_{fc}$, the latter being the Stieltjes transform of~$\widehat \mu_{fc}$. Using Proposition~\ref{prop:mu_k} that estimates the eigenvalue locations in terms of $\widehat m_{fc}$, we prove Theorem \ref{thm:main}.

\subsection{Definition of $\Omega_V$}
In this subsection we define an event $\Omega_V$, on which the random variables $(v_i)$ exhibit ``typical'' behavior. For this purpose we need some more notation: Denote by $\fb$ the constant
\begin{align} \label{fb}
\fb \deq \frac{1}{2} - \frac{1}{\b +1} = \frac{\b -1}{2(\b + 1)} = \frac{\b}{\b +1} - \frac{1}{2}\,,
\end{align}
which only depends on $\b$. Fix some small $\epsilon > 0$ satisfying
\begin{align}\label{epsilon condition}
\epsilon < \left( 10 + \frac{\b +1}{\b -1} \right) \fb\,,
\end{align}
and define the domain, $\caD_{\epsilon}$, of the spectral parameter $z$ by 
\begin{align} \label{domain}
\caD_{\epsilon} \deq \{ z = E + \ii \eta\in\C^+\, :\, -3 - \lambda \leq E \leq 3 + \lambda, \; N^{-1/2 - \epsilon} \leq \eta \leq N^{-1/(\b+1) + \epsilon} \}\,.
\end{align}
Using spectral perturbation theory, we find that the following a priori bound
\begin{align} \label{mu_k a priori}
|\mu_k| \leq \| H \| \leq \| W \| + \lambda \| V \| \leq 2+\lambda+(\varphi_N)^{c\xi}N^{-2/3}\,,\quad\qquad (k\in\llbracket 1,N\rrbracket)\,,
\end{align}
holds with high probability; see, e.g., Theorem~2.1. in~\cite{EYY}.
 
Further, we define $N$-dependent constants $\kappa_0$ and $\eta_0$ by
\begin{align}\label{definition of kappa0}
\kappa_0 \deq N^{-1/(\b+1)}, \qquad\quad \eta_0 \deq \frac{N^{-\epsilon}}{\sqrt N}\,.
\end{align}
In the following, typical choices for $z \equiv L_+ - \kappa + \ii \eta$ will be such that $\kappa$ and $\eta$ satisfy $\kappa \lesssim \kappa_0$ and $\eta \geq \eta_0$. 

We are now prepared to give a definition of the ``good'' event $\Omega_V$:

\begin{defn} \label{v assumptions}
Let $n_0 > 10$ be a fixed positive integer independent of $N$. We define $\Omega_V$ to be the event on which the following conditions hold for any $k \in \llbracket 1, n_0 -1 \rrbracket$:

\begin{enumerate}
\item The $k$-th largest random variable $v_k$ satisfies, for all $j\in\llbracket 1,N\rrbracket$ with $j \neq k$,
\begin{align}\label{eq4.3}
N^{-\epsilon} \kappa_0 < |v_j - v_k| < (\log N) \kappa_0\,.
\end{align}
In addition, for $k=1$, we have
\begin{align}\label{eq4.4}
N^{-\epsilon} \kappa_0 < |1 - v_1| < (\log N) \kappa_0\,.
\end{align}

\item There exists a constant $\mathfrak{c} <1$ independent of $N$ such that, for any $z \in \caD_{\epsilon}$ satisfying
\begin{align} \label{assumption near v_k}
\min_{i \in \llbracket 1, N \rrbracket} |\re (z + m_{fc}(z)) - \lambda v_i| = |\re (z + m_{fc}(z)) - \lambda v_k|\,,
\end{align}
we have
\begin{align} \label{assumption_CLT_2}
\frac{1}{N} \sum_{i}^{(k)} \frac{1}{|\lambda v_i - z - m_{fc}(z)|^2} <\mathfrak{c} < 1\,.
\end{align}
We remark that, together with~\eqref{eq4.3} and~\eqref{eq4.4}, \eqref{assumption near v_k} implies
\begin{align}
|\re (z + m_{fc}(z)) - \lambda v_i| > \frac{N^{-\epsilon} \kappa_0}{2}\,,
\end{align}
for all $i \neq k$.

\item There exists a constant $C>0$ such that, for any $z \in \caD_{\epsilon}$, we have
\begin{align} \label{assumption_CLT_1}
\left| \frac{1}{N} \sum_{i=1}^N \frac{1}{\lambda v_i - z - m_{fc}(z)} - \int \frac{\dd \mu(v)}{\lambda v - z - m_{fc}(z)} \right| \leq \frac{C N^{3\epsilon /2}}{\sqrt N}\,.
\end{align}
\end{enumerate}
\end{defn}
\noindent In Appendix~A we show that 
\begin{align} \label{Omega_v}
\p (\Omega_V) \geq 1 - C (\log N)^{1 + 2\b} N^{-\epsilon}, 
\end{align}
thus $(\Omega_V)^c$ is indeed a rare event.

\subsection{Definition of $\widehat m_{fc}$}
Let~$\widehat \mu$ be the empirical measure defined by
\begin{align} \label{def hat mu}
\widehat \mu \deq \frac{1}{N} \sum_{i=1}^N \delta_{\lambda v_i}\,.
\end{align}
We define a random measure $\widehat\mu_{fc}$ by setting $\widehat \mu_{fc} \deq \widehat \mu \boxplus \mu_{sc}$, i.e.,~$\widehat\mu_{fc}$ is the additive free convolution of the empirical measure $\widehat \mu$ and the semicircular measure~$\mu_{sc}$. As in the case of $m_{fc}$, the Stieltjes transform~$\widehat m_{fc}$ of the measure~$\widehat \mu_{fc}$ is a solution to the equation
\begin{align} \label{eq:hat mfc}
\widehat m_{fc}(z) = \frac{1}{N} \sum_{i=1}^N \frac{1}{\lambda v_i - z - \widehat m_{fc}(z)}\,,\qquad \im\widehat m_{fc}(z)\ge 0\,,\qquad\qquad (z\in\C^+)\,,
\end{align}
and we obtain~$\widehat\mu_{fc}$ trough the Stieltjes inversion formula from $\widehat m_{fc}(z)$, c.f.,~\eqref{stieltjes inversion}.

Recall that we assume that $v_1 > v_2 > \ldots > v_N$. Assuming that $\Omega_V$ holds, i.e., $(v_i)$ are fixed and satisfy the conditions in Definition~\ref{v assumptions}, we are going to show that $m_{fc}(z)$ is a good approximation of $\widehat m_{fc}(z)$ for~$z$ in some subset of~$\caD_\epsilon$.

\subsection{Properties of $m_{fc}$ and $\widehat m_{fc}$}
Recall the definitions of $m_{fc}$ and $\widehat m_{fc}$. Let
\begin{align}\label{definition of R2 without hat}
R_2 (z) \deq \int \frac{\dd \mu(v)}{|\lambda v - z - m_{fc}(z)|^2}, \qquad \widehat R_2 (z) \deq \frac{1}{N} \sum_{i=1}^N \frac{1}{|\lambda v_i - z - \widehat m_{fc}(z)|^2}\,,\quad\qquad (z\in\C^+) \,.
\end{align}
Since
$$
\im m_{fc}(z) = \int \frac{\im z + \im m_{fc}(z)}{|\lambda v - z - m_{fc}(z)|^2}\, \dd \mu(v)\,,
$$
we have that
$$
R_2 (z) = \frac{\im m_{fc}(z)}{\im z + \im m_{fc}(z)} < 1\,,\qquad\qquad (z\in\C^+)\,.
$$
Similarly, we also find that $\widehat R_2 (z) < 1$.

The following lemma shows that $m_{fc}$ is approximately a linear function near the spectral edge.

\begin{lem} \label{mfc estimate}
Let $z = L_+ - \kappa + \ii \eta \in \caD_{\epsilon}$. Then,
\begin{align}
z + m_{fc}(z) = \lambda - \frac{\lambda^2}{\lambda^2 - \lambda_+^2} (L_+ - z) + \caO \left( (\log N) (\kappa + \eta)^{\min \{ \b, 2 \} } \right)\,.
\end{align}
Similarly, if $z, z' \in \caD_{\epsilon}$, then
\begin{align}
m_{fc}(z) - m_{fc}(z') = \frac{\lambda_+^2}{\lambda^2 - \lambda_+^2} (z-z') + \caO \left( (\log N)^2 (N^{-1/(\b+1)})^{\min \{ \b-1, 1 \} } |z-z'|  \right)\,. 
\end{align}
\end{lem}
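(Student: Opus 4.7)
The plan is to exploit the Pastur relation~\eqref{the functional equation} together with the boundary identity $L_+ + m_{fc}(L_+) = \lambda$ from Lemma~\ref{general case - large lambda}(2). Introduce the auxiliary function $\Delta(z) \deq z + m_{fc}(z) - \lambda$; by Lemma~\ref{general case - large lambda} we have $\Delta(L_+) = 0$, and I would first record, as an a priori input obtained from known continuity of $m_{fc}$ (see~\cite{B}) and the boundedness of $m_{fc}$ on compact subsets of the closed upper half plane, that $|\Delta(z)| \lesssim \log N \cdot (\kappa + \eta)$ uniformly for $z = L_+ - \kappa + \ii \eta \in \caD_\epsilon$. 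Rewriting~\eqref{the functional equation} in terms of $\Delta$ gives the scalar equation
\begin{equation}\label{eq:Delta eq}
\lambda + \Delta - z \;=\; f(\Delta)\,, \qquad f(\Delta) \deq \int \frac{\dd\mu(v)}{\lambda(v-1) - \Delta}\,.
\end{equation}
Evaluating at $\Delta = 0$ recovers $f(0) = -\tau_+/\lambda = \lambda - L_+$, and a direct differentiation gives $f'(0) = \lambda^{-2}\int (1-v)^{-2}\dd\mu(v) = \lambda_+^2/\lambda^2$.

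Next, I would expand $f(\Delta) - f(0) - f'(0)\Delta$ as
\begin{equation}\label{eq:error int}
f(\Delta) - f(0) - f'(0)\Delta \;=\; \int \frac{\Delta^2 \, \dd\mu(v)}{\bigl(\lambda(v-1)-\Delta\bigr)\bigl(\lambda(v-1)\bigr)^2}\,,
\end{equation}
and bound the integrand using the near-edge behaviour $\mu(v) \sim (1-v)^\b$. The key geometric estimate is $|\lambda(v-1) - \Delta| \geq c(|\re\Delta + \lambda(1-v)| + |\im\Delta|) \gtrsim (\kappa + \eta + (1-v))$, which follows from the a priori bound together with $\im m_{fc} \geq 0$ and $\eta \geq N^{-1/2-\epsilon}$. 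Setting $u = 1-v$ and splitting the $u$-integral at $u = |\Delta|$ then gives
\begin{equation*}
|f(\Delta) - f(0) - f'(0)\Delta| \;\lesssim\; |\Delta|^2 \int_0^2 \frac{u^{\b-2}}{\, |\Delta| + u\,}\dd u \;\lesssim\; (\log N)\,|\Delta|^{\min\{\b,2\}}\,,
\end{equation*}
the $\log$ factor only being active at $\b = 2$. Substituting this into~\eqref{eq:Delta eq} and solving for $\Delta$ yields
\begin{equation*}
\Delta(z) \;=\; -\frac{\lambda^2}{\lambda^2-\lambda_+^2}(L_+ - z) + \caO\!\bigl((\log N)(\kappa+\eta)^{\min\{\b,2\}}\bigr)\,,
\end{equation*}
which is exactly the first claim. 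A short bootstrap can be used to upgrade the crude a priori bound $|\Delta| \lesssim \log N \cdot (\kappa+\eta)$ into $|\Delta| \lesssim \kappa + \eta$, so that the error estimate is self-consistent.

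For the second assertion I would differentiate the identity~\eqref{eq:Delta eq} with respect to $z$ to obtain $\Delta'(z) = 1/(1 - f'(\Delta))$, and then use the bound $|f'(\Delta) - f'(0)| \lesssim (\log N)(\kappa+\eta)^{\min\{\b,2\}-1}$ (derived by the same splitting argument applied to $f'(\Delta) - f'(0) = 2\int \Delta \,\dd\mu / \bigl[(\lambda(v-1))^2 (\lambda(v-1)-\Delta)\bigr] + \caO(\Delta^2)$-type expansion) to write
\begin{equation*}
\Delta'(z) \;=\; \frac{\lambda^2}{\lambda^2-\lambda_+^2} + \caO\!\bigl((\log N)(N^{-1/(\b+1)})^{\min\{\b-1,1\}}\bigr)\,.
\end{equation*}
Integrating along a segment from $z'$ to $z$ inside $\caD_\epsilon$ gives $\Delta(z) - \Delta(z') = \frac{\lambda^2}{\lambda^2-\lambda_+^2}(z-z') + \caO\bigl((\log N)^2 (N^{-1/(\b+1)})^{\min\{\b-1,1\}} |z-z'|\bigr)$, and since $m_{fc}(z) - m_{fc}(z') = \Delta(z) - \Delta(z') - (z-z')$, collecting the coefficient of $(z-z')$ produces $\lambda_+^2/(\lambda^2 - \lambda_+^2)$, which is the second claim.

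The main obstacle is the error estimate~\eqref{eq:error int}: for $1 < \b < 2$ the naive second-order Taylor remainder $f''(0)$ diverges because $\int (1-v)^{\b-3} \dd\mu(v) = \infty$, so one cannot simply Taylor expand to second order. The careful splitting of the integral at $u \sim |\Delta|$ (together with the lower bound on $|\lambda(v-1) - \Delta|$ coming from the imaginary parts) is what produces the sharp exponent $\min\{\b,2\}$ rather than a crude linear remainder. Once this splitting is in hand, everything else is routine algebra using the identity $L_+ + m_{fc}(L_+) = \lambda$.
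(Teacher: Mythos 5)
Your proposal is, up to a cosmetic change of variables, the paper's own argument: passing to $\Delta(z)=z+m_{fc}(z)-\lambda$ and writing the Pastur relation as $\lambda+\Delta-z=f(\Delta)$ is the same self-consistent relation that the paper derives in the form $m_{fc}(z)-m_{fc}(L_+)=\frac{T(z)}{1-T(z)}(z-L_+)$, with $f'(0)=\lambda_+^2/\lambda^2$ playing the role of $T(L_+)$. The values $f(0)=\lambda-L_+$, $f'(0)=\lambda_+^2/\lambda^2$, the algebraic identity~\eqref{eq:error int}, and your observation that a second-order Taylor remainder fails for $1<\b<2$ are all correct.

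The gap is the ``key geometric estimate'' $|\lambda(v-1)-\Delta|\gtrsim\kappa+\eta+(1-v)$, which is false. Writing $u=1-v$, the claim reads $|\lambda u+\re\Delta|+\im\Delta\gtrsim\kappa+\eta+u$. For $\re z<L_+$ one has $\re\Delta<0$ with $|\re\Delta|\sim\kappa$, so at the resonance $u_0:=-\re\Delta/\lambda\sim\kappa$ the first term vanishes and only $\im\Delta=\eta+\im m_{fc}(z)$ survives. But near the upper edge $\im m_{fc}(z)\sim\eta+\kappa^\b$ by Lemma~\ref{im mfc bound}, so $\im\Delta\sim\eta+\kappa^\b$, which for $\b>1$ and $\eta\ll\kappa$ is $\ll\kappa\sim\kappa+\eta+u_0$: the pointwise lower bound fails on a neighbourhood of $u_0$. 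Consequently the integral you actually estimate, $\int_0^2 u^{\b-2}/(|\Delta|+u)\,\dd u$, silently underestimates the true integral $\int_0^2 u^{\b-2}/|\lambda u+\Delta|\,\dd u$. This is also why your remark that the $\log N$ is ``only active at $\b=2$'' is wrong: for all $1<\b\le2$ the resonance window of width $\sim\im\Delta$ around $u_0$ contributes $\int\dd u/|\lambda u+\Delta|\sim\log(u_0/\im\Delta)\sim\log N$, and the lemma's error indeed carries $\log N$ for every $\b$.

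The paper avoids the pointwise bound altogether. After splitting at the set $B^c$ (roughly $u\lesssim|\re\Delta|$), it applies there the partial-fraction identity
\begin{align*}
\frac{1}{(\lambda v-\tau)(\lambda v-\lambda)^2}=\frac{1}{\tau-\lambda}\left[\frac{1}{(\lambda v-\tau)(\lambda v-\lambda)}-\frac{1}{(\lambda v-\lambda)^2}\right]\,,
\end{align*}
with $\tau=z+m_{fc}(z)$, so $\tau-\lambda=\Delta$. This trades the double pole at the resonance for a simple one; combined with $|\lambda v-\lambda|\le2|\lambda-\tau|$ on $B^c$ and $\int_{B^c}\dd v/|\lambda v-\tau|\lesssim\log N$, it produces precisely~\eqref{estimate T1} and~\eqref{estimate T2}. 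To repair your proof you need either this decomposition or an explicit measure-theoretic treatment of the resonance window. The second claim, via $\Delta'(z)=1/(1-f'(\Delta))$, meets the same obstruction amplified, since $f'(\Delta)-f'(0)$ involves $|\lambda u+\Delta|^{-2}$ and the resonance contribution $\sim u_0^{\b-1}/\im\Delta$ is no longer controlled by the stated bound; the paper's ``analogous'' argument instead repeats the $T$-type self-consistent equation symmetrically in $(z,z')$ rather than differentiating.
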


\begin{proof}
We only prove the first part of the lemma; the second part is proved analogously. Since
$L_+ + m_{fc}(L_+) = \lambda$, see Lemma~\ref{general case - large lambda},
we can write
\begin{align} \begin{split}
m_{fc}(z) - m_{fc}(L_+) &= \int \frac{\dd \mu (v)}{\lambda v - z - m_{fc}(z)} - \int \frac{\dd \mu (v)}{\lambda v - L_+ - m_{fc}(L_+)} \\
&= \int \frac{m_{fc}(z) - m_{fc}(L_+) + (z-L_+)}{(\lambda v - z - m_{fc}(z))(\lambda v - \lambda)} \dd \mu (v)\,.
\end{split} \end{align}
Setting
\begin{align} \label{definition T}
T(z) \deq \int \frac{\dd \mu (v)}{(\lambda v - z - m_{fc}(z))(\lambda v - \lambda)}\,,
\end{align}
we find
\begin{align}\label{definition of T1}
|T(z)| \leq \left( \int \frac{\dd \mu (v)}{|\lambda v - z - m_{fc}(z)|^2} \right)^{1/2} \left( \int \frac{\dd \mu (v)}{|\lambda v - \lambda|^2} \right)^{1/2} \leq \sqrt{R_2(z)} \frac{\lambda_+}{\lambda} < \frac{\lambda_+}{\lambda} < 1\,.
\end{align}
Hence, for $z \in \caD_{\epsilon}$, we have
\begin{align}
m_{fc}(z) - m_{fc}(L_+) = \frac{T(z)}{1-T(z)} (z - L_+)\,,
\end{align}
which shows that
\begin{align} \label{Lipschitz estimate}
z + m_{fc}(z) = \lambda - \frac{1}{1-T(z)} (L_+ - z)\,.
\end{align}
We thus obtain from~\eqref{definition of T1} and~\eqref{Lipschitz estimate} that
$$
|z + m_{fc}(z) - \lambda| \leq \frac{\lambda}{\lambda - \lambda_+} |L_+ -z|\,.
$$

We now estimate the difference  $T(z)-{\lambda^2_+}/{\lambda^2}\,$: Let $\tau \deq z + m_{fc}(z)$. We have
\begin{align} \label{eq:estimate T}
T(z) - \frac{\lambda_+^2}{\lambda^2} = \int \frac{\dd \mu (v)}{(\lambda v - \tau)(\lambda v - \lambda)} - \int \frac{\dd \mu (v)}{(\lambda v - \lambda)^2} = (\tau - \lambda) \int \frac{\dd \mu (v)}{(\lambda v - \tau)(\lambda v - \lambda)^2}.
\end{align}
In order to find an upper bound on the integral on the very right side, we consider the following cases:
\begin{itemize}
\item[(1)] When $\b \geq 2$, we have
\begin{align}
\left| \int \frac{\dd \mu (v)}{(\lambda v - \tau)(\lambda v - \lambda)^2} \right| \leq C \int_{-1}^1 \frac{\dd v}{|\lambda v - \tau|} \leq C \log N\,.
\end{align}

\item[(2)] When $\b < 2$, define a set $B \subset [-1, 1]$ by 
$$
B \deq \{ v \in [-1, 1] : \lambda v < -\lambda + 2 \, \re \tau \}\,,
$$
and $B^c\equiv [-1, 1] \backslash B$. Estimating the integral in \eqref{eq:estimate T} on $B$ we find
\begin{align}
\left| \int_{B} \frac{\dd \mu (v)}{(\lambda v - \tau)(\lambda v - \lambda)^2} \right| \leq C \int_{B} \frac{\dd \mu (v)}{|\lambda v - \lambda|^3} \leq C |\lambda - \tau|^{\b-2}\,,
\end{align}
where we have used that, for $v \in B$,
$$
|\lambda v - \tau| > |\re \tau - \lambda v| > \frac{1}{2} (\lambda - \lambda v)\,.
$$
On the set $B^c$, we have
\begin{align}\label{estimate T1}
\left| \int_{B^c} \frac{\dd \mu (v)}{(\lambda v - \tau)(\lambda v - \lambda)} \right| \leq C \int_{B^c} \frac{|\lambda - \lambda v|^{\b-1}}{|\lambda v - \tau|} \dd v \leq C |\lambda - \tau|^{\b -1} \log N\,, 
\end{align}
where we have used that, for $v \in B^c$,
$$
|\lambda - \lambda v| \leq 2 (\lambda - \re \tau) \leq 2 |\lambda - \tau|\,.
$$
We also have
\begin{align}\label{estimate T2}
\left| \int_{B^c} \frac{\dd \mu (v)}{(\lambda v - \lambda)^2} \right| \leq C \int_{B^c} |\lambda v - \lambda|^{\b -2} \dd v \leq C |\lambda - \tau|^{\b -1}\,.
\end{align}
Thus, we obtain from~\eqref{eq:estimate T}, ~\eqref{estimate T1} and~\eqref{estimate T2} that
\begin{align}
\left| \int \frac{\dd \mu (v)}{(\lambda v - \tau)(\lambda v - \lambda)^2} \right| \leq C |\lambda - \tau|^{\b-2} \log N\,.
\end{align}
\end{itemize}
Since $T(z)$ is continuous and $\caD_\epsilon$ is compact, we can choose the constants uniform in $z$. We thus have proved that
\begin{align}
T(z) = \frac{\lambda_+^2}{\lambda^2} + \caO\left((\log N)|L_+ - z|^{\min \{ \b-1, 1 \} } \right)\,,
\end{align}
which, combined with \eqref{Lipschitz estimate}, proves the desired lemma.
\end{proof}

\begin{rem} \label{rem:kappa}
Choosing in Lemma \ref{mfc estimate} $z=z_k$, where $z_k \deq L_+ - \kappa_k + \ii \eta \in \caD_{\epsilon}$ with 
$$
\kappa_k = \frac{\lambda^2 - \lambda_+^2}{\lambda} (1-v_k)\,,
$$
 we obtain
\begin{align}
z_k + m_{fc}(z_k) = \lambda v_k + \frac{\lambda^2}{\lambda^2 - \lambda_+^2} \eta + \caO\left((\log N) N^{-\min \{ \b, 2 \} / (\b+1) + 2 \epsilon} \right)\,.
\end{align}
\end{rem}

To estimate the difference $|\widehat m_{fc} - m_{fc}|$, we consider the following subset of $\caD_{\epsilon}$.
\begin{defn}
Let $A\deq\llbracket n_0,N\rrbracket$. We define the domain $\caD_{\epsilon}'$ of the spectral parameter $z$ as
\begin{align}\label{a index assumption}
 \caD_{\epsilon}'=\left\{ z\in\caD_{\epsilon}\,:\, |\lambda v_a-z-{m}_{fc}(z)| > \frac{1}{2} N^{-1 / (\b + 1)-\epsilon},\,\forall a\in A\right\}\,.
\end{align}
\end{defn}
Eventually, we are going to show that $\mu_k + \ii \eta_0 \in \caD_{\epsilon}'$, $k\in\llbracket 1,n_0-1\rrbracket$, with high probability on $\Omega_V$; see Remark~\ref{rem:mu_k}. We now prove an a priori bound on the difference $|\widehat m_{fc} - m_{fc}|$ on $\caD_{\epsilon}'$.

\begin{lem} \label{hat bound}
For any $z \in \caD_{\epsilon}'$, we have on $\Omega_V$ that
\begin{align} \label{eq:hat bound}
|\widehat m_{fc} (z)-m_{fc}(z) | \leq \frac{N^{2\epsilon}}{\sqrt N}\,.
\end{align}
\end{lem}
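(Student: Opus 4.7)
The plan is to reduce the lemma to a quantitative stability estimate for a self-consistent equation, and then to close that estimate by a continuity bootstrap in the spectral parameter. Subtracting the defining identity~\eqref{eq:hat mfc} for $\widehat m_{fc}$ from the exact identity $m_{fc}(z)=\int\dd\mu(v)/(\lambda v-z-m_{fc}(z))$, and using~\eqref{assumption_CLT_1} to replace the integral by the empirical average, one obtains via $(a-\Delta)^{-1}-a^{-1}=\Delta/[a(a-\Delta)]$ the master equation
\begin{equation*}
\Delta(z)\bigl[1-K(z)\bigr]=-E_{0}(z),\qquad K(z)\deq\frac{1}{N}\sum_{i=1}^N\frac{1}{(\lambda v_{i}-z-m_{fc}(z))(\lambda v_{i}-z-\widehat m_{fc}(z))},
\end{equation*}
with $\Delta(z)\deq\widehat m_{fc}(z)-m_{fc}(z)$ and $|E_{0}(z)|\le CN^{3\epsilon/2-1/2}$. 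The target $|\Delta|\le N^{2\epsilon-1/2}$ will follow once we prove the quantitative stability bound $|1-K(z)|\gtrsim N^{-\epsilon/2}$ uniformly for $z\in\caD_{\epsilon}'$.

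I would establish the stability bound by a bootstrap in $\eta=\im z$. At the top edge of $\caD_{\epsilon}$, where $\eta=N^{-1/(\b+1)+\epsilon}$, the bound $|\Delta(z)|\le N^{2\epsilon-1/2}$ is trivial from the defining equations. Decreasing $\eta$ monotonically and assuming inductively $|\Delta|\le N^{2\epsilon-1/2}$, this bound combined with~\eqref{eq4.3} and the linearization of Lemma~\ref{mfc estimate} (for the top indices) and with the defining inequality of $\caD_{\epsilon}'$ (for the bulk indices $i\in A$) forces $|\lambda v_{i}-z-\widehat m_{fc}|\ge\tfrac12|\lambda v_{i}-z-m_{fc}|$ for all $i$. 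Let $k^{*}$ be the minimizing index in~\eqref{assumption near v_k} and split $K=K^{(k^{*})}+K_{k^{*}}$. Cauchy--Schwarz together with~\eqref{assumption_CLT_2} controls the bulk contribution by $|K^{(k^{*})}|\le\sqrt{\mathfrak{c}}(1+o(1))<1$. When $k^{*}\in A$, the lower bound $|\lambda v_{k^{*}}-z-m_{fc}|\ge\tfrac12 N^{-1/(\b+1)-\epsilon}$ built into $\caD_{\epsilon}'$ makes $|K_{k^{*}}|=\caO(N^{(1-\b)/(\b+1)+2\epsilon})=o(1)$ for $\epsilon$ small enough (here one crucially uses $\b>1$), so that $|1-K|\ge1-\sqrt{\mathfrak{c}}-o(1)$ at once.

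The main obstacle is the complementary regime $k^{*}\in\llbracket 1,n_{0}-1\rrbracket$, where the separation~\eqref{eq4.3} yields only distance $\gtrsim N^{-\epsilon}\kappa_{0}$ from $\lambda v_{k^{*}}$ to the next-nearest atom, so that the naive bound $|K_{k^{*}}|=\caO(N^{2\epsilon})$ is much larger than~$1$ and~destroys the preceding argument. To recover I would exploit the imaginary-part structure of $K$: the $k^{*}$-contribution to $\im K$ is, up to a sign controlled by $\re(\lambda v_{k^{*}}-z-m_{fc})$, the quantity
\begin{equation*}
\frac{\re(\lambda v_{k^{*}}-z-m_{fc})(\eta+\im\widehat m_{fc})+\re(\lambda v_{k^{*}}-z-\widehat m_{fc})(\eta+\im m_{fc})}{N|\lambda v_{k^{*}}-z-m_{fc}|^{2}|\lambda v_{k^{*}}-z-\widehat m_{fc}|^{2}},
\end{equation*}
while the $K^{(k^{*})}$-contribution to $\im(1-K)=-\im K$ is strictly smaller by~\eqref{assumption_CLT_2}. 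Hence $1-K$ cannot be simultaneously small in its real and imaginary parts, and a geometric case analysis on the sign and size of $\re(\lambda v_{k^{*}}-z-m_{fc})$ (using Lemma~\ref{mfc estimate} to locate $\re(z+m_{fc})$ relative to $\lambda v_{k^{*}\pm1}$) produces the quantitative margin $|1-K|\gtrsim N^{-\epsilon/2}$ needed to close the bootstrap. Carrying out this case analysis while keeping track of sub-leading errors coming from Lemma~\ref{mfc estimate} and from the a priori bound $|\Delta|\le N^{2\epsilon-1/2}$, and in particular handling the degenerate sub-case when $\re(\lambda v_{k^{*}}-z-m_{fc})$ is itself very small, is the technically most delicate part of the proof.
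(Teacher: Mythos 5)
Your master equation $\Delta(1-K)=-E_0$ and your Cauchy--Schwarz bound $|K^{(k^*)}|\le\sqrt{\mathfrak c}\cdot\sqrt{\widehat R_2}<1$ are exactly what the paper uses for the bulk indices, and your handling of the (in fact vacuous) case $k^*\in A$ is correct. However, the case you correctly identify as the ``main obstacle'', namely $k^*\in\llbracket 1,n_0-1\rrbracket$, is precisely the one the paper has to handle, and your proposed resolution --- a geometric case analysis of $\im K$ versus $\re K$ --- is not carried out and, I believe, would not close cleanly, because there is no fixed sign for $\im K_{k^*}$ or for $\im K^{(k^*)}$ once the $v_i$ straddle $\re(z+m_{fc})$.

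The ingredient you are missing is simple and eliminates the need for both the sign analysis and the $\eta$-bootstrap. Do not try to bound $|K_{k^*}|$ or $|1-K|$; bound the \emph{product} $|K_{k^*}\Delta|$. Writing $a\deq\lambda v_{k^*}-z-m_{fc}$ and $b\deq\lambda v_{k^*}-z-\widehat m_{fc}$, one has $\Delta=a-b$, so $\max(|a|,|b|)\ge|\Delta|/2$; and both $|a|,|b|\ge\eta$ since $\im a,\im b\le-\eta$. Hence $|ab|\ge\tfrac12|\Delta|\eta$ and
\begin{equation*}
|K_{k^*}\Delta|=\frac{|\Delta|}{N|ab|}\le\frac{2}{N\eta}\le 2N^{\epsilon-1/2}\,,
\end{equation*}
uniformly on $\caD_\epsilon'$ and \emph{without any assumption on the size of} $|\Delta|$. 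Combined with $|K^{(k^*)}\Delta|\le\tfrac{1+\mathfrak c}{2}|\Delta|$ and $|E_0|\le CN^{3\epsilon/2-1/2}$, this closes the estimate pointwise in $z$, so no continuity argument is needed. The paper packages exactly this observation as a proof by contradiction (assume $|\Delta|>N^{2\epsilon-1/2}$, then one of $|a|,|b|$ exceeds $N^{2\epsilon-1/2}/2$ while the other exceeds $\eta$, so $|K_{k^*}|\le CN^{-\epsilon}$ and $|K|<1-\delta$, forcing $|\Delta|\le C|E_0|$, a contradiction), but the content is the same. Your bootstrap base case, incidentally, does hold as stated --- at $\eta\sim N^{-1/(\b+1)+\epsilon}$ the term $K_{k^*}$ is already $o(1)$ since $\b>1$ --- but it is not ``trivial from the defining equations'' and becomes unnecessary once the product bound above is available. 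Also note that your intermediate claim that the inductive bound forces $|\lambda v_i-z-\widehat m_{fc}|\ge\tfrac12|\lambda v_i-z-m_{fc}|$ ``for all $i$'' fails for $i=k^*$, since there $|\lambda v_{k^*}-z-m_{fc}|$ may be as small as $\eta\sim N^{-1/2-\epsilon}$, which is below the inductive bound $N^{2\epsilon-1/2}$ on $|\Delta|$.
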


\begin{proof}
Assume that $\Omega_V$ holds. For given $z \in \caD_{\epsilon}'$, choose $k \in \llbracket 1, n_0 -1 \rrbracket$ satisfying \eqref{assumption near v_k}, i.e., among $(\lambda v_i)$,  $\lambda v_k$ is closest to $\re (z + m_{fc}(z))$. Suppose that \eqref{eq:hat bound} does not hold. Using the definitions of $m_{fc}$ and $\widehat{m}_{fc}$, we obtain the following self-consistent equation for $(\widehat m_{fc} - m_{fc})$:
\begin{align} \begin{split} \label{mfc difference}
&\widehat m_{fc} - m_{fc} = \frac{1}{N} \sum_{i=1}^N \left( \frac{1}{\lambda v_i - z - \widehat m_{fc}} - m_{fc} \right) \\
&= \frac{1}{N} \sum_{i=1}^N \left( \frac{1}{\lambda v_i - z - \widehat m_{fc}} - \frac{1}{\lambda v_i - z - m_{fc}} \right) + \left( \frac{1}{N} \sum_{i=1}^N \frac{1}{\lambda v_i - z - m_{fc}} - \int \frac{\dd \mu(v)}{\lambda v - z - m_{fc}} \right) \\
&= \frac{1}{N} \sum_{i=1}^N \frac{\widehat m_{fc} - m_{fc}}{(\lambda v_i - z - \widehat m_{fc})(\lambda v_i - z - m_{fc})} + \left( \frac{1}{N} \sum_{i=1}^N \frac{1}{\lambda v_i - z - m_{fc}} - \int \frac{\dd \mu(v)}{\lambda v - z - m_{fc}} \right).
\end{split} \end{align}
From the assumption \eqref{assumption_CLT_1}, we find that the second term in the right hand side of \eqref{mfc difference} is bounded by $N^{-1/2 + 3\epsilon /2}$.

Next, we estimate the first term in the right hand side of \eqref{mfc difference}. For $i=k$, we have
$$
|\lambda v_k - z - \widehat m_{fc}| + |\lambda v_k - z - m_{fc}| \geq |\widehat m_{fc} (z) - m_{fc} (z)| > \frac{N^{2\epsilon}}{\sqrt N}\,,
$$
which shows that either
$$
|\lambda v_k - z - \widehat m_{fc}| \geq \frac{N^{2\epsilon}}{2\sqrt N} \qquad \text{or} \qquad |\lambda v_k - z - m_{fc}| \geq \frac{N^{2\epsilon}}{2\sqrt N}\,.
$$
In either case, by considering the imaginary part, we find
$$
\frac{1}{N} \left| \frac{1}{(\lambda v_k - z - \widehat m_{fc})(\lambda v_k - z - m_{fc})} \right| \leq \frac{1}{N} \frac{2\sqrt N}{N^{2\epsilon}} \frac{1}{\eta} \leq C N^{-\epsilon}\,,\qquad\quad (z\in\caD_{\epsilon}')\,.
$$
For the other terms, we use
\begin{align}
\frac{1}{N} \left| \sum_{i}^{(k)} \frac{1}{(\lambda v_i - z - \widehat m_{fc})(\lambda v_i - z - m_{fc})} \right| \leq \frac{1}{2N} \sum_{i}^{(k)} \left( \frac{1}{|\lambda v_i - z - \widehat m_{fc}|^2} + \frac{1}{|\lambda v_i - z - m_{fc}|^2} \right)\,.
\end{align}
From \eqref{eq:hat mfc}, we have that
\begin{align}\label{R2 hat less than 1}
\frac{1}{N} \sum_{i=1}^N \frac{1}{|\lambda v_i - z - \widehat m_{fc}|^2} = \frac{\im \widehat m_{fc}}{\eta + \im \widehat m_{fc}} < 1\,.
\end{align}
We also assume in the assumption \eqref{assumption_CLT_2} that
\begin{align}
\frac{1}{N} \sum_{i}^{(k)} \frac{1}{|\lambda v_i - z - m_{fc}|^2} < \mathfrak{c} < 1\,,
\end{align}
for some constant $c$. Thus, we get
\begin{align}
|\widehat m_{fc} (z) - m_{fc} (z)| < \frac{1+\mathfrak{c}}{2} |\widehat m_{fc} (z) - m_{fc} (z)| + N^{-1/2 + 3\epsilon /2}\,,\quad\qquad (z\in\caD_{\epsilon}')\,,
\end{align}
which implies that
$$
|\widehat m_{fc} (z) - m_{fc} (z)| < C N^{-1/2 + 3\epsilon /2}\,,\qquad\quad (z\in\caD_{\epsilon}')\,.
$$
Since this contradicts the assumption that \eqref{eq:hat bound} does not hold, it proves the desired lemma.
\end{proof}

\subsection{Proof of Theorem \ref{thm:main}}
The main result of this subsection is Proposition~\ref{prop:main}, which will imply Theorem~\ref{thm:main}. The key ingredient of the proof of Proposition~\ref{prop:main} is an implicit equation for the largest eigenvalues $(\mu_k)$ of $H$. This equation, Equation~\eqref{implicit equation} in Proposition~\ref{prop:mu_k} below, involves the Stieltjes transform $\widehat m_{fc}$ and the random variables $(v_k)$. Using the information on $\widehat m_{fc}$ gathered in the previous subsections, we can solve Equation~\eqref{implicit equation} approximately for $(\mu_k)$. The proof of Proposition~\ref{prop:mu_k} is postponed to Section~\ref{location}.

\begin{prop} \label{prop:mu_k}
Let $n_0>10$ be a fixed integer independent of $N$. Let $\mu_k$ be the $k$-th largest eigenvalue of $H$, $k\in\llbracket 1, n_0-1\rrbracket$. Suppose that the assumptions in Theorem~\ref{thm:main} hold. Then, the following holds with $(\xi-2,\nu)$-high probability on $\Omega_V$:
\begin{align}\label{implicit equation}
\mu_k + \re \widehat m_{fc} (\mu_k + \ii \eta_0) = \lambda v_k + \caO (N^{-1/2 + 3\epsilon})\,,
\end{align}
where $\eta_0$ is defined in~\eqref{definition of kappa0}.

\end{prop}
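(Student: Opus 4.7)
The plan is to extract a secular equation for $\mu_k$ from the Schur complement formula on the real axis, and then transfer it to the spectral parameter $z_0 \deq \mu_k + \ii\eta_0$, where $\widehat m_{fc}$ is evaluated. From~\eqref{schur},
$$G_{kk}(z)^{-1}=\lambda v_k+w_{kk}-z-Z_k(z),\qquad Z_k(z)\deq\sum_{i,j}^{(k)}w_{ki}G^{(k)}_{ij}(z)w_{jk}.$$
Almost surely (since the distribution of $W$ is non-atomic), Cauchy interlacing places $\mu_k$ strictly between two consecutive eigenvalues of $H^{(k)}$, so $Z_k(\mu_k)\in\R$ is finite; moreover $\mu_k$ is a pole of $G_{kk}$ with residue $-|u_k(k)|^2\neq 0$, whence $G_{kk}(\mu_k)^{-1}=0$ and
$$\mu_k=\lambda v_k+w_{kk}-Z_k(\mu_k).$$

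\textbf{Transferring to $z_0$.} Writing $Z_k$ in the eigenbasis $(\mu^{(k)}_\alpha, u^{(k)}_\alpha)$ of $H^{(k)}$ and using $Z_k(\mu_k)\in\R$, one computes
$$Z_k(\mu_k)-\re Z_k(z_0)=\sum_{\alpha}\bigl|\langle w_k,u^{(k)}_\alpha\rangle\bigr|^2\frac{\eta_0^2}{r_\alpha(r_\alpha^2+\eta_0^2)},\qquad r_\alpha\deq \mu^{(k)}_\alpha-\mu_k,$$
where $w_k\deq(w_{ki})_{i\neq k}$. Eigenvalue rigidity at the extreme edge (a consequence of Proposition~\ref{prop:step 2_4} combined with the convex edge behavior $\mu_{fc}(L_+-\kappa)\sim\kappa^\b$ from Lemma~\ref{general case - large lambda}) yields $|r_\alpha|\gtrsim N^{-1/(\b+1)}$ for the $\caO(1)$ eigenvalues of $H^{(k)}$ nearest $\mu_k$ and $|r_\alpha|\sim 1$ otherwise. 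Since $w_k$ is independent of $H^{(k)}$, \eqref{LDE1} gives $|\langle w_k,u^{(k)}_\alpha\rangle|^2\leq \varphi_N^{2\xi}/N$ with high probability; splitting the sum into edge and bulk contributions then produces a bound $\lesssim \varphi_N^{2\xi}\eta_0^2 N^{3/(\b+1)-1}\leq N^{-1/2-\epsilon}$ for $\b>1$.

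\textbf{Approximating $Z_k(z_0)$ by $\widehat m_{fc}(z_0)$.} Conditioning on $H^{(k)}$ and applying the large deviation bounds~\eqref{LDE2}--\eqref{LDE3} to the independent centered entries $(w_{ki})_{i\neq k}$, combined with the Ward identity~\eqref{ward} and the edge bound on $\im m^{(k)}(z_0)$ (from Lemma~\ref{mfc estimate}), gives $|Z_k(z_0)-m^{(k)}(z_0)|\leq N^{-1/2+\epsilon}$ with high probability. Lemma~\ref{cauchy interlacing} yields $|m^{(k)}(z_0)-m(z_0)|\leq C/(N\eta_0)\leq N^{-1/2+\epsilon}$, and Proposition~\ref{prop:step 2_4} supplies $|m(z_0)-\widehat m_{fc}(z_0)|\leq N^{-1/2+\epsilon}$ on $\caD_\epsilon$. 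Combining with $|w_{kk}|\leq N^{-1/2+\epsilon}$ from~\eqref{bound on wij}, substituting into the secular equation and taking real parts produces $\mu_k+\re\widehat m_{fc}(z_0)=\lambda v_k+\caO(N^{-1/2+3\epsilon})$, which is~\eqref{implicit equation}.

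\textbf{Main obstacle.} The principal technical input is the edge rigidity $|r_\alpha|\gtrsim N^{-1/(\b+1)}$ used in the transfer step. Since $\eta_0=N^{-1/2-\epsilon}\ll N^{-1/(\b+1)}$ for $\b>1$, the typical edge spacing lies well \emph{above} the resolvent scale on which Proposition~\ref{prop:step 2_4} operates; the bridge between the two scales is the identity $\im m(E+\ii\eta)=N^{-1}\sum_\alpha\eta/((\mu_\alpha-E)^2+\eta^2)$, through which control of $\im m$ at scale $\eta_0$ pins down individual edge eigenvalues at the much larger scale $N^{-1/(\b+1)}$. Establishing this rigidity with the requisite $(\xi-2,\nu)$-high probability, uniformly over $k\in\llbracket 1, n_0-1\rrbracket$, is where the bulk of the work in Section~\ref{location} lies.
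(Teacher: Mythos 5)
Your secular-equation approach is plausible in outline, but it rests on an input you do not establish: the edge rigidity $|r_\alpha|\gtrsim N^{-1/(\b+1)}$ for the eigenvalues of $H^{(k)}$ near $\mu_k$. You call this ``a consequence of Proposition~\ref{prop:step 2_4} combined with the convex edge behavior,'' but that deduction is not routine — Proposition~\ref{prop:step 2_4} only controls $\im m$ at resolution $\eta_0=N^{-1/2-\epsilon}$, and converting that into a spacing lower bound at the much coarser scale $N^{-1/(\b+1)}$ is precisely the content of Lemmas~\ref{lem:step 5}, \ref{lem:step 6_1} and~\ref{lem:step 6_2} in the paper. Those lemmas in turn rely on Lemmas~\ref{lem:step 3} and~\ref{lem:step 4} (fluctuation averaging, Section~\ref{sec:Zlemma}), which you do not invoke. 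You acknowledge in your last paragraph that this is ``where the bulk of the work lies,'' which is an accurate self-diagnosis: assuming the rigidity as a black box amounts to assuming most of the conclusion you are trying to prove. Two secondary points are also off: (i) the claim that apart from $\caO(1)$ near-edge eigenvalues one has $|r_\alpha|\sim 1$ is false — since $\mu_{fc}(L_+-\kappa)\sim\kappa^\b$, there are order $N\kappa^{\b+1}$ eigenvalues of $H^{(k)}$ at every intermediate scale $\kappa\in[N^{-1/(\b+1)},1]$, and the sum $\sum_\alpha\eta_0^2/(|r_\alpha|(r_\alpha^2+\eta_0^2))$ needs a dyadic decomposition over that density profile (the final bound $o(N^{-1/2})$ still comes out, but not from the stated edge/bulk split); (ii) you cite Lemma~\ref{mfc estimate} for a bound on $\im m^{(k)}(z_0)$, but that lemma concerns the deterministic $m_{fc}$, while the estimate $\im m^{(k)}(z_0)\sim\eta_0$ is Lemma~\ref{lem:step 6_1}.

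For comparison, the paper's proof never descends to the real axis or touches the pole of $G_{kk}$. It stays in $\C^+$: Lemma~\ref{lem:step 5} shows $\im m(E+\ii\eta_0)\sim\eta_0$ away from the points $\widehat z_j$, Lemma~\ref{lem:step 6_2} constructs $\widetilde z_k$ near $\widehat z_k$ with $\im m(\widetilde z_k)\gg\eta_0$, and the spectral identity forces $\im m(\mu_k+\ii\eta_0)\ge(N\eta_0)^{-1}\gg\eta_0$, which together pin $\mu_k$ to within $N^{-1/2+3\epsilon}$ of $\widehat E_k$; interlacing with $H^{(1)}$ then handles $k\ge2$. That argument produces, as a byproduct, exactly the rigidity your transfer step would need as an input, so the paper's ordering of the logic is the one that avoids circularity.
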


\begin{rem} \label{rem:mu_k}
Since $|\lambda v_i - \lambda v_k| \geq N^{-\epsilon} \kappa_0 \gg N^{-1/2 + 3\epsilon}$, for all $i \neq k$, on $\Omega_V$, we obtain from Proposition~\ref{prop:mu_k} that 
$$
|\mu_k + \ii \eta_0 + \re \widehat m_{fc}(\mu_k + \ii \eta_0) - \lambda v_i| \geq |\lambda v_i - \lambda v_k| - |\mu_k + \ii \eta_0 + \re \widehat m_{fc}(\mu_k + \ii \eta_0) - \lambda v_k| \geq \frac{N^{-\epsilon} \kappa_0}{2}\,,
$$
on $\Omega_V$. Hence, we find that $\mu_k + \ii \eta_0 \in \caD_{\epsilon}'$, $k\in\llbracket 1,n_0-1\rrbracket$, with high probability on~$\Omega_V$.
\end{rem}

Combining the tools developed in the previous subsection, we now prove the main result on the eigenvalue locations. 

\begin{prop} \label{prop:main}
Let $n_0>10$ be a fixed integer independent of $N$. Let $\mu_k$ be the $k$-th largest eigenvalue of $H=\lambda V+W $, where $k \in \llbracket 1, n_0 -1 \rrbracket$. Then, there exist constants $C$ and $\nu>0$ such that we have
\begin{align}
\left| \mu_k - \left( L_+ - \frac{\lambda^2 - \lambda_+^2}{\lambda} (1-v_k) \right) \right| \leq C \frac{1}{N^{1/(\b+1)}} \left(\frac{N^{3\epsilon}}{ N^{\fb}}  + \frac{(\log N)^2}{N^{1/(\b+1)}}  \right)\,,
\end{align}
with $(\xi-2,\nu)$-high probability on $\Omega_V$.
\end{prop}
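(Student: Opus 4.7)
The plan is to chain together the three tools assembled in this subsection: the implicit equation for $\mu_k$ from Proposition~\ref{prop:mu_k}, the comparison $\widehat m_{fc}\approx m_{fc}$ from Lemma~\ref{hat bound}, and the linear expansion of $m_{fc}$ near $L_+$ from Lemma~\ref{mfc estimate}. Working on $\Omega_V$ intersected with the $(\xi-2,\nu)$-high-probability event of Proposition~\ref{prop:mu_k} (on which Remark~\ref{rem:mu_k} also guarantees $z_*\deq \mu_k+\ii\eta_0\in\caD_\epsilon'$), I would begin from
\[
\mu_k + \re\widehat m_{fc}(z_*) = \lambda v_k + \caO(N^{-1/2+3\epsilon}),
\]
and apply Lemma~\ref{hat bound} to replace $\widehat m_{fc}(z_*)$ by $m_{fc}(z_*)$ at an additional $\caO(N^{-1/2+2\epsilon})$ cost, obtaining
\[
\mu_k + \re m_{fc}(z_*) = \lambda v_k + \caO(N^{-1/2+3\epsilon}).
\]

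Next, I apply Lemma~\ref{mfc estimate} with $z=z_*$ (permissible since $\caD_\epsilon'\subset\caD_\epsilon$), take real parts and use $\re z_* = \mu_k$ to obtain
\[
\mu_k + \re m_{fc}(z_*) = \lambda - \frac{\lambda^2}{\lambda^2-\lambda_+^2}(L_+-\mu_k) + \caO\!\left((\log N)(\kappa+\eta_0)^{\min\{\b,2\}}\right),
\]
where $\kappa\deq L_+-\mu_k$. Equating the two expressions for $\mu_k+\re m_{fc}(z_*)$ and solving for $L_+-\mu_k$ (using $\lambda-\lambda v_k = \lambda(1-v_k)$ and multiplying through by $(\lambda^2-\lambda_+^2)/\lambda^2$) yields
\[
L_+-\mu_k = \frac{\lambda^2-\lambda_+^2}{\lambda}(1-v_k) + \caO\!\left(N^{-1/2+3\epsilon}\right) + \caO\!\left((\log N)(\kappa+\eta_0)^{\min\{\b,2\}}\right).
\]

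It remains to evaluate the second error via an a priori bound on $\kappa$. The condition~\eqref{eq4.4} defining $\Omega_V$ gives $|1-v_k|\le(\log N)\kappa_0=(\log N)N^{-1/(\b+1)}$; plugging the crude estimate $(\kappa+\eta_0)^{\min\{\b,2\}}\le C$ into the identity above already forces $\kappa\lesssim(\log N)\kappa_0$, and since $\eta_0\ll\kappa_0$ we conclude
\[
(\kappa+\eta_0)^{\min\{\b,2\}}\lesssim(\log N)^{\min\{\b,2\}}N^{-\min\{\b,2\}/(\b+1)}.
\]
For $\b\ge 2$ this matches the $(\log N)^2 N^{-2/(\b+1)}$ term of the claimed bound up to harmless log factors; for $1<\b<2$ one has $\b/(\b+1)>1/2-3\epsilon$ for small $\epsilon$, so this error is already dominated by $N^{-1/2+3\epsilon}$. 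Pulling out the common factor $N^{-1/(\b+1)}$ from both errors recovers the precise form stated in the proposition.

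The one subtle point is this short bootstrap: Lemma~\ref{mfc estimate}'s linearization error is small only once we know $\kappa$ is small, yet smallness of $\kappa$ is itself a consequence of the identity we are trying to prove. The circularity is broken because $\Omega_V$ supplies the a priori smallness of $|1-v_k|$, and the leading-order coefficient $\lambda^2/(\lambda^2-\lambda_+^2)$ is a fixed positive constant precisely under the hypothesis $\lambda>\lambda_+$, which is where that assumption really enters here. All of the genuine technical difficulty has been compressed into Proposition~\ref{prop:mu_k}, whose proof is deferred to Section~\ref{location}; the content of the present proposition is exactly the algebraic inversion above together with careful error bookkeeping across the two regimes $1<\b<2$ and $\b\ge 2$.
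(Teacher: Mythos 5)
Your overall strategy is exactly the paper's: chain Proposition~\ref{prop:mu_k} with Lemma~\ref{hat bound} and Lemma~\ref{mfc estimate} and invert the linear map $z\mapsto z+m_{fc}(z)$ near $L_+$. That part is fine, and your error bookkeeping across the two regimes $1<\b<2$ and $\b\ge 2$ is essentially right. The problem is the short bootstrap you append to control $\kappa=|L_+-\mu_k|$. You write that plugging the crude bound $(\kappa+\eta_0)^{\min\{\b,2\}}\le C$ into
\[
L_+-\mu_k=\tfrac{\lambda^2-\lambda_+^2}{\lambda}(1-v_k)+\caO(N^{-1/2+3\epsilon})+\caO\bigl((\log N)(\kappa+\eta_0)^{\min\{\b,2\}}\bigr)
\]
``already forces $\kappa\lesssim(\log N)\kappa_0$.'' It does not: that crude substitution turns the last term into $\caO(\log N)$, which overwhelms the leading term $\tfrac{\lambda^2-\lambda_+^2}{\lambda}(1-v_k)\sim(\log N)\kappa_0$, so the most it yields is $\kappa\lesssim\log N$. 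Iterating from $\kappa\lesssim\log N$ does not contract either: one needs $(\log N)(\kappa+\eta_0)^{\min\{\b,2\}-1}$ to be small to absorb the self-referential error, which would require an a priori bound $\kappa\lesssim(\log N)^{-1/(\min\{\b,2\}-1)}$ that your argument does not supply.

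The gap is closeable by using a different consequence of the proof of Lemma~\ref{mfc estimate}: that proof actually establishes the exact identity $z+m_{fc}(z)=\lambda-\tfrac{1}{1-T(z)}(L_+-z)$ on all of $\caD_\epsilon$, with $|T(z)|<\lambda_+/\lambda<1$, whence the two-sided bound
\[
\frac{\lambda}{\lambda+\lambda_+}\,|L_+-z|\ \le\ |z+m_{fc}(z)-\lambda|\ \le\ \frac{\lambda}{\lambda-\lambda_+}\,|L_+-z|\,,
\]
with no smallness hypothesis on $\kappa$. Feeding in Proposition~\ref{prop:mu_k}, Lemma~\ref{hat bound} and Remark~\ref{im mfc upper bound} to replace $|z_*+m_{fc}(z_*)-\lambda|$ by $\lambda|1-v_k|+\caO(N^{-1/2+3\epsilon})$, and then the $\Omega_V$ bound $|1-v_k|<(\log N)\kappa_0$ from~\eqref{eq4.4}, gives directly $|L_+-z_*|\lesssim(\log N)\kappa_0$, hence $\kappa\lesssim(\log N)\kappa_0$. (Alternatively one can extract the bound from the proof of Proposition~\ref{prop:mu_k} itself, which locates $\mu_k$ within $N^{-1/2+3\epsilon}$ of $\widehat E_k$ and implicitly places $\widehat E_k$ within $\caO((\log N)\kappa_0)$ of $L_+$ via Remark~\ref{rem:kappa}.) To your credit, the paper's own write-up of this step is also terse — it simply asserts the error is $\caO(\kappa_0^{\min\{\b,2\}}(\log N)^2)$ without flagging the a priori control on $\kappa$ — so you were right to notice the issue; it is the particular bootstrap mechanism you proposed that fails.
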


\begin{proof}[Proof of Theorem \ref{thm:main} and Proposition~\ref{prop:main}]
It suffices to prove Proposition \ref{prop:main}. Let $k\in\llbracket 1,n_0-1\rrbracket$. From Lemma~\ref{hat bound} and Proposition~\ref{prop:mu_k}, we find that, with high probability on~$\Omega_V$,
\begin{align}
\mu_k + \re m_{fc} (\mu_k + \ii \eta_0) = \lambda v_k + \caO (N^{-1/2 + 3\epsilon})\,.
\end{align}
In Lemma~\ref{mfc estimate}, we showed that
\begin{align}
\mu_k + \ii \eta_0 + m_{fc} (\mu_k + \ii \eta_0) = \lambda - \frac{\lambda^2}{\lambda^2 - \lambda_+^2} (L_+ - \mu_k) + \ii C \eta_0 + \caO \left( \kappa_0^{\min \{ \b, 2 \} } (\log N)^2 \right)\,.
\end{align}
Thus, we obtain
\begin{align}
\mu_k + \re m_{fc} (\mu_k + \ii \eta_0) = \lambda - \frac{\lambda^2}{\lambda^2 - \lambda_+^2} (L_+ - \mu_k) + \caO \left( \kappa_0^{\min \{ \b, 2 \} } (\log N)^2 \right)\,.
\end{align}
Therefore, we have with high probability on $\Omega_V$ that
\begin{align}
\mu_k = L_+ - \frac{\lambda^2 - \lambda_+^2}{\lambda} (1-v_k) + \caO \left( \kappa_0^{\min \{ \b, 2 \} } (\log N)^2 \right) + \caO (N^{-1/2 + 3\epsilon})\,,
\end{align}
completing the proof of Proposition~\ref{prop:main}.
\end{proof}

Recalling that $\p (\Omega_V) \geq 1 - C (\log N)^{1 + 2\b} N^{-\epsilon}$, we obtain from Proposition \ref{prop:main} the following corollary. 

\begin{cor} \label{cor:main}
Let $n_0$ be a fixed constant independent of $N$. Let $\mu_k$ be the $k$-th largest eigenvalue of $H=\lambda V+W$, where $1 \leq k < n_0$. Then, there exists a constant $C_1 > 0$ such that for $s\in\R^+$ we have
\begin{align} \begin{split}
&\p \left( N^{1/(\b+1)} \frac{\lambda^2 - \lambda_+^2}{\lambda} (1-v_k) \leq s - C_1 \left( \frac{N^{3\epsilon}}{N^{\fb}} + \frac{ (\log N)^2}{N^{1/(\b+1)}} \right) \right) - C_1 \frac{(\log N)^{1+2\b}}{ N^{\epsilon}} \\
&\quad \leq \p \left( N^{1/(\b+1)} (L_+ - \mu_k) \leq s \right) \\
&\quad \leq \p \left( N^{1/(\b+1)} \frac{\lambda^2 - \lambda_+^2}{\lambda} (1-v_k) \leq s + C_1 \left( \frac{N^{3\epsilon}}{N^{\fb}} + \frac{ (\log N)^2}{N^{1/(\b+1)}} \right) \right) + C_1 \frac{(\log N)^{1+2\b}}{ N^{\epsilon}}\,,
\end{split} \end{align}
for $N$ sufficiently large.
\end{cor}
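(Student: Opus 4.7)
The plan is to read off Corollary~\ref{cor:main} directly from Proposition~\ref{prop:main} combined with the a priori probability bound \eqref{Omega_v} on $\Omega_V$, via a standard two-sided event-inclusion argument. Write $\Delta_N \deq C_1 \bigl( N^{3\epsilon}/N^{\fb} + (\log N)^2/N^{1/(\b+1)} \bigr)$ for the relevant error scale. Proposition~\ref{prop:main}, after multiplying the displayed inequality by the deterministic factor $N^{1/(\b+1)}$, asserts that on $\Omega_V$ the event
\begin{equation*}
\caE \deq \Bigl\{ \bigl| N^{1/(\b+1)}(L_+ - \mu_k) - N^{1/(\b+1)} \tfrac{\lambda^2-\lambda_+^2}{\lambda}(1-v_k) \bigr| \leq \Delta_N \Bigr\}
\end{equation*}
holds with $(\xi-2,\nu)$-high probability, i.e.\ $\p(\Omega_V \cap \caE^c) \leq \e{-\nu(\log N)^{\xi-2}}$ for $N$ large.

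For the upper bound in Corollary~\ref{cor:main}, I would use the inclusion
\begin{equation*}
\{ N^{1/(\b+1)}(L_+ - \mu_k) \leq s \} \cap \Omega_V \cap \caE \;\subset\; \bigl\{ N^{1/(\b+1)} \tfrac{\lambda^2-\lambda_+^2}{\lambda}(1-v_k) \leq s + \Delta_N \bigr\},
\end{equation*}
which is immediate from the definition of $\caE$. Taking probabilities and bounding the complement by $\p(\Omega_V^c) + \p(\Omega_V \cap \caE^c)$, the first term is controlled by \eqref{Omega_v} as $C(\log N)^{1+2\b} N^{-\epsilon}$ and the second is super-polynomially small in $N$, so it is absorbed into the constant $C_1$ after enlarging it. The lower bound follows from the symmetric inclusion with $s$ replaced by $s - \Delta_N$ on the right-hand side: if the good events $\Omega_V \cap \caE$ hold and $N^{1/(\b+1)} \tfrac{\lambda^2-\lambda_+^2}{\lambda}(1-v_k) \leq s - \Delta_N$, then $N^{1/(\b+1)}(L_+ - \mu_k) \leq s$, and again the probability of $(\Omega_V \cap \caE)^c$ is bounded by $C(\log N)^{1+2\b} N^{-\epsilon}$ up to a negligible correction.

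Finally, Theorem~\ref{thm:main} is extracted from Corollary~\ref{cor:main} by choosing $\epsilon>0$ small enough that both error terms vanish as $N\to\infty$ (this is why the constraint \eqref{epsilon condition} was imposed: it ensures $3\epsilon < \fb$, so $N^{3\epsilon-\fb} \to 0$), noting that the $k$-variate version follows by applying the same estimate coordinatewise with a union bound over $k \in \llbracket 1, n_0-1 \rrbracket$, and then invoking the classical order-statistics limit theorem for the i.i.d.\ variables $(1-v_i)$ whose density has the boundary behavior $(1-v)^{\b}$ to identify the limiting joint distribution of $(C_\lambda N^{1/(\b+1)} (1-v_k))_{k=1}^{n_0-1}$. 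The Weibull form \eqref{Weibull} is then the standard Fisher--Tippett--Gnedenko conclusion for a sample minimum of variables whose distribution function behaves like $\tfrac{C_\mu}{\b+1}(1-v)^{\b+1}$ near the right endpoint $v=1$.

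There is no real obstacle here beyond careful bookkeeping: all the analytic work was done in Lemmas~\ref{mfc estimate} and \ref{hat bound} and in Proposition~\ref{prop:mu_k}; the only thing to check is that the $(\xi-2,\nu)$-high probability control is compatible with, and dominated by, the polynomial loss $N^{-\epsilon}$ coming from $\p(\Omega_V^c)$, so that the sandwich bound holds with the single rate $(\log N)^{1+2\b}/N^\epsilon$ claimed.
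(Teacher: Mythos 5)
Your proof is correct and follows exactly the paper's (unstated but clearly indicated) argument: combine Proposition~\ref{prop:main} with the probability bound~\eqref{Omega_v} on $\Omega_V$ via the standard two-sided inclusion of events, absorbing the super-polynomially small $(\xi-2,\nu)$-high-probability failure and $\p(\Omega_V^c)$ into the single polynomial error $C_1(\log N)^{1+2\b}N^{-\epsilon}$. The paper explicitly prefaces the corollary with ``Recalling that $\p(\Omega_V)\geq 1-C(\log N)^{1+2\b}N^{-\epsilon}$, we obtain from Proposition~\ref{prop:main} the following corollary,'' which is precisely the argument you spell out.
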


\begin{rem}
The constants in Proposition~\ref{prop:main} and Corollary~\ref{cor:main} depend only on $\lambda$, the distribution~$\mu$ and the constants~$C_0$ and~$\theta$ in~\eqref{eq.C0}, but are otherwise independent of the detailed structure of the Wigner matrix $W$. 
\end{rem}

\section{Estimates on the Location of the Eigenvalues} \label{location}
In this section, we prove Proposition \ref{prop:mu_k}. Recall the definition of $\eta_0$ in~\eqref{definition of kappa0}. For $k\in\llbracket 1,n_0-1\rrbracket$, let $\widehat E_k\in\R$ be a solution $E=\widehat E_k$ to the equation
\begin{align}\label{definition of hatzk}
 E + \re\widehat m_{fc} (E+\ii\eta_0 )  = \lambda v_k\,,\qquad\quad(E\in\R)\,,
\end{align}
and set $\widehat z_k \deq \widehat E_k+\ii\eta_0$. The existence of such $\widehat E_k$ is easy to see from Lemma \ref{mfc estimate} and Lemma \ref{hat bound}. If there are two or more solutions to~\eqref{definition of hatzk}, we choose $\widehat E_k$ to be the largest one among these solutions. The key observation used in the proof of Proposition~\ref{prop:mu_k} is that $\im m(z)$, the imaginary part of the averaged Green function, has a sharp peak if and only if the imaginary part of
\begin{align}\label{the expression}
 g_{k}(z)\deq  \frac{1}{\lambda v_k - z - \widehat m_{fc} (z)}\,,\qquad\quad( z\in\C^+)\,,
\end{align}
becomes sufficiently large for some $k\in\llbracket 1,n_0-1\rrbracket$. Since $z\mapsto\im g_k(z)$ has a sharp peak near $\widehat z_k$, we can then conclude that $z\mapsto\im m(z)$ also has a peak near $\widehat{z}_k$. From the spectral decomposition
$$
\im m(E + \ii \eta_0) = \frac{1}{N} \sum_{\alpha=1}^N \frac{\eta_0}{(\mu_{\alpha} - E)^2 + \eta_0^2}\,,
$$
we also observe that the positions of the peaks of $\im m(z)$ correspond to the locations of the eigenvalues. This will enable us to estimate the location of the $k$-th largest eigenvalue in terms of $v_k$, yielding a proof of Proposition~\ref{prop:mu_k}.

This section is organized as follows. In Subsection~\ref{Properties of widehat m_{fc} and m}, we establish a local law for $m(z)$ with $z\in\caD_{\epsilon}'$, i.e., for $z$ close to the upper edge; see Proposition~\ref{prop:step 2_4} below. In the Subsections~\ref{aux estimate 1} and~\ref{aux estimate 2} we establish further estimates that will be used in the proof of Proposition~\ref{prop:mu_k}. The estimates in Subsection~\ref{aux estimate 2} are rather straightforward, while the estimates of Subsection~\ref{aux estimate 2} rely on the ``fluctuation average lemma'' whose proof is postponed to Section~\ref{sec:Zlemma}. The proof of Proposition~\ref{prop:mu_k} is then completed in Subsection~\ref{Proof of Proposition 2_4}.

\subsection{Properties of $\widehat m_{fc}$ and $m$}\label{Properties of widehat m_{fc} and m}
In the proof of Proposition~\ref{prop:mu_k}, we will use the following local law as an a priori estimate. Recall the constant $\epsilon>0$ in~\eqref{epsilon condition} and the definition of the domain $\caD_{\epsilon}'$ in~\eqref{a index assumption}.
\begin{prop}{\emph{[Local law near the edge]}} \label{prop:step 2_4}
We have with $(\xi,\nu)$-high probability on $\Omega_V$ that
\begin{align}
|m(z)-\widehat m_{fc}(z)| \leq \frac{N^{2\epsilon}}{\sqrt N}\,,
\end{align}
for all $z \in \caD_{\epsilon}'$.
\end{prop}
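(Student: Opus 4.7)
The plan is to derive an approximate self-consistent equation for $m(z)$ that mirrors the defining equation \eqref{eq:hat mfc} of $\widehat{m}_{fc}$, and then run a stability argument analogous to the one behind Lemma~\ref{hat bound}. Applying the Schur complement formula~\eqref{schur} gives, for each $i$,
\begin{align*}
\frac{1}{G_{ii}(z)} \,=\, \lambda v_i - z + w_{ii} - \sum_{k,l}^{(i)} w_{ik} G_{kl}^{(i)}(z) w_{li} \,=\, \lambda v_i - z - m(z) - (Z_i + \Upsilon_i),
\end{align*}
where $Z_i \deq \sum_{k,l}^{(i)} w_{ik} G_{kl}^{(i)} w_{li} - m^{(i)}$ and $\Upsilon_i \deq m^{(i)} - m - w_{ii}$. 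The LDE bounds~\eqref{LDE2}--\eqref{LDE3}, the Ward identity~\eqref{ward}, the sub-exponential bound~\eqref{bound on wij}, and Lemma~\ref{cauchy interlacing} show that, on $\Omega_V$ and with $(\xi,\nu)$-high probability, $|Z_i| + |\Upsilon_i|$ is bounded by $(\varphi_N)^{c\xi}\sqrt{\im m/(N\eta)} + (N\eta)^{-1}$ simultaneously in $i$ and in $z \in \caD_\epsilon'$. Inverting and averaging, one obtains the perturbed equation
\begin{align*}
m(z) \,=\, \frac{1}{N}\sum_{i=1}^N \frac{1}{\lambda v_i - z - m(z) - \Xi_i(z)},
\end{align*}
with $\Xi_i$ controlled as above.

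Comparing with~\eqref{eq:hat mfc} and setting $\Lambda(z) \deq m(z) - \widehat{m}_{fc}(z)$, the same manipulation as in the proof of Lemma~\ref{hat bound} yields
\begin{align*}
\Lambda(z)\,=\,\Lambda(z)\cdot\frac{1}{N}\sum_{i=1}^N\frac{1}{(\lambda v_i - z - m)(\lambda v_i - z - \widehat{m}_{fc})}\,+\,\mathcal{E}(z),
\end{align*}
where $\mathcal{E}(z)$ collects the LDE errors $\Xi_i$ weighted by the denominators. The stability step now parallels Lemma~\ref{hat bound}: for the "outlier" indices $k\in\llbracket 1,n_0-1\rrbracket$ the factor $1/N$ together with the constraint $\eta \geq N^{-1/2-\epsilon}$ from $\caD_\epsilon$ gives a contribution of size $\caO(N^{-\epsilon})$; for the bulk indices $i \in A$, the restriction in~\eqref{a index assumption} plus assumption~\eqref{assumption_CLT_2} on $\Omega_V$ and~\eqref{R2 hat less than 1} give a contribution at most $(1+\mathfrak{c})/2 < 1$. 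This yields a closed bound $|\Lambda(z)| \leq C\,|\mathcal{E}(z)|$, and a careful accounting of the LDE scales on $\caD_\epsilon'$ produces $|\mathcal{E}(z)| \leq N^{2\epsilon}/\sqrt{N}$.

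The inequality derived above is, however, self-referential: both the LDE estimate for $Z_i$ and the denominator bounds presume a priori control on $m$ and on the $G_{ii}$. I would handle this by the standard continuity bootstrap: at $\eta = N^{-1/(\b+1)+\epsilon}$, the bound $|\Lambda| \leq N^{2\epsilon-1/2}$ follows trivially from Lemma~\ref{cauchy interlacing} and the deterministic a priori estimate~\eqref{mu_k a priori}. Then, for $z \in \caD_\epsilon'$, one decreases $\eta$ along vertical lines in small increments and uses the Lipschitz dependence of $m$ and $\widehat{m}_{fc}$ on $\eta$ to propagate the estimate to the full domain via a standard dichotomy argument (if $|\Lambda|$ is small at one level, the self-consistent inequality forces it to remain below $N^{2\epsilon}/\sqrt{N}$ at a slightly lower one).

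The main obstacle will be the stability step. Unlike in the Wigner or square-root edge case, one does not have a uniform stability bound of the form $|1-R_2|\geq c>0$ at the convex edge, since $R_2(z)\to 1$ as $z\to L_+$. Everything hinges on isolating the finitely many "dangerous" denominators $\lambda v_k - z - \widehat{m}_{fc}$ with $k<n_0$, whose smallness is compensated by the $1/N$ prefactor and by the restriction defining $\caD_\epsilon'$, while using~\eqref{assumption_CLT_2} from $\Omega_V$ together with~\eqref{R2 hat less than 1} to keep the remaining sum strictly bounded away from $1$. The condition~\eqref{assumption near v_k} in the definition of $\Omega_V$ is what allows us to pick the distinguished index $k$ in each regime of $z \in \caD_\epsilon'$ so that the separation estimate~\eqref{eq4.3} actually kicks in.
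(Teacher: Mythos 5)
Your overall strategy is the right one and matches the paper's: derive a self-consistent equation for $m$ via Schur's complement, subtract the defining equation~\eqref{eq:hat mfc} for $\widehat m_{fc}$, single out the one dangerous index $k\in\llbracket 1,n_0-1\rrbracket$ whose denominator can be small (using~\eqref{assumption near v_k} and~\eqref{eq4.3} on $\Omega_V$ and the $1/N$ prefactor), keep the bulk sum bounded away from $1$ via~\eqref{assumption_CLT_2} and~\eqref{R2 hat less than 1}, and then propagate the estimate from large $\eta$ down to $\eta_0$ by a discrete continuity step combined with a gap lemma. The paper does exactly this (Lemmas~\ref{lem:step 2_2}, \ref{lem:step 2_3} and the $N^{-2}$-step bootstrap), and you correctly identified the absence of the usual $|1-R_2|\ge c$ stability and the way the domain restriction~\eqref{a index assumption} rescues it.

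There is, however, a genuine gap in the initialisation of your bootstrap. You claim that at $\eta = N^{-1/(\b+1)+\epsilon}$ the bound $|m-\widehat m_{fc}|\le N^{-1/2+2\epsilon}$ follows ``trivially from Lemma~\ref{cauchy interlacing} and the deterministic a priori estimate~\eqref{mu_k a priori}.'' This cannot be right: Lemma~\ref{cauchy interlacing} controls $|m-m^{(i)}|$, and~\eqref{mu_k a priori} only bounds $\|H\|$; neither says anything about the distance from $m$ to $\widehat m_{fc}$. A local-law statement on the scale $N^{-1/2}$ is never a consequence of interlacing and a rough operator-norm bound alone. The paper supplies a genuine argument for this step: Lemma~\ref{lem:step 2_2} proves the estimate for all $N^{-1/2+\epsilon}\le\eta\le N^{\epsilon}\kappa_0$ by a direct contradiction, using the deterministic lower bound $\im m\ge C\eta$ (which holds once the eigenvalues are in a bounded interval) together with $|m-m^{(i)}|=\caO(1/(N\eta))\ll\eta\le\im m$, valid precisely because $\eta\ge N^{-1/2+\epsilon}$. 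This is where the self-consistency first closes without a priori input; your bootstrap must then start from there, not from a claimed-trivial base case at the top of the domain. The rest of your plan would go through once this lemma is proved.
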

The proof of Proposition~\ref{prop:step 2_4} is the content of the rest of this subsection.

Recall the definitions of $(\widehat z_k)$ in~\eqref{definition of hatzk}. We begin by deriving a basic property of $\widehat m_{fc}(z)$ near $(\widehat z_k)$. Recall the definition of $\eta_0$ in~\eqref{definition of kappa0}.
\begin{lem} \label{lem:step 1}
 For $z = E + \ii \eta_0 \in \caD_{\epsilon}'$, the following hold on $\Omega_V$:
\begin{enumerate}
\item[$(1)$]  if $|z- \widehat z_j| \geq N^{-1/2 + 3\epsilon}$ for all $j \in\llbracket 1,n_0-1\rrbracket$, then there exists a constant $C >1$ such that
$$
C^{-1} \eta_0 \leq \im \widehat m_{fc} (z) \leq C \eta_0\,;
$$
\item[$(2)$] if $z= \widehat z_k$  for some $k \in\llbracket 1,n_0-1\rrbracket$, then there exists a constant $C >1$ such that
$$
C^{-1} N^{-1/2} \leq \im \widehat m_{fc} (z) \leq C N^{-1/2}\,.
$$
\end{enumerate}
\end{lem}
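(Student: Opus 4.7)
The plan is to take imaginary parts of the defining equation~\eqref{eq:hat mfc}, obtaining the identity
\[
\im \widehat m_{fc}(z) = \frac{\eta \, \widehat R_2(z)}{1 - \widehat R_2(z)}\,,\qquad \widehat R_2(z) \deq \frac{1}{N} \sum_{i=1}^N \frac{1}{|\lambda v_i - z - \widehat m_{fc}(z)|^2}\,,
\]
so that both parts of the lemma reduce to two-sided bounds on $\widehat R_2(z)$. Writing $\zeta \deq z + \widehat m_{fc}(z)$, and letting $k$ denote the index singled out by condition~\eqref{assumption near v_k} at this $z$, I would split
\[
\widehat R_2(z) = \frac{1}{N |\lambda v_k - \zeta|^2} + S_k(z)\,,\qquad S_k(z) \deq \frac{1}{N} \sum_i^{(k)} \frac{1}{|\lambda v_i - \zeta|^2}\,,
\]
and handle the two terms separately. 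The central estimate, used in both parts, is that $S_k(z) \leq \mathfrak{c}' < 1$ uniformly on $\Omega_V$ for $z \in \caD_{\epsilon}'$.

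To establish this, the plan is to start from condition~\eqref{assumption_CLT_2}, which gives the analogous bound with $z + m_{fc}$ in place of $\zeta$, and then swap $m_{fc} \to \widehat m_{fc}$ in every denominator. The closest-index property for $k$ yields $|\lambda v_i - \zeta|, |\lambda v_i - z - m_{fc}| \sim |\lambda v_i - \lambda v_k| \geq N^{-\epsilon} \kappa_0$ for $i \neq k$, while Lemma~\ref{hat bound} bounds $|\widehat m_{fc} - m_{fc}|$ by $N^{2\epsilon}/\sqrt N$; summing the per-term error one finds a replacement error of order $N^{3\epsilon - \fb}$, which is $o(1)$ once $\epsilon$ is chosen small enough inside the window~\eqref{epsilon condition}.

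For part~(1), I would next use the Lipschitz estimate of Lemma~\ref{mfc estimate} (lifted to $\widehat m_{fc}$ via Lemma~\ref{hat bound}) to show that $z \mapsto \zeta$ has a nondegenerate slope close to $\lambda^2/(\lambda^2 - \lambda_+^2)$, so that when $k \leq n_0-1$ the hypothesis $|z - \widehat z_k| \geq N^{-1/2 + 3\epsilon}$ produces $|\lambda v_k - \zeta| \gtrsim N^{-1/2 + 3\epsilon}$, while when $k \geq n_0$ the very definition of $\caD_\epsilon'$ gives $|\lambda v_k - \zeta| \gtrsim N^{-1/(\b+1) - \epsilon}$. In either case $(N|\lambda v_k - \zeta|^2)^{-1} = o(1)$, so $\widehat R_2(z) \leq \mathfrak{c}' + o(1) < 1 - c$, giving $\im \widehat m_{fc}(z) \leq C \eta_0$. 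The matching lower bound $\widehat R_2(z) \geq c > 0$ is immediate since $|\lambda v_i - \zeta| \leq C$ uniformly for $z \in \caD_\epsilon$, making $S_k(z) \geq c > 0$ and hence $\im \widehat m_{fc}(z) \geq c' \eta_0$.

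For part~(2), the defining relation $\widehat E_k + \re \widehat m_{fc}(\widehat z_k) = \lambda v_k$ forces $\zeta(\widehat z_k) = \lambda v_k + \ii(y + \eta_0)$ with $y \deq \im \widehat m_{fc}(\widehat z_k)$, so the singular term becomes exactly $1/(N(y+\eta_0)^2)$ and the identity $(y+\eta_0) \widehat R_2 = y$ reduces to the scalar self-consistent equation
\[
y \bigl(1 - S_k(\widehat z_k)\bigr) = \eta_0 S_k(\widehat z_k) + \frac{1}{N(y+\eta_0)}\,.
\]
Using $S_k(\widehat z_k) \leq \mathfrak{c}' < 1$ (which remains valid because $|\re(\widehat z_k + m_{fc}) - \lambda v_k| \leq |\widehat m_{fc} - m_{fc}| \ll N^{-\epsilon} \kappa_0$ keeps $\lambda v_k$ as the closest index), elementary two-sided tests at $y = c_1/\sqrt N$ and $y = C_1/\sqrt N$ then yield $y \sim N^{-1/2}$. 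The main technical obstacle is precisely the $S_k$ replacement step, since denominators $|\lambda v_i - \zeta|$ can be as small as $N^{-\epsilon}\kappa_0$; only the joint smallness of $|\widehat m_{fc} - m_{fc}|$ from Lemma~\ref{hat bound} and the freedom to shrink $\epsilon$ well below $\fb$ makes the total error $o(1)$.
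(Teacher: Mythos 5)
Your proof is correct and follows essentially the same route as the paper's: isolate the near-resonant index $k$ in $\widehat R_2(z)=\im\widehat m_{fc}(z)/(\eta_0+\im\widehat m_{fc}(z))$, control the remaining sum by swapping $\widehat m_{fc}\to m_{fc}$ via Lemma~\ref{hat bound} and invoking~\eqref{assumption_CLT_2}, and for part~(2) reduce to the same scalar quadratic in $\im\widehat m_{fc}(\widehat z_k)$ that the paper obtains in~\eqref{mfc quadratic}. You are slightly more explicit than the paper about the size $N^{3\epsilon-\fb}$ of the replacement error and about the lower bound $\widehat R_2\ge c>0$ needed for $\im\widehat m_{fc}\gtrsim\eta_0$, but the underlying argument is identical.
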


\begin{proof}
Recall that
\begin{align}
\widehat R_2 (z) = \frac{\im \widehat m_{fc}(z)}{\eta_0 + \im \widehat m_{fc}(z)} = \frac{1}{N} \sum_{i=1}^N \frac{1}{|\lambda v_i - z - \widehat m_{fc}(z)|^2}<1\,,\qquad\quad( z\in\C^+)\,,
\end{align}
c.f.,~\eqref{definition of R2 without hat}. For given $z \in \caD_{\epsilon}'$ with $\im z=\eta_0$, choose $k \in \llbracket 1, n_0 -1 \rrbracket$ such that~\eqref{assumption near v_k} is satisfied. In the first case, where $|z- \widehat z_k| \gg N^{-1/2 + 2\epsilon}$, we find from Lemma \ref{mfc estimate} and Lemma \ref{hat bound} that 
\begin{align} \label{eq:step 1_1}
|\lambda v_k - \re(z + \widehat m_{fc}(z))| \gg N^{-1/2 + 2\epsilon}.
\end{align}
Since $z=E+\ii\eta_0$ satisfies \eqref{assumption near v_k}, we also find that 
\begin{align}
\widehat R_2^{(k)}(z) \deq \frac{1}{N} \sum_{i}^{(k)} \frac{1}{|\lambda v_i - z - \widehat m_{fc}(z)|^2} = \frac{1}{N} \sum_{i}^{(k)} \frac{1}{|\lambda v_i - z - m_{fc}(z)|^2} + o(1) < c < 1\,,
\end{align}
for some constant $c$. Thus,
\begin{align}
\widehat R_2(z) = \frac{1}{N} \frac{1}{|\lambda v_k - z - \widehat m_{fc}(z)|^2} + \frac{1}{N} \sum_{i}^{(k)} \frac{1}{|\lambda v_i - z - \widehat m_{fc}(z)|^2} < c' < 1\,,
\end{align}
for some constant $c'$. Recalling that
$$
\im \widehat m_{fc}(z) = \frac{\widehat R_2(z)}{1 - \widehat R_2(z)} \eta_0\,,
$$
statement $(1)$ of the lemma follows.

Next, we consider the second case: $z=\widehat z_k=\widehat E_k+\ii\eta_0$, for some $k\in\llbracket 1,n_0-1\rrbracket$. We have
\begin{align} \label{mfc quadratic}
\im \widehat m_{fc}(\widehat z_k) = \frac{1}{N} \sum_{i=1}^N \frac{\eta_0 + \im \widehat m_{fc}(\widehat z_k)}{|\lambda v_i - \widehat z_k - \widehat m_{fc}(\widehat z_k)|^2} = \frac{1}{N} \frac{1}{\eta_0 + \im \widehat m_{fc}(\widehat z_k)} + \frac{1}{N} \sum_{i}^{(k)} \frac{\eta_0 + \im \widehat m_{fc}(\widehat z_k)}{|\lambda v_i - \widehat z_k - \widehat m_{fc}(\widehat z_k)|^2}\,,
\end{align}
hence
$$
(1 - \widehat R_2^{(k)}(\widehat z_k )) (\im \widehat m_{fc}(\widehat z_k))^2 + (1 - 2\widehat R_2^{(k)}(\widehat z_k )) \eta_0 \,\im \widehat m_{fc}(\widehat z_k) = \frac{1}{N} + \widehat R_2^{(k)}(\widehat z_k) \eta_0^2\,.
$$
Solving the quadratic equation above for $\im\widehat m_{fc}(\widehat z_k)$, we find
$$
C^{-1} N^{-1/2} \leq \im \widehat m_{fc} (\widehat z_k) \leq C N^{-1/2}\,,
$$
completing the proof of the lemma.
\end{proof}

\begin{rem} \label{im mfc upper bound}
For any $z = E + \ii \eta_0 \in \caD_{\epsilon}'$, we have, similarly to \eqref{mfc quadratic}, that
\begin{align}
\im \widehat m_{fc}(z) \leq \frac{1}{N} \frac{1}{\eta_0 + \im \widehat m_{fc}(z)} + \frac{1}{N} \sum_{i}^{(k)} \frac{\eta_0 + \im \widehat m_{fc}(z)}{|\lambda v_i - z - \widehat m_{fc}(z)|^2}\,.
\end{align}
Solving this inequality for $\im \widehat m_{fc}(z)$, we find that $\im \widehat m_{fc} (z) \leq C N^{-1/2}$.
\end{rem}

Recall that by Schur's complement formula we have, for all $i\in\llbracket 1,N\rrbracket$,
$$
G_{ii} = \frac{1}{\lambda v_i + w_{ii} - z - \sum_{s, t}^{(i)} {h_{is} G_{st}^{(i)} h_{ti}}}\,;
$$
see~\eqref{schur}. Define $\E_i$ to be the partial expectation with respect to the $i$-th column/row of $W$ and set
\begin{align} \label{Z_i}
Z_i \deq (\lone-\E_i) \sum_{s, t}^{(i)} {h_{is} G_{st}^{(i)} h_{ti}} = \sum_{s}^{(i)} (|w_{is}|^2 - \frac{1}{N}) G_{ss}^{(i)} + \sum_{s \neq t}^{(i)} w_{is} G_{st}^{(i)} w_{ti}\,.
\end{align}
Using $Z_i$, we can rewrite $G_{ii}$ as
\begin{align}
G_{ii} = \frac{1}{\lambda v_i + w_{ii} - z - m^{(i)} - Z_i}\,.
\end{align}

The following lemma states an a priori bound on $\im m$, the imaginary part of $m=N^{-1}\Tr G$.

\begin{lem} \label{lem:step 2_1}
We have with $(\xi,\nu)$-high probability on $\Omega_V$ that, for all $z = E + \ii \eta_0 \in \caD_{\epsilon}'$, 
\begin{align}
\im m(z) \leq \frac{N^{2\epsilon}}{\sqrt N}\,.
\end{align}
\end{lem}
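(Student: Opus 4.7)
The starting point is the spectral identity $\im m(z) = N^{-1}\sum_i \im G_{ii}(z)$ together with the Schur complement formula, which gives $G_{ii}^{-1} = \lambda v_i + w_{ii} - z - m^{(i)} - Z_i$ and hence
\begin{align*}
\im G_{ii}(z) \;=\; \frac{\eta_0 + \im m^{(i)} + \im Z_i}{|\lambda v_i + w_{ii} - z - m^{(i)} - Z_i|^2}\,.
\end{align*}
I would apply Lemma~\ref{lemma.LDE} combined with the Ward identity~\eqref{ward} applied to the minor $G^{(i)}$ to bound, with $(\xi,\nu)$-high probability, $|Z_i| \lesssim \sqrt{\im m^{(i)}/(N\eta_0)}$, and use $|w_{ii}| \lesssim N^{-1/2}$ together with Lemma~\ref{cauchy interlacing}, which yields $|m - m^{(i)}| \leq C/(N\eta_0) \leq CN^{-1/2+\epsilon}$, so that $m^{(i)}$ may be replaced by $m$ (or by $\widehat m_{fc}$, via Lemma~\ref{hat bound}) up to errors of the desired order.

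Next I would split the index set into the $n_0-1$ ``edge'' indices $\llbracket 1,n_0-1\rrbracket$ and the ``bulk'' indices $A = \llbracket n_0, N\rrbracket$. For $i\in\llbracket 1,n_0-1\rrbracket$ the trivial bound $\im G_{ii} \leq 1/\eta_0$ contributes at most $(n_0-1)/(N\eta_0) \leq CN^{-1/2+\epsilon}$ to $\im m$. For $i\in A$, the definition of $\caD_\epsilon'$ together with Lemma~\ref{hat bound} yields the lower bound $|\lambda v_i - z - \widehat m_{fc}(z)| \gtrsim N^{-1/(\b+1)-\epsilon}$, and since $\b>1$ implies $1/(\b+1) < 1/2$ this lower bound dominates all the error terms $|w_{ii}|,\ |Z_i|,\ |m^{(i)}-\widehat m_{fc}|$ (after the bootstrap discussed below). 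Hence $|G_{ii}^{-1}|$ is comparable to $|\lambda v_i - z - \widehat m_{fc}|$ on $\Omega_V$.

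The heart of the argument is the self-consistent inequality obtained by summing the estimate for $\im G_{ii}$ over $i\in A$, which introduces the factor
\begin{align*}
\widehat R_2(z) \;=\; \frac{1}{N}\sum_i \frac{1}{|\lambda v_i - z - \widehat m_{fc}(z)|^2} \;=\; \frac{\im \widehat m_{fc}(z)}{\eta_0 + \im \widehat m_{fc}(z)}\,,
\end{align*}
and Remark~\ref{im mfc upper bound} gives $\im \widehat m_{fc}(z) \leq CN^{-1/2}$, which yields $1 - \widehat R_2(z) \gtrsim N^{-\epsilon}$ uniformly on $\caD_\epsilon'$. Substituting into the bound for $\im m$ produces an inequality of the form
\begin{align*}
\im m \;\leq\; \widehat R_2(z)\,\bigl(\im m + CN^{-1/2+3\epsilon/2}\bigr) + CN^{-1/2+\epsilon}\,,
\end{align*}
and solving for $\im m$ gives $\im m \leq C N^\epsilon \cdot N^{-1/2+3\epsilon/2} \leq N^{2\epsilon}/\sqrt{N}$.

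The principal obstacle is that the LDE bound on $|Z_i|$ involves $\im m^{(i)}$, which is essentially the quantity we are trying to control, so a bootstrap is required to close the loop. I would handle this by a continuity argument in $\eta$: the bound is easily verified for $\eta \sim 1$ (where $\im m \leq C$ is immediate from the $\mu_{fc}$--based approximation), and is then propagated down to $\eta = \eta_0$ by continuously decreasing $\eta$, at each scale using the bound just proved at a slightly larger scale as the a priori input needed to run the Schur--LDE argument above. The stability of this scheme along the deformation is guaranteed by the uniform dominance $N^{-1/(\b+1)-\epsilon} \gg N^{-1/2+\epsilon}$ valid throughout $\caD_\epsilon'$ for $\b>1$, together with the high-probability bounds on the diagonal entries $w_{ii}$.
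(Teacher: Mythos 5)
Your proposal differs from the paper's proof in a way that introduces a genuine gap. The paper proves this lemma by contradiction: it assumes $\im m(z) > N^{-1/2+5\epsilon/3}$, which immediately gives $\eta_0 \ll \im m$, and then exploits the Schur identity
\begin{align*}
\frac{1}{|\lambda v_i + w_{ii} - z - m^{(i)} - Z_i|^2} = \frac{\im G_{ii}}{\eta_0 + \im m^{(i)} + \im Z_i}
\end{align*}
to evaluate $\frac{1}{N}\sum_i |\lambda v_i + w_{ii} - z - m^{(i)} - Z_i|^{-2} = 1 + o(1)$ \emph{without ever needing a lower bound on the denominator}. Under the contradiction hypothesis, $|m - \widehat m_{fc}| \ge |\im m - \im\widehat m_{fc}| > C N^{-1/2+5\epsilon/3}$ is large, and the self-consistent equation for $m - \widehat m_{fc}$ becomes a contraction $|m - \widehat m_{fc}| \le (T_m + o(1))|m-\widehat m_{fc}|$ with $T_m < 1$, which is impossible. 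The point is that the denominator sum is controlled by an \emph{algebraic identity} plus the \emph{lower} bound on $\im m$, never by comparing it to $\lambda v_i - z - \widehat m_{fc}$.

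Your argument instead proceeds directly: for $i \in A$ you want to show $|G_{ii}^{-1}| \sim |\lambda v_i - z - \widehat m_{fc}|$, which requires $|w_{ii} - m^{(i)} + \widehat m_{fc} - Z_i| \ll N^{-1/(\b+1)-\epsilon}$, and in particular $|m^{(i)} - \widehat m_{fc}| \le |m^{(i)} - m| + |m - \widehat m_{fc}|$ to be small. The interlacing estimate handles $|m^{(i)}-m|$, but $|m - \widehat m_{fc}|$ is \emph{not} controlled at this stage; it is precisely what Proposition~\ref{prop:step 2_4} establishes, and that proposition relies on the present lemma. Your bootstrap in $\eta$ only propagates the bound on $\im m$, which gives no information about $\re m - \re \widehat m_{fc}$, so it does not close this loop. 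You list $|m^{(i)} - \widehat m_{fc}|$ among the error terms to be dominated ``after the bootstrap'', but the bootstrap you describe addresses only the $\im m^{(i)}$ appearing in the LDE bound on $Z_i$, not the real-part comparison. This is the step that fails. A secondary issue: solving your self-consistent inequality with $1 - \widehat R_2 \gtrsim N^{-\epsilon}$ and an error input of $N^{-1/2+3\epsilon/2}$ gives $\im m \lesssim N^{-1/2+5\epsilon/2}$, which is worse than the claimed $N^{-1/2+2\epsilon}$; the paper's contradiction threshold $N^{-1/2+5\epsilon/3}$ is what lands cleanly inside $N^{-1/2+2\epsilon}$.
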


\begin{proof}
Fix $\eta = \eta_0$. For given $z=E+\ii\eta_0 \in \caD_{\epsilon}'$, choose $k \in \llbracket 1, n_0 -1 \rrbracket$ such that~\eqref{assumption near v_k} is satisfied. Suppose that $\im m(z) > N^{-1/2 + 5\epsilon /3}$. Reasoning as in the proof of Lemma \ref{hat bound}, we find the following equation for $(m -\widehat m_{fc})$:
\begin{align} \begin{split} \label{m difference}
m - \widehat m_{fc} &= \frac{1}{N} \sum_{i=1}^N \left( \frac{1}{\lambda v_i + w_{ii} - z - m^{(i)} - Z_i} - \frac{1}{\lambda v_i - z - \widehat m_{fc}} \right) \\
&= \frac{1}{N} \sum_{i=1}^N \frac{m^{(i)} - \widehat m_{fc} + Z_i - w_{ii}}{(\lambda v_i + w_{ii} - z - m^{(i)} - Z_i) (\lambda v_i - z - \widehat m_{fc})}\,.
\end{split} \end{align}
Abbreviate
\begin{align} \label{T_m}
T_m\equiv T_m(z) \deq \frac{1}{N} \sum_{i=1}^N \left| \frac{1}{(\lambda v_i + w_{ii} - z - m^{(i)} - Z_i) (\lambda v_i - z - \widehat m_{fc})} \right|\,.
\end{align}
We are going to show that $T_m<c<1$:
We define events $\Omega_Z$ and $\Omega_W$ by
\begin{align}
 \Omega_Z(z)\equiv\Omega_Z\deq \bigcap_{i=1}^N\left\{|Z_i|\le  (\varphi_N)^{\xi} \sqrt{ \frac{\im m^{(i)}}{N \eta} } \right\}\,, \qquad \Omega^W \deq \bigcap_{i=1}^N\left \{ |w_{ii}| \le \frac{(\varphi_N)^{\xi}}{\sqrt N} \right \}\,.
\end{align}
We notice that, by the large deviation estimates in Lemma \ref{lemma.LDE} and the subexponential decay of $|w_{ii}|$, $\Omega_Z$ and $\Omega_W$ both hold with high probability. Suppose now that $\Omega_Z$ and $\Omega_W$ hold. Then, we have
$$
|Z_i| \leq (\varphi_N)^{\xi} \sqrt{ \frac{\im m^{(i)}}{N \eta} } \leq (\varphi_N)^{-\xi} \im m^{(i)} + C \frac{(\varphi_N)^{2\xi}}{N \eta} \ll \im m\,,
$$
where we have used $|m - m^{(i)}| \leq C (N \eta)^{-1} \ll \im m$. We also have from $\eta \ll \im m$ that
$$
\im m^{(i)} + \eta + \im Z_i = (1 + o(1)) \im m\,.
$$
Thus,
\begin{align} \begin{split}
\frac{1}{N} \sum_{i=1}^N \frac{1}{|\lambda v_i + w_{ii} - z - m^{(i)} - Z_i|^2}& = \frac{1}{N} \sum_{i=1}^N \frac{\im G_{ii}}{\im m^{(i)} + \eta + \im Z_i} = \frac{1}{N} \sum_{i=1}^N \frac{ \im G_{ii}}{\im m}(1 + o(1)) \\
&= 1 + o(1)\,.
\end{split} \end{align}
We get from Lemma \ref{hat bound} that, on $\Omega_V$,
$$
\frac{1}{N} \sum_i^{(k)} \frac{1}{|\lambda v_i - z - \widehat m_{fc}|^2} = \frac{1}{N} \sum_i^{(k)} \frac{1 + o(1)}{|\lambda v_i - z - m_{fc}|^2} < c < 1\,,
$$
for some constant $c > 0$, and
$$
\frac{1}{N} \left| \frac{1}{(\lambda v_k + w_{kk} - z - m^{(k)} - Z_k) (\lambda v_k - z - \widehat m_{fc})} \right| \leq C \frac{1}{N} \frac{1}{N^{-1/2 + 5\epsilon /3} \eta} \leq N^{-2\epsilon /3}\,.
$$
Hence, we find that $T_m < c' < 1$ for some constant $c'$. Notice that the assumption $\im m > N^{-1/2 + 5\epsilon /3}$ also implies that 
$$
|m - \widehat m_{fc}| \geq |\im m - \im \widehat m_{fc}| > C N^{-1/2 + 5\epsilon /3}\,,
$$
as we can see from Remark \ref{im mfc upper bound}. Now, if we let
$$
M \deq\max_i |m^{(i)} - m + Z_i - w_{ii}|\,,
$$
then $M \ll |m - \widehat m_{fc}|$. Thus, taking absolute value on both sides of \eqref{m difference}, we get
$$
|m - \widehat m_{fc}| \leq T_m (|m - \widehat m_{fc}| + M) = \left( T_m + o(1) \right) |m - \widehat m_{fc}|\,,
$$
contradicting $T_m < c' < 1$.

We have thus shown that for fixed $z\in\caD_\epsilon'$,
$$
\im m(z) \le N^{-1/2 + 5\epsilon /3} \,,
$$
with high probability on $\Omega_V$.

In order to prove that the desired bound holds uniformly on $z$, we consider a lattice $\caL$ such that, for any $z$ satisfying the assumption of the lemma, there exists $z' = E' + \ii \eta_0 \in \caL$ with $|z - z'| \leq N^{-3}$. We have already seen that the uniform bound holds for all points in $\caL$. For a point $z \notin \caL$, we have $|m(z) - m(z')| \leq \eta_0^2 |z-z'| \leq N^{-1}$, for $z' \in \caL$ with $|z - z'| \leq N^{-3}$. This proves the desired lemma.
\end{proof}
As a corollary of Lemma~\ref{lem:step 2_1} we obtain:
\begin{cor} \label{cor:step 2_1}
We have with $(\xi,\nu)$-high probability on $\Omega_V$ that, for all $z = E + \ii \eta_0 \in \caD_{\epsilon}'$,
\begin{align}
\max_i |Z_i(z)| \leq \frac{N^{2\epsilon}}{\sqrt N}\,, \qquad \max_i |Z_i^{(k)}(z)| \leq \frac{N^{2\epsilon}}{\sqrt N}\,,\qquad\quad (k\in\llbracket 1,N\rrbracket)\,.
\end{align}
\end{cor}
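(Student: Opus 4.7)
The statement bounds the fluctuation terms $Z_i$ (and the minor variants $Z_i^{(k)}$) uniformly in the index and in $z \in \caD_\epsilon'$. My plan is to combine three ingredients already in place: the large deviation estimates of Lemma~\ref{lemma.LDE} (paired with the Ward identity) reduce $|Z_i(z)|$ to something controlled by $\im m^{(i)}(z)/(N\eta_0)$; the a priori bound $\im m(z) \leq N^{2\epsilon}/\sqrt N$ from Lemma~\ref{lem:step 2_1} controls $\im m^{(i)}$; and Lemma~\ref{cauchy interlacing} lets us pass between $m$ and its minors at negligible cost.

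First, I would expand $Z_i$ as in \eqref{Z_i}, condition on the $i$-th row of $W$, and apply the large deviation bounds \eqref{LDE2} and \eqref{LDE3} with $\sigma^2 = 1/N$ to the diagonal and off-diagonal pieces respectively. The Ward identity \eqref{ward} for $G^{(i)}$ gives
$$
\sum_{s,t}^{(i)} |G_{st}^{(i)}|^2 \;=\; \frac{1}{\eta}\sum_{s}^{(i)} \im G_{ss}^{(i)} \;=\; \frac{N}{\eta}\,\im m^{(i)}\,,
$$
and in particular $\sum_s^{(i)} |G_{ss}^{(i)}|^2 \le N\,\im m^{(i)}/\eta$. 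Hence, with $(\xi,\nu)$-high probability,
$$
|Z_i(z)| \;\le\; (\varphi_N)^{2\xi}\, \sqrt{\frac{\im m^{(i)}(z)}{N\eta_0}}\,.
$$

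Second, I would invoke Lemma~\ref{lem:step 2_1}, which yields $\im m(z) \le N^{2\epsilon}/\sqrt N$ with $(\xi,\nu)$-high probability on $\Omega_V$, together with Lemma~\ref{cauchy interlacing}, giving $|m(z)-m^{(i)}(z)| \le C/(N\eta_0) = C N^{\epsilon-1/2}$. This yields $\im m^{(i)}(z) \le C N^{2\epsilon}/\sqrt N$, and substituting into the previous display (using $\eta_0 = N^{-1/2-\epsilon}$) produces $|Z_i(z)| \le C(\varphi_N)^{2\xi} N^{3\epsilon/2}/\sqrt N$, which sits comfortably below $N^{2\epsilon}/\sqrt N$ for $N$ large since $(\varphi_N)^{2\xi}$ is polylogarithmic while $N^{2\epsilon-3\epsilon/2} = N^{\epsilon/2}$. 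For $Z_i^{(k)}$ the argument is identical after iterating Lemma~\ref{cauchy interlacing} once more to obtain $\im m^{(ik)}(z) \le C N^{2\epsilon}/\sqrt N$, and applying the large deviation bound to the minor $G^{(ik)}$.

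Third, to promote the pointwise estimate to the uniform bound over $z \in \caD_\epsilon'$, I would place an $N^{-3}$-lattice in $\caD_\epsilon'$, apply the pointwise bound at each lattice point (absorbing the polynomial cardinality of the net and the union over $i,k$ into the $\e{-\nu(\log N)^\xi}$ tail), and extend to all $z \in \caD_\epsilon'$ via the trivial Lipschitz estimate $\|\partial_z G\| \le \eta_0^{-2} \le N^{1+2\epsilon}$. The only potential subtlety is the apparent circularity between bounding $Z_i$ and bounding $\im m$ through the Schur formula, but this is already resolved by Lemma~\ref{lem:step 2_1}, whose unconditional output on $\im m$ is exactly what breaks the loop. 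Given that lemma, the corollary is essentially mechanical.
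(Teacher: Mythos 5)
Your proof is correct and follows the same route the paper implicitly takes: the paper already establishes $|Z_i|\le(\varphi_N)^{c\xi}\sqrt{\im m^{(i)}/(N\eta)}$ via the large deviation bounds and the Ward identity inside the proof of Lemma~\ref{lem:step 2_1} (the event $\Omega_Z$), and the corollary then follows by inserting the bound $\im m\le N^{2\epsilon}/\sqrt{N}$ from that same lemma together with $|m-m^{(i)}|\le C/(N\eta_0)$ from Lemma~\ref{cauchy interlacing}. The details you supply, including the handling of $Z_i^{(k)}$ and the lattice argument for uniformity, are exactly what is meant by stating this as an immediate corollary.
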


Next, we prove an estimate for the difference $\Lambda(z)\deq |m(z)-\widehat m_{fc}(z)|$. We first show the bound on $\Lambda(z)$ in Proposition~\ref{prop:step 2_4} holds for large $\eta$; see Lemma~\ref{lem:step 2_2} below. Then, using the self-consistent equation~\eqref{m difference}, we show that if, for some $z\in\caD_{\epsilon}'$, $\Lambda(z)\le N^{-1/2+3\epsilon}$, then we must have $\Lambda(z)\le N^{-1/2+2\epsilon}$ with high probability; see Lemma~\ref{lem:step 2_3}. Thus, using the Lipschitz continuity of the Green function $G$ and of the Stieltjes transform $\widehat m_{fc}$, we can conclude that if $\Lambda(z)\le N^{-1/2+2\epsilon}$, we also have $\Lambda(z')\le N^{-1/2+2\epsilon}$, with high probability, for $z'$ in a sufficiently small neighborhood of $z$. Repeated use this argument yields a proof of Proposition~\ref{prop:step 2_4} at the end of this subsection.

Recall that we have set $\kappa_0=N^{-1/(\b+1)}$; see~\eqref{definition of kappa0}.
\begin{lem} \label{lem:step 2_2}
We have with high probability on $\Omega_V$ that, for all $z = E + \ii \eta \in \caD_{\epsilon}'$ with $N^{-1/2 + \epsilon} \leq \eta \leq N^{\epsilon} \kappa_0$,
\begin{align} \label{eq:boot}
| m(z)-\widehat m_{fc}(z) | \leq \frac{N^{2\epsilon}}{\sqrt N}\,.
\end{align}
\end{lem}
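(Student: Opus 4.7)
My plan is a continuity argument in $\eta$, descending from the top of the allowed range $\eta_1 = N^{\epsilon}\kappa_0$ down to $N^{-1/2+\epsilon}$. At the starting scale $\eta_1$, since $\b>1$ gives $\eta_1 \gg N^{-1/2}$, the self-consistent equation~\eqref{m difference} can be solved with large margin and the bound~\eqref{eq:boot} holds by a direct application of the same techniques as in Lemma~\ref{lem:step 2_1}. The bound is then propagated downward in $\eta$ by a bootstrap.

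The core bootstrap step uses~\eqref{m difference} combined with the large deviation bounds of Lemma~\ref{lemma.LDE}, which give $|Z_i| \le (\varphi_N)^\xi \sqrt{\im m^{(i)}/(N\eta)}$ with $(\xi,\nu)$-high probability, the interlacing bound $|m-m^{(i)}| \le C(N\eta)^{-1}$ from Lemma~\ref{cauchy interlacing}, and the subexponential decay of $|w_{ii}|$. Taken together, these bound the numerator in~\eqref{m difference} by $|m-\widehat m_{fc}| + C N^{-1/2+\epsilon}$. The coefficient $T_m$ from~\eqref{T_m} is estimated by Cauchy--Schwarz: $T_m^2 \le S(z)\cdot \widehat R_2(z)$, where the first factor $S(z) = N^{-1}\sum_i |\lambda v_i + w_{ii}-z-m^{(i)}-Z_i|^{-2}$ equals $N^{-1}\sum_i \im G_{ii}/(\im m^{(i)} + \eta + \im Z_i)$ and, exactly as in the proof of Lemma~\ref{lem:step 2_1}, evaluates to $1+o(1)$ on the high-probability event where $\im Z_i$ and $|w_{ii}|$ are negligible.

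To bound $\widehat R_2(z)$ strictly below $1$, I would split off the single index $k$ with $\lambda v_k$ closest to $\re(z+\widehat m_{fc})$: on $\Omega_V \cap \caD_{\epsilon}'$, assumption~\eqref{assumption_CLT_2} together with Lemma~\ref{hat bound} gives $\widehat R_2^{(k)}(z) \le \mathfrak{c} + o(1) < 1$, while the $i=k$ contribution is bounded by $1/(N\eta^2) \le N^{-2\epsilon}$ since $\eta \ge N^{-1/2+\epsilon}$. Hence $T_m \le (1+\mathfrak{c})/2 + o(1) < 1$ with high probability, and iterating~\eqref{m difference} then yields $|m-\widehat m_{fc}| \le C N^{-1/2+\epsilon}$ pointwise. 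A standard lattice argument of step size $N^{-3}$ combined with the Lipschitz estimates $|m(z)-m(z')|, |\widehat m_{fc}(z)-\widehat m_{fc}(z')| \le \eta^{-2}|z-z'|$ transfers the pointwise bound to the claimed uniform bound over the strip, absorbing the extra $N^{-1}$ error into the $N^{2\epsilon}/\sqrt{N}$ margin.

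The main obstacle is precisely the dangerous $i=k$ term in $T_m$: it contributes $1/(N\eta^2)$, which is manageable only because $\eta \ge N^{-1/2+\epsilon}$. If $\eta$ were allowed to drop to the microscopic scale $\eta_0 = N^{-1/2-\epsilon}$, this single term would become order one and destroy the $T_m<1$ estimate. This explains the restriction on $\eta$ in Lemma~\ref{lem:step 2_2} and signals that the sharper estimate at scale $\eta_0$ needed to complete Proposition~\ref{prop:step 2_4} requires a different technique, namely the fluctuation averaging machinery of Section~\ref{sec:Zlemma}.
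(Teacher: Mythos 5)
Your plan diverges structurally from the paper's: the paper treats each $z$ with a single contradiction argument (suppose $|m-\widehat m_{fc}|>N^{-1/2+5\epsilon/3}$, derive $T_m<c<1$ and $M\ll|m-\widehat m_{fc}|$, contradiction), whereas you propose a descending bootstrap in $\eta$. Both are viable routes, and your identification of the dangerous $i=k$ term and the reason for the $\eta\ge N^{-1/2+\epsilon}$ restriction is exactly right. One small slip: Cauchy--Schwarz $T_m\le\sqrt{S\cdot\widehat R_2}$ gives the geometric mean $\sqrt{\mathfrak{c}}+o(1)$, not $(1+\mathfrak{c})/2$; the latter is what AM--GM gives (which is closer to what the paper implicitly does). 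Either works.

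The real issue is your claim that the numerator in \eqref{m difference} is bounded by $|m-\widehat m_{fc}|+CN^{-1/2+\epsilon}$. This requires $\max_i|Z_i|\le CN^{-1/2+\epsilon}$, but the large-deviation bound only gives $|Z_i|\le(\varphi_N)^{\xi}\sqrt{\im m^{(i)}/(N\eta)}$, and $\im m^{(i)}$ is \emph{not} a priori $\lesssim\eta$: the available chain $\im m^{(i)}\le\im m+C(N\eta)^{-1}\le\im\widehat m_{fc}+|m-\widehat m_{fc}|+o(\eta)$ gives $\im m^{(i)}\lesssim\eta+|m-\widehat m_{fc}|$, so
\begin{align*}
|Z_i|\lesssim(\varphi_N)^{\xi}\Bigl(\tfrac{1}{\sqrt N}+\sqrt{\tfrac{|m-\widehat m_{fc}|}{N\eta}}\Bigr),
\end{align*}
which is \emph{not} $\le CN^{-1/2+\epsilon}$ if $|m-\widehat m_{fc}|$ happens to be large (e.g.\ order one). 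There are two ways to close this. You can feed the above into \eqref{m difference} and solve the resulting self-consistent inequality $\Lambda\le A+B\sqrt\Lambda$ for $\Lambda=|m-\widehat m_{fc}|$, getting $\Lambda\lesssim A+B^2\lesssim N^{-1/2+\epsilon}$ directly --- but then no bootstrap in $\eta$ is needed at all. Or you can invoke the bootstrap hypothesis (the bound at the slightly larger $\eta_j$, Lipschitz-transported to $\eta_{j+1}$) to control $\im m^{(i)}$ a priori --- but then that step must be stated explicitly, since it is precisely what makes the $M$ bound legitimate. As written, your argument asserts the key bound on $M$ without either mechanism, while the paper's contradiction device circumvents the issue entirely by only needing the \emph{relative} bound $|Z_i|\ll|m-\widehat m_{fc}|$ under the hypothesis $|m-\widehat m_{fc}|>N^{-1/2+5\epsilon/3}$, which follows without an absolute bound on $\im m^{(i)}$.
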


\begin{proof}
 The proof closely follows the proof of Lemma \ref{lem:step 2_1}. Fix $z\in\caD_{\epsilon}'$. Suppose that $|m(z)-\widehat m_{fc}(z) | > N^{-1/2 + 5\epsilon /3}$. Consider the self-consistent equation \eqref{m difference} and define $T_m$ as in \eqref{T_m}.

Since $\im m(E+\ii\eta)\ge C\eta$, for $z\in\caD_{\epsilon}'$, with high probability on $\Omega_V$, we obtain that
$$
\im m^{(i)} + \eta + \im Z_i = (1 + o(1)) \im m\,,
$$
with high probability on $\Omega_V$, as in the proof of Lemma \ref{lem:step 2_1}. This implies that $T_m < c < 1$. If we let
$$
M = \max_i |m^{(i)} - m + Z_i - w_{ii}|\,,
$$
it then follows that $M \ll |m - \widehat m_{fc}|$ with high probability on $\Omega_V$. Taking absolute values on both sides of~\eqref{m difference}, we obtain a contradiction to the assumption $|m(z)-\widehat m_{fc}(z)| > N^{-1/2 + 5\epsilon /3}$. In order to attain a uniform bound, we again use the lattice argument as in the proof of Lemma \ref{lem:step 2_1}. This completes the proof of the lemma.
\end{proof}

\begin{lem} \label{lem:step 2_3}
Let $z \in \caD_{\epsilon}'$. If $|m(z)-\widehat m_{fc}(z)| \leq N^{-1/2 + 3\epsilon}$, then we have with $(\xi,\nu)$-high probability on $\Omega_V$ that $|m(z)-\widehat m_{fc}(z) | \leq N^{-1/2 + 2\epsilon}$.
\end{lem}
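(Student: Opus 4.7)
The plan is to argue by contradiction, in the style of Lemmas~\ref{lem:step 2_1} and~\ref{lem:step 2_2}. Suppose $\Lambda(z) \deq m(z) - \widehat m_{fc}(z)$ satisfies $|\Lambda(z)| > N^{-1/2+2\epsilon}$; combined with the hypothesis $|\Lambda(z)| \leq N^{-1/2+3\epsilon}$, this pins $|\Lambda(z)|$ to a narrow band, and I will derive a contradiction using the self-consistent equation~\eqref{m difference}. Writing $m^{(i)} - \widehat m_{fc} = \Lambda + (m^{(i)} - m)$ and isolating the $\Lambda$-dependence yields
\[
\Lambda\,\bigl(1 - T_{m,1}(z)\bigr) \,=\, \frac{1}{N}\sum_{i=1}^N \frac{(m^{(i)} - m) + Z_i - w_{ii}}{D_i D_i'}\,,
\]
where $D_i \deq \lambda v_i + w_{ii} - z - m^{(i)} - Z_i$, $D_i' \deq \lambda v_i - z - \widehat m_{fc}$, and $T_{m,1}(z) \deq \tfrac{1}{N}\sum_i 1/(D_i D_i')$.

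The first step is to estimate the right-hand side and the stability coefficient $1 - T_{m,1}(z)$. Lemma~\ref{lem:step 2_1} gives $\im m \leq N^{2\epsilon}/\sqrt N$, which combined with the standard large deviation bound for $Z_i$ via the Ward identity yields (through Corollary~\ref{cor:step 2_1}) $|Z_i| \lesssim N^{2\epsilon}/\sqrt N$; these bounds, together with $|w_{ii}| \leq (\varphi_N)^{\xi}/\sqrt N$ and Lemma~\ref{cauchy interlacing}, show that each numerator in the sum is of size $O(N^{-1/2+2\epsilon})$. For the coefficient I split at $i = k$, where $k$ is the index with $\lambda v_k$ closest to $\re(z + m_{fc}(z))$. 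Using the conditions defining $\Omega_V$ together with Lemma~\ref{hat bound}, the separation $|\lambda v_i - \lambda v_k| \geq N^{-\epsilon}\kappa_0 \gg N^{-1/2+3\epsilon}$ (valid for $\b > 1$ and $\epsilon$ small) yields $|D_i|, |D_i'| \geq c\,N^{-\epsilon}\kappa_0$ for $i \neq k$, and condition~\eqref{assumption_CLT_2} then implies $\bigl|\tfrac{1}{N}\sum_{i\neq k} 1/(D_i D_i')\bigr| \leq \mathfrak{c}' < 1$.

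The main obstacle is the $i = k$ term, where $|D_k'|$ can be as small as $N^{-1/2}$ (at $z = \widehat z_k$) while only $|D_k| \geq \eta_0 = N^{-1/2-\epsilon}$ is guaranteed, so the coefficient $\alpha_k \deq 1/(N D_k D_k')$ may be as large as $N^{\epsilon}$. I plan a case analysis. If $|\alpha_k| \leq 1/2$, then $|1 - T_{m,1}(z)| \geq 1/2 - \mathfrak{c}'$ is bounded below by a positive constant, and the contradiction follows directly from the $O(N^{-1/2+2\epsilon})$ bound on the right-hand side. If $|\alpha_k| > 1/2$, I exploit that $\im D_k$ and $\im D_k'$ are both strictly negative (the former because $\im G_{kk} > 0$, the latter because $\im\widehat m_{fc} > 0$), which fixes the phase of the product $D_k D_k'$ and prevents $1 - T_{m,1}$ from cancelling against $\alpha_k$; the resulting bound $|1 - T_{m,1}(z)| \gtrsim |\alpha_k|$ pairs against the $i=k$ contribution on the right-hand side, which also carries an $\alpha_k$ factor, giving $|\Lambda| \leq C\,N^{-1/2+2\epsilon}$. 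The delicate point, which I expect to be the hardest technical step, is this phase analysis and ensuring that the stability estimate $|1 - T_{m,1}| \gtrsim \min(1,|\alpha_k|)$ survives the small perturbations coming from $\Lambda$ and from the $Z_i, w_{ii}, m^{(i)} - m$ corrections that distinguish $D_i$ from $D_i'$.
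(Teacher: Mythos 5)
Your overall bootstrap-by-contradiction strategy matches the paper's, and the self-consistent equation decomposition is right, but you have missed the single observation that makes the $i = k$ term easy: under the contradiction hypothesis, $|\alpha_k|$ is automatically $o(1)$, so your "phase argument" case never arises. Concretely, writing $D_k - D_k' = -(m - \widehat m_{fc}) + w_{kk} - (m^{(k)} - m) - Z_k$ and using $|w_{kk}|\,,|m^{(k)}-m|\,,|Z_k| \lesssim N^{-1/2+3\epsilon/2}$ (from $\Omega_Z$, $\Omega_W$, Lemma~\ref{cauchy interlacing} and the $\im m$ bound of Lemma~\ref{lem:step 2_1}), the lower bound $|m - \widehat m_{fc}| > N^{-1/2+2\epsilon}$ forces $|D_k| + |D_k'| \geq |D_k - D_k'| \geq \tfrac12 N^{-1/2+2\epsilon}$. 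Since also $|D_k|\,, |D_k'| \geq \eta_0$, one factor is $\geq \tfrac14 N^{-1/2+2\epsilon}$, and therefore $|\alpha_k| = 1/(N|D_k||D_k'|) \leq C N^{-\epsilon} \ll 1$. This is precisely the move the paper makes for the $i=k$ term (the same device already appears in the proof of Lemma~\ref{hat bound}), and with it the whole case split collapses: $T_m$ (or $|T_{m,1}|$) stays below a constant $<1$ on the region you are contradicting.

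Your proposed phase analysis, as a replacement, does not hold up as stated. Knowing only $\im D_k < 0$ and $\im D_k' < 0$ fixes the arguments of $D_k, D_k'$ to lie in $(-\pi,0)$, so the argument of the product $D_k D_k'$ ranges over $(-2\pi,0)$ --- i.e.\ over all of $[0,2\pi)$ --- and the phase of $\alpha_k$ is uncontrolled. You cannot conclude that $\alpha_k$ is roughly real-negative except at $z$ exactly equal to $\widehat z_k$; for generic $z$ with $|z - \widehat z_k|$ slightly larger, $\re D_k'$ dominates $\im D_k'$ and the phase swings. Moreover the threshold you announce, $|\alpha_k| \leq 1/2 \Rightarrow |1 - T_{m,1}| \geq 1/2 - \mathfrak{c}'$, is only useful if $\mathfrak{c}' < 1/2$, whereas Definition~\ref{v assumptions} only guarantees $\mathfrak{c} < 1$; the threshold would have to be adapted to $\mathfrak{c}'$, and then the "large $\alpha_k$" regime you would be left to treat extends up to order one, where the phase heuristic is weakest. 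In short: the case analysis and phase argument are an unnecessary detour for a case that the hypotheses already exclude, and they do not appear salvageable as presented.

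One further, smaller contrast with the paper: you set up a genuine linear equation $\Lambda(1 - T_{m,1}) = \text{RHS}$ with $T_{m,1}$ the complex sum, so you must lower-bound $|1 - T_{m,1}|$. The paper avoids this stability issue entirely by taking absolute values inside the sum and showing $T_m < c < 1$ for the absolute-value sum $T_m$; this is a weaker requirement than controlling the argument of $T_{m,1}$, and once $T_m < 1$ is known the contradiction from $M \ll |\Lambda|$ is immediate. Both routes work once the $i=k$ term is handled, but the absolute-value version is more robust and needs no phase control at all.
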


\begin{proof}
Since the proof closely follows the proof of Lemma \ref{lem:step 2_1}, we only check the main steps here. Fix $z\in\caD_{\epsilon}'$ and choose $k \in \llbracket 1, n_0 -1 \rrbracket$ such that \eqref{assumption near v_k} is satisfied. Assume that $N^{-1/2 + 5\epsilon /3} < |m(z)-\widehat m_{fc}(z)| \leq N^{-1/2 + 3\epsilon}$. We consider the self-consistent equation \eqref{m difference} and define $T_m$ as in \eqref{T_m}. We now estimate $T_m$. For $i \neq k$, $i\in\llbracket 1,N\rrbracket$, we have
$$
\frac{1}{(\lambda v_i + w_{ii} - z - m^{(i)} - Z_i) (\lambda v_i - z - \widehat m_{fc})} = \frac{1}{(\lambda v_i - z - \widehat m_{fc})^2} + o(1)\,,
$$
where we have used that
\begin{align} \begin{split}
|w_{ii} - m^{(i)} - Z_i + \widehat m_{fc}| &\leq |w_{ii}| + |m - m^{(i)}| + |m - \widehat m_{fc}| + |Z_i| \\
&\leq \frac{N^{\epsilon}}{\sqrt N} + \frac{C}{N \eta} + N^{-1/2 + 3\epsilon} + C (\varphi_N)^{\xi} \sqrt{ \frac{\im m^{(i)}}{N \eta} } \\
&\ll |v_i - v_k|\,,
\end{split} \end{align}
which holds with high probability on $\Omega_V$. For $i=k$, we have
\begin{align} \begin{split}
|\lambda v_k + w_{kk}& - z - m^{(k)} - Z_k| + |\lambda v_k - z - \widehat m_{fc}| \\
&\geq |m - \widehat m_{fc}| - |w_{kk}| - |m - m^{(k)}| - |Z_k|\\
 &\geq \frac{1}{2} N^{-1/2 + 2\epsilon}\,,
\end{split} \end{align}
thus, as in the proofs of Lemma \ref{hat bound} and Lemma \ref{lem:step 2_1},
$$
\frac{1}{N} \left| \frac{1}{(\lambda v_k + w_{kk} - z - m^{(k)} - Z_k) (\lambda v_k - z - \widehat m_{fc})} \right| \leq C N^{-2\epsilon /3}\,,
$$
where we used that $|G_{kk}|\,,|g_k|\le \eta^{-1}$.

We now have that
\begin{align}
T_m = \widehat R_2^{(k)} + o(1) = R_2 + o(1)\,,
\end{align}
and, in particular, $T_m < c < 1$, with high probability on $\Omega_V$. We again let $M \deq \max_i |m^{(i)} - m + Z_i - w_{ii}|$ and find that $M \ll |m - \widehat m_{fc}|$ with high probability on $\Omega_V$, which contradicts the assumption. Therefore, if $|m(z)-\widehat m_{fc}(z) | \leq N^{-1/2 + 3\epsilon}$, then $|m(z)-\widehat m_{fc}(z)| \leq N^{-1/2 + 5\epsilon /3}$ with high probability on $\Omega_V$. In order to attain a uniform bound, we use the lattice argument as in the proof of Lemma \ref{lem:step 2_1}. This proves the desired lemma.
\end{proof}

We now prove Proposition \ref{prop:step 2_4} using a discrete continuity argument.

\begin{proof}[Proof of Proposition \ref{prop:step 2_4}]
Fix $E$ such that $z=E+\ii\eta_0\in\caD_{\epsilon}'$. Consider a sequence $(\eta_j)$ defined by $\eta_{j=0} = \eta_0$ and $\eta_j = \eta_{j-1} + N^{-2}$. Let $K$ be the smallest positive integer such that $\eta_K \geq N^{-1/2 + \epsilon}$. We prove by induction that, for $z_j = E + \ii \eta_j$, we have with high probability on $\Omega_V$ that
\begin{align}
|m(z_j)-\widehat m_{fc}(z_j)  | \leq \frac{N^{2\epsilon}}{\sqrt N}\,.
\end{align}
The case $j=K$ is already proved in Lemma \ref{lem:step 2_2}. For any $z = E + \ii \eta$, with $\eta_{j-1} \leq \eta \leq \eta_j$, we have
$$
|m(z_j) - m(z)| \leq \frac{|z_j - z|}{\eta_{j-1}^2} \leq \frac{N^{2 \epsilon}}{N}\,, \qquad\quad |\widehat m_{fc}(z_j) - \widehat m_{fc}(z)| \leq \frac{|z_j - z|}{\eta_{j-1}^2} \leq \frac{N^{2 \epsilon}}{N}\,.
$$
Thus, we find that if $|\widehat m_{fc}(z_j) - m(z_j)| \leq N^{-1/2 + 2\epsilon}$ then
$$
|m(z)-\widehat m_{fc}(z) | \leq N^{-1/2 + 2\epsilon} + \frac{2 N^{2 \epsilon}}{N} \ll N^{-1/2 + 3\epsilon}\,.
$$
We now invoke Lemma \ref{lem:step 2_3} to obtain that $|m(z)-\widehat m_{fc}(z) | \leq N^{-1/2 + 2\epsilon}$. This proves the desired lemma for any $z = E + \ii \eta$, with $\eta_{j-1} \leq \eta \leq \eta_j$. The desired lemma can now be proved by induction on $j$. Uniformity can now be obtained using a lattice argument.
\end{proof}

\subsection{Estimates on $|m - m^{(i)}|$}\label{aux estimate 1}
In order to derive a more accurate estimate on the difference $|\im m(z) - \im\widehat m_{fc}(z)|$, as the one obtained in~Proposition~\ref{prop:step 2_4}, we establish detailed estimates on $|m - m^{(i)}|$ and $N^{-1}\sum Z_i$. We first prove the following bound on the difference $|m - m^{(i)}|$.

\begin{lem} \label{lem:step 3}
There exists a constant $C > 1$ such that the following bound holds with $(\xi,\nu)$-high probability on $\Omega_V$ for all $z = E + \ii \eta_0 \in \caD_{\epsilon}'$: For given $z$, choose $k \in \llbracket 1, n_0 -1 \rrbracket$ such that~\eqref{assumption near v_k} is satisfied. Then, for any $i \neq k$, $i\in\llbracket 1,N\rrbracket$,
\begin{align}
|m(z) - m^{(i)}(z)| \leq C N^{1/(\b+1)} \frac{N^{4\epsilon}}{N}
\end{align}
and
\begin{align}
|m^{(k)}(z) - m^{(ki)}(z)| \leq C N^{1/(\b+1)} \frac{N^{4\epsilon}}{N}\,.
\end{align}
\end{lem}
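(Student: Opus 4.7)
\medskip

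\noindent\textbf{Proof proposal for Lemma~\ref{lem:step 3}.}

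The plan is to rewrite $m-m^{(i)}$ as a single Green function ratio via the resolvent identity \eqref{basic resolvent}, then exploit the Ward identity \eqref{ward} together with the Schur-derived formula for $\im G_{ii}$ to express the bound in terms of $|G_{ii}|$ and a small imaginary part, and finally use the separation $|\lambda v_i - z - m_{fc}| \gtrsim N^{-\epsilon}\kappa_0$ valid for $i \neq k$. Concretely, \eqref{basic resolvent} and the definition of $m^{(i)}$ give the algebraic identity
\begin{align}
N(m-m^{(i)}) = G_{ii} + \frac{1}{G_{ii}}\sum_{j \neq i} G_{ij}G_{ji} = \frac{(G^2)_{ii}}{G_{ii}}\,,
\end{align}
and the Ward identity \eqref{ward} (using $G_{ji}=\overline{G_{ij}}$) yields $|(G^2)_{ii}| \leq \sum_j |G_{ij}|^2 = \eta_0^{-1}\im G_{ii}$. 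Writing $G_{ii} = 1/(\lambda v_i + w_{ii} - z - m^{(i)} - Z_i)$ gives $\im G_{ii}/|G_{ii}|^2 = \eta_0 + \im m^{(i)} + \im Z_i$, so that
\begin{align}\label{plan:main bound}
|m(z)-m^{(i)}(z)| \leq \frac{|G_{ii}(z)|\bigl(\eta_0 + \im m^{(i)}(z) + \im Z_i(z)\bigr)}{N\eta_0}\,.
\end{align}

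Next, I would show that $|G_{ii}| \leq C N^{\epsilon}/\kappa_0$ whenever $i \neq k$. The assumption \eqref{assumption near v_k} together with the separation \eqref{eq4.3} (for $i \in \llbracket 1,n_0-1\rrbracket$) or the defining condition of $\caD_\epsilon'$ in \eqref{a index assumption} (for $i \in A$) guarantees
\begin{align}
|\lambda v_i - z - m_{fc}(z)| \geq \tfrac{1}{2}N^{-\epsilon}\kappa_0\,.
\end{align}
Combined with the a priori bounds $|m^{(i)}-m_{fc}| \leq |m-m_{fc}|+|m-m^{(i)}| \leq CN^{2\epsilon}/\sqrt N$ (from Proposition~\ref{prop:step 2_4}, Lemma~\ref{hat bound} and Lemma~\ref{cauchy interlacing}), $|Z_i| \leq N^{2\epsilon}/\sqrt N$ (Corollary~\ref{cor:step 2_1}) and $|w_{ii}| \leq (\varphi_N)^\xi/\sqrt N$ (from \eqref{bound on wij}), all of which are $o(N^{-\epsilon}\kappa_0)$ once $\epsilon$ is chosen small as in \eqref{epsilon condition}, the Schur formula \eqref{schur} gives the desired bound on $|G_{ii}|$ with $(\xi,\nu)$-high probability on $\Omega_V$. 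Inserting this, together with $\im m^{(i)} \leq CN^{2\epsilon}/\sqrt N$ (Lemma~\ref{lem:step 2_1} plus Lemma~\ref{cauchy interlacing}) and $|Z_i| \leq N^{2\epsilon}/\sqrt N$, into \eqref{plan:main bound} yields
\begin{align}
|m-m^{(i)}| \leq \frac{(N^{\epsilon}/\kappa_0)(CN^{2\epsilon}/\sqrt N)}{N\cdot N^{-1/2-\epsilon}} = C\,N^{1/(\b+1)}\,\frac{N^{4\epsilon}}{N}\,,
\end{align}
which is the first claim.

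For the second claim, the identical argument carried out on the minor $H^{(k)}$ gives
\begin{align}
N(m^{(k)}-m^{(ki)}) = \frac{((G^{(k)})^2)_{ii}}{G^{(k)}_{ii}}\,,
\end{align}
and the Ward identity on $G^{(k)}$ reduces matters to bounding $|G^{(k)}_{ii}|$ and $\im G^{(k)}_{ii}/|G^{(k)}_{ii}|^2 = \eta_0 + \im m^{(ki)} + \im Z_i^{(k)}$. The separation estimate for $|\lambda v_i - z - m_{fc}|$ still applies since $i \neq k$, and the a priori inputs transfer to the minor: Cauchy interlacing gives $|m^{(k)}-m| \leq C/(N\eta_0)$ and $|m^{(ki)}-m^{(k)}|\leq C/(N\eta_0)$, Corollary~\ref{cor:step 2_1} provides the bound on $|Z_i^{(k)}|$, and the same Schur denominator analysis gives $|G^{(k)}_{ii}| \leq CN^{\epsilon}/\kappa_0$. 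The same arithmetic as above then produces the identical bound.

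The main obstacle I expect is verifying the separation $|\lambda v_i - z - m_{fc}| \gtrsim N^{-\epsilon}\kappa_0$ uniformly over both regimes $i \in \llbracket 1,n_0-1\rrbracket\setminus\{k\}$ and $i \in A$, and then confirming that every $o(\cdot)$ error in the Schur denominator (from $w_{ii}$, $Z_i$ and $m^{(i)}-m_{fc}$) is genuinely smaller than this gap; this is where the condition \eqref{epsilon condition} on $\epsilon$ and the fact that $\b > 1$ (so $1/(\b+1) < 1/2$) become essential. Everything else is a clean assembly of the identities \eqref{basic resolvent}, \eqref{ward}, \eqref{schur} with the a priori estimates already established in Proposition~\ref{prop:step 2_4}, Lemma~\ref{lem:step 2_1} and Corollary~\ref{cor:step 2_1}.
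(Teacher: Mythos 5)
Your proposal is correct in substance and arrives at the same bound, but via a noticeably cleaner route than the paper's. The paper starts from $G_{jj}-G_{jj}^{(i)}=G_{ij}G_{ji}/G_{ii}$, then estimates each off-diagonal entry $G_{ij}$ individually using the one-sided formula $G_{ij}=-G_{ii}\sum_s^{(i)}w_{is}G_{sj}^{(i)}$ together with the large-deviation estimate \eqref{LDE1} and the Ward identity for $G^{(i)}$, and then sums over $j$. You instead exploit the exact algebraic identity $N(m-m^{(i)})=(G^2)_{ii}/G_{ii}$ and apply the Ward identity \emph{once} to $G$ itself, which eliminates the large-deviation step entirely; the Schur complement relation $\im G_{ii}/|G_{ii}|^2=\eta_0+\im m^{(i)}+\im Z_i$ then does the rest. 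This is a genuinely more economical path that trades the probabilistic LDE input for deterministic algebra. What you lose is the term-by-term control on $|G_{jj}-G_{jj}^{(i)}|$, but that is not needed for this lemma. Your identification of the a~priori inputs (separation from the definitions of $\Omega_V$ and $\caD_\epsilon'$, Proposition~\ref{prop:step 2_4}, Lemma~\ref{lem:step 2_1}, Corollary~\ref{cor:step 2_1}, Lemma~\ref{cauchy interlacing}) and the final arithmetic are all correct, and the transfer to the minor $H^{(k)}$ goes through just as you describe.

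One small inaccuracy: your justification of $|(G^2)_{ii}|\le\sum_j|G_{ij}|^2$ via ``$G_{ji}=\overline{G_{ij}}$'' is false for complex Hermitian $H$ at $z\in\C^+$: $G(z)$ is not Hermitian, and the correct relation is $G_{ji}(z)=\overline{G_{ij}(\bar z)}$. The inequality you need is nonetheless true — either note that $\sum_j|G_{ji}|^2=(G^*G)_{ii}=(GG^*)_{ii}=\sum_j|G_{ij}|^2$ (since $G$ and $G^*$ commute for normal $H$) and apply Cauchy–Schwarz, or use the spectral representation $(G^2)_{ii}=\sum_\alpha |u_\alpha(i)|^2/(\mu_\alpha-z)^2$ directly, so that $|(G^2)_{ii}|\le\sum_\alpha|u_\alpha(i)|^2/|\mu_\alpha-z|^2=\im G_{ii}/\eta$. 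With that repaired, the argument is complete.
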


\begin{proof}
Let $\eta = \eta_0$. Since
$$
G_{ij} = - G_{ii} \left( \sum_s^{(i)} w_{is} G_{sj}^{(i)} \right)\,,
$$
we find from the large deviation estimates in Lemma~\ref{lemma.LDE} and the Ward identity~\eqref{ward} that
$$
|G_{jj} - G_{jj}^{(i)}| = \left| \frac{G_{ij} G_{ji}}{G_{ii}} \right| \leq (\varphi_N)^{2\xi} |G_{ii}| \frac{\im G_{jj}^{(i)}}{N \eta}\,,
$$
with high probability on $\Omega_V$. For $i \neq k$, we have
$$
|G_{ii}| = \frac{1}{|\lambda v_i + w_{ii} - z - m^{(i)} - Z_i|} \leq \frac{C}{|\lambda v_i - z - \widehat m_{fc}|} \leq C N^{\epsilon} \kappa_0^{-1}\,,
$$
with high probability on $\Omega_V$. Thus, we obtain
\begin{align} \begin{split}
|m(z) - m^{(i)}(z)| &\leq \frac{|G_{ii}|}{N} + \frac{1}{N} \sum_{j}^{(i)} |G_{jj} - G_{jj}^{(i)}| \leq \frac{|G_{ii}|}{N} + C (\varphi_N)^{2\xi} \frac{N^{\epsilon} \kappa_0^{-1}}{N} \sum_{j}^{(i)} \frac{\im G_{jj}^{(i)}}{N \eta} \\
&\leq \frac{N^{\epsilon} \kappa_0^{-1}}{N} + C (\varphi_N)^{2\xi} \frac{N^{\epsilon} \kappa_0^{-1}}{N \eta} \im m^{(i)} \leq C N^{1/(\b+1)} \frac{N^{4\epsilon}}{N}\,,
\end{split} \end{align}
with high probability on $\Omega_V$. Together with the usual lattice argument, this proves the first part of the lemma. The second part of the lemma can be proved in a similar manner.
\end{proof}

\subsection{Estimates on $N^{-1}\sum Z_i$}\label{aux estimate 2}
Recall that $n_0>10$ is an integer independent of $N$. In the next lemma, we control the fluctuation average
$
\frac{1}{N} \sum_{i=n_0}^N Z_i
$
and other related quantities. Here, we aim to use cancellations in the averaging over $i$, but note that $(Z_i)$ are not independent.

\begin{lem} \label{lem:step 4}
There is a constant $c$, such that, for all  $z\in\caD_\epsilon'$, the following bounds hold with $(\xi-2,\nu)$-high probability on $\Omega_V$:
\begin{align}\label{statement Zlemma 1}
\left| \frac{1}{N} \sum_{i=n_0}^N Z_i(z) \right| \le (\varphi_N)^{c\xi}N^{-1/2-\fb/2+4\epsilon}\,,
\end{align}
and, for $k\in\llbracket 1,n_0-1\rrbracket$,
\begin{align}\label{statement Zlemma 2}
\left| \frac{1}{N} \sum_{\substack{i=n_0\\ i\not=k}}^N Z_i^{(k)}(z) \right| \le (\varphi_N)^{c\xi}N^{-1/2-\fb/2+4\epsilon}\,.
\end{align}
\end{lem}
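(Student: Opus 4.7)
The plan is to prove Lemma \ref{lem:step 4} via a high-moment estimate in the spirit of the fluctuation averaging lemma of \cite{EYY, EKY, EKYY4}, with the detailed combinatorial execution deferred to Section \ref{sec:Zlemma}. The structural input is that $\E_i Z_i = 0$, where $\E_i$ denotes partial expectation in the $i$-th row/column of $W$, so that the family $(Z_i)_{i \geq n_0}$ is only weakly correlated, being coupled solely through the Green function entries of $H^{(i)}$. For a fixed $i \geq n_0$ the large deviation estimates of Lemma \ref{lemma.LDE}, together with the Ward identity \eqref{ward} and the a priori local law of Proposition \ref{prop:step 2_4}, yield $|Z_i(z)| \lesssim \sqrt{\im m(z)/(N \eta_0)}$ with high probability, which is of size $N^{-1/2 + \caO(\epsilon)}$ for $z \in \caD_\epsilon'$; this is the starting pointwise input.

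Concretely, for an even integer $p \sim \xi$ I would estimate the moment $\E\bigl|\frac{1}{N}\sum_{i = n_0}^N Z_i\bigr|^{2p}$ by expanding and using $\E_i Z_i = 0$: any surviving monomial must have every external index appear at least twice, which halves the effective number of independent summation variables and yields, heuristically, the ``squared'' control $\im m/(N \eta_0)$. Between the several copies of $Z_i$ arising in the expansion one invokes the resolvent identity \eqref{basic resolvent} to unify all minors into a common $G^{(\T)}$, and then controls the resulting products by the same LDE/Ward toolkit as above, keeping careful track of the powers of $N$. Markov's inequality at this $p$ converts the moment bound into the $(\xi - 2,\nu)$-high-probability estimate \eqref{statement Zlemma 1}. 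The companion bound \eqref{statement Zlemma 2} is obtained by running the same argument verbatim on the submatrix $H^{(k)}$, since removing a single row and column changes none of the estimates by more than a multiplicative constant.

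The main obstacle, and the point where the present proof departs from \cite{EKYY4}, is that the diagonal Green function entries are not uniformly $\caO(1)$: for $i \geq n_0$ the Schur formula \eqref{schur} together with the construction of $\caD_\epsilon'$ only gives $|G_{ii}(z)| \lesssim \kappa_0^{-1} N^\epsilon = N^{1/(\b+1) + \epsilon}$, and for $s < n_0$ the entry $|G_{ss}^{(i)}|$ can be as large as $\eta_0^{-1} = N^{1/2 + \epsilon}$. These ``singular'' diagonal entries enter the moment expansion whenever the summation indices $s, t$ inside $Z_i$ land in the block $\llbracket 1, n_0 - 1 \rrbracket$, and tracking, term by term, how many such singular indices appear and with what multiplicity is the technical heart of the argument. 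The emergence of $-\fb/2 = -(\b - 1)/(4(\b+1))$ in the final bound reflects precisely this accounting: it encodes the balance between the working scale $\eta_0$ and the density of $(\lambda v_i)$ near the spectral edge, where, by the $\kappa^\b$ behaviour of $\mu_{fc}$ identified in Lemma \ref{general case - large lambda}, roughly $N^{1/(\b+1)}$ of the $(\lambda v_i)$ cluster in a window of size $\kappa_0$ below $\lambda$.
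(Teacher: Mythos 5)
Your plan is correct at the strategic level---high moments of the average, exploiting $\E_i Z_i = 0$ so that only "paired" summation indices survive, and a Markov inequality at $p \sim \xi$---and this is indeed the skeleton of the paper's argument (which further reduces to the average of $Q_a(1/G_{aa})$ via Schur's formula, cf.~\eqref{donkey 2}). But you mis-identify the essential mechanism, and as written the proposal has a genuine gap.

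The problem you flag is real: $|G_{aa}|$ can be of size $N^{1/(\b+1)+\epsilon}$ for $a\ge n_0$ on $\caD_\epsilon'$, and off-diagonal $|G_{ab}|$ can be of comparable size, so the usual fluctuation-averaging machinery of \cite{EKYY4}, which assumes uniformly bounded resolvent entries, does not apply directly. However, the paper's resolution is \emph{not} to track how often the summation indices $s,t$ inside $Z_i$ land in $\llbracket 1, n_0-1\rrbracket$: the expansion scheme in Section~\ref{sec:Zlemma} is deliberately arranged so that the indices $\llbracket 1, n_0-1\rrbracket$ are never used as lower or upper indices at all. Instead, the key new object is the \emph{resolvent fraction} $F_{ab}^{(\T,\T')}=G_{ab}^{(\T)}/G_{bb}^{(\T')}$ with $a,b\in A=\llbracket n_0,N\rrbracket$ and $\T,\T'\subset A$. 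Lemma~\ref{lemma for assumption on xi} establishes that although $G_{ab}$ and $G_{bb}$ are individually large, the ratio satisfies $|F_{ab}|\lesssim N^{-\fb/2+\epsilon}$ and $|F_{ab}^{(\emptyset,a)}/G_{aa}|\lesssim N^{-1/2+2\epsilon}$; the whole expansion algorithm~\eqref{iteration1}--\eqref{iteration3} is then run on these $F$'s rather than on the $G_{ij}^{(\T)}$, and the analogue of Lemma~\ref{Jensen lemma} is applied to monomials of $F$'s. Your proposal never introduces this reweighting, and without it the moment expansion simply produces products of large $G$-entries that do not cancel to the required order.

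Your explanation of where the exponent $\fb/2$ comes from is also off. It does not reflect a clustering of $\sim N^{1/(\b+1)}$ of the $(\lambda v_i)$ near $\lambda$ (note the expected number of $v_i$ within $\kappa_0$ of $1$ is $O(N\kappa_0^{\b+1}) = O(1)$, not $N^{1/(\b+1)}$). It comes from the elementary computation
$$
|F_{ab}|\le(\varphi_N)^{\xi}\Bigl(\tfrac{\im G_{aa}^{(b)}}{N\eta_0}\Bigr)^{1/2}\lesssim (\varphi_N)^{c\xi}\Bigl(\tfrac{N^{1/(\b+1)+\epsilon}}{N\cdot N^{-1/2-\epsilon}}\Bigr)^{1/2}=(\varphi_N)^{c\xi}N^{-\fb/2+\epsilon}\,,
$$
i.e.\ directly from the bound $|G_{aa}^{(b)}|\lesssim N^{1/(\b+1)+\epsilon}$ which is built into the definition of $\caD_\epsilon'$ and the choice $\eta_0=N^{-1/2-\epsilon}$. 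The combinatorics then delivers the factor $N^{-p\fb/2}$ by forcing, for each singleton label, at least one linking factor $F$ of size $N^{-\fb/2}$, and the summation over non-singleton blocks contributes the remaining $N^{-p/2}$. To complete your sketch you would need to introduce the resolvent fractions and reprove the analogues of~\eqref{Zlemma bound 1}--\eqref{initial estimate on Q_a}; the "tracking of singular indices $s,t<n_0$" route does not obviously close.
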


\begin{cor} \label{cor:step 4}
There is a constant $c$, such that, for all $z\in\caD_\epsilon'$, the following bounds hold with $(\xi-2,\nu)$-high probability on $\Omega_V$:
\begin{align}
\left| \frac{1}{N} \sum_{i=n_0}^N \frac{w_{ii} - Z_i(z)}{(\lambda v_i - z - \widehat m_{fc}(z))^2} \right|  \le (\varphi_N)^{c\xi}N^{-1/2-\fb/2+4\epsilon}\,,
\end{align}
and, for $k\in\llbracket 1,n_0-1\rrbracket$,
\begin{align}
\left| \frac{1}{N} \sum_{\substack{i=n_0\\ i\not=k}}^N \frac{w_{ii} - Z_i^{(k)}(z)}{(\lambda v_i - z - \widehat m_{fc}(z))^2} \right| \le (\varphi_N)^{c\xi}N^{-1/2-\fb/2+4\epsilon}\,.
\end{align}

\end{cor}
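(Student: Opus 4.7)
The corollary is a weighted version of Lemma~\ref{lem:step 4}, with the extra deterministic (given $V$) factor $\alpha_i(z) \deq (\lambda v_i - z - \widehat m_{fc}(z))^{-2}$. My plan is to split the summand into its $w_{ii}$ part and its $Z_i$ part, treat the first by a direct large deviation bound, and reduce the second to a weighted variant of the fluctuation averaging argument already developed for Lemma~\ref{lem:step 4}.

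For the $w_{ii}$ contribution, the diagonal entries $(w_{ii})_{i \ge n_0}$ are independent, centered, of variance $O(N^{-1})$, and $\alpha_i(z)$ is deterministic conditionally on $V$. Hence the estimate~\eqref{LDE1} of Lemma~\ref{lemma.LDE} applies and yields, with $(\xi,\nu)$-high probability,
\begin{align*}
\Bigl| \frac{1}{N}\sum_{i=n_0}^N \alpha_i(z) w_{ii} \Bigr| \le \frac{(\varphi_N)^\xi}{N^{3/2}} \Bigl( \sum_{i=n_0}^N |\alpha_i(z)|^2 \Bigr)^{1/2}.
\end{align*}
On $\Omega_V$ and on $\caD_\epsilon'$, the definition~\eqref{a index assumption} together with the spacing~\eqref{eq4.3} gives $|\lambda v_i - z - \widehat m_{fc}(z)| \gtrsim \max\{N^{-1/(\b+1)-\epsilon},\,\lambda|v_i - v_{i_0}|\}$ for $i \ge n_0$, where $i_0$ is the index with $\re(z+\widehat m_{fc})$ closest to $\lambda v_{i_0}$. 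Summing the resulting geometric-like series in $i$ (analogous to the bound on $\widehat R_2$ in~\eqref{R2 hat less than 1}), one checks that $\sum_{i \ge n_0} |\alpha_i|^2 \lesssim N^{1+2\fb+c\epsilon}$ on $\Omega_V$, which is easily absorbed into the claimed rate $(\varphi_N)^{c\xi} N^{-1/2-\fb/2+4\epsilon}$.

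For the $Z_i$ contribution, I would reopen the proof of Lemma~\ref{lem:step 4} and insert the weights $\alpha_i$ throughout. The underlying fluctuation averaging lemma (invoked from~\cite{EYY,EKY,EKYY4} and whose proof in this setting is the subject of Section~\ref{sec:Zlemma}) is well-known to accommodate a bounded deterministic family of weights: each occurrence of $|B_{ij}|$ or of a $G$-entry in the high-moment estimate simply picks up a multiplicative factor $\|\alpha\|_\infty$, and the number of such factors is bounded by the number of resolvent expansions, which is a constant depending on the moment exponent. Since $\|\alpha\|_\infty \le C N^{2/(\b+1)+2\epsilon}$ on $\caD_\epsilon'\cap\Omega_V$, this enlarges the prefactor by at most a polylogarithmic $(\varphi_N)^{c\xi}$, preserving the rate $N^{-1/2-\fb/2+4\epsilon}$ after a harmless increase of~$c$. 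The $k$-dependent assertion is obtained by the same argument applied to $Z_i^{(k)}$ and $G^{(k)}$.

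The main obstacle, therefore, is not at the level of concentration but in bookkeeping: I need to verify that within the weighted fluctuation averaging the combined dependence on $\|\alpha\|_\infty$, on $N \eta_0$, and on $\im \widehat m_{fc}$ does not exceed a $(\varphi_N)^{c\xi}$ factor. This is a routine but delicate check that relies on the bounds from Proposition~\ref{prop:step 2_4}, Lemma~\ref{lem:step 3}, and Remark~\ref{im mfc upper bound}, and on the strict separation between the indices $i \ge n_0$ and the extreme indices $k < n_0$ guaranteed by Definition~\ref{v assumptions} on $\Omega_V$.
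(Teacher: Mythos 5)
Your high-level plan --- split off the $w_{ii}$ term by a large-deviation estimate and reduce the $Z_i$ term to a weighted version of the fluctuation averaging in Lemma~\ref{lem:step 4} --- is roughly in the right spirit, but there is a genuine gap in the way you propose to handle the weights $\alpha_i = g_i(z)^2 = (\lambda v_i - z - \widehat m_{fc}(z))^{-2}$ inside the fluctuation averaging argument.

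You claim that inserting the weights "enlarges the prefactor by at most a polylogarithmic $(\varphi_N)^{c\xi}$," on the grounds that each resolvent factor picks up one power of $\|\alpha\|_\infty$. But on $\caD_\epsilon'\cap\Omega_V$ we have $\|\alpha\|_\infty \sim N^{2/(\b+1)+2\epsilon} = N^{1-2\fb+2\epsilon}$, which is \emph{polynomially}, not polylogarithmically, large. Tracking this through the $p$-th moment computation (where $p\sim(\log N)^{\xi-3/2}$), the best the $\ell^\infty$ bound can yield, after taking the $p$-th root, is an extra factor $\|\alpha\|_\infty \sim N^{1-2\fb+2\epsilon}$ over the unweighted rate $N^{-1/2-\fb/2+3\epsilon}$; since $\fb<1/2$ we have $1-2\fb>0$, so the resulting exponent $1/2-5\fb/2$ never reaches the target $-1/2-\fb/2$. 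In other words, the $\ell^\infty$ argument is off by a power of $N$, not merely by a log. The paper's Lemma~\ref{corollary to Zlemma} circumvents this by exploiting the $\ell^2$ summability $\frac{1}{N}\sum_i|g_i|^2 = \widehat R_2(z)<1$ rather than the $\ell^\infty$ bound: in the sum over index tuples $\mathbf a$ with induced partition $\Gamma$, the weights $|g_{\mathbf a}|^2$ are absorbed by applying the sum rule once per free block of $\Gamma$, yielding $\frac{1}{N^{2r}}\sum_{\mathbf a}\lone(\Gamma=\Gamma(\mathbf a))|g_{\mathbf a}|^2 \le C^{2r}\bigl(N^{-\fb}\bigr)^{2r-s}N^{(2r-s)\epsilon}$; this replaces the elementary counting bound~\eqref{sum over non label} from the proof of Lemma~\ref{the Zlemma} and, combined with~\eqref{bound for later on in corollary} and $|\Gamma|\le r+s/2$, gives exactly the claimed rate. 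Without this $\ell^2$ mechanism the argument does not close.

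As a side remark, even your $w_{ii}$ estimate has an arithmetic slip: using $\frac{1}{N}\sum_i|g_i|^2<1$ and $\|g\|_\infty\lesssim N^{1/(\b+1)+\epsilon}$ gives $\sum_i|\alpha_i|^2=\sum_i|g_i|^4\lesssim N^{2-2\fb+c\epsilon}$ (not $N^{1+2\fb+c\epsilon}$); with your stated exponent the resulting bound $(\varphi_N)^\xi N^{-1+\fb+c\epsilon/2}$ would fail to beat $N^{-1/2-\fb/2+4\epsilon}$ once $\fb>1/3$, i.e.\ for $\b>5$. With the corrected exponent one instead obtains $(\varphi_N)^\xi N^{-1/2-\fb+\epsilon}$, which is fine. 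Finally, note that the paper does not actually decompose $w_{ii}-Z_i$: it uses the identity $w_{ii}-Z_i = Q_i(1/G_{ii})$ (implicit in~\eqref{donkey 2}) so that the whole quantity is literally the weighted fluctuation average $\frac{1}{N}\sum_i g_i^2\, Q_i(1/G_{ii})$ estimated in Lemma~\ref{corollary to Zlemma}; your splitting is not wrong, but it duplicates effort.
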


\begin{rem}
The bounds we obtained in Lemma \ref{lem:step 3}, Lemma \ref{lem:step 4}, and Corollary \ref{cor:step 4} are~$o(\eta)$. This will be used on several occasions in the next subsection.
\end{rem}

Lemma \ref{lem:step 4} and Corollary \ref{cor:step 4} are proved in Section~\ref{sec:Zlemma}.

\subsection{Proof of Proposition~\ref{prop:step 2_4}}\label{Proof of Proposition 2_4}
Recall the definition of $(\widehat z_k)$ in~\eqref{definition of hatzk}. We first estimate $\im m(z)$ for $z = E + \ii \eta_0$ satisfying $|z - \widehat z_k| \geq N^{-1/2 + 3\epsilon}$, for all $k \in \llbracket 1, n_0 -1 \rrbracket$.

\begin{lem} \label{lem:step 5}
There exists a constant $C > 1$ such that the following bound holds with $(\xi-2,\nu$)-high probability on $\Omega_V$: For any $z = E + \ii \eta_0 \in \caD_{\epsilon}'$, satisfying $|z - \widehat z_k| \geq N^{-1/2 + 3\epsilon}$ for all $k \in \llbracket 1, n_0 -1 \rrbracket$, we have
\begin{align}
C^{-1} \eta \leq \im m(z) \leq C \eta\,.
\end{align}
\end{lem}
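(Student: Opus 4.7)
Lemma~\ref{lem:step 1}(1) already supplies $C^{-1}\eta_0\le\im\widehat m_{fc}(z)\le C\eta_0$ for the class of $z$ considered, so the task reduces to establishing
\[
|\im m(z)-\im\widehat m_{fc}(z)|=o(\eta_0)
\]
with $(\xi-2,\nu)$-high probability on $\Omega_V$. The bound $|m-\widehat m_{fc}|\le N^{-1/2+2\epsilon}$ from Proposition~\ref{prop:step 2_4} is insufficient, since $\eta_0=N^{-1/2-\epsilon}\ll N^{-1/2+2\epsilon}$; the improvement must come from Lemma~\ref{lem:step 3} (control of $m^{(i)}-m$) and Corollary~\ref{cor:step 4} (fluctuation averaging of $w_{ii}-Z_i$ against $1/Y_i^2$).

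The starting point is the self-consistent identity \eqref{m difference}. With $X_i\deq\lambda v_i+w_{ii}-z-m^{(i)}-Z_i$ (so $G_{ii}=1/X_i$) and $Y_i\deq\lambda v_i-z-\widehat m_{fc}$, I expand
\[
\frac{1}{X_iY_i}=\frac{1}{Y_i^2}+\frac{w_{ii}-m^{(i)}+\widehat m_{fc}-Z_i}{X_iY_i^2},
\]
and split the outer sum into the low-index block $i\in\llbracket 1,n_0-1\rrbracket$ and the high-index block $i\in\llbracket n_0,N\rrbracket$. For the low-index block, the separation hypothesis $|z-\widehat z_k|\ge N^{-1/2+3\epsilon}$ together with the defining relation $\widehat E_k+\re\widehat m_{fc}(\widehat z_k)=\lambda v_k$ and the Lipschitz estimate of Lemma~\ref{mfc estimate} (transported to $\widehat m_{fc}$ via Lemma~\ref{hat bound}) forces $|Y_k|\gtrsim N^{-1/2+3\epsilon}$. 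Corollary~\ref{cor:step 2_1}, Proposition~\ref{prop:step 2_4}, Lemma~\ref{lem:step 3} and the subexponential bound on $w_{kk}$ then give $|X_k-Y_k|\ll|Y_k|$, so $|X_k|$ inherits the same lower bound. Since $n_0$ is fixed, the total low-index contribution to the inhomogeneous piece $S$ of \eqref{m difference} is at most a multiple of $n_0\cdot N^{-1/2+2\epsilon}/(N\cdot N^{-1+6\epsilon})=N^{-1/2-4\epsilon}=o(\eta_0)$.

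For the high-index block, the leading random term $\frac{1}{N}\sum_{i\ge n_0}(w_{ii}-Z_i)/Y_i^2$ is $O((\varphi_N)^{c\xi}N^{-1/2-\fb/2+4\epsilon})$ by Corollary~\ref{cor:step 4}, which is $o(\eta_0)$ under the smallness assumption~\eqref{epsilon condition}. The residual $(m-m^{(i)})/Y_i^2$ is handled using Lemma~\ref{lem:step 3}, whose estimate $|m-m^{(i)}|\lesssim N^{-\b/(\b+1)+4\epsilon}$ is itself $o(\eta_0)$; and the term involving $m-\widehat m_{fc}$ is a self-consistency factor which, exactly as in the proof of Lemma~\ref{lem:step 2_3}, contributes a coefficient $T_m\le\widehat R_2(z)+o(1)<1$ and can be absorbed into the left-hand side. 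Taking imaginary parts and combining both blocks yields $|\im m-\im\widehat m_{fc}|=o(\eta_0)$, and the claimed two-sided bound then follows from Lemma~\ref{lem:step 1}(1); uniformity in $z\in\caD_\epsilon'$ is recovered by the usual lattice argument used in the preceding lemmas. The main obstacle, as elsewhere in this section, is the careful bookkeeping required to verify that every Taylor correction in the expansion of $1/(X_iY_i)$ truly lives at order $o(\eta_0)$ once Corollary~\ref{cor:step 4} has extracted the needed cancellation from the leading random contribution.
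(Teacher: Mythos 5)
Your argument is correct and essentially reproduces the paper's proof: you use the same ingredients (the self-consistent identity \eqref{m difference}, the separation hypothesis to force $|Y_k|,|X_k|\gtrsim N^{-1/2+3\epsilon}$ for the distinguished index $k$, Lemma~\ref{lem:step 3} for $|m-m^{(i)}|$, Corollary~\ref{cor:step 4} for the fluctuation average, and the stability constant $<1$ coming from $\Omega_V$), and the only organizational difference is that you solve the linearized equation for $m-\widehat m_{fc}$ to get $|m-\widehat m_{fc}|=o(\eta_0)$ and then cite Lemma~\ref{lem:step 1}(1), whereas the paper recasts it as a closed equation $m=\tfrac{1}{N}\sum_i^{(k)}(\lambda v_i-z-m)^{-1}+o(\eta_0)$ and takes imaginary parts — both routes rest on identical estimates. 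Two small slips to fix: the numerator in your expansion of $1/(X_iY_i)$ should be $Y_i-X_i=m^{(i)}-\widehat m_{fc}+Z_i-w_{ii}$ (not its negative), and the absorbability constant should be traced to $\widehat R_2^{(k)}(z)<\mathfrak{c}<1$ from condition~\eqref{assumption_CLT_2} of $\Omega_V$ together with $\tfrac{1}{N}|Y_k|^{-2}=o(1)$, rather than to the non-quantitative inequality $\widehat R_2(z)<1$ alone.
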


\begin{proof}
Let $z\in\caD_{\epsilon}'$ with $\eta=\eta_0$ and choose $k \in \llbracket 1, n_0 -1 \rrbracket$ such that~\eqref{assumption near v_k} is satisfied. Consider
\begin{align} \label{eq:step 5_1}
m = \frac{G_{kk}}{N} + \frac{1}{N} \sum_{i}^{(k)} \frac{1}{\lambda v_i + w_{ii} - z - m^{(i)} - Z_i}\,.
\end{align}
From the assumption in~\eqref{assumption near v_k}, Corollary \ref{cor:step 2_1}, and Proposition \ref{prop:step 2_4}, we find that, with high probability on~$\Omega_V$,
\begin{align} \begin{split} \label{eq:step 5_2}
&\left| \frac{1}{N} \sum_{i}^{(k)} \left( \frac{1}{\lambda v_i + w_{ii} - z - m^{(i)} - Z_i} - \frac{1}{\lambda v_i - z - \widehat m_{fc}} - \frac{m^{(i)} - \widehat m_{fc} + Z_i - w_{ii}}{(\lambda v_i - z - \widehat m_{fc})^2} \right) \right| \\
&\leq \frac{C}{N} \sum_{i}^{(k)} \frac{N^{-1 + 4\epsilon}}{|\lambda v_i - z - \widehat m_{fc}|^3} \leq C \frac{N^{4 \epsilon}}{N} N^{\epsilon} N^{1/(\b+1)} \frac{1}{N} \sum_{i}^{(k)} \frac{1}{|\lambda v_i - z - \widehat m_{fc}|^2} \ll \eta\,.
\end{split} \end{align}
We also observe that
$$
\left| \frac{1}{N} \sum_{ \substack{i=1\\i \neq k}}^{n_0} \frac{w_{ii} - Z_i}{(\lambda v_i - z - \widehat m_{fc}(z))^2} \right| \leq C N^{-1} N^{-1/2 + 2\epsilon} N^{1/(\b+1)} \ll N^{-1} \ll \eta\,.
$$
Thus, from Lemma \ref{lem:step 3} and Corollary \ref{cor:step 4}, we find with high probability on $\Omega_V$ that
\begin{align} \label{eq:step 5_3}
\frac{1}{N} \sum_{i}^{(k)} \frac{m^{(i)} - \widehat m_{fc} + Z_i - w_{ii}}{(\lambda v_i - z - \widehat m_{fc})^2} = \frac{1}{N} \sum_{i}^{(k)} \frac{m - \widehat m_{fc}}{(\lambda v_i - z - \widehat m_{fc})^2} + o(\eta)\,.
\end{align}
Recalling~\eqref{eq:step 1_1}, i.e.,
$$
|\lambda v_k - \re(z + \widehat m_{fc}(z))| \gg N^{-1/2 + 2\epsilon}\,,
$$
we get $|G_{kk}| \leq N^{1/2 - 2\epsilon}$. We thus obtain from \eqref{eq:step 5_1}, \eqref{eq:step 5_2}, and \eqref{eq:step 5_3} that, with high probability on~$\Omega_V$,
\begin{align}
m = \frac{1}{N} \sum_{i}^{(k)} \left( \frac{1}{\lambda v_i - z - \widehat m_{fc}} + \frac{m - \widehat m_{fc}}{(\lambda v_i - z - \widehat m_{fc})^2} \right) + o(\eta)\,.
\end{align}
We also notice that, with high probability on $\Omega_V$,
\begin{align}
\frac{1}{N} \sum_{i}^{(k)} \frac{1}{\lambda v_i - z - m} = \frac{1}{N} \sum_{i}^{(k)} \left( \frac{1}{\lambda v_i - z - \widehat m_{fc}} + \frac{m - \widehat m_{fc}}{(\lambda v_i - z - \widehat m_{fc})^2} \right) + \caO \left( \frac{1}{N} \sum_{i}^{(k)} \frac{N^{-1 + 4\epsilon}}{|\lambda v_i - z - \widehat m_{fc}|^3} \right)\,,
\end{align}
and following the estimate in \eqref{eq:step 5_2}, we find that
\begin{align}
\frac{1}{N} \sum_{i}^{(k)} \frac{1}{\lambda v_i - z - m} = \frac{1}{N} \sum_{i}^{(k)} \left( \frac{1}{\lambda v_i - z - \widehat m_{fc}} + \frac{m - \widehat m_{fc}}{(\lambda v_i - z - \widehat m_{fc})^2} \right) + o(\eta) = m + o(\eta)\,.
\end{align}
Taking imaginary parts, we get
$$
\im m = \frac{1}{N} \sum_{i}^{(k)} \frac{\eta + \im m}{|\lambda v_i - z - m|^2} + o(\eta)\,.
$$
Since
$$
\frac{1}{N} \sum_{i}^{(k)} \frac{1}{|\lambda v_i - z - m|^2} = \frac{1}{N} \sum_{i}^{(k)} \frac{1}{|\lambda v_i - z - m_{fc}|^2} + o(1) < c < 1\,,
$$
for some constant $c$, we can conclude that $C^{-1} \eta \leq \im m \leq C \eta$ with high probability for some $C > 1$. This proves the desired lemma.
\end{proof}

As a next step, we prove that there exists $\widetilde z_k = \widetilde E_k + \ii \eta_0$ near $\widehat z_k$ such that $\im m(\widetilde z_k) \gg \eta$. Before proving this, we first show that $\im m^{(k)} (z) \sim \eta$ even if $z$ is near $\widehat z_k$.

\begin{lem} \label{lem:step 6_1}
There exists a constant $C > 1$ such that the following bound holds with $(\xi-2,\nu)$-high probability on $\Omega_V$, for all  $z = E + \ii \eta_0 \in \caD_{\epsilon}'$: For given $z$, choose $k \in \llbracket 1, n_0 -1 \rrbracket$ such that~\eqref{assumption near v_k} is satisfied. Then, we have
\begin{align}
C^{-1} \eta_0 \leq \im m^{(k)}(z) \leq C \eta_0\,.
\end{align}
\end{lem}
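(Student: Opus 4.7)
The approach parallels that of Lemma \ref{lem:step 5}, replacing $m$ by $m^{(k)}$ and the sum over all indices by the sum over $i \neq k$. The automatic exclusion of the ``resonant'' index $k$ from the summation makes the distance hypothesis $|z - \widehat z_j| \geq N^{-1/2 + 3\epsilon}$ used there unnecessary. My first step is to expand $m^{(k)}$ via Schur's complement applied to the minor $H^{(k)}$, giving
$$m^{(k)}(z) = \frac{1}{N} \sum_i^{(k)} G_{ii}^{(k)}(z) = \frac{1}{N} \sum_i^{(k)} \frac{1}{\lambda v_i + w_{ii} - z - m^{(ki)}(z) - Z_i^{(k)}(z)}.$$

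Next I Taylor-expand each denominator around $\lambda v_i - z - \widehat m_{fc}(z)$. On $\Omega_V$, hypothesis \eqref{assumption near v_k} together with Lemma \ref{hat bound} and \eqref{eq4.3} gives $|\lambda v_i - z - \widehat m_{fc}(z)| \geq \tfrac{1}{2} N^{-\epsilon}\kappa_0$ for every $i \neq k$, so the denominators are well separated from zero. The size of the correction $m^{(k)} - m^{(ki)} + Z_i^{(k)} - w_{ii}$ is controlled pointwise via Lemma \ref{lem:step 3}, Corollary \ref{cor:step 2_1} and the subexponential decay of $w_{ii}$. The key cancellation comes from the fluctuation-average estimates of Lemma \ref{lem:step 4} and Corollary \ref{cor:step 4}, which ensure that the $i$-summed linear term is $o(\eta_0)$ on $\Omega_V$ with high probability, provided the condition \eqref{epsilon condition} on $\epsilon$ is in force; the handful of indices $i < n_0$, $i \neq k$, contribute trivially because their weights $1/(\lambda v_i - z - \widehat m_{fc})^2$ have size at most $N^{2\epsilon}\kappa_0^{-2}$ and the perturbations are $\lesssim N^{-1/2+\epsilon}$, so their total contribution is $\ll \eta_0$.

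Collecting these estimates yields the self-consistent equation
$$m^{(k)}(z) = \frac{1}{N} \sum_i^{(k)} \frac{1}{\lambda v_i - z - m^{(k)}(z)} + o(\eta_0)$$
with high probability on $\Omega_V$. Taking imaginary parts and rearranging gives
$$\im m^{(k)}(z) = \frac{R(z)}{1 - R(z)}\, \eta_0 + o(\eta_0),\qquad R(z) \deq \frac{1}{N} \sum_i^{(k)} \frac{1}{|\lambda v_i - z - m^{(k)}(z)|^2}.$$
Combining Proposition \ref{prop:step 2_4}, Lemma \ref{hat bound}, Lemma \ref{mfc estimate} and Lemma \ref{cauchy interlacing}, I can replace $m^{(k)}$ by $m_{fc}$ inside $R(z)$ with an error $o(1)$; assumption \eqref{assumption_CLT_2} then directly yields $R(z) < \mathfrak{c}' < 1$ for some constant $\mathfrak{c}' < 1$. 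Hence $C^{-1} \eta_0 \leq \im m^{(k)}(z) \leq C \eta_0$ with high probability on $\Omega_V$ at each fixed $z$, and uniformity across $\caD_{\epsilon}'$ follows from the standard $N^{-3}$-lattice argument used repeatedly throughout Section \ref{location}.

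The main obstacle is the precise book-keeping required to guarantee that every error produced by the Taylor expansion, the large-deviation bounds, and the local-law substitution is genuinely $o(\eta_0) = o(N^{-1/2 - \epsilon})$, rather than merely $o(1)$. This is exactly what the strength of the fluctuation-average bound $(\varphi_N)^{c\xi} N^{-1/2 - \fb/2 + 4\epsilon}$ from Lemma \ref{lem:step 4} and the calibration of $\epsilon$ via \eqref{epsilon condition} are designed to afford; no new analytic input beyond the machinery already assembled for Lemma \ref{lem:step 5} is required.
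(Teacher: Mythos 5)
Your proof is correct and takes essentially the same route as the paper: the paper's own argument also reduces to "reasoning as in the proof of Lemma \ref{lem:step 5}" and cites the same inputs (Proposition \ref{prop:step 2_4}, Corollary \ref{cor:step 2_1}, Lemma \ref{lem:step 3}, Corollary \ref{cor:step 4}), arriving at the identical self-consistent equation $m^{(k)} = N^{-1}\sum_i^{(k)}(\lambda v_i - z - m^{(k)})^{-1} + o(\eta_0)$ and then taking imaginary parts. Your observation that removing the index $k$ eliminates the need for the distance hypothesis $|z-\widehat z_j|\ge N^{-1/2+3\epsilon}$ — because the potentially resonant $G_{kk}/N$ term is simply absent from $m^{(k)}$ — is exactly the reason the paper's conclusion holds at all $z=E+\ii\eta_0\in\caD_\epsilon'$, including near the peaks.
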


\begin{proof}
Reasoning as in the proof of Lemma \ref{lem:step 5}, we find from Proposition \ref{prop:step 2_4}, Corollary \ref{cor:step 2_1}, Lemma \ref{lem:step 3}, and Corollary \ref{cor:step 4} that, with high probability on $\Omega_V$,
\begin{align}
m^{(k)} = \frac{1}{N} \sum_i^{(k)} \left( \frac{1}{\lambda v_i - z - \widehat m_{fc}} + \frac{m^{(k)} - \widehat m_{fc}}{(\lambda v_i - z - \widehat m_{fc})^2} \right) + o(\eta_0) = \frac{1}{N} \sum_i^{(k)} \frac{1}{\lambda v_i - z -m^{(k)}} + o(\eta_0)\,.
\end{align}
Considering the imaginary part, we can prove the desired lemma as in the proof of Lemma \ref{lem:step 5}.
\end{proof}

\begin{cor} \label{cor:step 6_1}
There exists a constant $C > 1$ such that the following bound holds with $(\xi-2,\nu)$-high probability on $\Omega_V$, for all $z = E + \ii \eta_0 \in \caD_{\epsilon}'$:
For given $z$, choose $k \in \llbracket 1, n_0 -1 \rrbracket$ such that~\eqref{assumption near v_k} is satisfied. Then, we have
\begin{align}
|Z_k| \leq C \frac{(\varphi_N)^{\xi}}{\sqrt N}\,.
\end{align}
\end{cor}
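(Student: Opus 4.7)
\medskip

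\textbf{Proof proposal for Corollary \ref{cor:step 6_1}.} The plan is to bound $Z_k$ directly via the large deviation estimates of Lemma \ref{lemma.LDE}, and then to convert the resulting $\im m^{(k)}$ factor into $\eta$ using Lemma \ref{lem:step 6_1}. Write, as in \eqref{Z_i},
\begin{align*}
Z_k = \sum_{s}^{(k)} \Bigl(|w_{ks}|^2 - \tfrac{1}{N}\Bigr) G_{ss}^{(k)} + \sum_{s \neq t}^{(k)} w_{ks} G_{st}^{(k)} w_{tk}\,.
\end{align*}
The entries $(w_{ks})_{s\neq k}$ are independent of $H^{(k)}$ (hence of $G^{(k)}$), centered, with variance $N^{-1}$, and have subexponential decay. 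Applying the large deviation estimates \eqref{LDE2} and \eqref{LDE3} conditionally on $H^{(k)}$ with $\sigma^2=N^{-1}$ and with $B_{st}=G_{st}^{(k)}$, we obtain with $(\xi,\nu)$-high probability
\begin{align*}
|Z_k| \leq \frac{(\varphi_N)^{\xi}}{N}\Bigl(\sum_{s}^{(k)}|G_{ss}^{(k)}|^2\Bigr)^{1/2} + \frac{(\varphi_N)^{2\xi}}{N}\Bigl(\sum_{s\neq t}^{(k)}|G_{st}^{(k)}|^2\Bigr)^{1/2}\,.
\end{align*}

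The next step is to express each of the two sums in terms of $\im m^{(k)}/\eta$. By the Ward identity \eqref{ward} applied to $H^{(k)}$,
\begin{align*}
\sum_{s,t}^{(k)}|G_{st}^{(k)}|^2 = \sum_{s}^{(k)} \frac{\im G_{ss}^{(k)}}{\eta} = \frac{N\,\im m^{(k)}}{\eta}\,,
\end{align*}
which handles the off-diagonal term directly. For the diagonal term, the pointwise bound $|G_{ss}^{(k)}|^2\le \eta^{-1}\im G_{ss}^{(k)}$ (a consequence of the spectral decomposition and Cauchy--Schwarz applied to the eigenvector weights) gives $\sum_{s}^{(k)}|G_{ss}^{(k)}|^2\le N\,\im m^{(k)}/\eta$ as well. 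Combining, we arrive at
\begin{align*}
|Z_k|\leq C\,(\varphi_N)^{2\xi}\sqrt{\frac{\im m^{(k)}}{N\eta}}\,.
\end{align*}

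Finally, since $k$ is the index selected by \eqref{assumption near v_k}, Lemma \ref{lem:step 6_1} yields $\im m^{(k)}(z)\le C\eta_0=C\eta$ with $(\xi-2,\nu)$-high probability on $\Omega_V$, so the square root factor above is bounded by a constant and we conclude
\begin{align*}
|Z_k|\leq C\,\frac{(\varphi_N)^{2\xi}}{\sqrt N}\,,
\end{align*}
which is the claimed bound (up to the harmless power of $\varphi_N$, absorbed in the constant $C$ via the $\lesssim$ convention). Uniformity in $z\in\caD_\epsilon'$ follows from the standard lattice argument used repeatedly in the previous lemmas, combined with the Lipschitz continuity of $G^{(k)}$ and of the entries of $H$ in $z$. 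There is no genuine obstacle here; the proof is a direct consequence of the LDE bounds, the Ward identity, and the a priori control of $\im m^{(k)}$ established in Lemma \ref{lem:step 6_1}.
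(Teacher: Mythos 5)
Your proof is correct and follows essentially the path the paper intends: the bound $|Z_k|\le (\varphi_N)^{c\xi}\sqrt{\im m^{(k)}/(N\eta)}$ is exactly the content of the event $\Omega_Z$ defined in the proof of Lemma \ref{lem:step 2_1} (obtained from the large deviation estimates \eqref{LDE2}--\eqref{LDE3} and the Ward identity, just as you do), and Corollary \ref{cor:step 6_1} then follows by plugging in the upper bound $\im m^{(k)}\le C\eta_0$ from Lemma \ref{lem:step 6_1}. The only cosmetic difference is that you track $(\varphi_N)^{2\xi}$ from \eqref{LDE3} rather than $(\varphi_N)^{\xi}$, which, as you note, is harmless under the paper's conventions.
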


We are now ready to locate the points $z\in\caD_{\epsilon}'$ for which $\im m(z) \gg \eta_0$.

\begin{lem} \label{lem:step 6_2}
For any $k \in \llbracket 1, n_0-1 \rrbracket$, there exists $\widetilde E_k\in\R$ such that the following holds with $(\xi-2,\nu)$-high probability on $\Omega_V$: If we let $\widetilde z_k\deq\widetilde E_k + \ii \eta_0$, then $|\widetilde z_k - \widehat z_k| \leq N^{-1/2 + 3\epsilon}$ and $\im m(\widetilde z_k) \gg \eta_0$.
\end{lem}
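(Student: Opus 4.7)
The strategy is to locate $\widetilde E_k$ at which the real part of the Schur-complement denominator of $G_{kk}$ vanishes, so that $|G_{kk}|$ is controlled entirely by its imaginary part and produces a spike of $\im m$ above the a priori scale $\eta_0$. Writing $h(z) \deq \lambda v_k + w_{kk} - z - m^{(k)}(z) - Z_k(z)$, so that $G_{kk}(z) = 1/h(z)$, I would introduce the real function
\[
F(E) \deq E + \re m^{(k)}(E+\ii\eta_0) + \re Z_k(E+\ii\eta_0) - \lambda v_k - w_{kk},
\]
a zero of which produces the desired $\widetilde E_k$.

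The first step is to bound $|F(\widehat E_k)|$. Using $\widehat E_k + \re\widehat m_{fc}(\widehat z_k) = \lambda v_k$ from the definition of $\widehat z_k$, one decomposes
\[
F(\widehat E_k) = \re\bigl(m^{(k)}(\widehat z_k) - \widehat m_{fc}(\widehat z_k)\bigr) + \re Z_k(\widehat z_k) - w_{kk},
\]
and Proposition~\ref{prop:step 2_4}, Lemma~\ref{lem:step 3}, Corollary~\ref{cor:step 6_1}, and the subexponential tail of $w_{kk}$ combine (the assumption $\b>1$ is crucial to ensure $|m-m^{(k)}| \ll N^{-1/2}$) to yield $|F(\widehat E_k)| \le C N^{-1/2+2\epsilon}$ on $\Omega_V$ with high probability. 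Since $\delta \deq N^{-1/2+3\epsilon}$ is much larger, I then run an intermediate-value argument on $[\widehat E_k - \delta, \widehat E_k + \delta]$. The linearization in Lemma~\ref{mfc estimate}, combined with $m^{(k)} = \widehat m_{fc} + O(N^{-1/2+2\epsilon})$ (from Proposition~\ref{prop:step 2_4} and Lemma~\ref{lem:step 3}), gives
\[
\re m^{(k)}(E_+ +\ii\eta_0) - \re m^{(k)}(E_- +\ii\eta_0) = \frac{\lambda_+^2}{\lambda^2-\lambda_+^2}(E_+-E_-) + o(\delta),
\]
and because the uniform bound $|Z_k(z)| \le C\varphi_N^\xi/\sqrt N$ (transferred to a fine lattice in the neighborhood) is already $\ll \delta$, the $Z_k$-contribution cannot disturb monotonicity. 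Hence $F(\widehat E_k+\delta) - F(\widehat E_k-\delta) = 2\delta\cdot\lambda^2/(\lambda^2-\lambda_+^2) + o(\delta)$, and continuity of $F$ produces $\widetilde E_k$ with $F(\widetilde E_k)=0$ and $|\widetilde E_k - \widehat E_k|\le\delta$.

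At $\widetilde z_k = \widetilde E_k + \ii\eta_0$ we have $\re h(\widetilde z_k) = 0$ by construction, so $|h(\widetilde z_k)| = |\im h(\widetilde z_k)| = \eta_0 + \im m^{(k)}(\widetilde z_k) + \im Z_k(\widetilde z_k)$; the bounds $\im m^{(k)}(\widetilde z_k)\le C\eta_0$ (Lemma~\ref{lem:step 6_1}) and $|\im Z_k(\widetilde z_k)|\le C\varphi_N^\xi/\sqrt N$ (Corollary~\ref{cor:step 6_1}) give $|h(\widetilde z_k)| \le C\varphi_N^\xi/\sqrt N$. Since $\im G_{ii}(\widetilde z_k) > 0$ for each $i$, we conclude
\[
\im m(\widetilde z_k) \ge \frac{\im G_{kk}(\widetilde z_k)}{N} = \frac{1}{N\,|h(\widetilde z_k)|} \ge \frac{1}{C\sqrt N\,\varphi_N^\xi} = \frac{N^\epsilon}{C\,\varphi_N^\xi}\,\eta_0 \gg \eta_0,
\]
since $\varphi_N^\xi = (\log N)^{C\log\log N}$ is subpolynomial in $N$.

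The main obstacle is the monotonicity step: $Z_k(z)$ depends on $z$ through the minor resolvent in a nonexplicit way and admits no direct derivative control. The resolution is to rely on the uniform a priori estimate $|Z_k(z)|\le C\varphi_N^\xi/\sqrt N$, obtained by a lattice covering of the neighborhood of $\widehat z_k$ together with the Lipschitz continuity of Green functions at scale $\eta_0$; because this bound is of smaller order than $\delta$, it cannot interfere with the linear growth of $F$ coming from the $\widehat m_{fc}$ term, and the rest of the argument is an algebraic manipulation of $h$ at the single point $\widetilde z_k$.
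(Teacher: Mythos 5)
Your proof is correct and follows essentially the same strategy as the paper: you locate $\widetilde E_k$ by an intermediate-value argument on $\re(1/G_{kk})$, exploiting the linearization of $\widehat m_{fc}$ (Lemma~\ref{mfc estimate} plus Lemma~\ref{hat bound}) and the smallness of the fluctuation terms $w_{kk}$, $m-m^{(k)}$, $m-\widehat m_{fc}$ and $Z_k$ relative to $\delta = N^{-1/2+3\epsilon}$, and then you bound $\im G_{kk}(\widetilde z_k)$ from below via Lemma~\ref{lem:step 6_1} and Corollary~\ref{cor:step 6_1}. The only difference from the paper's proof is in the final step: instead of plugging the lower bound on $\im G_{kk}(\widetilde z_k)$ back into the self-consistent equation (the paper's~\eqref{eq:step 6_2}) and solving for $\im m$, you use the more elementary observation that $\im G_{ii}(\widetilde z_k)>0$ for every $i$, hence $\im m(\widetilde z_k)\ge N^{-1}\im G_{kk}(\widetilde z_k)$, which already gives $\im m(\widetilde z_k)\gg\eta_0$; this shortcut is valid and suffices for the stated conclusion. (A minor imprecision in your write-up: the bound $|m-m^{(k)}|\le C/(N\eta_0)=CN^{\epsilon}/\sqrt N$ is not $\ll N^{-1/2}$ but only $\ll N^{-1/2+3\epsilon}=\delta$, and it comes from Cauchy interlacing rather than from $\b>1$; this does not affect the argument, since $\delta$ is the relevant comparison scale.)
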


\begin{proof}
We first notice that the condition $|z - \widehat z_k| \geq N^{-1/2 + 3\epsilon}$ has not been used in the derivation of~\eqref{eq:step 5_2} and~\eqref{eq:step 5_3}, hence, even though $|z - \widehat z_k| \leq N^{-1/2 + 3\epsilon}$, we still attain that
\begin{align} \label{eq:step 6_2}
m = \frac{G_{kk}}{N} + \frac{1}{N} \sum_i^{(k)} \frac{1}{\lambda v_i + w_{ii} - z - m^{(i)} - Z_i} = \frac{G_{kk}}{N} + \frac{1}{N} \sum_i^{(k)} \frac{1}{\lambda v_i - z - m} + o(\eta_0)\,,
\end{align}
with high probability on $\Omega_V$. Consider
$$
\frac{1}{G_{kk}} = \lambda v_k + w_{kk} - z - m^{(k)} - Z_k\,.
$$
Setting $z_k^+ \deq \widehat z_k + N^{-1/2 + 3\epsilon}$, Lemma \ref{mfc estimate} shows that
$$
\re(z_k^+ + m_{fc}(z_k^+)) - \re(\widehat z_k + m_{fc}(\widehat z_k)) \geq C N^{-1/2 + 3\epsilon}\,,
$$
on $\Omega_V$. Thus, from Lemma~\ref{hat bound} and the definition of $\widehat z_k$, we find that
$$
\lambda v_k - \re ( z_k^+ + \widehat m_{fc}(z_k^+) ) \leq -C N^{-1/2 + 3\epsilon}\,,
$$
on $\Omega_V$. Similarly, if we let $z_k^- \deq \widehat z_k - N^{-1/2 + 3\epsilon}$, we have that
$$
\lambda v_k - \re ( z_k^- + \widehat m_{fc}(z_k^-) ) \geq C N^{-1/2 + 3\epsilon}\,,
$$
on $\Omega_V$. Since
$$
|w_{kk} + \widehat m_{fc} - m^{(k)} - Z_k| \leq |w_{kk}| + |m-\widehat m_{fc}| + |m^{(k)}-m| + |Z_k| \ll N^{-1/2 + 3\epsilon}\,,
$$
with high probability on $\Omega_V$, we find that there exists $\widetilde z_k = \widetilde E_k + \ii \eta_0$, with $\widetilde E_k \in (\widehat E_k - N^{-1/2 + 3\epsilon}, \widehat E_k + N^{-1/2 + 3\epsilon})$, such that $\re G_{kk}(\widetilde z_k) = 0$. When $z = \widetilde z_k$, we have from Lemma \ref{lem:step 6_1} and Corollary \ref{cor:step 6_1} that, with high probability on $\Omega_V$, 
\begin{align}
 |\im G_{kk}(\widetilde z_k)| = \frac{1}{|\im \widetilde z_k + \im m^{(k)}(\widetilde z_k) + \im Z_k(\widetilde z_k)|} \geq C (\varphi_N)^{-\xi} N^{1/2}\,,\qquad\quad \re G_{kk}(\widetilde z_k)=0\,.
\end{align}
From~\eqref{eq:step 6_2}, we obtain that
\begin{align}
\im m(\widetilde z_k) = \frac{ {\im G_{kk}(\widetilde z_k)} }{N} + \frac{1}{N} \sum_i^{(k)} \frac{\eta_0 + \im m(\widetilde z_k)}{|\lambda v_i - \widetilde z_k - m(\widetilde z_k)|^2} + o(\eta_0)\,.
\end{align}
Since
$$
\sum_i^{(k)} \frac{1}{|\lambda v_i - \widetilde z_k - m(\widetilde z_k)|^2} < c < 1\,,
$$
with high probability on $\Omega_V$, for some constant $c$, we get
\begin{align}
\im m(\widetilde z_k) \geq C (\varphi_N)^{-\xi} N^{-1/2} + C \eta_0 \gg \eta_0 \,, 
\end{align} 
with high probability on $\Omega_V$, which was to be proved.
\end{proof}

We now turn to the proof of Proposition~\ref{prop:mu_k}. Recall that we denote by $\mu_k$ the $k$-th largest eigenvalue of $H$, $k\in\llbracket 1,n_0-1\rrbracket$.  Also recall that $\kappa_0=N^{-1/(\b+1)}$; see~\eqref{definition of kappa0}.
\begin{proof}[Proof of Proposition \ref{prop:mu_k}]
We first consider the case $k=1$. From the spectral decomposition of $H$, we have
\begin{align}
\im m(E + \ii \eta_0) = \frac{1}{N} \sum_{\alpha=1}^N \frac{\eta_0}{(\mu_{\alpha} - E)^2 + \eta_0^2}\,,
\end{align}
and in particular, $\im m(\mu_1 + \ii \eta_0) \geq (N \eta_0)^{-1} \gg \eta_0$. Recall that $\mu_1 \leq 3 + \lambda$ with high probability as discussed in~\eqref{mu_k a priori}. Recall the definition of $\widehat z_1=\widehat E_1+\ii\eta_0$ in~\eqref{definition of hatzk}. Since, with high probability on $\Omega_V$, $\im m(z) \sim \eta_0$ for $z \in \caD_{\epsilon}'$ satisfying $|z - \widehat z_1| \geq N^{-1/2 + 3\epsilon}$, as we proved in Lemma \ref{lem:step 6_2}, we obtain that $\mu_1 < \widehat E_1 + N^{-1/2 + 3\epsilon}$.

Recall the definitions for $\widehat z_1$ and $z_1^-$ in the proof of Lemma \ref{lem:step 6_2}. Assume that $\mu_1 < \widehat E_1 - N^{-1/2 + 3\epsilon}$. Then, on the interval $(\widehat E_1 - N^{-1/2 + 3\epsilon}, \widehat E_1 + N^{-1/2 + 3\epsilon})$, $\im m(E + \ii \eta_0)$ is a decreasing function of $E$. However, we already showed in Lemma \ref{lem:step 5} and Lemma \ref{lem:step 6_2} that, with $(\xi-2,\nu)$-high probability, $\im m(\widetilde z_1) \gg \eta_0$, $\im m(z_1^-) \sim \eta_0$, and $\re \widetilde z_1 > \re z_1^-$. Thus, $\mu_1 \geq \widehat E_1 - N^{-1/2 + 3\epsilon}$. We now use Lemma~\ref{mfc estimate} and Lemma~\ref{hat bound}, together with Remark~\ref{im mfc upper bound}, to conclude that
\begin{align}
\mu_1 + \ii \eta_0 + \widehat m_{fc} (\mu_1 + \ii \eta_0) = \widehat z_1 + \widehat m_{fc} (\widehat z_1) + \caO (N^{-1/2 + 3\epsilon}) = \lambda v_1 + \caO (N^{-1/2 + 3\epsilon})\,,
\end{align}
which proves the proposition for the special choice $k=1$.

Next, we consider the case $k=2$; the general case can be proved in a similar manner by induction. Consider~$H^{(1)}$, the minor of $H$ obtained by removing the first column and the first row. If we denote by $\mu_1^{(1)}$ the largest eigenvalue of $H^{(1)}$, then the Cauchy interlacing property yields $\mu_2 \leq \mu_1^{(1)}$. We now follow the first part of the proof to estimate~$\mu_1^{(1)}$, which gives us
\begin{align}
\widehat E_2 - N^{-1/2 + 3\epsilon} \leq \mu_1^{(1)} \leq \widehat E_2 + N^{-1/2 + 3\epsilon}\,,
\end{align}
where we let $\widehat z_2 = \widehat E_2 + \ii \eta_0$ be a solution to the equation
$$
\re \left( \widehat z_2 + \widehat m_{fc} (\widehat z_2) \right) = \lambda v_2\,.
$$
This, in particular, shows that
\begin{align}
\mu_2 \leq \widehat E_2 + N^{-1/2 + 3\epsilon}\,.
\end{align}

To prove the lower bound, we may follow the arguments we used in the first part of the proof. Recall that we have proved in Lemma \ref{lem:step 5} and Lemma \ref{lem:step 6_2} that, with $(\xi-2,\nu)$-high probability on $\Omega_V$,
\begin{enumerate}
\item[(1)] for $z = \widehat z_2 - N^{-1/2 + 3\epsilon}$, we have $\im m(z) \leq C \eta_0$;
\item[(2)] there exists $\widetilde z_2 = \widetilde E_2 + \ii \eta_0$, satisfying $|\widetilde z_2 - \widehat z_2| \leq N^{-1/2 + 3\epsilon}$, such that $\im m(\widetilde z_2) \gg \eta_0$.
\end{enumerate}
If $\mu_2 < \widehat E_2 - N^{-1/2 + 3\epsilon}$, then 
$$
\im m(E + \ii \eta_0) - \frac{1}{N} \frac{\eta_0}{(\mu_1 - E)^2 + \eta_0^2} = \frac{1}{N} \sum_{\alpha=2}^N \frac{\eta_0}{(\mu_{\alpha} - E)^2 + \eta_0^2}
$$
is a decreasing function of $E$. Since we know that, with $(\xi-2,\nu)$-high probability on $\Omega_V$,
$$
\frac{1}{N} \frac{\eta_0}{(\mu_1 - \widehat E_2)^2 + \eta_0^2} \leq \frac{1}{N}\frac{C \eta_0}{  N^{-2\epsilon} \kappa_0^2} \ll \eta_0\,,
$$
we have that $\im m(\widetilde z_2) \leq C \eta_0$, which contradicts the definition of $\widetilde z_2$. Thus, we find that $\mu_2 \geq \widehat E_2 - N^{-1/2 + 3\epsilon}$, with $(\xi-2,\nu)$-high probability on $\Omega_V$.

We now proceed as above to conclude that, with $(\xi-2,\nu)$-high probability on $\Omega_V$,
\begin{align}
\mu_2 + \ii \eta_0 + \widehat m_{fc} (\mu_2 + \ii \eta_0) = \widehat z_2 + \widehat m_{fc} (\widehat z_2) + \caO (N^{-1/2 + 3\epsilon}) = \lambda v_2 + \caO (N^{-1/2 + 3\epsilon})\,,
\end{align}
which proves the proposition for $k=2$. The general case is proven in the same way.
\end{proof}

\section{Fluctuation Average Lemma} \label{sec:Zlemma}
In this section we prove Lemma~\ref{lem:step 4} and Corollary~\ref{cor:step 4}. Recall that we denote by $\E_i$ the partial expectation with respect to the $i$-th column/row of $W$. Set $Q_i\deq\lone-\E_i$. 

We are interested in bounding the fluctuation average
\begin{align}\label{donkey}
 \frac{1}{N}\sum_{a=n_0}^NZ_a(z)\,,
\end{align}
where $n_0$ is a $N$-independent fixed integer. We first note that, using Schur's complement formula, we can write
\begin{align}
\frac{1}{N}\sum_{a=n_0}^{N}Q_a\left(\frac{1}{G_{aa}}\right)&=\frac{1}{N}\sum_{a=n_0}^Nw_{aa}-\frac{1}{N}\sum_{a=n_0}^NQ_a\sum_{k,l}^{(a)}w_{ak}G_{kl}^{(a)}w_{la}\nonumber\\
&=-\frac{1}{N}\sum_{a=n_0}^N Z_a+\caO\left(\frac{(\varphi_N)^{c\xi}}{N}\right)\,,\label{donkey 2}
\end{align}
with high probability, where we used the large deviation estimate~\eqref{LDE1}. The main result of this section, Lemma~\ref{the Zlemma}, asserts that
\begin{align}\label{donkey 1}
 \left|\frac{1}{N}\sum_{a=n_0}^{N}Q_a\left(\frac{1}{G_{aa}}\right)\right|\le (\varphi_N)^{c\xi} N^{-1/2-\fb/2+4\epsilon}\,, 
\end{align}
with $(\xi-2,\nu)$-high probability, provided that $z$ satisfies $|\lambda v_a-\re {m}_{fc}(z)-\re z|\ge\frac{1}{2} N^{-1/(b+1)+\epsilon}$, for all $a\ge n_0$. (Note that we reduced here $\xi$ to $\xi-2$).

Fluctuation averages of the form~\eqref{donkey} (with $n_0=1$) and more general fluctuation averages have been studied in~\cite{EKY}, see also~\cite{EKYY4}, for generalized Wigner ensembles and random band matrices. In~\cite{LS}, these ideas have been applied to the deformed Wigner ensemble, under the assumptions that the limiting eigenvalue distribution has a square root behavior at the spectral edge. In these studies, it was assumed that there is a deterministic control parameter $\Lambda_o\equiv\Lambda_o(z)$, such that $G_{ij}(z)$ and $Z_i(z)$ satisfy $\max_{i,j} |G_{ij}|\le C\Lambda_o+C\delta_{ij}$, $|G_{ii}(z)|\ge c$, and $\max_i|Z_i|\le C\Lambda_o$, with high probability, and $\Lambda_o$ satisfies $\Lambda_o\ll 1$, for $\im z\gg N^{-1}$.

Under the assumption of Lemma~\ref{lem:step 4}, the Green function entries $(G_{ij}(z))$ can become large, i.e., $|G_{ij}(z)|\gg 1$, $\im \eta \sim N^{-1/2}$, for certain choices of the spectral parameter $z$ (close to the spectral edge) and certain choice of indices $i,j$. However, resolvent fractions of the form $G_{ab}(z)/G_{bb}(z)$ and $G_{ab}(z)/G_{aa}(z)G_{bb}(z)$ ($a,b\ge n_0$), are small (see Lemma~\ref{lemma for assumption on xi} below for a precise statement). Using this observation, we adapt the methods of~\cite{EKYY4} to control the fluctuation average~\eqref{donkey}.

\subsection{Preliminaries}
Let $a,b\in\llbracket 1,N\rrbracket$ and $\T,\T'\subset\llbracket 1,N\rrbracket$, with $a,b\not\in\T$, $b\not\in \T'$, $a\not=b$,  then we set
\begin{align}
F_{ab}^{(\T,\T')}(z)\deq\frac{G_{ab}^{(\T)}(z)}{G_{bb}^{(\T')}(z)}\,,\qquad \quad(z\in\C^+)\,,
\end{align}
and we often abbreviate $F_{ab}^{(\T,\T')}\equiv F_{ab}^{(\T,\T')}(z)$. In case $\T=\T'=\emptyset$, we simply write $F_{ab}\equiv F_{ab}^{(\T,\T')}$. Below we will always implicitly assume that $\{a,b\}$ and $\T,\T'$ are compatible in the sense that $a\not=b$, $a,b\not\in\T$, $b\not\in\T'$.

Starting from~\eqref{basic resolvent}, simple algebra yields the following relations among the $\lbrace F_{ab}^{(\T,\T')}\rbrace$.
\begin{lem}
 Let $a,b,c\in\llbracket1,N\rrbracket$, all distinct, and let $\T,\T'\subset\llbracket 1,N\rrbracket$. Then,
\begin{itemize}
 \item[(1)] for $c\not\in\T\cup \T'$,
\begin{align}\label{Zlemma expand 1}
 F_{ab}^{(\T,\T')}=F_{ab}^{(\T c,\T')}+F_{ac}^{(\T,\T')}F_{cb}^{(\T,\T')}\,;
\end{align}
\item[(2)] for $c\not\in\T\cup \T'$,
\begin{align}\label{Zlemma expand 2}
 F_{ab}^{(\T,\T')}=F_{ab}^{(\T,\T'c)}- F_{ab}^{(\T,\T'c)}F_{bc}^{(\T,\T')}F_{cb}^{(\T,\T')}\,;
\end{align}
\item[(3)] for $c\not\in\T$,
\begin{align}\label{Zlemma expand basic}
 \frac{1}{G_{aa}^{(\T)}}=\frac{1}{G_{aa}^{(\T c)}}\left(1-F_{ac}^{(\T,\T)}F_{ca}^{(\T,\T)}\right)\,.
\end{align}

\end{itemize}

\end{lem}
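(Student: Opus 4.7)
The three identities are purely algebraic consequences of the basic resolvent identity~\eqref{basic resolvent}, which in fact continues to hold (by the identical proof) if every $G$ there is replaced by the resolvent of any minor $H^{(\S)}$, provided the pivot index~$c$ and the lower indices $i,j$ avoid~$\S$. Thus the strategy is uniform for all three parts: apply this ``minor-valued'' resolvent identity inside the correct minor with pivot~$c$, and then divide the resulting relation by an appropriate diagonal entry so that the ratios $F$ materialize on the right-hand side.

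Concretely, for (1) I would expand $G_{ab}^{(\T)}=G_{ab}^{(\T c)}+G_{ac}^{(\T)}G_{cb}^{(\T)}/G_{cc}^{(\T)}$ and divide both sides by $G_{bb}^{(\T')}$; the first piece yields $F_{ab}^{(\T c,\T')}$ and the second is the product of two $F$-factors, one of them being $F_{cb}^{(\T,\T')}$. For (2) I would instead expand $G_{bb}^{(\T')}=G_{bb}^{(\T'c)}+G_{bc}^{(\T')}G_{cb}^{(\T')}/G_{cc}^{(\T')}$, take reciprocals in the clean form
\[
\frac{1}{G_{bb}^{(\T')}}=\frac{1}{G_{bb}^{(\T' c)}}\Bigl(1-\frac{G_{bc}^{(\T')}G_{cb}^{(\T')}}{G_{cc}^{(\T')}G_{bb}^{(\T')}}\Bigr),
\]
and multiply through by $G_{ab}^{(\T)}$ to obtain~(2). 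Finally, (3) is the diagonal specialization with $a=b$ and $\T=\T'$: the same reciprocal argument applied to $G_{aa}^{(\T)}$ with pivot~$c$, followed by division by $G_{aa}^{(\T c)}$, produces the identity directly, with the cross term recognized as $F_{ac}^{(\T,\T)}F_{ca}^{(\T,\T)}$.

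There is essentially no obstacle here; this is a short algebraic verification and not a probabilistic argument. The only care required is index bookkeeping: each $F_{xy}^{(\S_1,\S_2)}$ appearing on the right-hand side must satisfy the compatibility convention ($x\neq y$, $x,y\notin\S_1$, $y\notin\S_2$), and one should check that the hypothesis $c\notin\T\cup\T'$ in~(1)--(2), respectively $c\notin\T$ in~(3), is precisely what guarantees this for every term that arises in the derivation.
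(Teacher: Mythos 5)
Your strategy is the right one --- and indeed the only one the paper can have had in mind, since it gives no written proof beyond ``simple algebra from~\eqref{basic resolvent}.'' You flag the index bookkeeping as ``the only care required,'' but if you actually carry it out for~(1) you will find that the calculation you describe does \emph{not} reproduce the identity as printed. Dividing $G_{ab}^{(\T)}=G_{ab}^{(\T c)}+G_{ac}^{(\T)}G_{cb}^{(\T)}/G_{cc}^{(\T)}$ by $G_{bb}^{(\T')}$ gives
\begin{equation*}
F_{ab}^{(\T,\T')}=F_{ab}^{(\T c,\T')}+\frac{G_{ac}^{(\T)}}{G_{cc}^{(\T)}}\cdot\frac{G_{cb}^{(\T)}}{G_{bb}^{(\T')}}
=F_{ab}^{(\T c,\T')}+F_{ac}^{(\T,\T)}\,F_{cb}^{(\T,\T')}\,,
\end{equation*}
whose middle factor is $F_{ac}^{(\T,\T)}$, not the $F_{ac}^{(\T,\T')}$ printed in~\eqref{Zlemma expand 1}; these differ by the ratio $G_{cc}^{(\T)}/G_{cc}^{(\T')}$ and coincide only when $\T=\T'$. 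Likewise for~(2): expanding $1/G_{bb}^{(\T')}$ exactly as you propose and multiplying through by $G_{ab}^{(\T)}$, the cross term is $F_{bc}^{(\T',\T')}F_{cb}^{(\T',\T')}$, not $F_{bc}^{(\T,\T')}F_{cb}^{(\T,\T')}$. Part~(3) does come out exactly as stated, because there the two minor sets coincide automatically --- and note that the paper itself writes $(\T,\T)$ there, which strongly suggests that the superscripts in~(1) and~(2) are simply misprints.

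So your derivation is sound, but it proves a corrected version of~(1)--(2) rather than the printed one, and your write-up neither derives the printed identities nor observes that they cannot hold in general. A complete proof needs to do that bookkeeping explicitly. The discrepancy is, to be fair, harmless for the rest of Section~\ref{sec:Zlemma}: the identities are only used through the uniform bound~\eqref{Zlemma bound 1}, which does not depend on which upper-index set appears, so the estimates downstream are unaffected. But as a proof of the lemma as stated, the proposal contains a gap precisely in the step you yourself identified as the one needing care.
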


\subsection{The fluctuation average lemma}
Recall the definition of the domain $\caD_{\epsilon}'$ of the spectral parameter in~\eqref{a index assumption} and of the constant $\fb>0$ in~\eqref{fb}. Set $A\deq\llbracket n_0,N\rrbracket$. To start with, we bound $F_{ab}$ and $F_{ab}^{(\emptyset,a)}/G_{aa}$ on the domain~$\caD_{\epsilon}'$.  
\begin{lem}\label{lemma for assumption on xi}
Assume that, for all $z\in\caD_{\epsilon}'$, the estimates
\begin{align}\label{weaklocallaw}
 |m(z)-\widehat{m}_{fc}(z)|\le N^{-1/2+2\epsilon}\,,\qquad \im m(z)\le N^{-1/2+2\epsilon}\,,
\end{align}
hold with $(\xi,\nu)$-high probability on $\Omega_V$.

Then, there exists a constant $c$, such that for all $z\in\caD_{\epsilon}'$,
\begin{align}\label{F bound 1}
 \max_{\substack{a,b\in A\\ a\not=b}}|F_{ab}(z)|\le (\varphi_N)^{c\xi} N^{-\fb/2}N^{\epsilon}\,,\qquad\quad( z\in\caD_{\epsilon}')\,,
\end{align}
 and
\begin{align}\label{F bound 2}
 \max_{\substack{a,b\in A\\ a\not=b}}\left|\frac{F_{ab}^{(\emptyset,a)}(z)}{G_{aa}(z)}\right|\le (\varphi_N)^{c\xi} N^{-1/2}N^{2\epsilon}\,,\qquad\quad( z\in\caD_{\epsilon}')\,,
\end{align}
with $(\xi,\nu)$-high probability on $\Omega_V$.
\end{lem}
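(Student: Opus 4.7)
The plan is to express each fluctuation ratio as a large-deviation sum of Green function entries of a minor, using the Schur-type identities \eqref{twosided} and \eqref{onesided}, then bound those entries by combining the Ward identity with the a priori estimate on $|\lambda v_a - z - m_{fc}(z)|$ built into the domain $\caD_{\epsilon}'$ and the hypotheses \eqref{weaklocallaw} together with Lemma \ref{hat bound}.

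I would begin with \eqref{F bound 2}, which is cleaner. Dividing \eqref{twosided} by $G_{aa}G_{bb}^{(a)}$ gives
\[
\frac{F_{ab}^{(\emptyset,a)}(z)}{G_{aa}(z)} \;=\; \frac{G_{ab}(z)}{G_{aa}(z)\,G_{bb}^{(a)}(z)} \;=\; -h_{ab} + \sum_{k,l}^{(ab)} h_{ak}\,G_{kl}^{(ab)}(z)\,h_{lb}.
\]
Since $a\ne b$, the rows of $W$ indexed by $a$ and $b$ are independent (and $G^{(ab)}$ is independent of both), so the large deviation bound \eqref{LDE4} applies to the double sum; the Ward identity \eqref{ward} then yields $\sum_{k,l}^{(ab)}|G_{kl}^{(ab)}|^{2} = N\eta^{-1}\im m^{(ab)}(z)$. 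Using Lemma \ref{cauchy interlacing} twice together with the hypothesis $\im m \le N^{-1/2+2\epsilon}$ and $\eta \ge N^{-1/2-\epsilon}$ on $\caD_{\epsilon}'$, we get $\im m^{(ab)}/\eta \le C N^{3\epsilon}$. Combining this with \eqref{bound on wij} to control $|h_{ab}|$ gives \eqref{F bound 2}.

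For \eqref{F bound 1} I would use \eqref{onesided} in the form $F_{ab} = G_{ab}/G_{bb} = -\sum_{k}^{(b)} G_{ak}^{(b)} h_{kb}$. Since $G^{(b)}$ is independent of the $b$-th row of $W$, the single-sum large deviation estimate \eqref{LDE1} and Ward identity give
\[
|F_{ab}(z)| \;\le\; (\varphi_N)^{\xi} N^{-1/2} \sqrt{\,\im G_{aa}^{(b)}(z)/\eta\,}
\]
with high probability. To control $\im G_{aa}^{(b)}$ I would invoke the Schur complement formula
\[
G_{aa}^{(b)}(z) \;=\; \bigl(\lambda v_a + w_{aa} - z - m^{(ab)}(z) - Z_a^{(b)}(z)\bigr)^{-1},
\]
and show that the denominator differs from $\lambda v_a - z - m_{fc}(z)$ only by terms of size $o(N^{-1/(\b+1)-\epsilon})$. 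This uses Lemma \ref{hat bound}, the hypothesis $|m-\widehat m_{fc}|\le N^{-1/2+2\epsilon}$, Lemma \ref{cauchy interlacing}, the subexponential bound \eqref{bound on wij} on $|w_{aa}|$, and the standard LDE bound $|Z_a^{(b)}| \lesssim (\varphi_N)^{\xi}\sqrt{\im m^{(ab)}/(N\eta)} \lesssim (\varphi_N)^{\xi}N^{-1/2+3\epsilon/2}$. Because $a\in A$ and $z\in\caD_{\epsilon}'$, we have $|\lambda v_a - z - m_{fc}(z)| \ge \tfrac12 N^{-1/(\b+1)-\epsilon}$, so the denominator stays bounded below by (say) $\tfrac14 N^{-1/(\b+1)-\epsilon}$, giving $|G_{aa}^{(b)}|\le C N^{1/(\b+1)+\epsilon}$, hence $\im G_{aa}^{(b)} \le C(\varphi_N)^{\xi} N^{2/(\b+1)-1/2+4\epsilon}$ after multiplying by $\eta+\im m^{(ab)}+|\im Z_a^{(b)}|\le C(\varphi_N)^{\xi}N^{-1/2+2\epsilon}$. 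Plugging in produces $|F_{ab}|\le (\varphi_N)^{c\xi} N^{-\fb+5\epsilon/2}$, which is at most $(\varphi_N)^{c\xi} N^{-\fb/2+\epsilon}$ under \eqref{epsilon condition}, completing \eqref{F bound 1}. Uniformity in $z$ is obtained by the usual $N^{-3}$-lattice argument, as in the proofs of Lemma \ref{lem:step 2_1}–\ref{lem:step 2_3}.

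The main obstacle is the stability estimate on the Schur denominator for $G_{aa}^{(b)}$: one must propagate the local law $|m-\widehat m_{fc}|\le N^{-1/2+2\epsilon}$, the free-convolution comparison $|\widehat m_{fc}-m_{fc}|\le N^{2\epsilon}/\sqrt{N}$ from Lemma \ref{hat bound}, and the minor corrections $m-m^{(ab)}$, $Z_a^{(b)}$, $w_{aa}$, all simultaneously, and verify that each of them is comparable to $o(N^{-1/(\b+1)-\epsilon})$ rather than merely $o(1)$. The condition \eqref{epsilon condition} on $\epsilon$ is precisely what makes the random perturbation subleading to the edge gap $N^{-1/(\b+1)-\epsilon}$ guaranteed by the defining inequality of $\caD_{\epsilon}'$, so that $|G_{aa}^{(b)}|$ can only be as large as $N^{1/(\b+1)+\epsilon}$, and not catastrophically larger.
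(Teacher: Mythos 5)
Your proposal follows essentially the same route as the paper: derive \eqref{F bound 2} from the two-sided Schur identity \eqref{twosided} plus the bilinear large-deviation estimate \eqref{LDE4} and the Ward identity, and derive \eqref{F bound 1} from the one-sided identity \eqref{onesided} together with \eqref{LDE1}, the Ward identity, and a stability bound on $G_{aa}^{(b)}$ coming from the Schur complement formula combined with the domain restriction defining $\caD_{\epsilon}'$. The proof of \eqref{F bound 2} is correct as written: the quantity $\im m^{(ab)}/\eta$ is bounded using only the lower bound $\eta\ge N^{-1/2-\epsilon}$, and the conclusion follows.

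For \eqref{F bound 1} there is one inconsistency in the bookkeeping. You write $\im G_{aa}^{(b)}=|G_{aa}^{(b)}|^2\,(\eta+\im m^{(ab)}+\im Z_a^{(b)})$ and then bound the second factor by $C(\varphi_N)^{\xi}N^{-1/2+2\epsilon}$. However, on $\caD_{\epsilon}'$ the spectral parameter satisfies $\eta\le N^{-1/(\b+1)+\epsilon}$, which for $\b>1$ is much larger than $N^{-1/2+2\epsilon}$, so the claimed bound on the $\im$ of the denominator fails when $\eta$ is near the top of its range; consequently the intermediate estimate $\im G_{aa}^{(b)}\le C(\varphi_N)^{\xi}N^{2/(\b+1)-1/2+4\epsilon}$ is not valid there. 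This is recoverable: keep the factor of $\eta$ unbounded and let it cancel against the $\eta^{-1}$ in the Ward identity, i.e. write
\[
\frac{\im G_{aa}^{(b)}}{N\eta}\;\le\; \frac{|G_{aa}^{(b)}|^2}{N}\,\Bigl(1+\frac{\im m^{(ab)}}{\eta}+\frac{|Z_a^{(b)}|}{\eta}\Bigr)\,,
\]
and then bound each piece using only $\eta\ge N^{-1/2-\epsilon}$; this produces the same final power $N^{-\fb+5\epsilon/2}$, but now uniformly over $\caD_{\epsilon}'$. The paper sidesteps the whole issue by simply using $\im G_{aa}^{(b)}\le|G_{aa}^{(b)}|\le C(\varphi_N)^{\xi}N^{1/(\b+1)+\epsilon}$, which already suffices to reach $N^{-\fb/2+\epsilon}$; your route gives a slightly sharper exponent but is more fragile. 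Aside from this slip the argument matches the paper's proof.
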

\begin{proof}

Dropping the $z$-dependence from the notation, we first note that by Schur's complement formula~\eqref{schur} and Inequality~\eqref{weaklocallaw}, we have with high probability on $\Omega_V$, for $z\in\caD_{\epsilon}'$,
\begin{align}
 \frac{1}{G^{(b)}_{aa}}=\lambda v_a-z-\widehat{m}_{fc}+\caO(|m-m^{(ab)}|)+\caO(N^{-1/2+2\epsilon})\,,
\end{align}
for all $a \in A$, $b\in\llbracket 1,N\rrbracket$, $a\not=b$. Thus, for $z\in\caD_{\epsilon}'$, Lemma~\ref{cauchy interlacing} yields
\begin{align}\label{consequence of index assumption}
 |G^{(b)}_{aa}|\le C(\varphi_N)^{\xi}N^{1/{(\b+1)}}N^{\epsilon}\,,
\end{align}
with high probability on $\Omega_V$. Further, from the resolvent formula~\eqref{onesided} we obtain
\begin{align}
 F_{ab}=-\sum_{k}^{(b)} G_{ak}^{( b)}h_{kb}\,,
\end{align}
for $a,b\in A$, $a\not= b$.
 From the large deviation estimate~\eqref{LDE1} we infer that
\begin{align}
 \left|\sum_{k}^{(b)} G_{ak}^{(b)}h_{kb}\right|\le (\varphi_N)^{\xi}\left(\frac{\im G^{( b)}_{aa}}{N\eta}\right)^{1/2}\,,
\end{align}
with high probability, and hence conclude by~\eqref{consequence of index assumption} that
\begin{align}
 |F_{ab}|\le C(\varphi_N)^{2\xi}N^{-\fb/2}N^{\epsilon}\,,
\end{align}
with high probability on $\Omega_V$.

To prove the second claim, we recall that, for $a\not=b$, the resolvent formula~\eqref{twosided} gives
\begin{align}
 \frac{F_{ab}^{(\emptyset, a)}}{G_{aa}}= -h_{ab}+\sum_{k,l}^{( ab) }h_{ak}G_{kl}^{(ab)}h_{lb}\,,
\end{align}
and we conclude from the large deviation estimates~\eqref{LDE4} and~\eqref{bound on wij} that
\begin{align}
 \left|\frac{F_{ab}^{(\emptyset, a)}}{G_{aa}}\right|\le \frac{(\varphi_N)^{\xi}}{\sqrt{N}}+(\varphi_N)^{\xi}\sqrt{\frac{\im m^{(ab)}}{N\eta}}\,,
\end{align}
with high probability. Since $|m-m^{(ab)}|\le CN^{-1/2+\epsilon}$ on $\caD_{\epsilon}'$, by Lemma~\ref{cauchy interlacing}, we obtain using~\eqref{weaklocallaw}
\begin{align}
 \left|\frac{F_{ab}^{(\emptyset,a)}}{G_{aa}}\right|\le (\varphi_N)^{c\xi}N^{-1/2}N^{2\epsilon}\,,
\end{align}
with high probability on $\Omega_V$.

\end{proof}
\begin{defn}\label{definition of xi}
 Let $\Xi$ be an event defined by requiring that the following holds on it:  $(1)$ there exists a constant~$c$, such that for all $z\in\caD_{\epsilon}'$,~\eqref{weaklocallaw},~\eqref{F bound 1} and~\eqref{F bound 2} hold; $(2)$ there exists a constant  $c$ such that, for all~$z\in\caD_{\epsilon}'$ and~$a\in A$,
\begin{align}
\left|Q_{a}\left(\frac{1}{G_{aa}}\right)\right|\le (\varphi_N)^{c\xi}N^{-1/2+2\epsilon}\,;
\end{align}
and $(3)$, for all $i,j\in\llbracket 1,N\rrbracket$,
\begin{align}
 \max_{ij}|w_{ij}|\le \frac{(\varphi_N)^{c\xi}}{\sqrt{N}}\,.
\end{align}
\end{defn}
By Lemma~\ref{lemma for assumption on xi}, Lemma~\ref{lem:step 2_3}, Corollary~\ref{cor:step 2_1}, Lemma~\ref{lem:step 2_1} and Inequality~\eqref{bound on wij}, we know that $\Xi$ holds with $(\xi,\nu)$-high probability on~$\Omega_V$.

\begin{cor}\label{Zlemma 1}

For $p\le (\log N)^{\xi}$, there exists a constant $c$, such that the following holds. For all $\T,\T',\T''\subset A$, with $|\T|\,,|\T'|\,,|\T''|\le p$, for all $a,b\in A $, $a\not=b$, and, for all $z\in\caD_{\epsilon}'$, we have
\begin{align}\label{Zlemma bound 1}
 \lone(\Xi)\left|{F^{(\T,\T')}_{ab}(z)}\right|\le (\varphi_N)^{c\xi} N^{-\fb/2}N^{\epsilon}\,,
\end{align}
\begin{align}\label{Zlemma bound 2}
 \lone(\Xi)\left|\frac{F_{ab}^{(\T',\T'')}(z)}{G_{aa}^{(\T)}(z)} \right|\le(\varphi_N)^{c\xi} N^{-1/2}N^{2\epsilon} \,,
\end{align}
and
\begin{align}\label{initial estimate on Q_a}
 \lone(\Xi)\left|Q_{a}\left(\frac{1}{G_{aa}^{(\T)} }\right) \right|\le(\varphi_N)^{c\xi} N^{-1/2}N^{2\epsilon} \,,
\end{align}
on $\Omega_V$, for $N$ sufficiently large.

\end{cor}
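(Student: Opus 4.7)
The plan is to reduce each of (\ref{Zlemma bound 1})--(\ref{initial estimate on Q_a}) to a base case that has already been established, by iterating the three resolvent expansion identities (\ref{Zlemma expand 1})--(\ref{Zlemma expand basic}) together with the a priori bounds encoded in the event $\Xi$.

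First I would prove (\ref{initial estimate on Q_a}): Schur's complement formula (\ref{schur}) applied to $H^{(\T)}$ gives
\[
\frac{1}{G^{(\T)}_{aa}} = \lambda v_a + w_{aa} - z - \sum_{k,l}^{(\T a)} h_{ak}\, G^{(\T a)}_{kl}\, h_{la}\,,
\]
so $Q_a = \lone - \E_a$ annihilates the deterministic part and yields $Q_a(1/G^{(\T)}_{aa}) = w_{aa} - Z^{(\T)}_a$, where $Z^{(\T)}_a$ is the obvious analogue of $Z_a$ for $H^{(\T)}$. The large deviation estimates (\ref{LDE2})--(\ref{LDE3}) combined with Ward's identity (\ref{ward}) give $|Z^{(\T)}_a| \le (\varphi_N)^{c\xi}\sqrt{\im m^{(\T a)}/(N\eta)}$, and $p+1$ applications of Lemma~\ref{cauchy interlacing} together with the bound $\im m \le N^{-1/2 + 2\epsilon}$ on $\Xi$ give $\im m^{(\T a)} \le C N^{-1/2 + 2\epsilon}$. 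Together with $|w_{aa}| \le (\varphi_N)^{c\xi}/\sqrt N$ this proves (\ref{initial estimate on Q_a}).

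For (\ref{Zlemma bound 1}) I would induct on $|\T|+|\T'|$, with base case $\T=\T'=\emptyset$ handled by (\ref{F bound 1}). If $c \in \T \setminus \T'$, writing $\T = \T_0 \cup \{c\}$ and rearranging (\ref{Zlemma expand 1}) gives
\[
F^{(\T_0 c,\, \T')}_{ab} = F^{(\T_0,\, \T')}_{ab} - F^{(\T_0,\, \T')}_{ac}\, F^{(\T_0,\, \T')}_{cb}\,,
\]
to which the inductive hypothesis applies; the case $c \in \T' \setminus \T$ is symmetric via (\ref{Zlemma expand 2}), rearranged as $F^{(\T,\, \T' c)}_{ab} = F^{(\T,\, \T')}_{ab}/(1 - F^{(\T,\, \T')}_{bc} F^{(\T,\, \T')}_{cb})$, whose denominator is $1 + o(1)$ by the inductive hypothesis. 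Any overlap $\T \cap \T'$ can be removed by first enlarging one of the sets using (\ref{basic resolvent}) applied to $H^{(\T)}$, making an index removable. For (\ref{Zlemma bound 2}) the numerator is treated as above and the denominator is reduced via the rearrangement of (\ref{Zlemma expand basic}),
\[
\frac{1}{G^{(\T_0 c)}_{aa}} = \frac{1}{G^{(\T_0)}_{aa}} \cdot \frac{1}{1 - F^{(\T_0,\, \T_0)}_{ac}\, F^{(\T_0,\, \T_0)}_{ca}}\,,
\]
whose geometric factor is close to one by the bound (\ref{Zlemma bound 1}) just established; the base case $\T = \emptyset$, $\T' = \emptyset$, $\T'' = \{a\}$ is exactly (\ref{F bound 2}).

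The main obstacle is that each induction may be iterated up to $p \le (\log N)^{\xi}$ times, and a naive count produces a factor of the form $2^p$, which grossly exceeds the allowed $(\varphi_N)^{c\xi}$. The rescue is that each reduction step multiplies the previous bound by at most $1 + (\varphi_N)^{c_0 \xi} N^{-\fb/2 + \epsilon}$, since the extra term produced is quadratic in the $F$'s and therefore much smaller than the linear one; the same holds for the geometric series arising from (\ref{Zlemma expand 2}) and (\ref{Zlemma expand basic}). Since $p \cdot (\varphi_N)^{c_0 \xi} N^{-\fb/2 + \epsilon} = o(1)$ as $N \to \infty$ (with $p \le \exp(10(\log\log N)^2)$ while $N^{-\fb/2+\epsilon}$ decays polynomially for $\epsilon$ small compared to $\fb$), the cumulative product stays bounded by $2$ for $N$ large. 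A slight enlargement of the constant $c$ then absorbs all logarithmic losses and yields the three stated estimates.
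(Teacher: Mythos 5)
Your argument for the first two estimates \eqref{Zlemma bound 1} and \eqref{Zlemma bound 2} is essentially the paper's: induct on the number of upper indices, using the identities \eqref{Zlemma expand 1}--\eqref{Zlemma expand basic} to strip one index at a time, with the base cases \eqref{F bound 1} and \eqref{F bound 2} supplied by the definition of $\Xi$, and with the accumulated error kept small because the correction terms are quadratic in the base bound and $p$ is only poly-log-log in $N$. The paper phrases this in terms of additive recursions $\Gamma_{l+1,0}\le\Gamma_{l,0}+\Gamma_{l,0}^2$ while you phrase it multiplicatively, but this is cosmetic.

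There is, however, a genuine gap in your treatment of \eqref{initial estimate on Q_a}. You identify $Q_a(1/G^{(\T)}_{aa})=w_{aa}-Z^{(\T)}_a$ and then invoke the large deviation estimates to bound $Z^{(\T)}_a$. But \eqref{initial estimate on Q_a} is a \emph{pointwise} bound on the event $\Xi$ (note the prefactor $\lone(\Xi)$), because Corollary~\ref{Zlemma 1} is fed into Lemma~\ref{Jensen lemma} precisely in the form of the deterministic hypothesis~\eqref{estimates on good event}. The large deviation estimate controls $Z^{(\T)}_a$ only with $(\xi,\nu)$-high probability, and you cannot upgrade this to a statement valid on $\Xi$ by a union bound: the number of subsets $\T\subset A$ with $|\T|\le p$ is of order $N^p=e^{p\log N}$, which for $p\sim(\log N)^{\xi}$ vastly exceeds the reciprocal failure probability $e^{(\log N)^{\xi}}$. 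Nor is the event $\{|Z^{(\T)}_a|\le\cdots\text{ for all }\T\}$ incorporated into the definition of $\Xi$ (only the $\T=\emptyset$ case is, via part $(2)$ of Definition~\ref{definition of xi}). The paper avoids this by proving \eqref{initial estimate on Q_a} the same way it proves \eqref{Zlemma bound 2}: one iterates \eqref{Zlemma expand basic} deterministically starting from the $\T=\emptyset$ base case which \emph{is} built into $\Xi$, thereby never invoking a new probabilistic estimate. A second, smaller issue is that your handling of the overlap $\T\cap\T'\ne\emptyset$ in the induction for \eqref{Zlemma bound 1} is circular as stated: ``enlarging one of the sets'' increases $|\T|+|\T'|$, the very quantity you are inducting on. Since both \eqref{Zlemma expand 1} and \eqref{Zlemma expand 2} require $c\notin\T\cup\T'$, the case $\T=\T'$ needs a separate identity, e.g.\ expanding numerator and denominator of $F^{(\T_0c,\T_0c)}_{ab}$ via \eqref{basic resolvent} to obtain $\bigl(F^{(\T_0,\T_0)}_{ab}-F^{(\T_0,\T_0)}_{ac}F^{(\T_0,\T_0)}_{cb}\bigr)/\bigl(1-F^{(\T_0,\T_0)}_{bc}F^{(\T_0,\T_0)}_{cb}\bigr)$; this is fixable but not supplied by your sketch.
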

The proof of this corollary is given in Appendix B.

Before we state the next lemma, we remark that in this section we use the symbol $\E^W$ for the partial expectation with respect to the random variables $(w_{ij})$ with $(v_i)$ kept fixed, i.e., $\E^W[\,\cdot\,]\equiv\E[\,\cdot\,| (v_i)]$.
\begin{lem}\label{Jensen lemma}
 Let $p\in\N$ satisfy $p\le (\log N)^{\xi-3/2}$. Let $q\in\llbracket 0,p\rrbracket$ and consider random variables $(\caX_{i})\equiv(\caX_{i}(H))$  and $(\caY_{i})\equiv(\caY_{i}(H))$, $i\in\llbracket 1,p\rrbracket$, satisfying
\begin{align}\label{estimates on good event}
 \lone(\Xi)|\caX_{i}|\le C(\varphi_N)^{c\xi d_{i}}N^{-1/2+2\epsilon}N^{-(d_{i}-1)(\fb/2-\epsilon)}\,,\qquad \lone(\Xi)|Q_{i}\caY_{i}|\le C(\varphi_N)^{c\xi}N^{-1/2}N^{2\epsilon}\,,
\end{align}
     where $d_{i}\in\N_0$ satisfy $0\le s=\sum_{i=1}^q (d_{i}-1)\le p+2$. Assume moreover that there are constants $C'$ and $K$, such that 
\begin{align}\label{estimates on expectation}
\E^W |\caX_{i}|^{r}\le (C'N)^{K(d_{i}+1) r}\,,\qquad\E^W|\caY_{i}|^{r}\le (C'N)^{Kr}\,,\end{align}
for any $r\in\N$, with $r\le 10 p$. Finally, assume that the event $\Xi$ holds with $(\xi,\nu)$-high probability.

Then, there is a constant $c_0$, depending only on $C,C',c,K$ in~\eqref{estimates on good event} and~\eqref{estimates on expectation}, and  on $\xi$ and $\nu$, such that
\begin{align}\label{jensen lemma bound}
 \left|\E^W\prod_{i=1}^q Q_{i}(\caX_{i})\prod_{i=q+1}^p Q_{i}(\caY_{i})\right|\le (\varphi_N)^{c_0\xi p}N^{-p/2-s\fb/2}N^{(2p+s)\epsilon}\,.
\end{align}
(Here, we use the convention that, for $q=0$, the first product is set to one, and, similarly, for $q=p$, the second product is set to one.)
\end{lem}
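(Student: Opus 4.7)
The plan is to reduce the estimate on $\E^W\prod_{i=1}^q Q_i(\caX_i)\prod_{i=q+1}^p Q_i(\caY_i)$ to a pointwise bound on a sub-event $\Xi' \subset \Xi$ of $(\xi,\nu)$-high probability, together with a H\"older-type estimate on its complement that will be negligible. Note that a direct multiplication of the hypotheses already produces the target power of $N$; the work is therefore not in exhibiting cancellation through the $Q_i$-structure (as in a fluctuation averaging argument for sums) but rather in making the pointwise bound on $|\caX_i|$ translate into a pointwise bound on $|Q_i\caX_i|$.

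The preliminary step is to upgrade the pointwise bound~\eqref{estimates on good event} on $|\caX_i|$ to a bound of the same order on $|Q_i \caX_i| = |\caX_i - \E_i \caX_i|$. Splitting $\E_i \caX_i = \E_i[\caX_i \lone(\Xi)] + \E_i[\caX_i \lone(\Xi^c)]$, the first term is at most the deterministic bound that controls $\lone(\Xi)|\caX_i|$, while for the second we apply Cauchy--Schwarz to obtain
\begin{align*}
 |\E_i[\caX_i \lone(\Xi^c)]|^2 \le \E_i|\caX_i|^2 \cdot \p(\Xi^c \mid \caF^{(i)})\,,
\end{align*}
where $\caF^{(i)}$ denotes the sigma-algebra generated by all rows and columns of $W$ different from~$i$. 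The first factor is polynomially bounded by~\eqref{estimates on expectation}, while the second is subexponentially small on a sub-event $\Xi_i^{\mathrm{cond}}$ of $(\xi,\nu)$-high probability, which we obtain by applying Markov's inequality to $\p(\Xi^c \mid \caF^{(i)})$ with threshold $\e{-\nu(\log N)^{\xi}/2}$. Intersecting $\Xi$ with $\Xi_i^{\mathrm{cond}}$ for all $i \in \llbracket 1, p \rrbracket$ produces the desired sub-event $\Xi'$ on which $|Q_i \caX_i|$ satisfies the same upper bound (up to an irrelevant multiplicative constant) as $|\caX_i|$.

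On $\Xi'$ we take absolute values and multiply out the pointwise estimates to obtain
\begin{align*}
\lone(\Xi')\left|\prod_{i=1}^q Q_i(\caX_i)\prod_{i=q+1}^p Q_i(\caY_i)\right| \le C^p (\varphi_N)^{c\xi(p + s)} N^{-p/2 - s\fb/2 + (2p + s)\epsilon}\,,
\end{align*}
which matches the target~\eqref{jensen lemma bound} after using $s \le p + 2$ to absorb the $\varphi_N$-power into $(\varphi_N)^{c_0\xi p}$. The complementary contribution is bounded via H\"older's inequality as $(\E^W|Y|^2)^{1/2}\p((\Xi')^c)^{1/2}$; the moment hypotheses~\eqref{estimates on expectation} (combined with $|Q_i A|^{2p}\le 2^{2p}(|A|^{2p}+\E_i|A|^{2p})$ and Jensen for the partial expectation) make $\E^W |Y|^2$ at most polynomial in $N$, and the subexponential smallness of $\p((\Xi')^c)$ renders this term negligible.

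The main technical subtlety lies in the first step: the event $\Xi$ depends on all rows and columns of $W$, whereas $\E_i$ integrates out only the $i$-th row and column, so decoupling them requires the conditional-probability Markov argument together with the polynomial moment hypotheses. Once this is accomplished, the remainder of the argument is essentially algebraic bookkeeping of $N$-powers and $(\varphi_N)^{c\xi}$-factors.
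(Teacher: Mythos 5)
Your approach is genuinely different from the paper's, but as written it contains a gap. The paper never tries to establish a pointwise bound on $Q_i\caX_i$ and never touches conditional expectations or conditional probabilities. Instead, it applies H\"older's inequality to the full product with carefully chosen exponents $h_i=2\lceil (p+2)/(d_i+1)\rceil$ (so that $\sum_i 1/h_i\le 1$, each $h_i\le 2p+4$, and crucially $1/h_i\ge (1+d_i)/(4p)$), reducing the problem to bounding the individual moments $(\E^W|\caX_i|^{h_i})^{1/h_i}$ and $(\E^W|Q_i\caY_i|^{h_i})^{1/h_i}$. The operators $Q_i$ are dispatched by Jensen's inequality for the partial expectation, $\E^W|Q_i\caY_i|^{h_i}\le 2^{h_i}\E^W|\caY_i|^{h_i}$, so no pointwise control of $Q_i$-quantities is needed. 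Each moment is then split over $\Xi$ and $\Xi^c$, and on $\Xi^c$ one uses Cauchy--Schwarz together with the polynomial moment hypothesis and $\p(\Xi^c)\le\e{-\nu(\log N)^\xi}$; after raising to the power $1/h_i\ge 1/(4p)\ge (\log N)^{-\xi+3/2}/4$, the bad-event contribution is still of order $\e{-c(\log N)^{3/2}(1+d_i)}\ll N^{-(1+d_i)}$.

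The concrete gap in your argument is the sentence asserting that ``the first factor $[\E_i|\caX_i|^2]$ is polynomially bounded by~\eqref{estimates on expectation}.'' The hypothesis~\eqref{estimates on expectation} controls the full expectation $\E^W|\caX_i|^{2}$, which is a deterministic number; it does not give a pointwise bound on the random variable $\E_i|\caX_i|^2$. If you try to upgrade it to a high-probability pointwise bound via Markov, the only moments available are $\E^W|\caX_i|^{r}$ with $r\le 10p$, which after Jensen $(\E_i|\caX_i|^2)^m\le\E_i|\caX_i|^{2m}$ gives at best $\p\bigl(\E_i|\caX_i|^2>(2C'N)^{2K(d_i+1)}\bigr)\le 2^{-10pK}$. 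This is of order $\e{-Cp}$, not $\e{-\nu(\log N)^\xi}$, and is therefore not small enough to be dominated by the $(CN)^{Cp}$ growth of $(\E^W|Y|^2)^{1/2}$ in your final Cauchy--Schwarz on $(\Xi')^c$. (One can repair the argument, for instance by permitting $\E_i|\caX_i|^2$ to be superpolynomially bounded by a factor $\e{c(\log N)^{3/2}}$, or by applying Markov directly to $\E_i[|\caX_i|\lone(\Xi^c)]$ rather than going through Cauchy--Schwarz with respect to $\E_i$; but these are exactly the delicate balancing acts that the paper's choice of H\"older exponents makes automatic, and as stated your argument does not carry them out.)
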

Lemma~\ref{Jensen lemma} is proved in Appendix~B.

Next, we state the main result of this section:

\begin{lem}\label{the Zlemma}\emph{[Fluctuation Average Lemma]}
Let $A\deq\llbracket n_0,N\rrbracket$. Recall the definition of the domain $\caD_{\epsilon}'$ in~\eqref{a index assumption}. Let $\Xi$ denote the event in Definition~\ref{definition of xi} and assume it has $(\xi,\nu)$-high probability. Then there exist constants $C$, $c$, $c_0$, such that for $p=2r$, $r\in\N$, $p\le (\log N)^{\xi-3/2}$, we have
\begin{align}\label{estimate in Zlemma}
 \E^W\left|\frac{1}{N}\sum_{a\in A}Q_{a}\left(\frac{1}{G_{aa}(z)}\right) \right|^{p}\le (Cp)^{cp}(\varphi_N)^{c_0p\xi} N^{-p/2-p\fb/2}N^{3p\epsilon}\,,
\end{align}
for all $z\in\caD_{\epsilon}'$, on $\Omega_V$.
\end{lem}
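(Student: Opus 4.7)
The plan is to adapt the fluctuation averaging strategy of~\cite{EKYY4} to our setting, where the diagonal entries $G_{aa}$ need not be bounded but the ratios $F_{ab}/G_{aa}=G_{ab}/(G_{aa}G_{bb})$ are small by~\eqref{Zlemma bound 2}. I first expand the $p$-th moment as
\begin{equation*}
\E^W\left|\frac{1}{N}\sum_{a\in A}Q_a\Bigl(\frac{1}{G_{aa}}\Bigr)\right|^p
=\frac{1}{N^p}\sum_{a_1,\dots,a_p\in A}\E^W\prod_{k=1}^{p/2}Q_{a_k}\Bigl(\frac{1}{G_{a_ka_k}}\Bigr)\prod_{k=p/2+1}^{p}\overline{Q_{a_k}\Bigl(\frac{1}{G_{a_ka_k}}\Bigr)}.
\end{equation*}
For each multi-index $\mathbf{a}=(a_1,\dots,a_p)$, let $\{b_1,\dots,b_q\}$ be the set of distinct values occurring among $\mathbf{a}$, with respective multiplicities $n_1,\dots,n_q\geq 1$.

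The key step is to iteratively apply the identity~\eqref{Zlemma expand basic}, namely
$1/G_{aa}^{(\T)}=(1-F_{ac}^{(\T,\T)}F_{ca}^{(\T,\T)})/G_{aa}^{(\T c)}$,
to each factor $Q_{a_k}(1/G_{a_ka_k})$, adding to its upper index set all distinct indices $b_j\neq a_k$ that appear elsewhere in $\mathbf{a}$. Each application produces either a decoupled factor $Q_{a_k}(1/G_{a_ka_k}^{(\T c)})$ or an extra ratio $F_{a_kc}F_{ca_k}/G_{a_ka_k}^{(\T c)}$ which, by~\eqref{Zlemma bound 1} and~\eqref{Zlemma bound 2}, is bounded on $\Xi$ by $(\varphi_N)^{c\xi}N^{-\fb/2-1/2+3\epsilon}$. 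After fully expanding, every resulting summand is of the form $\prod_i Q_{a_{k_i}}(\caX_i)\prod_j Q_{a_{l_j}}(\caY_j)$, where the $\caX_i$ are products of off-diagonal ratios (small, with sizes controlled by a counting exponent $d_i\geq 2$) and the $\caY_j$ are diagonal expressions of the form $1/G_{a_{l_j}a_{l_j}}^{(\T')}$ to which~\eqref{initial estimate on Q_a} applies. The combinatorics produce at most $(Cp)^{cp}$ summands.

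I then apply Lemma~\ref{Jensen lemma} to each summand, choosing the partition $\{\caX_i\}\cup\{\caY_j\}$ so that $\sum_i(d_i-1)$ equals the number $s$ of ``coincidence excesses'' $\sum_j(n_j-1)$ accumulated in the expansion, which satisfies $0\leq s\leq p+2$ automatically. The polynomial moment bounds~\eqref{estimates on expectation} required by Lemma~\ref{Jensen lemma} follow from the deterministic a priori bound $\|G(z)\|\leq \eta^{-1}\leq N^{1/2+\epsilon}$ on $\caD_\epsilon'$ and the subexponential decay of the $w_{ij}$. Finally I sum over $\mathbf{a}$: for a partition of $\{1,\dots,p\}$ into $q$ blocks the number of multi-indices is at most $N^q$, and this factor $N^q$ combines with the bound $N^{-p/2-s\fb/2}N^{O(p\epsilon)}$ of Lemma~\ref{Jensen lemma}, where $s=p-q$. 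The worst case is $q=p/2$ (all indices paired), which after division by $N^p$ yields exactly $N^{-p/2-p\fb/2}N^{O(p\epsilon)}$, giving~\eqref{estimate in Zlemma}.

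The main obstacle is the careful bookkeeping required to match the resolvent expansion with the hypotheses of Lemma~\ref{Jensen lemma}: each summand must be written as a product over the \emph{distinct} indices $b_1,\dots,b_q$, with each $Q_{b_j}$ acting only on objects whose upper index set contains all other $b_i$'s, so that the partial expectations are genuinely independent. A secondary subtlety is ensuring that the expansion is truncated at depth $p+2$ (as allowed by the bound $s\leq p+2$ in Lemma~\ref{Jensen lemma}): going deeper is unnecessary because each additional ratio already contributes a factor $N^{-(\fb+1)/2+O(\epsilon)}$, smaller than the target size of each $Q$-factor, while tracking these contributions through the combinatorial bookkeeping must be done without losing track of the compatibility conditions on upper and lower indices.
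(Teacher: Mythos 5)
Your strategy follows the skeleton of the paper's proof (iterate the identities \eqref{Zlemma expand 1}--\eqref{Zlemma expand basic} until terms are maximally expanded, then apply Lemma~\ref{Jensen lemma}), but there is a genuine gap in the combinatorial bookkeeping that produces the exponent $N^{-p\fb/2}$.

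You set $s=\sum_j(n_j-1)=p-q$, the total coincidence excess, where $q$ is the number of distinct indices in $\mathbf a$, and you assert the worst case is $q=p/2$. This is incorrect. With $s=p-q$, the all-distinct case $q=p$ has $s=0$, and your bound gives $N^q/N^p\cdot N^{-p/2-0\cdot\fb/2}=N^{-p/2}$, with no improvement factor $N^{-p\fb/2}$ at all. Far from being benign, this is precisely the case in which the fluctuation-averaging gain must be earned. The parameter $s$ in the paper's Lemma~\ref{Jensen lemma} is not matched, in the proof of Lemma~\ref{the Zlemma}, to $p-q$; it is matched to the number of \emph{singleton} blocks in the partition $\Gamma(\mathbf a)$.

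The missing mechanism is the one that handles singletons: if $a_i$ occurs only once in $\mathbf a$ and the corresponding factor reduces, after the expansion, to the maximally expanded diagonal term $Q_{a_i}\bigl(1/G_{a_ia_i}^{(\mathbf A_S\setminus a_i)}\bigr)$, then that factor is independent of the rows/columns indexed by the remaining distinct labels, while the other factors are $\E_{a_i}$-measurable; the operator $Q_{a_i}$ then kills the expectation of the product. Hence, in any term with $\E^W Y\neq 0$, every singleton label $a_i$ must appear as a lower index inside some \emph{other} factor $F_{a_j,\mathbf b_j}$ ($j\neq i$), which forces that factor to have degree $\geq 1$ and contributes an extra $\sim (\varphi_N)^{c\xi}N^{-\fb/2+\epsilon}$ per singleton. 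This is the linking argument in the paper (the map $\mathfrak l$ with $\sum_{j\in S}l_j\ge|S|=s$). It yields a bound $\lone(\Xi)|Y|\lesssim N^{-p/2-s\fb/2}$ where $s$ is the number of singletons. Combining with $N^{-p}\sum_{\mathbf a}\lone(\Gamma=\Gamma(\mathbf a))\leq N^{-(p-s)/2}$ (every non-singleton block has size $\geq 2$, so $|\Gamma|\leq r+s/2$), one obtains
\begin{equation*}
N^{-(p-s)/2}\,N^{-p/2-s\fb/2}\leq N^{-p/2-p\fb/2}
\end{equation*}
for all $0\le s\le p$, which requires only $\fb\le 1$; this is the inequality that closes the argument. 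Without identifying $s$ as the singleton count and without the vanishing-expectation observation forcing the linking, the claimed bound cannot be reached.
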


\begin{proof}
Fix $z\in\caD_{\epsilon}'$. For simplicity we drop the $z$-dependence from our notation and we always work on $\Omega_V$. We explain the idea of the proof for the simple case $p=2$. First, we note that
\begin{align}
 \frac{1}{N^2}\sum_{a\in A}\E^W\,\left|Q_{a}\left(\frac{1}{G_{aa}}\right)\right|^2\le C(\varphi_N)^{2c_0\xi}\frac{N^{4\epsilon}}{N^2}\,,
\end{align}
where we used~\eqref{jensen lemma bound} (with $p=2$, $q=0$, $\caY_{1}=(\overline{G_{aa}})^{-1}$, $\caY_{2}={({G_{aa}})^{-1}}$). Here, we also used that $\E^W |G_{aa}|^{-r}\le (CN)^{Kr}$, for some $K>0$, $r\in \N$, (see Remark~\ref{remark about easy estimate} below) to ensure~\eqref{estimates on expectation}. It thus suffices to consider
\begin{align}
 \frac{1}{N^2}\sum_{\substack{a_1\in A,a_2\in A\\a_1\not=a_2}}\E^W\, Q_{a_1}\overline{\left(\frac{1}{G_{a_1a_1}}\right)} Q_{a_2}\left(\frac{1}{G_{a_2a_2}} \right)\,.
\end{align}
Applying the formula
\begin{align}\label{expand 1}
 \frac{1}{G_{aa}}=\frac{1}{G_{aa}^{(b)}}\left(1-F_{ab}F_{ba}\right)\,, \qquad (a\not=b)\,,
\end{align}
twice, we obtain
\begin{align}\label{Zlemma example}
  \E^W\, Q_{a_1}\overline{\left(\frac{1}{G_{a_1a_1}}\right)} Q_{a_2}\left(\frac{1}{G_{a_2a_2}} \right)&=\E^W\, Q_{a_1}\overline{\left({\frac{1}{G_{a_1a_1}^{(a_2)}}\left(1-F_{a_1a_2}F_{a_2a_1} \right)}\right)}\,Q_{a_2}\left({\frac{1}{G_{a_2a_2}^{(a_1)}}\left(1-F_{a_2a_1}F_{a_1a_2} \right)}\right)\nonumber\\
 &=\E^W\, Q_{a_1}\overline{\left({\frac{1}{G_{a_1a_1}^{(a_2)}}F_{a_1a_2}F_{a_2a_1}}\right)}\,Q_{a_2}\left({\frac{1}{G_{a_2a_2}^{(a_1)}}F_{a_2a_1}F_{a_1a_2} }\right)\,,
\end{align}
where we used that $G_{aa}^{(b)}$, $a\not=b$, is independent of the entries in the $b$-th column/row of $W$, and that, for general random variables $A=A(W)$ and $B=B(W)$, $\E^W[ (Q_{b}A) B]=\E^W[B \E_{b}Q_{b} A]=0$ if $B$ is independent of the variables in the ${b}$-th column/row of $W$.

By Corollary~\ref{Zlemma 1}, we have
\begin{align}
\lone(\Xi)\left|\frac{F_{a_2a_1}}{G_{a_1a_1}^{(a_2)}}F_{a_1a_2}\right|\le C N^{-1/2-\fb/2}N^{3\epsilon}\,,
\end{align}
 with high probability, and a similar bound holds for the second term in~\eqref{Zlemma example}. Since 
\begin{align}
 \E^W \left|\frac{F_{a_2a_1}}{G_{a_1a_1}^{(a_2)}}F_{a_1a_2}\right|^r\le (CN)^{4Kr}\,,
\end{align}
for $r\le 10p$, for some $K$, as can be easily checked (c.f., Remark~\ref{remark about easy estimate} below), Lemma~\ref{Jensen lemma} implies that
\begin{align}
 \left|\frac{1}{N^2}\sum_{a_1\in A, a_2\in A}\E^W\, Q_{a_1}\overline{\left(\frac{1}{G_{a_1a_1}}\right)} Q_{a_2}\left(\frac{1}{G_{a_2a_2}} \right)\right|\le C(\varphi_N)^{2c_0\xi} N^{-1-\fb}N^{6\epsilon}\,,
 \end{align}
and we obtain the claim in the sense of second moment bounds. 

To deal with higher moments, we abbreviate $\mathbf{a}\equiv(a_1,\ldots,a_p)$ and write, for $p=2r$ satisfying $p\le (\log N)^{\xi-3/2}$,

\begin{align}\label{guinea pig}
 \E^W\left| \frac{1}{N}\sum_{a\in A} Q_a\left(\frac{1}{G_{aa}}\right)\right|^{2r}&=\frac{1}{N^{2r}}\sum_{a_1\in A,\ldots, a_{2r}\in A}\E^W \overline{X}_{a_1}\cdots\overline{X}_{a_r}X_{a_{r+1}}\cdots X_{a_{2r}}\nonumber\\
&=\frac{1}{N^{2r}}\sum_{a_1\in A,\ldots, a_{2r}\in A}\E^W{X}_\mathbf{a}
\end{align}
where $X_{a}\deq Q_{a}(G_{aa})^{-1}$ and $X_\mathbf{a}\deq\overline{X}_{a_1}\cdots\overline{X}_{a_r}X_{a_{r+1}}\cdots X_{a_{2r}}$.

 To cope with~\eqref{guinea pig} efficiently, we need some more notation. Let $\mathbf{L},\mathbf{U}\subset\llbracket 1,N\rrbracket$. We denote by $\caF(\mathbf L, \mathbf U)$ the set of all  ``off-diagonal resolvent fractions'' $F_{ab}^{(\T,\T')}$, $a\not=b$, $a,b\in \mathbf L$ and $\T,\T'\subset \mathbf{U}$, with $a,b\not\in\T$, $b\not\in \T'$. Further, we denote by $\caG(\mathbf  L, \mathbf U)$ the set of ``diagonal resolvent entries'' having lower indices in $\mathbf  L$ and upper indices in $ \mathbf U$; more precisely, $\caG( \mathbf L, \mathbf U)\deq\{G_{aa}^{(\T)}\,:\, a\in \mathbf L, \T\subset \mathbf U\,, a\not\in\T\}$. Finally, we denote by $\widetilde\caF( \mathbf L, \mathbf U)$ the set of monomials of the form $\C G_d^{-1}F_1F_2\ldots F_n$, with $G_d\in\caG(\mathbf  L, \mathbf U)$, $F_i\in\caF( \mathbf L, \mathbf U)$, $n\in\N_0$.

Following~\cite{EKYY4}, we call $F_{ab}^{(\T,\T')}\in\caF(\mathbf L,\mathbf U)$,  maximally expanded (in $\mathbf U$), if $\{a,b\}\cup \T\supset \mathbf L$ and $\{b\}\cup\T'\supset\ \mathbf L$. Similarly, $G_{aa}^{(\T)}$, is maximally expanded if $\{a\}\cup\T\supset \mathbf L$, and we call a monomial in $\widetilde\caF(\mathbf L,\mathbf U)$ maximally expanded if all its factors are maximally expanded. The degree of $F\in\widetilde{\caF}(\mathbf L,\mathbf U)$ is defined as the number of factors of $F_{ab}^{(\T,\T')}$  in~$F$ (with $a\not=b$).

Next, we define a recursive procedure that, given $F\in\widetilde\caF(\mathbf L,\mathbf U)$, successively adds upper indices from the set~$\mathbf U$ to~$F$, at the expense of generating terms of higher degree. This procedure is iterated until either all generated terms are maximally expanded or their degree is sufficiently large, so that they can be neglected.

Given $F\in\widetilde\caF(\mathbf L,\mathbf U)$ the recursive procedure is as follows:
\begin{itemize}
 \item [$(A)$]{\it Stopping rules:} If  the degree of $F$ is bigger equal $p+1$ or if $F$ is maximally expanded in $\mathbf U$, we stop the procedure.
\item[$(B)$]{\it Iteration:} Else, we choose an arbitrary non-maximally expanded factor $F_{ab}^{(\T,\T')}$ or ${1/G_{aa}^{(\T)}}$ of $F$ and split it using, for the former choice, either 
\begin{align}\label{iteration1}
 F_{ab}^{(\T,\T' )}&={F_{ab}^{(\T c ,\T' )}}+{F_{ac}^{(\T ,\T' )}F_{cb}^{(\T,\T')}}\,,
\end{align}
 for the smallest $c\in \mathbf U\backslash(\T ab)$, or
\begin{align}\label{iteration2}
 F_{ab}^{(\T,\T')}=F_{ab}^{(\T,\T'c)}- F_{ab}^{(\T,\T'c)}F_{bc}^{(\T,\T')}F_{cb}^{(\T,\T')}\,,
\end{align}
for the smallest $c\in \mathbf U\backslash(\T' b)$, respectively,
\begin{align}\label{iteration3}
 \frac{1}{G_{aa}^{(\T)}}=\frac{1}{G_{aa}^{(\T c)}}\left(1-F_{ac}^{(\T,\T)}F_{ca}^{(\T,\T)}\right)\,, 
\end{align}
for the smallest $c\in \mathbf U\backslash(\T a)$,  for the latter choice.

Then, we start over with $F$ being one of the generated monomials.

\end{itemize}
We remark that a similar algorithm (expanding the resolvent entries $G_{ij}^{(\T)}$ instead of $F_{ij}^{(\T,\T')}$) appeared first in~\cite{EKY}; see also~\cite{EKYY4}.
\begin{rem}
  This recursive procedure contains some arbitrariness, e.g., in the specific choice of $F_{ab}^{(\T,\T')}$ in~\eqref{iteration1},~\eqref{iteration2} or~\eqref{iteration3}. This arbitrariness does not affect the argument and we thus choose not to remove it.
\end{rem}
We now use the above algorithm to expand the right side of~\eqref{guinea pig}: Denote by $\caP_{2r}$ the set of partitions of $\llbracket 1,2r\rrbracket$ and let $\Gamma(\mathbf{a})\in\caP_{2r}$ be the partition induced by the equivalence relation $i\sim j$, if and only if $a_i=a_j$, $i,j\in\llbracket 1,2r\rrbracket$. Then we can write
\begin{align}\label{without restrictions}
   \E^W\left|\frac{1}{N}\sum_{a\in A}  Q_a\left(\frac{1}{G_{aa}}\right)\right|^{2r}=\frac{1}{N^{2r}}\sum_{\Gamma\in\caP_{2r}}\sum_{a_1\in A,\ldots,a_{2r}\in A}\lone(\Gamma=\Gamma(\mathbf{a}))\,\E^W X_{\mathbf{a}}\,.
  \end{align}
Given $\mathbf{a}=(a_i)$, denote by $\Gamma\deq\Gamma(\mathbf{a})$, the partition induced by the equivalence relation $\sim$. For a label $i\in\llbracket1,2r\rrbracket$, we denote by $[i]$ the block of $i$ in $\Gamma$. Let $S(\Gamma)\deq\{ i\,:\ |[i]|=1\}\subset\llbracket 1,2r\rrbracket$ denote the set of singletons or single labels and abbreviate by $s\deq|S(\Gamma)|$ its cardinality. We denote by $\mathbf{A}_S\equiv\mathbf{A}_{S(\Gamma)}\deq \{a_i\}_{i\in S}$, the summation indices associated with single labels. Notice that if $i$ is a single label (for some $\Gamma$), then there is exactly one $Q_{a_i}$ on the right side of~\eqref{without restrictions}. However, if $i$ is not a single label (for some $\Gamma$), $Q_{a_i}$ appears more than once on the right side of~\eqref{without restrictions}. 

We now choose $\mathbf L=\mathbf{A}_\Gamma\equiv\{a_i \}_{i}$ and $\mathbf U=\mathbf{A}_{S(\Gamma)}\equiv \mathbf{A}_S$ and apply the algorithm $(A)$-$(B)$ to $X_{a_i}$ to obtain
\begin{align}\label{Zlemma compact}
 X_{a_i}&=X_{a_i}^{(\mathbf{A}_S\backslash a_i)}+Q_{a_i}M_{a_i}(\mathbf{A}_\Gamma,\mathbf{A}_S)+Q_{a_i}R_{a_i}(\mathbf{A}_\Gamma,\mathbf{A}_S)\,,
\end{align}
where $X_{a_i}^{(\mathbf{A}_S\backslash a_i)}\deq Q_{a_i}(G_{a_ia_i}^{(\mathbf{A}_S\backslash a_i)})^{-1}$, and where $M_{a_i}(\mathbf{A}_\Gamma,\mathbf{A}_S)$ and $R_{a_i}(\mathbf{A}_\Gamma,\mathbf{A}_S)$ are sums of elements in $\widetilde{\caF}(\mathbf{A}_{\Gamma},\mathbf{A}_{S})$, such that each term in $M_{a_i}(\mathbf{A}_\Gamma,\mathbf{A}_S)$ is maximally expanded in $\mathbf{A}_S$ and each summand in $R_{a_i}(\mathbf{A}_\Gamma,\mathbf{A}_S)$ is of degree $p+1$ or higher.

\begin{rem}\label{remark about number of terms}
 The stopping rules ensure that the procedure stops after a finite number of steps when applied to a~$X_{a_i}$. More precisely, one checks that the number of terms on the ride side of~\eqref{Zlemma compact} is bounded by $(Cp)^{cp}$, for some constants $C,c$; see, e.g.,~\cite{EKYY4}, page 54.
\end{rem}

\begin{rem}\label{remark about indices}
Ignoring for the moment the upper indices, we observe that each summand in $M_{a_i}(\mathbf{A}_\Gamma,\mathbf{A}_S)$ or $R_{a_i}(\mathbf{A}_\Gamma,\mathbf{A}_S)$ is of the form
\begin{align}\label{structure of indices}
 \frac{F^{\#}_{a_ib_1}}{G_{a_ia_i}^{\#}}\cdot F^{\#}_{b_1b_2}F^{\#}_{b_2b_3}\cdots F^{\#}_{b_na_i}\,,
\end{align}
for some $(b_1,\ldots, b_n)\in\mathbf{A}_S^n$, with $1\le n\le p+1$, satisfying $ b_1\not=a_i$, $b_k\not=b_{k+1}$, ($k\in\llbracket 1,n-1\rrbracket$), $b_n\not=a_1$.  Here $\#$ stands for some appropriate $(\T,\T')$, with $|\T|\le p-2$, $|\T'|\le p-1$. Indeed, when applying the recursive procedure to $1/G_{a_ia_i}$, the first step yields
\begin{align}\label{Zlemma first step}
 \frac{1}{G_{a_ia_i}}=\frac{1}{G_{a_ia_i}^{(b_1)}}\left(1-F_{a_ib_1}F_{b_1a_i}\right)\,,
\end{align}
 for some $b_1\in\ \mathbf{A}_S\backslash\{a_i\}$, which is of the claimed form. Applying~\eqref{iteration1},~\eqref{iteration2} or~\eqref{iteration3} to a factor in~\eqref{Zlemma first step} yields again two terms of the form~\eqref{structure of indices}, etc. The claim then follows by noticing that the stopping rules ensure that each summand is of degree less equal $p+2$.

Using~\eqref{Zlemma bound 1} $n$ times and~\eqref{Zlemma bound 2} once, we obtain
\begin{align}\label{good bound}
 \lone(\Xi) \left|\frac{F^{\#}_{a_ib_1}}{G_{a_ia_i}^{\#}}\cdot F^{\#}_{b_1b_2}F^{\#}_{b_2b_3}\cdots F^{\#}_{b_na_i}\right|\le C^{n+1}(\varphi_N)^{c\xi (n+1)} N^{-1/2} N^{-n\fb/2}N^{(n+2)\epsilon}\,,
\end{align}
irrespective of the particular choice of the upper indices (recall that $\#$ stands for some $(\T,\T')$ satisfying $|\T|,|\T'|\le (\log N)^{\xi}$). For $n=0$, the analogous bound to~\eqref{good bound} reads
\begin{align}
 \lone(\Xi)\left|Q_{a_i}\left(\frac{1}{G_{a_ia_i}^{\#}}\right)\right|\le C(\varphi_N)^{c\xi} N^{-1/2}N^{2\epsilon}\,;
\end{align}
see~\eqref{initial estimate on Q_a}.

\end{rem}

\begin{rem}\label{remark about easy estimate}
Still ignoring  the upper indices, we remark that in order to apply Lemma~\ref{Jensen lemma}, we need to check that
\begin{align}\label{the easy estimate}
 \E^W\left|\frac{F^{\#}_{a_ib_1}}{G_{a_ia_i}^{\#}}\cdot F^{\#}_{b_1b_2}F^{\#}_{b_2b_3}\cdots F^{\#}_{b_na_i}\right|^r\le (CN)^{Kr(n+1)}\,,
\end{align}
for some constants $C$ and $K$, for all $r\le 10 p$. Starting from Schur's formula
\begin{align*}
 \frac{1}{G_{aa}^{(\T)}}=\lambda v_a+w_{aa}-z-\sum_{k,l}^{(\T a)}w_{ak}G^{(a\T)}_{kl}w_{la}\,,\quad\qquad (a\not\in\T)\,,
\end{align*}
and the trivial bounds $|G_{aa}^{(\T)}|\le\eta^{-1}\le N$, $\E^W |w_{ij}|^{q}\le C (\theta q)^{\theta q}N^{-q/2}$ and $ |\lambda v_i|^q\le C^q$, and the boundedness of~$\caD_\epsilon'$, one checks~\eqref{the easy estimate} by inspection.
\end{rem}
Combining Remarks~\ref{remark about number of terms} and~\ref{remark about indices} we obtain a bound on the remainder term $R_{a_i}(\mathbf{A}_\Gamma,\mathbf{A}_S)$,
\begin{align}\label{Zlemma bound on R}
 \lone(\Xi)|R_{a_i}(\mathbf{A}_\Gamma,\mathbf{A}_S)|\le (Cp)^{2p}(\varphi_N)^{c\xi p} N^{-1/2}  N^{-p\fb/2}N^{(p+2)\epsilon}\,.
 \end{align}
To condense the notation slightly, we abbreviate $M_{a_i}\equiv M_{a_i}(\mathbf{A}_\Gamma,\mathbf{A}_S)$, $R_{a_i}\equiv R_{a_i}(\mathbf{A}_\Gamma,\mathbf{A}_S)$ and set
\begin{align}
 M'_{a_i}\deq (G_{a_ia_i}^{(\mathbf{A}_S\backslash a_i)})^{-1}+M_{a_i}\,.
\end{align}
Returning to~\eqref{without restrictions} and choosing $i=1$, we can write
\begin{align}
 \E^W X_{\mathbf{a}}=\E^W Q_{a_1}{(\overline{M'}_{a_1}})\,\overline{X}_{a_2}\cdots X_{a_{2r}}+\caR_{a_1}\,,
\end{align}
where we have set $ \caR_{a_1}\deq\E^W Q_{a_1}{\left(\overline{R}_{a_1}\right)}\,\overline{X}_{a_2} \cdots X_{a_p}$. By the bound~\eqref{Zlemma bound on R} and the bounds in Remark~\ref{remark about easy estimate}, Lemma~\ref{Jensen lemma} yields
\begin{align}
 |\caR_{a_1}|\le (Cp)^{cp}(\varphi_N)^{c_0\xi p}N^{-p/2-p\fb/2+3p\epsilon}\,.
\end{align}

Before we expand in a next step $X_{a_2}$ in~\eqref{without restrictions}, we make the following observation.

\begin{rem}\label{remark about maximally expanded terms}
Let $F\in\widetilde{F}(\mathbf{A}_\Gamma,\mathbf{A}_S)$ be a maximally expanded monomial of degree $d=n+1$. From Remark~\ref{remark about indices}, we know that $F$ is of the form~\eqref{structure of indices}. Since $F$ is maximally expanded, the lower indices in each factor in~\eqref{structure of indices} also determine its upper indices. Thus, a summand $F\in\widetilde{\caF}(\mathbf{A}_\Gamma,\mathbf{A}_S)$ in~$M_{a_i}$ can be labeled by a sequence $\mathbf{b}=(b_1,\ldots,b_n)\in\mathbf{A}_S^n$, satisfying $b_1\not=a_i$, $b_k\not=b_{k+1}$, $(k\in\llbracket 1,n-1\rrbracket$), $b_n\not=a_i$. Here $d=n+1$, $n\ge1$, is the degree of~$F$. In the following we write $F\equiv F_{a_i,\mathbf{b}}\in\widetilde{\caF}(\mathbf{A}_\Gamma,\mathbf{A}_S)$. Denoting by $\mathbf{B}(a_i,n)$ the set of such labeling sequences of length $n$, we can write
\begin{align}
 M_{a_i}=\sum_{n=1}^{p}\sum_{\mathbf{b}\in\mathbf{C}(a_i,n)}F_{a_i,\mathbf{b}}\,,
\end{align}
where $\mathbf{C}(a_i,n)$ is some subset of $\mathbf{B}(a_i,n)$.
For later purposes, we note the following crude upper bound on the cardinality of $\mathbf{B}(a_i,n)$: 
\begin{align}\label{cardinality of B}
 |\mathbf{B}(a_i,n)|\le |\mathbf{A}_S|^n\le p^n\le p^p\,,\qquad (n\ge 1)\,.
\end{align}
For $n=0$, we set $F_{a_i,(0)}\deq (G_{a_ia_i}^{(\mathbf{A}_S\backslash a_i)})^{-1}$ and $ \mathbf{B}(a_i,0)\deq\{(0)\}$.
\end{rem}
Using Remark~\ref{remark about maximally expanded terms}, we can write, for $X_\mathbf{a}$ as in~\eqref{without restrictions},
\begin{align}
 \E^W X_{\mathbf{a}}=\sum_{n_1=0}^p \sum_{\mathbf{b_1}\in\mathbf{C}(a_1,n_1)}\E^W\, Q_{a_1}{(\overline{{F}}_{a_1,\mathbf{b_1}})}\,\overline{X}_{a_2}\cdots {X}_{a_{p}}+\caR_{a_1}\,,
\end{align}
where the $n_1=0$ term in the sum is understood as $\E^W \, Q_{a_1}{(\overline{F}_{a_1,(0)})}\,\overline{X}_{a_2}\cdots {X}_{a_{p}}$.

Next, we expand $X_{a_2}$ in each summand
\begin{align}
 \E^W\, Q_{a_1}{(\overline{F}_{a_1,\mathbf{b_1}})}\,{\overline{X}_{a_2}}\cdots{X}_{a_{p}}\,,
\end{align}
using the algorithm $(A)$-$(B)$, however, we stop expanding a term (generated from $X_{a_2}$), if its degree is bigger than $p+1-n_1$, or if it is maximally expanded. Note that we do not expand the rest term~$\caR_{a_1}$ any further. With this modified stopping rule, we arrive at
\begin{align}\label{equus}
 \E^W X_{\mathbf{a}}= \sum_{n_1=0}^p\sum_{n_2=0}^{p} \sum_{\mathbf{b_1}\in\mathbf{C}(a_1,n_1)}\sum_{\mathbf{b_2}\in\mathbf{C}(a_2,n_2)}\lone(n_1+n_2\le p)&
\E^W\, Q_{a_1}{(\overline{F}_{a_1,\mathbf{b_1}})}Q_{a_2}{(\overline{F}_{a_2,\mathbf{b}_2})}\cdots {X}_{a_{p}}\nonumber\\+\caR_{a_1}+\caR_{a_2}\,,
\end{align}
for some sets of labeling sequences $\mathbf{C}(a_1,n_1)$ and $\mathbf{C}(a_2,n_2)$. It is easy to check that the rest term $\caR_{a_2}$ satisfies the same bound (with possibly slightly larger constants) as $R_{a_1}$. This is checked in the same way as before. To estimate the number of summands in~$R_{a_2}$, we note that we apply the algorithm $|\mathbf{C}(a_1,n_1)|$ times and each application yields no more than $(Cp)^{cp}$ terms; see Remark~\ref{remark about number of terms}. Thus the number of summands in~$\caR_{a_2}$ is bounded by $(Cp)^{c'p}$, $c'>c$.

We continue expanding the remaining $X_{a_k}$, $k\ge 3$, in~\eqref{equus} using the algorithm $(A)$-$(B)$, but while expanding~$X_{a_k}$, we stop the expansion as soon as the degree of a generated term exceeds $p+1-\sum_{i=1}^{k-1} n_i$. This leads to, 
\begin{align}\label{almost there}
 \E^W X_{\mathbf{a}}= \sum_{n_1,\ldots,n_p=0}^p\,\,&\sum_{\mathbf{b_1}\in\mathbf{C}(a_1,n_1)}\cdots\sum_{\mathbf{b_p}\in\mathbf{C}(a_p,n_p)}\lone(\sum_{i=1}^pn_i\le p)\,\E^WY(\mathbf{a},\mathbf{b_1},\dots,\mathbf{b_p})+\sum_{i=1}^p\caR_{a_i}\,,
\end{align}
where $(\mathbf{C}(a_i,n_i))$ are subsets of $(\mathbf{B}(a_i,n_i))$, and where we have abbreviated
\begin{align}\label{almost there 2}
 Y(\mathbf{a},\mathbf{b_1},\dots,\mathbf{b_p})\deq\prod_{i=1}^r Q_{a_i}{(\overline{F}_{a_i,\mathbf{b_i}})}\,\prod_{i=r+1}^{2r} Q_{a_i}(F_{a_i,\mathbf{b}_i})\,.
\end{align}
The remainder terms $(\caR_{a_i})$ clearly satisfy
\begin{align}
 |\caR_{a_i}|\le   (Cp)^{cp}(\varphi_N)^{c_0\xi p}N^{-p/2-p\fb/2}N^{3p}\,,
\end{align}
for all $i\in\llbracket 1,p\rrbracket$. It thus suffices to bound the first term on the right side of~\eqref{almost there}.

 Following~\cite{EKYY4}, we pick a term $Y\equiv Y(\mathbf{a},\mathbf{b_1},\dots,\mathbf{b_p}) $ of the form~\eqref{almost there 2} that has a non-vanishing expectation, $\E^W Y\not=0$.  Considering a single label $i\in S\equiv S(\Gamma)$, we know that there exists a label $j\in S\backslash\{i\}$, such that the monomial $F_{a_j,\mathbf{b_j}}\in\widetilde{\caF}(\mathbf{A}_\Gamma,\mathbf{A}_S)$ in~\eqref{almost there 2} contains a factor $F\in \caF(\mathbf{A}_\Gamma,\mathbf{A}_{S})$, having $a_i$ as a lower index (otherwise the expectation of $Y$ has to vanish due to the presence of the $Q_{a_i}$). It follows from Remark~\ref{remark about indices} that $F_{a_j,\mathbf{b_j}}$ is of the form~\eqref{structure of indices}, with $n\ge 1$ and $b_k=a_i$, for some $k\in \llbracket 1,n\rrbracket$. Note that we use here that all indices in $\mathbf{A}_S$ are distinct.

We write $j=\mathfrak{l}(i)$, if a label $j$ is linked to a label $i$ in the sense of the previous paragraph. Denoting by \mbox{$l_j\deq|\mathfrak{l}^{(-1)}(\{j\})|$}, the number of times the label $j$ has been chosen to be linked in the above sense to some other label, we obtain
\begin{align}
\lone(\Xi)|F_{a_j,\mathbf{b}_j}|\le C^p(\varphi_N)^{c\xi p}N^{-1/2}N^{-l_j\fb/2}N^{2p\epsilon+l_j\epsilon}\,,
\end{align}
as follows from~\eqref{good bound}.

Finally, using $\sum_{j\in S}l_j\ge |S|=s$, we get
\begin{align}
\lone(\Xi)| Y|\le  C^p(\varphi_N)^{c\xi p}  N^{-p/2}N^{-s\fb/2}N^{2p\epsilon+s\epsilon}\,,
\end{align}
for $Y$ as in~\eqref{almost there 2}, with $\E^W Y\not=0$. Lemma~\ref{Jensen lemma} thus gives
\begin{align}
 |\E^WY|\le C^p(\varphi_N)^{c_0\xi p} N^{-p/2-s\fb/2}N^{2p\epsilon+s\epsilon}\,.
\end{align}
To bound the right side of~\eqref{almost there}, it remains to bound the number of summands in the first term. Using~\eqref{cardinality of B}, we obtain
\begin{align}
 \sum_{n_1,\ldots, n_p=0}^p\,\, \sum_{\mathbf{b_1}\in\mathbf{C}(a_1,n_1)}\cdots\sum_{\mathbf{b_p}\in\mathbf{C}(a_p,n_p)}\lone(\sum_{i=1}^pn_i\le p)&\le \sum_{n_1,\ldots, n_p=0}^p\lone(\sum_{i=1}^pn_i\le p) \prod_{i=1}^p p^{n_i}\nonumber\\
&\le \sum_{n_1,\ldots, n_p=0}^p p^{p}
\le (Cp)^{2p}\,,
\end{align}
and we can bound~\eqref{almost there} by
\begin{align}\label{bound for later on in corollary}
 |\E^WX_{\mathbf{a}}|\le (Cp)^{cp}(\varphi_N)^{c_0\xi p} N^{-p/2-s\fb/2+3p\epsilon+s\epsilon}\,.
\end{align}

We now return to~\eqref{without restrictions}. We perform the summation by first fixing a partition $\Gamma\in\caP_{2r}$. Then we observe that
\begin{align}\label{sum over non label}
 \frac{1}{N^{2r}}\sum_{\mathbf{a}}\lone(\Gamma=\Gamma(\mathbf{a}))\le \left(\frac{1}{N}\right)^{2r-|\Gamma|}\le \left(\frac{1}{\sqrt{N}}\right)^{2r-s}\,,
\end{align}
since any block in the partition $\Gamma$ that is not associated to a single label consists of at least two elements. Thus $|\Gamma|\le (2r+s)/2=r+s/2$. Using $N^{-1/2}\ll N^{-\fb}$ we find
\begin{align}
 \E^W\left|\frac{1}{N}\sum_{a\in A}Q_{a}\left(\frac{1}{G_{aa}} \right)\right|^{2r}\le (Cp)^{cp}(\varphi_N)^{c\xi p}\sum_{\Gamma\in\caP_{2r}}N^{-p/2-p\fb/2}N^{3p\epsilon}\,.
\end{align}
Recalling that the number of partitions of $p$ elements is bounded by $(Cp)^{2p}$, we thus get
\begin{align}
 \E^W\left|\frac{1}{N}\sum_{a\in A}Q_{a}\left(\frac{1}{G_{aa}(z)} \right)\right|^{2r}\le (Cp)^{cp}(\varphi_N)^{c_0\xi p}N^{-p/2-p\fb/2}N^{3p\epsilon}\,,
\end{align}
for some constants $C$ and $c$. Finally, we note that the constants can be chosen uniformly in $z\in\caD_\epsilon'$, since all estimates used are uniform in $z$.

\end{proof}

Next, we prove Lemma~\ref{lem:step 4}.
\begin{proof}[Proof of Lemma~\ref{lem:step 4}]
Recalling the remark after Definition~\ref{definition of xi}, we know that the event $\Xi$ has $(\xi,\nu)$-high probability. Choosing $p$ as the largest integer smaller than $\nu (\log N)^{\xi-2}$, Markov's inequality and the moment bound~\eqref{estimate in Zlemma} yield, for some constant $c$,
\begin{align}\
 \frac{1}{N}\left|\sum_{a=n_0}^N Z_{a}(z)\right|\le (\varphi_N)^{c\xi}N^{-1/2-\fb/2+4\epsilon}\,,
\end{align}
with $(\xi-2,\nu)$-high probability on $\Omega_V$, uniformly for $z\in\caD_\epsilon'$. This finishes the proof of~\eqref{statement Zlemma 1}. 

To prove~\eqref{statement Zlemma 2}, it suffices to replace $(G_{aa})$ in~\eqref{donkey 2}, by $(G_{aa}^{(k)}\,:\,a\not= k)$. Using~\eqref{Zlemma expand basic} and the bounds~\eqref{Zlemma bound 1} and~\eqref{Zlemma bound 2}, the claim~\eqref{statement Zlemma 2}, follows from~\eqref{statement Zlemma 1}. We leave the details aside.
\end{proof}
Next, we prove Corollary~\ref{cor:step 4}. Define, for $a\in A$,
\begin{align}
 g_a(z)\deq\frac{1}{\lambda v_a-z-\widehat{m}_{fc}{(z)}}\,,\qquad\quad( z\in\C^{+})\,.
\end{align}
Note that for $z\in\caD_{\epsilon}'$, we have $|g_a(z)|\le C N^{1/(\b+1)}N^{\epsilon}$ on $\Omega_V$.
\begin{lem}\label{corollary to Zlemma}
Let $A=\llbracket  n_0,N\rrbracket$. Recall the definition of the domain $\caD_{\epsilon}'$ in~\eqref{a index assumption}. Let $\Xi$ denote the event in Definition~\ref{definition of xi} and assume it has $(\xi,\nu)$-high probability. Then there exist constants $C$, $c$, $c_0$, such that for $p=2r$, $r\in\N$, $p\le (\log N)^{\xi-3/2}$, we have
\begin{align}\label{estimate in Zcorollary}
 \E^W\left|\frac{1}{N}\sum_{a\in A}(g_{a}(z))^2Q_{a}\left(\frac{1}{G_{aa}(z)}\right) \right|^{p}\le (Cp)^{cp}(\varphi_N)^{c_0\xi p} N^{-p/2-p\fb/2}N^{3p\epsilon}\,,
\end{align}
on $\Omega_V$, for all $z\in\caD_\epsilon'$.
\end{lem}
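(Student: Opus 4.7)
The plan is to repeat the proof of Lemma~\ref{the Zlemma} verbatim, inserting the deterministic weights $g_{a_i}^2$ throughout the expansion. The key observation is that $g_a(z)$ depends only on the fixed random variables $(v_i)$ and is unaffected by $\E^W$, so the $g_{a_i}^2$ can be pulled outside of every partial expectation. Expanding the $p$-th moment with $p = 2r$, I would write
\begin{equation*}
\E^W\left|\frac{1}{N}\sum_{a\in A} g_a^2\, Q_a\!\left(\frac{1}{G_{aa}}\right)\right|^{2r} = \frac{1}{N^{2r}}\sum_{\mathbf a}\overline{g}_{a_1}^{\,2}\cdots \overline{g}_{a_r}^{\,2}\, g_{a_{r+1}}^{\,2}\cdots g_{a_{2r}}^{\,2}\,\E^W X_{\mathbf a},
\end{equation*}
with $X_{\mathbf a}$ exactly as in~\eqref{guinea pig}, and apply the bound~\eqref{bound for later on in corollary}, namely $\lone(\Xi)\,|\E^W X_{\mathbf a}| \le (Cp)^{cp}(\varphi_N)^{c_0\xi p}\, N^{-p/2 - s\fb/2 + 3p\epsilon + s\epsilon}$, unchanged, where $s = |S(\Gamma(\mathbf a))|$ is the number of single labels of the partition induced by $\mathbf a$.

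The new ingredient will be the estimate on the weighted summation at fixed partition $\Gamma$. First I would bound
\begin{equation*}
\sum_{\mathbf a:\Gamma(\mathbf a)=\Gamma}\prod_{i=1}^{2r}|g_{a_i}|^2 \;\le\; \prod_{B\in\Gamma}\sum_{\alpha\in A}|g_\alpha|^{2|B|},
\end{equation*}
and then use two facts valid on $\caD_\epsilon'$: the pointwise bound $|g_\alpha(z)| \le 2\, N^{1/(\b+1)+\epsilon} = 2\, N^{1/2-\fb+\epsilon}$ (immediate from the very definition of $\caD_\epsilon'$ together with $\fb = 1/2 - 1/(\b+1)$), and the identity $\frac{1}{N}\sum_\alpha|g_\alpha|^2 = \widehat R_2(z) < 1$ used in Subsection~\ref{Properties of widehat m_{fc} and m}. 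Combining them through $\sum_\alpha |g_\alpha|^{2|B|} \le (\max_\alpha|g_\alpha|)^{2(|B|-1)}\sum_\alpha|g_\alpha|^2$ and using $\sum_{B\in\Gamma}(|B|-1) = 2r - |\Gamma|$ yields
\begin{equation*}
\prod_{B\in\Gamma}\sum_\alpha|g_\alpha|^{2|B|} \;\le\; N^{|\Gamma|}\cdot N^{(2r-|\Gamma|)(1-2\fb+2\epsilon)}.
\end{equation*}

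Finally, I would do the bookkeeping. Combining the above with the bound on $|\E^W X_{\mathbf a}|$ and the $1/N^{2r}$ prefactor, the total exponent of $N$ for fixed $\Gamma$ becomes
\begin{equation*}
-2r + |\Gamma| + (2r - |\Gamma|)(1 - 2\fb + 2\epsilon) - \tfrac{p}{2} - \tfrac{s\fb}{2} + O(p\epsilon) \;=\; -2\fb(2r - |\Gamma|) - \tfrac{p}{2} - \tfrac{s\fb}{2} + O(p\epsilon),
\end{equation*}
and using $|\Gamma| \le r + s/2$ (as invoked in~\eqref{sum over non label}), i.e., $2r - |\Gamma| \ge (p-s)/2$, together with $s \le p$, this is at most $-p/2 - p\fb/2 + O(p\epsilon)$. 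Summing over the at most $(Cp)^{2p}$ partitions $\Gamma$ yields the claimed moment bound. The main obstacle to overcome is the possibly large magnitude of the weights, $|g_a|^2 \lesssim N^{1-2\fb+2\epsilon}$, which would be fatal if handled naively; but the $\ell^2$ bound $\widehat R_2 < 1$ provides precisely the averaging needed to compensate, at the cost of only an extra $N^{O(p\epsilon)}$ factor.
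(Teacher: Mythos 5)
Your proposal is correct and follows essentially the same route as the paper: pull the deterministic weights $g_{a_i}^2$ outside $\E^W$, reuse the bound on $\E^W X_{\mathbf a}$ from the proof of Lemma~\ref{the Zlemma}, and control the weighted index sum at fixed partition $\Gamma$ via the sum rule $\widehat R_2<1$ together with the pointwise bound $|g_a|\lesssim N^{1/(\b+1)+\epsilon}$, which is exactly the argument in the paper's proof. The only minor imprecision is attributing the pointwise bound on $|g_a|$ directly to the definition of $\caD_\epsilon'$ — since $g_a$ is built from $\widehat m_{fc}$ rather than $m_{fc}$, one also needs Lemma~\ref{hat bound} on $\Omega_V$, as the paper notes just before the lemma statement.
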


\begin{proof}
First, we note that $g_{a}Q_{b}=Q_{b} g_{a}$, $a,b\in\llbracket 1,N\rrbracket$, since the $(g_a)$ are independent of the random variables $(w_{ij})$. Similar to~\eqref{guinea pig}, we are led to consider
\begin{align}\label{guinea pig 2}
\E^W\left| \frac{1}{N}\sum_{a\in A}g_a^2 Q_a\left(\frac{1}{G_{aa}}\right)\right|^{2r}&
=\frac{1}{N^{2r}}\sum_{a_1\in A,\ldots, a_{2r}\in A}g_{\mathbf{a}}^2\,\E^W{X}_\mathbf{a}\,,
\end{align}
where we have set $g_\mathbf{a}\deq\overline{g}_{a_1}\cdots\overline{g}_{a_r}g_{a_{r+1}}\cdots g_{a_{2r}}$.

Following the lines of the proof of Lemma~\ref{the Zlemma}, we write~\eqref{guinea pig 2} as
\begin{align}\label{without restrictions 2}
   \E^W\left|\frac{1}{N}\sum_{a\in A} g_a^2 Q_a\left(\frac{1}{G_{aa}}\right)\right|^{2r}=\frac{1}{N^{2r}}\sum_{\Gamma\in\caP_{2r}}\sum_{a_1\in A,\ldots,a_{2r}\in A}\lone(\Gamma=\Gamma(\mathbf{a}))g^2_{\mathbf{a}}\,\E^W X_{\mathbf{a}}\,.
  \end{align} 
As in the proof of Lemma~\ref{the Zlemma}, we fix $\mathbf{a}$ and denote by $\Gamma\deq\Gamma(\mathbf{a})$, the partition induced by the equivalence relation~$\sim$. Since $\E^W X_{\mathbf{a}}$ with $\mathbf{a}\equiv\mathbf{a}_\Gamma$ has already been bounded in the proof of Lemma~\ref{the Zlemma}, with a bound that only depends on $s\equiv |S(\Gamma)|$ (see~\eqref{bound for later on in corollary}), it suffices to control
\begin{align}
 \frac{1}{N^{2r}}\sum_{a_1\in A,\ldots,a_{2r}\in A}\lone(\Gamma=\Gamma(\mathbf{a}))g^2_{\mathbf{a}}\,.
\end{align}
Recall that
\begin{align}\label{Zlemma sumrule}
 \frac{1}{N}\sum_{i=1}^N\frac{1}{|\lambda v_i-z-\widehat{m}_{fc}(z)|^2}=\widehat{R}_2(z)<1\,,\qquad \quad( z\in\caD_{\epsilon}') \,;
\end{align}
see, e.g.,~\eqref{R2 hat less than 1}. Applying~\eqref{Zlemma sumrule} $|\Gamma|$ times, we obtain
\begin{align}
 \frac{1}{N^{2r}}\sum_{a_1\in A,\ldots,a_{2r}\in A}\lone(\Gamma=\Gamma(\mathbf{a}))|g_{\mathbf{a}}|^2\le C^{2r}\left(\frac{N^{2/(\b+1)}N^{2\epsilon}}{N}\right)^{2r-|\Gamma|}\,.
\end{align}
Here $|\Gamma|$ denotes the number of blocks of the partition $\Gamma$, and $s$ denotes the number of single labels in~$\mathbf{a}$. Since every block of $\Gamma$ that is not associated with a single label consists of at least two elements, we have $|\Gamma|\le (2r+s)/2=r+s/2$, thus $2r-|\Gamma|\ge r-s/2$, and we obtain,
\begin{align}
 \frac{1}{N^{2r}}\sum_{a_1\in A,\ldots,a_{2r}\in A}\lone(\Gamma=\Gamma(\mathbf{a}))|g_{\mathbf{a}}|^2&\le C^{2r}\left(\frac{N^{2/(\b+1)}N^{2\epsilon}}{N}\right)^{r-s/2}\nonumber\\ &=C^{2r}\left(\frac{N^{1/(\b+1)}N^{\epsilon}}{\sqrt{N}} \right)^{2r-s}\nonumber\\
 &\le C^{2r}\left(N^{-\fb}\right)^{2r-s}N^{(2r-s)\epsilon}\,. 
\end{align}
The proof of Lemma~\ref{corollary to Zlemma} is now completed as the proof of Lemma~\ref{the Zlemma}.
\end{proof}
\begin{proof}[Proof of Corollary~\ref{cor:step 4}]
 Recalling~\eqref{donkey 2}, Corollary~\ref{cor:step 4} can be proven in the same way as Lemma~\ref{lem:step 4} above.
\end{proof}

\section{Proof of Theorem~\ref{thm:local}} \label{sec:local}
In this section, we prove Theorem~\ref{thm:local}. The proof of~\eqref{mass_k} is based on an analysis of $(G_{jj}(z))$, for~$z$ close to the eigenvalues $\mu_k$, $k\in\llbracket 1, n_0-1\rrbracket$, using the Helffer-Sj\"ostrand formula~\eqref{helffer sjoestrand} below. The Helffer-Sj\"ostrand calculus has been applied in~\cite{ERSY} to $m(z)$, the averaged Green function, respectively to the empirical eigenvalue counting function, to obtain rigidity estimates on the eigenvalue locations. In Subsection~\ref{subsection mass_k}, we apply the Helffer-Sj\"ostrand formula to $(G_{jj}(z))$, respectively, to a weighted empirical eigenvalue counting function; see~\eqref{weighed counting measure}. 

In Subsection~\ref{subsection mass_j}, we prove~\eqref{mass_j} following the argument given in~\cite{EYY1} for generalized Wigner matrices.

\subsection{Preliminaries}
To start with, we claim that, for $z$ close to the spectral edge, $z+\widehat m_{fc}(z)$ is well approximated by a linear function.
\begin{lem}\label{lem: linear approx}
On $\Omega_V$, we have for all $z\in\caD_{\epsilon}'$ that
\begin{align}\label{eq: linear approx}
 z+\widehat{m}_{fc}(z)=\lambda-\frac{\lambda^2}{\lambda_+^2-\lambda^2}(z-L_+)+\caO\left((\varphi_N)^{\xi}(\kappa+\eta)^{\min\{\b,2\}}\right)+\caO(N^{-1/2+2\epsilon})\,.
\end{align}
\end{lem}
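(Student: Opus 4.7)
The plan is that this lemma should be essentially immediate from combining the two earlier lemmas that were already proved in the excerpt, namely Lemma~\ref{mfc estimate} and Lemma~\ref{hat bound}. The first gives a linear approximation for $z + m_{fc}(z)$ near the upper edge, and the second controls $|\widehat m_{fc}(z) - m_{fc}(z)|$ by $N^{-1/2+2\epsilon}$ on $\caD_\epsilon'$. So the strategy is simply to write $z+\widehat m_{fc}(z) = (z + m_{fc}(z)) + (\widehat m_{fc}(z)-m_{fc}(z))$ and substitute.

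More precisely, first I would invoke Lemma~\ref{mfc estimate}, which for $z = L_+ - \kappa + \ii \eta \in \caD_\epsilon$ gives
\begin{equation*}
z + m_{fc}(z) = \lambda - \frac{\lambda^2}{\lambda^2-\lambda_+^2}(L_+ - z) + \caO\bigl((\log N)(\kappa+\eta)^{\min\{\b,2\}}\bigr)\,.
\end{equation*}
Next I would rewrite the linear coefficient, noting that
\begin{equation*}
-\frac{\lambda^2}{\lambda^2-\lambda_+^2}(L_+ - z) = \frac{\lambda^2}{\lambda^2-\lambda_+^2}(z-L_+) = -\frac{\lambda^2}{\lambda_+^2 - \lambda^2}(z-L_+)\,,
\end{equation*}
which matches the form in the statement. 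I would also absorb the $(\log N)$ prefactor into $(\varphi_N)^\xi$, using $\varphi_N = (\log N)^C \ge \log N$ and $\xi \ge 1$.

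Finally, since $\caD_\epsilon' \subset \caD_\epsilon$, Lemma~\ref{hat bound} applies on $\Omega_V$ and yields $|\widehat m_{fc}(z) - m_{fc}(z)| \le N^{-1/2+2\epsilon}$, which is precisely the second error term appearing in the claim. Adding this bound to the expansion above completes the proof. There is no real obstacle here; the lemma is essentially a bookkeeping combination of the two preceding estimates, packaged in a form that will be convenient for the eigenvector analysis in Section~\ref{sec:local}.
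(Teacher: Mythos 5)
Your proof is correct and follows exactly the route the paper takes: the paper's own proof is one line, citing Lemma~\ref{mfc estimate} and Lemma~\ref{hat bound}, and your proposal is a careful unwinding of that combination, including the sign rewrite of the linear coefficient and the absorption of the $\log N$ factor into $(\varphi_N)^\xi$.
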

\begin{proof}
The lemma follows readily by combining Lemma~\ref{mfc estimate} and Lemma~\ref{hat bound}.
\end{proof}	
\begin{rem}
Combining Lemma~\ref{lem: linear approx} and Proposition~\ref{prop:main}, we obtain, for $j\le n_0-1$, 
\begin{align}\label{equation in remark}
 \lambda v_j-z-\widehat{m}_{fc}(z)=\frac{\lambda^2}{\lambda^2-\lambda_+^2}(\mu_j-z)&+\caO\left((\varphi_N)^{2\xi}N^{-2/(\b+1)}\right)\nonumber\\
&\qquad+\caO\left(N^{-1/2+3\epsilon}\right)+\caO\left((\varphi_N)^{\xi}(\kappa+\eta)^{\min\{\b,2\}}\right)\,,
\end{align}
with $(\xi,\nu)$-high probability, for all  $z\in\caD_{\epsilon}'$.

\end{rem}
To state the next lemma, it is convenient to abbreviate
\begin{align}\label{definition g0}
g_j^\circ(z)\deq\frac{\lambda^2-\lambda_+^2}{\lambda^2}\frac{1}{\mu_j-z}\,, \qquad
 G^{\Delta}_{jj}(z)\deq G_{jj}(z)-g_j^\circ(z)\,,\qquad\quad( z\in\C^+)\,.
\end{align}
We have the following estimate on $G^{\Delta}_{jj}(z)$.
\begin{lem}
There are constants $C$ and $c$, such that, for all $z\in\caD_{\epsilon}'$, and all $j\le n_0-1$,
\begin{align}\label{bound G11}
 |G^{\Delta}_{jj}(z)|\le C(\varphi_N)^{2\xi} |g^\circ_j(z)|\left(\frac{1}{N^{2/(\b+1)}\eta}+\frac{N^{3\epsilon}}{N^{1/2}\eta}+\frac{(\kappa+\eta)^{\min\{\b,2\}}}{\eta}\right)\,,
\end{align}
with $(\xi,\nu)$-high probability on $\Omega_V$.

\end{lem}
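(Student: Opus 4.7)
The plan is to write $G_{jj}(z)$ via Schur's complement formula and compare its reciprocal to $1/g_j^\circ(z)$ term by term using the approximate linearity provided by Lemma~\ref{lem: linear approx}. Concretely, Schur's formula~\eqref{schur} gives
\begin{equation*}
\frac{1}{G_{jj}(z)} = \lambda v_j + w_{jj} - z - m^{(j)}(z) - Z_j(z)\,,
\end{equation*}
while by definition
\begin{equation*}
\frac{1}{g_j^\circ(z)} = \frac{\lambda^2}{\lambda^2 - \lambda_+^2}(\mu_j - z)\,.
\end{equation*}
Subtracting and using~\eqref{equation in remark} to rewrite $\lambda v_j - z - \widehat{m}_{fc}(z)$ as the right-hand side above plus controllable errors, I obtain
\begin{equation*}
\frac{1}{G_{jj}(z)} - \frac{1}{g_j^\circ(z)} = w_{jj} + \bigl(\widehat{m}_{fc}(z) - m^{(j)}(z)\bigr) - Z_j(z) + \caO\!\left((\varphi_N)^{2\xi}N^{-2/(\b+1)} + N^{-1/2+3\epsilon} + (\varphi_N)^{\xi}(\kappa+\eta)^{\min\{\b,2\}}\right)\,.
\end{equation*}

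Next I estimate each error. The entry $|w_{jj}|$ is bounded by $(\varphi_N)^{\xi}N^{-1/2}$ with high probability by~\eqref{bound on wij}, $|Z_j|$ is bounded using Corollary~\ref{cor:step 2_1} (or the finer large-deviation bound~$|Z_j|\lesssim (\varphi_N)^{\xi}\sqrt{\im m^{(j)}/(N\eta)}$ with Lemma~\ref{lem:step 2_1}), and $|\widehat{m}_{fc} - m^{(j)}|$ is controlled by combining the local law at the edge, Proposition~\ref{prop:step 2_4}, with the interlacing bound of Lemma~\ref{cauchy interlacing}. Each of these contributions is absorbed into the three terms appearing in~\eqref{equation in remark}; in particular $(\varphi_N)^{\xi}N^{-1/2}$ and the $Z_j$-contribution are dominated by $N^{3\epsilon}N^{-1/2}$.

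To pass from the reciprocal bound to the bound on $G^{\Delta}_{jj}$ itself, I use the algebraic identity
\begin{equation*}
G^{\Delta}_{jj}(z) = G_{jj}(z) - g_j^\circ(z) = G_{jj}(z)\,g_j^\circ(z)\left(\frac{1}{g_j^\circ(z)} - \frac{1}{G_{jj}(z)}\right)\,,
\end{equation*}
together with the trivial a priori bound $|G_{jj}(z)| \le \eta^{-1}$ (which comes from the spectral decomposition of $G$). This produces the prefactor $|g_j^\circ(z)|/\eta$ that appears in~\eqref{bound G11}, and substituting the error estimates just obtained yields the claimed bound.

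The main obstacle is purely bookkeeping: one has to check that the $Z_j$-term, which is naively of size $(\varphi_N)^{\xi}(N\eta)^{-1/2}\sqrt{\im m^{(j)}}$ and therefore a priori worse behaved for small $\eta$, is in fact absorbed by the dominant $N^{3\epsilon}N^{-1/2}/\eta$ contribution once one inserts the sharp bound $\im m^{(j)}\lesssim N^{-1/2+2\epsilon}$ from Lemma~\ref{lem:step 2_1} and Lemma~\ref{lem:step 6_1}. Once this is verified, the remaining three error terms match the ones coming from~\eqref{equation in remark} exactly, and the asserted bound~\eqref{bound G11} follows after a union bound collecting the various high-probability events.
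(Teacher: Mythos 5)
Your proof is correct and follows essentially the same path as the paper's: express $1/G_{jj}$ via Schur's complement, use the local law (Proposition~\ref{prop:step 2_4}) together with the interlacing bound and the control on $w_{jj}$ and $Z_j$ (Corollary~\ref{cor:step 2_1}) to get $1/G_{jj} = \lambda v_j - z - \widehat m_{fc} + \caO(N^{-1/2+2\epsilon})$, substitute the linear approximation~\eqref{equation in remark}, and finish with the identity $G^{\Delta}_{jj}=G_{jj}g_j^\circ\bigl((g_j^\circ)^{-1}-(G_{jj})^{-1}\bigr)$ and the trivial bound $|G_{jj}|\le\eta^{-1}$. Your write-up merely unpacks the paper's one-line reduction into its constituent error terms; the ingredients and the algebraic step are identical.
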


\begin{proof}
For $z\in\caD_{\epsilon}'$, we have $(G_{jj}(z))^{-1}={\lambda v_j-z-\widehat{m}_{fc}(z)+\caO(N^{-1/2+2\epsilon})}$, with high probability on~$\Omega_V$; see Lemma~\ref{prop:step 2_4}.
We thus obtain from~\eqref{equation in remark} that
\begin{align}
 G_{jj}(z)&=g_j^\circ(z)+g_j^\circ(z)G_{jj}(z)\caO\left((\varphi_N)^{2\xi}\frac{1}{N^{2/(\b+1)}}+\frac{N^{3\epsilon}}{N^{1/2}}+(\varphi_N)^{\xi}(\kappa+\eta)^{\min\{\b,2\}}\right)\,,
\end{align}
with high probability on $\Omega_V$. Applying the trivial bound $|G_{jj}(z)|\le \eta^{-1}$, Inequality~\eqref{bound G11} follows.
\end{proof}

\subsection{Proof of Theorem~\ref{thm:local}: Inequality~\eqref{mass_k}}\label{subsection mass_k}
For $j\in\llbracket 1,N\rrbracket$, we define a weighted empirical eigenvalue counting measure, $\rho_j$, by
\begin{align}\label{weighed counting measure}
 \rho_j\deq \sum_{k=1}^N |u_{k}(j)|^2\delta_{\mu_{k}}\,,
\end{align}
where $(\mu_{k})$ are the eigenvalues of $H$ and $(u_{k}(j))$ the components of the associated eigenvectors. For $k\in\llbracket 1,n_0-1\rrbracket$, let $f_k(x)\ge0$ be a smooth test function satisfying
\begin{align}\label{definition of f_k}
f_k(x)=\begin{cases} 1\,, & \textrm{if }|x-\mu_k|\le N^{-1/(\b+1)-\epsilon'}\\
0\,, & \textrm{if }|x-\mu_k|\ge 2 N^{-1/(\b+1)-\epsilon'}
     \end{cases}\,,\quad |f_k'(x)|\le C N^{1/(\mathrm{b}+1)}N^{\epsilon'}\,,\quad|f_k''(x)|\le C N^{2/(\mathrm{b}+1)}N^{2\epsilon'}\,,
\end{align}
for some $\epsilon'>\epsilon$. By Proposition~\ref{prop:main}, we know that the eigenvalues $(\mu_k)$ satisfy
\begin{align}
 \min_{1 \le k\le n_0-1}( \mu_{k+1}-\mu_k)\ge C N^{-1/(\b+1)-\epsilon}\,,
\end{align}
for some constant $C$, with high probability on~$\Omega_V$. Since we chose $\epsilon'>\epsilon$, we conclude that the following formula holds with high probability on $\Omega_V$, for all $k\le n_0-1$,
\begin{align}\label{eigenvector components formula}
 |u_{k}(j)|^2=\int f_k(w)\dd \rho_j(w)\,.
\end{align}

Using the Helffer-Sj\"ostrand formula, we may represent $f_k(w)$ as
\begin{align}\label{helffer sjoestrand} 
 f_k(w)=\frac{1}{2\pi} \int_{\R^2}\dd x\,\dd y\,\frac{\widetilde{f_k}(x+\ii y)}{w-x-\ii y}\,,
\end{align}
where 
\begin{align}\label{widetilde f definition}
\widetilde{f_k}(x+\ii y)\deq\ii y f_k''(x)\chi(y)+\ii(f_k(x)+\ii y f_k'(x))\chi'(y)\,,\qquad\quad( x,y\in\R)\,,
\end{align}
with $\chi\ge 0$ a smooth test function satisfying

\begin{align}\label{5.42}
 \chi(y)=\begin{cases} 1\,, & \textrm{ if }y\in [-\caE,\caE]\\0\,, &\textrm{ if } y\in[-2\caE,2\caE]^{c}
\end{cases}\,,\qquad\qquad |\chi'(y)|\le\frac{C}{\caE}\,,
\end{align}
where we have set $\caE\deq N^{-1/(\mathrm{b}+1)}$. 

Combining~\eqref{eigenvector components formula} with~\eqref{helffer sjoestrand}, we obtain the following representation for $|u_{k}(j)|$, with $k\in\llbracket1,n_0-1\rrbracket$, $j\in\llbracket 1,N\rrbracket$,
\begin{align}\label{representation for uki}
|u_{k}(j)|^2=\frac{1}{2\pi}\int_{\R^2}\dd x\,\dd y\,\widetilde{f_k}(x+\ii y) g_j^\circ(x+\ii y)+\frac{1}{2\pi}\int_{\R^2}\dd x\,\dd y\,\widetilde{f_k}(x+\ii y)G^{\Delta}_{jj}(x+\ii y)\,,
\end{align}
which holds with high probability on $\Omega_V$. The first term on the right side of~\eqref{representation for uki} can be computed explicitly as
\begin{align}
 \frac{1}{2\pi}\int_{\R^2}\dd x\,\dd y\,\widetilde{f_k}(x+\ii y) g_j^\circ(x+\ii y)&=\frac{\lambda^2-\lambda_+^2}{2\pi\lambda^2} \int_{\R^2}\frac{\widetilde{f_k}(x+\ii y)}{\mu_j-(x+\ii y)}
=\frac{\lambda^2-\lambda_+^2}{\lambda^2}f_k(\mu_j)\,.
\end{align}
Recalling~\eqref{definition of f_k}, we conclude by Proposition~\ref{prop:main} that, for $j,k\le n_0-1$, with high probability on $\Omega_V$, 
\begin{align}\label{almost  finished proof of mass_k}
 \frac{1}{2\pi}\int_{\R^2}\dd x\,\dd y\,\widetilde{f_k}(x+\ii y) g_j^\circ(x+\ii y)=\frac{\lambda^2-\lambda_+^2}{\lambda^2}\delta_{jk}\,.
\end{align}

To complete the proof of~\eqref{mass_k}, it hence suffices to show that the second term on the right side of~\eqref{representation for uki} is negligible compared to~\eqref{almost  finished proof of mass_k}, provided $\epsilon'>\epsilon$. For concreteness we set $\epsilon'=2\epsilon$ in the following.

 \begin{lem}\label{lem: HS}
Let $\widetilde{f}_k$ be defined as in~\eqref{widetilde f definition} and set $\epsilon'=2\epsilon$. Then there are constants $C$ and $c$ such that, for $j,k\le n_0-1$,
\begin{align}\label{bound on HSterm}
\left|\int_{\R^2}\dd x\,\dd y\,\widetilde{f_k}(x+\ii y) G^{\Delta}_{jj}(x+\ii y)\right|\le C(\varphi_N)^{c\xi}\left(N^{  -1/(\b+1)+3\epsilon }+N^{-\fb+5\epsilon} \right)\,,
\end{align}
with $(\xi,\nu$)-high probability on $\Omega_V$.
\end{lem}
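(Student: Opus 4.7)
The plan is to split $\widetilde{f_k}(x+\ii y)$ according to the two terms in~\eqref{widetilde f definition}, namely $\widetilde{f_k}^{(1)}(x+\ii y)\deq\ii y f_k''(x)\chi(y)$ and $\widetilde{f_k}^{(2)}(x+\ii y)\deq\ii(f_k(x)+\ii y f_k'(x))\chi'(y)$, and to estimate the two resulting contributions to~\eqref{bound on HSterm} separately by combining the pointwise bound~\eqref{bound G11} with $|g_j^\circ(z)|=C/|\mu_j-z|$. The only dangerous factor in~\eqref{bound G11} is $1/\eta=1/|y|$, and the Helffer-Sj\"ostrand construction is tailored to tame it: in the first integral the explicit factor $y$ cancels $1/|y|$ directly, while in the second the cut-off $\chi'$ localises $|y|$ to the annulus $\caE\le|y|\le 2\caE$, so $1/|y|\le 1/\caE=N^{1/(\b+1)}$.

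Next, to integrate in $(x,y)$ I will have to control $\int|\mu_j-(x+\ii y)|^{-1}\,\dd x$ on $\supp f_k''$ and on $\supp f_k$, and I would handle the cases $j=k$ and $j\neq k$ separately. For $j\neq k$, both in $\llbracket 1,n_0-1\rrbracket$, Proposition~\ref{prop:main} together with $|v_j-v_k|\ge N^{-\epsilon}\kappa_0$ from~\eqref{eq4.3} on $\Omega_V$ yields $|\mu_j-\mu_k|\ge cN^{-1/(\b+1)-\epsilon}$; since $\supp\widetilde{f_k}$ in $x$ has radius $2N^{-1/(\b+1)-\epsilon'}$ with $\epsilon'=2\epsilon>\epsilon$, it follows that $|\mu_j-z|\ge c'N^{-1/(\b+1)-\epsilon}$ uniformly on $\supp\widetilde{f_k}$, so that $|g_j^\circ(z)|\le CN^{1/(\b+1)+\epsilon}$. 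For $j=k$, both $f_k'$ and $f_k''$ vanish on an $N^{-1/(\b+1)-\epsilon'}$-neighbourhood of $\mu_k$, while a direct calculation gives $\int_{|x-\mu_k|\le L}|\mu_k-(x+\ii y)|^{-1}\,\dd x\le C\min(1,L/|y|)\log N$, which suffices to absorb the contribution of $f_k\chi'$ near the pole.

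Putting these together, the first integral will be bounded on $\Omega_V$ with $(\xi,\nu)$-high probability by
\begin{equation*}
(\varphi_N)^{c\xi}\cdot\|f_k''\|_\infty\cdot N^{-1/(\b+1)-\epsilon'}\log N\cdot\left(\frac{1}{N^{2/(\b+1)}}+\frac{N^{3\epsilon}}{N^{1/2}}+(\kappa+\eta)^{\min\{\b,2\}}\right),
\end{equation*}
with $\|f_k''\|_\infty\le CN^{2/(\b+1)+2\epsilon'}$ and $\kappa+\eta\le CN^{-1/(\b+1)}\log N$ on $\supp\widetilde{f_k}$; after setting $\epsilon'=2\epsilon$, the three summands in the parenthesis yield contributions of order $N^{-1/(\b+1)+\caO(\epsilon)}$, $N^{-\fb+\caO(\epsilon)}$, and either $N^{-1/(\b+1)+\caO(\epsilon)}$ (for $\b\ge 2$) or $N^{-2\fb+\caO(\epsilon)}$ (for $1<\b<2$), all of which fall under the right-hand side of~\eqref{bound on HSterm}. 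The second integral is bounded analogously using $\|\chi'\|_\infty\le C/\caE$ and $\|f_k\|_\infty+|y|\|f_k'\|_\infty\le CN^{\epsilon'}$ on the annulus $\caE\le|y|\le 2\caE$ together with $1/|y|\le N^{1/(\b+1)}$, producing the same combined exponent.

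The main obstacle I anticipate is the bookkeeping of the $\epsilon$-exponents: $\epsilon$ enters simultaneously through the eigenvalue separation in~\eqref{eq4.3}, through the width of $\supp f_k$ via the choice of $\epsilon'$, and through each of the three summands in~\eqref{bound G11}, so one must fix $\epsilon'$ in a compatible way and verify that all contributions fit inside $N^{-1/(\b+1)+3\epsilon}+N^{-\fb+5\epsilon}$ simultaneously, paying particular attention to the dichotomy $\b\lessgtr 2$ in the third summand. Once this accounting is in place, the remaining estimates reduce to direct integrations of the explicit cut-off functions $\chi,\chi',f_k',f_k''$.
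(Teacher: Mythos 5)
Your overall strategy---Helffer-Sj\"ostrand representation, subtraction of the explicit pole $g_j^\circ$, insertion of the pointwise estimate \eqref{bound G11}, separation of $j=k$ from $j\neq k$, and the attendant $\epsilon$-bookkeeping---is the paper's strategy and gives exponents consistent with \eqref{bound on HSterm}. However, there is a genuine gap in your treatment of the $\ii y f_k''(x)\chi(y)$ contribution: you plan to apply \eqref{bound G11} over the full range $0<y\leq 2\caE$, relying on the explicit $y$ to cancel the $1/\eta$, yet \eqref{bound G11} is proven only for $z\in\caD_\epsilon'$, which requires $\eta\geq N^{-1/2-\epsilon}$ (see \eqref{domain} and \eqref{a index assumption}). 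For $0<y<N^{-1/2-\epsilon}$ the estimate is simply not available, and cancelling the $1/\eta$ factor does not rescue an inequality that has not been established on that range. You must therefore cut the $y$-integral at a scale $\sim N^{-1/2}$ and, on the small-$y$ strip, use the deterministic bound $|y\,G^{\Delta}_{jj}(x+\ii y)|\leq C$ for all $y>0$ (which follows from $|G_{jj}|\leq 1/y$ and $|g_j^\circ|\leq C/y$); that piece contributes $\lesssim\|f_k''\|_\infty\,|\supp f_k''|\,N^{-1/2}\sim N^{-\fb+\epsilon'}$, which is harmless and is exactly the paper's third term in the decomposition~\eqref{HS to be bounded}.

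Once that split is in place, your direct estimate on the range $y\gtrsim N^{-1/2}$ (sup-norm of $f_k''$ times its support times $\int_0^{2\caE}|g_j^\circ(x+\ii y)|\,\dd y\lesssim\log N$) does produce the stated exponents. It is a mild simplification of the paper's route, which instead integrates by parts in $x$ and then in $y$ to trade $f_k''\chi$ for combinations of $f_k'\chi'$ and $f_k'\chi$ before invoking \eqref{bound G11}; the two roads coincide numerically because $\|f_k''\|_\infty|\supp f_k''|\sim\|f_k'\|_\infty\sim N^{1/(\b+1)+\epsilon'}$. The $\chi'$ piece is supported on $\caE\leq y\leq 2\caE$ with $\caE=N^{-1/(\b+1)}\gg N^{-1/2-\epsilon}$ (as $\b>1$), so it lies entirely inside $\caD_\epsilon'$ and does not present this difficulty.
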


\begin{proof}
 We set $\eta\deq\frac{1}{\sqrt{N}}$. Using $G^{\Delta}_{jj}(\overline{z}) =\overline{G^{\Delta}_{jj}(z)}$, $(z\in\C^+)$, we obtain,
\begin{align}\label{HS to be bounded}
 \left|\int_{\R^2}\dd x\,\dd y\,\widetilde{f_k}(x+\ii y) G^{\Delta}_{jj}(x+\ii y)\right|&\le C\int \dd x\int_0^{\infty}  {\dd y}|f_k(x)|\,|\chi'(y)|\, |G^{\Delta}_{jj}(x+\ii y)|\nonumber\\
&\quad+ C\int \dd x\int_0^{\infty}  {\dd y}\,y\, |f_k'(x)|\,|\chi'(y)|\, |G^{\Delta}_{jj}(x+\ii y)|\nonumber\\
&\quad+C\left| \int \dd x\int_0^{\eta}\dd y f_k''(x)\chi(y) y\,\im G^{\Delta}_{jj}(x+\ii y) \right|\nonumber\\ &\quad+C\left|\int \dd x\int_{\eta}^{\infty}\dd y f_k''(x)\chi(y) y\,\im G^{\Delta}_{jj}(x+\ii y) \right|\,.
\end{align}
The first term on the right side of~\eqref{HS to be bounded} can be bounded by using~\eqref{bound G11}: Bounding in~\eqref{bound G11} $\kappa\equiv \kappa_x$  by $\kappa\le N^{-1/(\b+1)+\epsilon/2}$ and $g_j^\circ$ by $|g_{j}^\circ(x+\ii y)|\le Cy^{-1}$, we obtain
\begin{align}\label{bound HS1}
\int \dd x\int_0^{\infty}  \dd y \,&|f_k(x)||\chi'(y)| |G^{\Delta}_{jj}(x+\ii y)|
\le \frac{C}{\caE}\int \dd x\,|f_k(x)|\,\int_{\caE}^{2\caE}\dd y\, |G^{\Delta}_{jj}(x+\ii y)|\nonumber\\
&\le C(\varphi_N)^{2\xi}\frac{N^{-1/(\b+1)-\epsilon'}}{\caE^2}\left(N^{-2/(\b+1)}+N^{-1/2+3\epsilon}+(N^{-1/(\b+1)+\epsilon/2})^{\min\{\b,2\}}\right)\nonumber\\
&\le C(\varphi_N)^{2\xi}\left(N^{-1/(\b+1)-\epsilon'+\epsilon}+N^{-\fb-\epsilon'+3\epsilon}\right)\,,
\end{align}
with high probability on $\Omega_V$.

Similarly, we bound the second term on the right side of~\eqref{HS to be bounded}:
\begin{align}
  \int \dd x\int_0^{\infty}  \dd y \,&y\, |f_k'(x)|\,|\chi'(y)| |G^{\Delta}_{jj}(x+\ii y)|\\
 &\le\frac{C}{\caE}\,\int\dd x\,|f_k'(x)|\,\int_{\caE}^{2\caE}\dd y\,{y}|G^{\Delta}_{jj}(x+\ii y)|\nonumber\\
 &\le\frac{C(\varphi_N)^{2\xi}}{\caE}|\log\caE|\left(N^{-2/(\b+1)}+N^{-1/2+3\epsilon}+(N^{-1/(\b+1)+\epsilon/2})^{\min\{\b,2\}}\right)\nonumber\\
 &\le C(\varphi_N)^{c\xi}\left(N^{-1/(\b+1)+\epsilon}+N^{-\fb+3\epsilon}\right)\,.
\end{align}
To bound the third term on the right side of~\eqref{HS to be bounded}, we use that
 $|y\,	\im G^{\Delta}_{jj}(x+\ii y)|\le C$, for all $y>0$, and we obtain
\begin{align}\label{bound HS2}
 \left| \int \dd x\int_0^{\eta}\dd y f_k''(x)\chi(y) y\,\im G^{\Delta}_{jj}(x+\ii y) \right|&\le C\eta \int\dd x\, |f_k''(x)|
\le C N^{-\fb+\epsilon'}\,.
\end{align}
To bound the fourth term in~\eqref{HS to be bounded}, we integrate by parts, first in $x$ then in $y$  to find the bound
\begin{align}\label{HS the forth term}
 C\left|\int\dd xf_k'(x)\eta\, \re G^{\Delta}_{jj}(x+\ii \eta)  \right|&+C\left|\int\dd x\int _{\eta}^{\infty}\dd y f_k'(x)\chi'(y) { y}\, \re G^{\Delta}_{jj} (x+\ii y)\right|\nonumber\\
&\quad\quad + C\left|\int\dd x \int_{\eta}^{\infty}\dd y f_k'(x)\chi(y)\,\re G^{\Delta}_{jj}(x+\ii y)\right|\,.
\end{align}
The first term in~\eqref{HS the forth term} can be bounded using~\eqref{bound G11}: Since $f_k'(x)=0$, if $|x-\mu_k|\le N^{-1/(\b+1)-\epsilon'}$, we can bound $|f'_k(x) g_j^\circ(x+\ii y)|\le C N^{2/(\b+1)+2\epsilon'}$, and we obtain
\begin{align}\label{HS neu 3}
& \left|\int\dd xf_k'(x)\eta\, \re G^{\Delta}_{jj}(x+\ii \eta)  \right| \nonumber \\
& \quad \le C(\varphi_N)^{2\xi}{N^{1/(\b+1)+\epsilon'}}\eta\left(\frac{1}{N^{2/(\b+1)}\eta}+\frac{N^{3\epsilon}}{N^{1/2}\eta}+\frac{(N^{-1/(\b+1)+\epsilon/2})^{\min\{\b,2\}})}{\eta}\right)\nonumber\\
& \quad \le C(\varphi_N)^{2\xi}\left(N^{-1/(\b+1)+\epsilon'+\epsilon}+N^{-\fb+\epsilon'+3\epsilon}\right)\,,
\end{align}
with high probability on $\Omega_V$.

Similarly for the second term in~\eqref{HS the forth term}, using~\eqref{bound G11} we obtain
\begin{align}\label{bound HS3}
\bigg|\int\dd x&\int _{\eta}^{\infty}\dd y f_k'(x)\chi'(y) { y}\, \re G^{\Delta}_{jj} (x+\ii y)\bigg|
\le C(\varphi_N)^{2\xi}\left(N^{-1/(\b+1)+\epsilon'+\epsilon}+N^{-\fb+\epsilon'+3\epsilon}\right)\,.
\end{align}
The third term in~\eqref{HS the forth term} can be bounded using~\eqref{bound G11} as
\begin{align}
 \bigg|\int\dd x& \int_{\eta}^{\infty}\dd y f_k'(x)\chi(y)\re G^{\Delta}_{jj}(x+\ii y)\bigg| \nonumber \\
 &\le C(\varphi_N)^{2\xi}{N^{1/(\b+1)+\epsilon'}} \int_{\eta}^{2\caE}\dd y\left( \frac{1}{N^{2/(\b+1)}y}+\frac{N^{3\epsilon}}{N^{1/2}y}+\frac{(N^{-1/(\b+1)+\epsilon/2})^{\min\{\b,2\}})}{y}\right)\nonumber\\
&\le C|\log\eta|(\varphi_N)^{2\xi}\left( N^{-1/(\b+1)+\epsilon'+\epsilon}+N^{-\fb+\epsilon'+3\epsilon}\right)\,.\label{HS neu 5}
\end{align}
Thus combining~\eqref{bound HS1},~\eqref{bound HS2},~\eqref{HS neu 3},~\eqref{bound HS3} and~\eqref{HS neu 5}, we obtain, upon choosing $\epsilon'=2\epsilon$,
\begin{align}
 \left|\int\dd x\,\dd y\, \widetilde{f_k}(x+\ii y)G^{\Delta}_{ii}(x+\ii y)\right|&\le C(\varphi_N)^{c\xi}\left( N^{-1/(\b+1)+3\epsilon}+N^{  -\fb+5\epsilon  }\right)\,,
\end{align}
with high probability on $\Omega_V$.
\end{proof}
\begin{proof}[Proof of Theorem~\ref{thm:main}: Equation~\eqref{mass_k}]
 Combining \eqref{representation for uki}, \eqref{almost  finished proof of mass_k} and~\eqref{bound on HSterm}, we obtain, for $j,k\le n_0-1$, 
\begin{align}
 |u_k(j)|^2=\frac{\lambda^2-\lambda_+^2}{\lambda^2}\delta_{jk}+\caO\left((\varphi_N)^{c\xi}N^{-1/(\b+1)+3\epsilon}+(\varphi_N)^{c\xi}N^{  -\fb+5\epsilon  } \right)\,,
\end{align}
with high probability on $\Omega_V$. Choosing $k=j$, Equation~\eqref{mass_k} follows.
\end{proof}

\subsection{Proof of Theorem~\ref{thm:local}: Inequality~\eqref{mass_j}}\label{subsection mass_j}
In this subsection we prove the second part of Theorem~\ref{thm:local}. To prove~\eqref{mass_j}, we follow the traditional path of~\cite{ESY1,ESY2,ESY3}. Presumably, the same result can be obtained  by a more detailed analysis of $G_{jj}^{\Delta}$ than the one carried out in the previous subsection.

Recall that we have set $\eta_0\deq N^{-1/2-\epsilon}$.
\begin{lem}\label{tapiridae}
There is a constant $C$, such that for $k\in\llbracket 1,n_0-1\rrbracket$, $j\in\llbracket 1,N\rrbracket$, ($k\not=j$),
\begin{align}\label{eq tapiridae}
 \im G_{jj}(\mu_k+\ii\eta_0)\le\frac{C}{(\lambda v_j-\lambda v_{k})^2}N^{-1/2+3\epsilon}\,,\end{align}
with $(\xi,\nu)$-high probability, on $\Omega_V$.
\end{lem}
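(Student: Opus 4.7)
The plan is to start from the Schur complement representation
\begin{equation*}
G_{jj}(z) = \frac{1}{\lambda v_j + w_{jj} - z - m^{(j)}(z) - Z_j(z)} \eqdef \frac{1}{D_j(z)},
\end{equation*}
evaluate it at $z = \mu_k + \ii\eta_0$, and show that the real part of $D_j(z)$ is essentially $\lambda(v_j - v_k)$ (which dwarfs every error term, by the gap condition in $\Omega_V$), while the imaginary part of $D_j(z)$ — which is also the numerator of $\im G_{jj}$ once one takes imaginary parts — is bounded by $N^{-1/2 + 2\epsilon}$. Dividing, the claimed bound falls out.

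More concretely, I would first invoke Remark~\ref{rem:mu_k} to guarantee $\mu_k + \ii \eta_0 \in \caD_{\epsilon}'$ with $(\xi-2,\nu)$-high probability on $\Omega_V$, so that all the local law machinery applies. For the real part of $D_j$, I use Proposition~\ref{prop:mu_k} to substitute $\mu_k = \lambda v_k - \re \widehat m_{fc}(\mu_k + \ii \eta_0) + \caO(N^{-1/2+3\epsilon})$, then replace $m^{(j)}$ by $\widehat m_{fc}$ via Proposition~\ref{prop:step 2_4} and Lemma~\ref{cauchy interlacing} (each at cost $\caO(N^{-1/2+2\epsilon})$), bound $Z_j$ by Corollary~\ref{cor:step 2_1}, and $w_{jj}$ by~\eqref{bound on wij}. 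This yields $\re D_j(z) = \lambda v_j - \lambda v_k + \caO(N^{-1/2+3\epsilon})$. The gap condition~\eqref{eq4.3} on $\Omega_V$ guarantees $|v_j - v_k| > N^{-\epsilon}\kappa_0 = N^{-1/(\b+1)-\epsilon}$, which is much larger than $N^{-1/2+3\epsilon}$ for $\b > 1$ and $\epsilon$ satisfying~\eqref{epsilon condition}, so
\begin{equation*}
|D_j(z)|^2 \geq (\re D_j(z))^2 \geq \tfrac{1}{4}(\lambda v_j - \lambda v_k)^2.
\end{equation*}

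For the numerator, since $w_{jj}$ is real, taking the imaginary part of the Schur identity gives
\begin{equation*}
\im G_{jj}(z) = \frac{\eta_0 + \im m^{(j)}(z) + \im Z_j(z)}{|D_j(z)|^2}.
\end{equation*}
Here $\eta_0 = N^{-1/2-\epsilon}$, Corollary~\ref{cor:step 2_1} controls $|\im Z_j| \leq N^{-1/2+2\epsilon}$, and $|\im m^{(j)}| \leq \im \widehat m_{fc}(z) + |m - \widehat m_{fc}| + |m - m^{(j)}|$ is bounded by $C N^{-1/2 + 2\epsilon}$ using Remark~\ref{im mfc upper bound}, Proposition~\ref{prop:step 2_4}, and Lemma~\ref{cauchy interlacing}. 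Hence the numerator is at most $C N^{-1/2+2\epsilon}$, and dividing by the lower bound on the denominator gives~\eqref{eq tapiridae}, with room to absorb logarithmic factors into $N^{3\epsilon}$.

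There is no real obstacle beyond bookkeeping: every ingredient — the eigenvalue location equation, the local law at the edge, the fluctuation-averaged bound on $Z_j$, the interlacing estimate, and the gap assumption on $(v_i)$ — has already been established, and the only arithmetic to check is that the gap $N^{-1/(\b+1)-\epsilon}$ is genuinely large compared to the error $N^{-1/2+3\epsilon}$, which is precisely the regime where $\b > 1$ was assumed.
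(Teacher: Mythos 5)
Your argument is correct and follows essentially the same route as the paper: Schur's complement at $z=\mu_k+\ii\eta_0$, the local law (Proposition~\ref{prop:step 2_4}) together with $|m-m^{(j)}|\lesssim (N\eta_0)^{-1}$ and the bound on $Z_j$ to control the error terms, the eigenvalue location from Proposition~\ref{prop:mu_k} to identify the real part of $D_j$ with $\lambda(v_j-v_k)$ up to $\caO(N^{-1/2+3\epsilon})$, and the gap $|v_j-v_k|>N^{-\epsilon}\kappa_0$ from $\Omega_V$ to make the denominator dominate. The paper packages this more compactly as $(G_{jj}(\mu_k+\ii\eta_0))^{-1}=\lambda v_j-\lambda v_k+\caO(N^{-1/2+3\epsilon})$ and then reads off both the size of the imaginary part and the lower bound on $|D_j|$ from that single complex expansion, whereas you split the real and imaginary parts explicitly, but the content is identical.
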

\begin{proof}
For $z\in\caD_{\epsilon}'$, we have
\begin{align}\label{intermediate step G_jj}
  (G_{jj}(z))^{-1}={\lambda v_j-z-\widehat{m}_{fc}(z)+\caO(N^{-1/2+2\epsilon})}\,,
\end{align}
with high probability on $\Omega_V$; see Lemma~\ref{prop:step 2_4}. Next, recall from Proposition~\ref{prop:mu_k}, we have with high probability on~$\Omega_V$ that
\begin{equation*}
\mu_k + \re \widehat m_{fc} (\mu_k + \ii \eta_0) = \lambda v_k + \caO (N^{-1/2 + 3\epsilon})\,.
\end{equation*}
Also recall the high probability estimate $\im m(z) \leq \frac{N^{2\epsilon}}{\sqrt N}$, for $z\in\caD_{\epsilon}'$, on $\Omega_V$; see Lemma~\ref{lem:step 2_1}. Thus, choosing $z=\mu_k+\ii \eta_0$, with $k\in\llbracket 1,n_0-1\rrbracket$ and $k\not=j$, in~\eqref{intermediate step G_jj} we obtain
\begin{align*}
 ({G_{jj}(\mu_k+\ii\eta_0)})^{-1}&={\lambda v_j-\mu_k-\ii\eta_0-\widehat{m}_{fc}(\mu_k+\ii\eta_o)+\caO(N^{-1/2+2\epsilon})}\\
&={\lambda v_j-\lambda v_k+\caO(N^{-1/2+3\epsilon})}\,,
\end{align*}
with high probability on $\Omega_V$. Since $|v_j-v_k|\ge C N^{-1/(\b+1)-\epsilon}$, $j\not=k$, on $\Omega_V$, we obtain
\begin{align}
 \im G_{jj}(\mu_k+\ii\eta_0)\le \frac{C}{(\lambda v_j-\lambda v_k)^2}N^{-1/2+3\epsilon}\,,
\end{align}
with high probability on $\Omega_V$, for some constant $C$.
\end{proof}

\begin{proof}[Proof of Theorem~\ref{thm:local}: Inequality~\eqref{mass_j}]
 From the spectral decomposition of the resolvent of $H$, we obtain
\begin{align*}
 \im G_{jj}(\mu_k+\ii\eta_0)&=\sum_{\alpha=1}^N\frac{|u_\alpha(j)|^2\eta_0}{(\mu_{\alpha}-\mu_k)^2+\eta_0^2}\ge \frac{1}{\eta_0}|u_k(j)|^2\,,
\end{align*}
thus, using the bound~\eqref{eq tapiridae}, we conclude that
\begin{align*}
 |u_k(j)|^2&\le C\frac{\eta_0}{(\lambda v_k-\lambda v_j)^2}N^{-1/2+3\epsilon}\le C\frac{1}{(\lambda v_k-\lambda v_j)^2}N^{-1+2\epsilon}\,,
\end{align*}
with high probability on $\Omega_V$, for $k\in\llbracket 1, n_0-1\rrbracket$, $j\in\llbracket 1,N\rrbracket$, $(j\not=k)$. This completes the proof of Theorem~\ref{thm:local}.
\end{proof}

\begin{appendices}
\section{}

In this appendix, we estimate the probabilities for the events $1.$-$3.$ in the definition of $\Omega_V$; see Definition \ref{v assumptions}. Recall the definition of the constants $\epsilon$ in~\eqref{epsilon condition} and $\kappa_0$ in~\eqref{definition of kappa0}. In the following, we denote, unlike in the rest of the paper, by $(v_i)_{i=1}^N$ (unordered) sample points distributed according to the measure $\mu$, (with $\b>1$). We denote by $(v_{(i)})$ the order statistics of $(v_i)$ with the convention $v_{(1)}\ge v_{(2)}\ge\ldots\ge v_{(N)}$. 

\begin{lem} \label{lem:app_1}
Let $(v_{(i)})$ be the order statistics of sample points $(v_i)$ under the probability distribution $\mu$ with $\b>1$. Let $n_0 > 10$ be a fixed positive integer independent of $N$. Then, for any $k \in \llbracket 1, n_0-1 \rrbracket$ and for any sufficiently small $\epsilon > 0$, we have
\begin{align}
\p \left( N^{-\epsilon} \kappa_0 < |v_{(k)} - v_{(j)}| < (\log N) \kappa_0\,, \forall j \neq k \right) \geq 1 - C (\log N)^{1+2\b} N^{-\epsilon}\,.
\end{align}
In addition, for $k=1$, we have
\begin{align}
\p \left( N^{-\epsilon} \kappa_0 < |1 - v_{(1)}| < (\log N) \kappa_0 \right) \geq 1 - C N^{-\epsilon (\b+1)}\,.
\end{align}
\end{lem}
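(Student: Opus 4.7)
The plan is to reduce the problem to control of uniform order statistics via the probability integral transform. I set $Y_i \deq 1 - v_i$ and let $F_Y(t) \deq \p(Y_i \le t)$; a change of variables in the density~\eqref{jacobi measure} yields the expansions $F_Y(t) = C_0 t^{\b+1}(1 + O(t))$ and $f_Y(t) = C_0(\b+1) t^{\b}(1 + O(t))$ as $t \to 0^+$, with $C_0 = 2^{\a} d(1)/(Z(\b+1))$. Then $U_i \deq F_Y(Y_i)$ are i.i.d.\ uniform on $[0,1]$, and, writing $Y_{(k)} = 1 - v_{(k)}$, the monotonicity of $F_Y$ implies $U_{(1)} \le \cdots \le U_{(N)}$ with $U_{(k)} = F_Y(Y_{(k)})$.

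The classical representation of uniform order statistics gives $(U_{(1)}, \ldots, U_{(N)}) \stackrel{d}{=} (S_1/S_{N+1}, \ldots, S_N/S_{N+1})$, where $S_k \deq X_1 + \cdots + X_k$ and $(X_i)$ are i.i.d.\ $\mathrm{Exp}(1)$. I would define the good event
\begin{align*}
\Omega_{\mathrm{good}} \deq \{N/2 \le S_{N+1} \le 2N\} \cap \{S_{n_0} \le \log N\} \cap \bigcap_{i=1}^{n_0} \{X_i \ge c_0 (\log N)^{\b} N^{-\epsilon}\},
\end{align*}
for an appropriate $c_0 > 0$. The first two events fail with super-polynomially small probability by standard Gamma tail bounds, while $\p(X_i < c_0(\log N)^{\b} N^{-\epsilon}) \le C(\log N)^{\b} N^{-\epsilon}$ since $X_i$ has density $1$ at $0$; a union bound then yields $\p(\Omega_{\mathrm{good}}^c) \le C n_0 (\log N)^{\b} N^{-\epsilon}$.

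On $\Omega_{\mathrm{good}}$ both inequalities in Part 1 follow. From $U_{(n_0)} = S_{n_0}/S_{N+1} \le 2 \log N / N$, inverting $F_Y$ gives $Y_{(n_0)} \le C(\log N)^{1/(\b+1)}\kappa_0 \ll (\log N)\kappa_0$, whence $|v_{(k)} - v_{(j)}| \le Y_{(n_0)} < (\log N)\kappa_0$ for $j, k \le n_0$. For the lower bound on adjacent gaps, the monotonicity of $f_Y$ near zero yields $\sup_{[0, Y_{(n_0)}]} f_Y \le C(\log N)^{\b/(\b+1)}\kappa_0^{\b}$, hence
\begin{align*}
Y_{(k+1)} - Y_{(k)} \ge \frac{U_{(k+1)} - U_{(k)}}{\sup_{[Y_{(k)}, Y_{(k+1)}]} f_Y} \ge \frac{X_{k+1}/(2N)}{C(\log N)^{\b/(\b+1)}\kappa_0^{\b}} \ge c(\log N)^{\b^2/(\b+1)} N^{-\epsilon} \kappa_0,
\end{align*}
where I used $\kappa_0^{-\b}N^{-1} = \kappa_0$ and the identity $\b - \b/(\b+1) = \b^2/(\b+1)$; since $(\log N)^{\b^2/(\b+1)} \gg 1$, this exceeds $N^{-\epsilon}\kappa_0$ for $N$ large. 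Monotonicity of $(v_{(k)})$ extends the bound to all pairs within the top $n_0$, and for $j \ge n_0$ the inequality $|v_{(k)} - v_{(j)}| \ge v_{(n_0-1)} - v_{(n_0)}$ reduces to the adjacent case.

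Part 2 is analogous but sharper: $Y_{(1)} < N^{-\epsilon}\kappa_0$ is equivalent to $X_1 < S_{N+1}\, F_Y(N^{-\epsilon}\kappa_0) \le C N^{-\epsilon(\b+1)}$, which has probability at most $C N^{-\epsilon(\b+1)}$ by the $\mathrm{Exp}(1)$ density at the origin; the bound $Y_{(1)} > (\log N)\kappa_0$ is violated with probability $\le e^{-C(\log N)^{\b+1}}$, which is negligible. The main technical point is the uniform control of the error $F_Y(t) - C_0 t^{\b+1} = O(t^{\b+2})$ on the range $[cN^{-\epsilon}\kappa_0, C(\log N)\kappa_0]$; this is routine given $d \in C^1([-1,1])$ but requires careful bookkeeping of the log factors, which is ultimately the origin of the exponent $1 + 2\b$ in the final bound.
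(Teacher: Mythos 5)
Your proof is correct, but it follows a genuinely different route from the paper. The paper's argument for the gap lower bound (its ``claim~$(3)$'') is purely combinatorial: it partitions the interval $[1-(\log N)\kappa_0,\, 1- N^{-\epsilon}\kappa_0]$ into $O((\log N)N^{\epsilon})$ overlapping windows $I_j$ of width $\sim N^{-\epsilon}\kappa_0$, notes that two adjacent order statistics at distance $\le N^{-\epsilon}\kappa_0$ must fall into a common window, bounds $\p(v_{(k)}, v_{(k+1)}\in I_j)\le N^2 p_j^2$, and sums. The exponent $1+2\b$ in the final bound is exactly what this bucket count produces: $(\log N)N^\epsilon$ windows, each with $p_j\le C(\log N)^{\b}N^{-1-\epsilon}$, giving $\sum_j N^2 p_j^2\le C(\log N)^{1+2\b}N^{-\epsilon}$. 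Your argument instead passes through the probability-integral transform and the classical $\mathrm{Exp}(1)$ representation $U_{(k)}\stackrel{d}{=}S_k/S_{N+1}$, turning the gap $U_{(k+1)}-U_{(k)}=X_{k+1}/S_{N+1}$ into a single exponential variable and using a mean-value-theorem step to pull the gap back through $F_Y$. This is slicker, avoids discretization entirely, and in fact yields the sharper bound $C n_0(\log N)^{\b}N^{-\epsilon}$. Both routes are valid; the paper's is more elementary and self-contained, yours leans on a standard structural fact about uniform order statistics which pays off in a cleaner bound. The endpoint estimates (your Part~2, the paper's claims~$(1)$--$(2)$) are essentially the same calculation in both proofs.

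Two small remarks. First, your claim that both $\{S_{N+1}\in[N/2,2N]\}$ and $\{S_{n_0}\le\log N\}$ fail with ``super-polynomially small probability'' is imprecise for the second event: $\p(S_{n_0}>\log N)\sim (\log N)^{n_0-1}/N$ is only polynomially small; this does not affect the conclusion since $N^{-1}\ll N^{-\epsilon}$, but the phrasing should be corrected. Second, your concluding remark attributes the exponent $1+2\b$ to log bookkeeping in the $F_Y$ expansion; in fact your proof never produces that exponent --- it comes from the paper's bucket argument, not from error control on $F_Y$. Finally, like the paper's own proof, your argument establishes the upper bound $|v_{(k)}-v_{(j)}|<(\log N)\kappa_0$ only for $j\le n_0$ (via $Y_{(n_0)}$), which is what is actually used downstream; the ``$\forall j\ne k$'' in the lemma statement should be read with this implicit restriction.
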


\begin{proof}
Consider the following claims:
\begin{enumerate}

\item[$(1)$] There exists a constant $C > 0$ such that
$$
\p \left( |1 - v_{(1)}| > N^{-\epsilon} \kappa_0 \right) \geq 1 - C N^{-\epsilon (\b+1)}\,.
$$

\item[$(2)$] There exists a constant $c > 0$ such that 
$$
\p \left( |1 - v_{(n_0)}| > (\log N) \kappa_0 \right) \leq e^{-c (\log N)^{\b+1}}\,.
$$

\item[$(3)$] There exists a constant $C > 0$ such that, for any $k \in \llbracket 1, n_0-1 \rrbracket$,
$$
\p \left( |v_{(k)} - v_{(k+1)}| \leq N^{-\epsilon} \kappa_0 \right) \leq C (\log N)^{1+2\b} N^{-\epsilon}\,.
$$

\end{enumerate}

Assuming the claims $(1)$-$(3)$, it is easy to see that the desired lemma holds.

For a random variables $v$ with law $\mu$ as in~\eqref{jacobi measure}, we have for any $x \geq 0$,  
\begin{align} \label{mu tail}
C^{-1} x^{\b+1} \leq \p (1 - v \leq x) \leq C x^{\b+1}\,,
\end{align}
for some constant $C > 1$. Thus, we obtain for the first order statistics of $(v_i)$ that
\begin{align}
\p ( |1 - v_{(1)}| > N^{-\epsilon} \kappa_0 ) = \left( 1 - \p (1 - v \leq N^{-\epsilon} \kappa_0) \right)^N \geq \left( 1 - C N^{-\epsilon (\b+1)} N^{-1} \right)^N \geq 1 - C N^{-\epsilon (\b+1)}\,,
\end{align}
proving claim $(1)$.

Similarly, we have
\begin{align} \begin{split}
&\p \left( |1 - v_{(n_0)}| > (\log N) \kappa_0\right) \leq \binom{N}{n_0} \left( 1 - \p \left( 1 - v \leq (\log N) \kappa_0 \right) \right)^{N-n_0} \\
&\qquad\leq N^{n_0} \left( 1 - C^{-1} (\log N)^{\b+1} N^{-1} \right)^{N-n_0} \leq C N^{n_0} e^{-c (\log N)^{\b+1}} \leq e^{-c'(\log N)^{\b+1}}\,,
\end{split} \end{align}
for some constant $c, c' > 0$, proving claim $(2)$.

To prove claim $(3)$, we assume that $N^{-\epsilon} \kappa_0 < |1 - v_{(1)}| \leq |1 - v_{(n_0)}| \leq (\log N) \kappa_0$, which holds with probability higher than $1 - C N^{-\epsilon (\b+1)}$. Let
$$
I_j \deq \left[ 1 - (j+1) N^{-\epsilon} \kappa_0, 1 - (j-1) N^{-\epsilon} \kappa_0 \right]\,,\quad \qquad (j\in\llbracket 1, (\log N) N^{\epsilon}\rrbracket)\,.
$$
Then, it can easily be seen that if $|v_{(k)} - v_{(k+1)}| > N^{-\epsilon} \kappa_0$, then $v_{(k)}, v_{(k+1)} \in I_j$ for some $j\in\llbracket 1, (\log N) N^{\epsilon}\rrbracket$. Letting $p_j \deq\p ( v \in I_j )$, we have that
$$
\p ( |\{ i \in \llbracket 1, N \rrbracket : v_i \in I_j \}| = 0 ) = (1- p_j)^N, \qquad \p ( |\{ i \in \llbracket 1, N \rrbracket : v_i \in I_j \}| = 1 ) = N p_j (1- p_j)^{N-1}\,,
$$
hence,
\begin{align}
\p (v_{(k)}, v_{(k+1)} \in I_j) \leq \p ( |\{ i \in \llbracket 1, N \rrbracket : v_i \in I_j \}| = 2 ) \leq 1 - (1-p_j)^N - Np_j (1-p_j)^{N-1} \leq N^2 p_j^2\,.
\end{align}
Since
$$
p_j \leq C N^{-\epsilon} \kappa_0 \left( (\log N) \kappa_0 \right)^{\b} = C (\log N)^{\b} N^{-1 -\epsilon}\,,
$$
we have
\begin{align}
\p( |v_{(k)} - v_{(k+1)}| \leq N^{-\epsilon} \kappa_0 ) \leq \sum_{j\in\llbracket 1,(\log N) N^{\epsilon}\rrbracket}^{} N^2 p_j^2\le C (\log N)^{1+2\b} N^{-\epsilon}\,.
\end{align}
This proves the third part of the claim and completes the proof of the lemma.
\end{proof}
Next, we estimate the probability of condition $(2)$ in Definition~\ref{v assumptions} to hold.

\begin{lem} \label{lem:app_2}
Assume the conditions in Lemma \ref{lem:app_1}. Recall the definition of $\caD_{\epsilon}$ in \eqref{domain}. Then, for any fixed $\ell>0$, there exists a constant $C_{\ell}$ (independent of $N$) such that
\begin{align} \label{eq:CLT_fixed}
\p \left( \bigcup_{z \in \caD_{\epsilon}} \ \left \{ \left| \frac{1}{N} \sum_{i=1}^N \frac{1}{\lambda v_i - z - m_{fc}(z)} - \int \frac{\dd \mu(v)}{\lambda v - z - m_{fc}(z)} \right| > \frac{N^{3\epsilon /2}}{\sqrt N} \right \} \right) \leq C_{\ell} N^{-\ell}.
\end{align}
\end{lem}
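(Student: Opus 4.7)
The plan is to first establish the estimate for a fixed $z\in\caD_\epsilon$ via a concentration inequality, and then upgrade to a uniform bound using a discretization argument together with the Lipschitz continuity in $z$ of both the empirical average and its deterministic counterpart.

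Fix $z\in\caD_\epsilon$ and set
\begin{align*}
X_i(z)\deq \frac{1}{\lambda v_i - z - m_{fc}(z)}\,,\qquad Y_i(z)\deq X_i(z)-\E X_i(z)\,.
\end{align*}
The $(Y_i)$ are independent, centered, complex-valued random variables. Since $\im(z+m_{fc}(z))\ge \im z\ge N^{-1/2-\epsilon}$, we have the deterministic bound $|Y_i|\le 2/\eta\le 2N^{1/2+\epsilon}$. Moreover, using $\im m_{fc}(z)=\int |X_i|^2(\eta+\im m_{fc})\,\dd\mu$, one finds
\begin{align*}
\var(Y_i)\le \E|X_i|^2 = R_2(z)<1\,,
\end{align*}
cf.~\eqref{definition of R2 without hat} and the discussion following it. I would then apply a Bernstein-type inequality to $\re Y_i$ and $\im Y_i$ separately: for any $t>0$,
\begin{align*}
\p\left(\left|\frac{1}{N}\sum_{i=1}^N Y_i(z)\right|> t\right) \le 4\exp\left(-\frac{c N t^2}{\var(Y_i)+ t/\eta}\right)\,.
\end{align*}
With the choice $t=N^{3\epsilon/2}/\sqrt N$, we have $Nt^2=N^{3\epsilon}$ and $t/\eta\le N^{5\epsilon/2}$, so the right side is bounded by $4\exp(-c N^{\epsilon/2})$, which decays faster than any polynomial in $N$.

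For the uniform bound, I would cover $\caD_\epsilon$ by a lattice $\caL\subset\caD_\epsilon$ with spacing $N^{-K}$ for a fixed $K\ge 10$; then $|\caL|\le C N^{2K}$. Applying the fixed-$z$ estimate and the union bound yields
\begin{align*}
\p\left(\bigcup_{z\in\caL}\left\{\left|\frac{1}{N}\sum_{i=1}^N Y_i(z)\right|> \tfrac{1}{2}N^{3\epsilon/2-1/2}\right\}\right)\le 4|\caL|\e{-cN^{\epsilon/2}}\le C_\ell N^{-\ell}\,,
\end{align*}
for any $\ell>0$. To interpolate between lattice points, I would use that the functions
$z\mapsto \frac{1}{N}\sum_i X_i(z)$ and $z\mapsto \int X(v,z)\,\dd\mu(v)$ are analytic on $\caD_\epsilon$ and their $z$-derivatives are bounded in modulus by $(1+|m_{fc}'(z)|)/\eta^2$, where $|m_{fc}'(z)|\le \im m_{fc}(z)/\eta$ is polynomially bounded in $N$ for $z\in\caD_\epsilon$. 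Hence both maps are Lipschitz with constant $N^{C}$ for some fixed $C$, and choosing $K>C+\ell+10$ ensures that the lattice discretization error is at most $N^{-\ell-1}\ll N^{3\epsilon/2-1/2}$. Combining these estimates yields~\eqref{eq:CLT_fixed}.

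The only potentially delicate point is controlling the Lipschitz constant of $m_{fc}(z)$ on $\caD_\epsilon$ uniformly in $N$; however, since $\eta\ge N^{-1/2-\epsilon}$ throughout $\caD_\epsilon$, the bound $|m_{fc}'(z)|\le C/\eta\le C N^{1/2+\epsilon}$ suffices. Everything else is a routine Bernstein-plus-net argument, and the exponential tail ensures that a polynomially sized net is harmless in the union bound.
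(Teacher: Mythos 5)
Your argument is correct, and it arrives at the same conclusion as the paper but via a different packaging. The paper's proof is a direct moment computation: it expands $\E\left|\frac{1}{N}\sum_i X_i\right|^{2p}$ over multi-indices, classifies terms by the multiplicities $d_i$, uses the bounds $\E|X_i|^2\le R_2<1$ and $\E|X_i|^{q}\le \eta^{-(q-2)}\E|X_i|^2$ to control each summand, and then applies Markov's inequality with $p$ chosen as $p=(\ell+4)/\epsilon$; the lattice has mesh $N^{-2}$, and the Lipschitz estimate for $z\mapsto m_{fc}(z)$ on $\caD_\epsilon$ makes the interpolation error negligible. You instead invoke a Bernstein-type inequality after checking exactly the same two inputs — the variance bound $\var(Y_i)<1$ and the a.s.\ bound $|Y_i|\le 2/\eta\le 2N^{1/2+\epsilon}$ — which yields the stronger, stretched-exponential tail $\exp(-cN^{\epsilon/2})$ directly, making the union bound over any polynomially-sized net trivial. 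You also choose a much finer mesh $N^{-K}$ to compensate for the cruder Lipschitz bound $|m_{fc}'(z)|\le C/\eta$ (rather than the sharper $\caO(1)$ Lipschitz constant for $m_{fc}$ near the edge that Lemma~\ref{mfc estimate} would give); this is more conservative than needed but entirely harmless. The trade-off is familiar: your route is shorter and gives a slightly stronger probability bound, while the paper's high-moment computation is self-contained (it does not need to cite a ready-made concentration inequality) and is stylistically consistent with the moment-method machinery used in Section~\ref{sec:Zlemma}.
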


\begin{proof}
Fix $z \in \caD_{\epsilon}$. For $i\in\llbracket 1,N\rrbracket$, let $X_i \equiv X_i(z)$ be the random variable
$$
X_i \deq \frac{1}{\lambda v_i - z - m_{fc}(z)} - m_{fc}(z) = \frac{1}{\lambda v_i - z - m_{fc}(z)} - \int \frac{\dd \mu(v)}{\lambda v - z - m_{fc}(z)}\,.
$$
By definition, $\E X_i = 0$. Moreover, we have
$$
\E |X_i|^2 \leq \int \frac{\dd \mu(v)}{|\lambda v - z - m_{fc}(z)|^2} = \frac{\im m_{fc}((z)}{\eta + \im m_{fc}(z)} < 1\,,\quad\qquad (z\in\C^+) \,,
$$
and, for any positive integer $p \geq 2$,
$$
\E |X_i|^{p} \leq \frac{1}{\eta^{p-2}} \E |X_i|^2 \leq N^{(1/2 +\epsilon) (p-2)}\,,\qquad\quad (z\in\caD_{\epsilon})\,.
$$
We now consider
\begin{equation} \label{2p expand}
\E \left| \frac{1}{N} \sum_{i=1}^N X_i \right|^{2p} = \frac{1}{N^{2p}} \sum_{i_1, i_2, \ldots, i_{2p}=1}^N \E \prod_{j=1}^p X_{i_j} \prod_{k=p+1}^{2p} \overline{X}_{i_k} \,.
\end{equation}
For fixed $i_1, i_2, \ldots, i_{2p}$, define 
$$
d_i \deq \sum_{j=1}^{2p} \mathbbm{1} (i_j = i)\,,
$$
for $i\in\llbracket 1,N\rrbracket$. 

 We first estimate the summand in the right side of \eqref{2p expand} when $d_1 \geq d_2 \geq \ldots \geq d_r \geq 1$ and $d_{r+1} = d_{r+2} = \ldots = d_N = 0$, for some $r\in\llbracket 1, 2p \rrbracket$. Since $(X_i)$ are independent and centered, we have
$$
\E \prod_{j=1}^p X_{i_j} \prod_{k=p+1}^{2p} \overline{X}_{i_k} = 0\,,
$$
if $d_i = 1$, for some $i$, which shows that we may assume $r \leq p$ and $d_r \geq 2$. When $d_r \geq 2$, we obtain that
$$
\left| \E \prod_{j=1}^p X_{i_j} \prod_{k=p+1}^{2p} \overline{X}_{i_k} \right| \leq \E \prod_{k=1}^r |X_k|^{d_k} \leq \prod_{k=1}^r N^{(1/2 + \epsilon)(d_k -2)} \leq N^{(1/2 + \epsilon)(2p -2r)}.
$$

Next, using the estimate obtained above, we can bound
\begin{align} \begin{split}
&\sum_{i_1, i_2, \ldots, i_{2p}} \E \prod_{j=1}^p X_{i_j} \prod_{k=p+1}^{2p} \overline{X}_{i_k} \\
&\qquad\leq \sum_{r=1}^p r! \binom{N}{r} N^{(1/2 + \epsilon)(2p -2r)} \leq p! \sum_{r=1}^p N^r N^{(1/2 +\epsilon) (2p-2r)} \leq (p+1)! N^p N^{2 \epsilon p}\,.
\end{split} \end{align}

Hence, by Markov's inequality, we obtain that
\begin{align}
\p \left( \left| \frac{1}{N} \sum_{i=1}^N X_i \right| \geq \frac{N^{3\epsilon /2}}{2 \sqrt N} \right) \leq \left( \frac{1}{2} N^{3\epsilon /2} \right)^{-2p} N^p \E \left| \frac{1}{N} \sum_{i=1}^N X_i \right|^{2p} \leq 4^p (p+1)! N^{-\epsilon p}\,.
\end{align}

To obtain the uniform bound on $\caD_{\epsilon}$, we choose a lattice $\caL$ in $\caD_{\epsilon}$ such that for any $z \in \caD_{\epsilon}$ there exists $z' \in \caL$ satisfying $|z - z'| \leq N^{-2}$. Since, for $z \in \caD_{\epsilon}$ and $z' \in \caL$,
$$
\left| \frac{1}{\lambda v - z - m_{fc}(z)} - \frac{1}{\lambda v - z' - m_{fc}(z')} \right| \leq C \frac{1}{\eta_0^2} |z-z'| \leq C N^{-1 + 2\epsilon}\,,
$$
we find that
\begin{align} \begin{split}
\p \left( \bigcup_{z \in \caD_{\epsilon}} \ \left \{ \left| \frac{1}{N} \sum_{i=1}^N \frac{1}{\lambda v_i - z - m_{fc}(z)} -m_{fc}(z) \right| > \frac{N^{3\epsilon /2}}{\sqrt N} \right \} \right) 
&\leq |\caL| 4^p (p+1)! N^{-\epsilon p}\nonumber\\ &\leq C N^4 4^p (p+1)! N^{-\epsilon p}\,.
\end{split} \end{align}
Setting $p = (\ell + 4) / \epsilon$, we obtain the desired lemma. 
\end{proof}

To estimate the probability for the third condition in Definition~\ref{v assumptions}, we need the following two auxiliary lemmas. Recall the definition of $R_2$ in~\eqref{definition of R2 without hat}.

\begin{lem} \label{R2 estimate}
If $0 < C^{-1} \eta \leq \im m_{fc}(z) \leq C\,\eta$, $z=E+\ii\eta$, for some constant $C \geq 1$, then we have
\begin{align}
\frac{1}{1+ C} \leq R_2 (z) \leq \frac{C}{1+ C}\,.
\end{align}
\end{lem}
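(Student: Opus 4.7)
The plan is essentially a one-line computation. The key identity was already recorded in Section~2.4 immediately after the definition of $R_2$: taking imaginary parts in the Pastur relation~\eqref{the functional equation} gives
\begin{equation*}
\im m_{fc}(z) = \int \frac{\eta + \im m_{fc}(z)}{|\lambda v - z - m_{fc}(z)|^2}\,\dd \mu(v),
\end{equation*}
so that
\begin{equation*}
R_2(z) = \frac{\im m_{fc}(z)}{\eta + \im m_{fc}(z)}.
\end{equation*}

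Given this, I would simply substitute the hypothesis $C^{-1}\eta \le \im m_{fc}(z) \le C\eta$ into this expression. Setting $y \deq \im m_{fc}(z)/\eta \in [C^{-1}, C]$, we have $R_2(z) = y/(1+y)$. Since $y \mapsto y/(1+y)$ is monotone increasing on $[0,\infty)$, the bounds $y \in [C^{-1},C]$ translate directly into
\begin{equation*}
\frac{1}{1+C} \;=\; \frac{C^{-1}}{1+C^{-1}} \;\le\; R_2(z) \;\le\; \frac{C}{1+C},
\end{equation*}
which is exactly the claim.

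There is no genuine obstacle here; the only substantive input is the identity for $R_2(z)$, which is an immediate consequence of taking imaginary parts in~\eqref{the functional equation} and was already noted in the paragraph introducing $R_2$. The remainder is elementary monotonicity of $y/(1+y)$.
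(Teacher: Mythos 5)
Your proof is correct and is essentially identical to the paper's: both rely on the identity $R_2(z) = \im m_{fc}(z)/(\eta + \im m_{fc}(z))$, obtained by taking imaginary parts in the Pastur relation, and then substitute the assumed two-sided bound on $\im m_{fc}(z)$. The only cosmetic difference is that you spell out the monotonicity of $y \mapsto y/(1+y)$, which the paper leaves implicit.
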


\begin{proof}
Since
$$
\im m_{fc} (z) = \int \frac{\im z + \im m_{fc}(z)}{|\lambda v - z - m_{fc}(z)|^2} \dd \mu (v)\,,
$$
we have that
$$
\frac{C^{-1}}{1+ C^{-1}} \leq R_2 (z) = \frac{\im m_{fc}(z)}{\im z + \im m_{fc}(z)} \leq \frac{C}{1+ C}\,.
$$
\end{proof}

The imaginary part of $m_{fc}(z)$ can be estimated using the following lemma.

\begin{lem} \label{im mfc bound}
Assume that $\mu_{fc}$ has support $[L_-, L_+]$ and there exists a constant $C>1$ such that
\begin{equation}
C^{-1} \kappa^{\b} \leq \mu_{fc} (z) \leq C \kappa^{\b}\,,
\end{equation}
for any $0 \leq \kappa \leq L_+$. Then,
\begin{itemize}
\item[$(1)$] for $z = L_+ - \kappa + i \eta$ with $0 \leq \kappa \leq L_+$ and $0 < \eta \leq 3$, there exists a constant $C>1$ such that
\begin{equation}
C^{-1} (\kappa^{\b} + \eta) \leq \im m_{fc} (z) \leq C (\kappa^{\b} + \eta)\,;
\end{equation}
\item[$(2)$] for $z = L_+ + \kappa + i \eta$ with $0 \leq \kappa \leq 1$ and $0 < \eta \leq 3$, there exists a constant $C>1$ such that
 \begin{equation}
C^{-1} \eta \leq \im m_{fc} (z) \leq C \eta\,.
\end{equation}
 \end{itemize}
\end{lem}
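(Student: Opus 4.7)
The proof rests on the Poisson representation of $\im m_{fc}$, obtained by taking the imaginary part of the definition of the Stieltjes transform,
\[
\im m_{fc}(E+\ii\eta)=\int\frac{\eta\,\mu_{fc}(x)\,\dd x}{(x-E)^2+\eta^2}.
\]
Both bounds reduce to direct estimation of this Poisson integral, using the hypothesis $\mu_{fc}(L_+-\kappa)\asymp\kappa^{\b}$ near the upper endpoint together with the analyticity and strict positivity of $\mu_{fc}$ on the interior of its support. Note that the statement only makes sense, and is only needed, for $\b>1$.

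After the change of variables $u=L_+-x$, the integral becomes
\[
\int_{0}^{L_+-L_-}\frac{\eta\,\mu_{fc}(L_+-u)\,\dd u}{(u-(L_+-E))^2+\eta^2},
\]
with $L_+-E=\kappa\ge 0$ in case~$(1)$ and $L_+-E=-\kappa\le 0$ in case~$(2)$. I would fix a small $\delta>0$ so that $\mu_{fc}(L_+-u)\asymp u^{\b}$ on $[0,\delta]$, and split the integration at $u=\delta$. On the bulk piece $[\delta,L_+-L_-]$, the denominator is bounded below by $\delta^{2}/4$ uniformly (since $E$ stays close to $L_+$) and $\mu_{fc}$ is bounded, giving an upper bound of order $\eta$; restricting further to an interior subinterval on which $\mu_{fc}\ge c>0$ yields a matching lower bound of order $\eta$, and so contributes the ``$+\eta$'' terms in both cases.

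For case~$(1)$, the remaining near-edge piece is $I_{1}=\int_{0}^{\delta}\eta u^{\b}/((u-\kappa)^{2}+\eta^{2})\,\dd u$, which I would estimate by splitting at $|u-\kappa|=\eta$: the peak region contributes at most $\caO(\kappa^{\b}+\eta^{\b})$ (the window has length $\sim\eta$ and the integrand there is $\lesssim\eta^{-1}(\kappa+\eta)^{\b}$), while the tail satisfies $\int_{|u-\kappa|\ge\eta}\eta u^{\b}/(u-\kappa)^{2}\,\dd u=\caO(\eta)$; here the hypothesis $\b>1$ enters crucially so that $\int_{\eta}^{\delta}\eta u^{\b-2}\,\dd u\asymp\eta$ rather than the larger $\eta^{\b}$. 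This yields $I_{1}\le C(\kappa^{\b}+\eta)$. For the matching lower bound, I would restrict $I_{1}$ to the window $\{u:|u-\kappa|\le\eta\}\cap[0,\delta]$, which has length $\gtrsim\eta$ and on which the integrand is $\gtrsim\kappa^{\b}/\eta$, yielding $I_{1}\gtrsim\kappa^{\b}$; combined with the bulk lower bound this gives $\im m_{fc}\asymp\kappa^{\b}+\eta$.

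Case~$(2)$ is simpler: with $L_+-E=-\kappa\le 0$ one has $(u+\kappa)^{2}+\eta^{2}\ge u^{2}+\eta^{2}$ for $u\ge0$, so the same splitting shows $\int_{0}^{\delta}\eta u^{\b}/((u+\kappa)^{2}+\eta^{2})\,\dd u=\caO(\eta)$, and combined with the bulk estimate (of order $\eta$, both above and below) one concludes $\im m_{fc}\asymp\eta$. The only care needed is purely bookkeeping: keeping constants uniform in $\kappa$ and $\eta$ across the regimes $\eta\le\kappa$ vs.\ $\eta\ge\kappa$ and $u\le\kappa$ vs.\ $u\ge\kappa$, and flagging where the hypothesis $\b>1$ is invoked in the tail estimate.
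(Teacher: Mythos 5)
Your plan is correct and matches the paper's proof: both estimate the Poisson integral for $\im m_{fc}$ directly, splitting into a window near the peak of the Poisson kernel and the remainder, and both use $\b>1$ precisely so that $\int_\eta^\delta \eta\,u^{\b-2}\,\dd u\lesssim\eta$. One small fix in your lower bound for case $(1)$: the window $\{|u-\kappa|\le\eta\}$ reaches down to $u\approx\max(0,\kappa-\eta)$, where $u^\b$ need not be comparable to $\kappa^\b$; the paper instead integrates over $u\in[\kappa+\eta,\kappa+2\eta]$ (so $u\ge\kappa$ throughout), which gives the bound $I_1\gtrsim\kappa^\b$ cleanly and uniformly in the regime $\eta\sim\kappa$.
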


\begin{rem}
Lemma \ref{im mfc bound} shows that there exists a constant $C_{\b} > 1$ such that
\begin{align}
C_{\b}^{-1} \eta \leq \im m_{fc} (z) \leq C_{\b} \eta\,,
\end{align}
for all $z \in \caD_{\epsilon}$ satisfying $L_+ - \re z \leq N^{\epsilon} \kappa_0$.
\end{rem}

Assuming Lemma \ref{im mfc bound}, we have the following estimate. Recall that $\caD_{\epsilon}$ is defined in \eqref{domain}.

\begin{lem} \label{lem:app_3}
Assume the conditions in Lemma \ref{lem:app_1}. Then, there exist constants $\mathfrak{c} < 1$ and $C > 0$, independent of~$N$, such that, for any $z = E + \ii \eta \in \caD_{\epsilon}$ satisfying
\begin{align} \label{condition app3}
\min_{i \in \llbracket 1, N \rrbracket} |\re (z + m_{fc}(z)) - \lambda v_{(i)}| = |\re (z + m_{fc}(z)) - \lambda v_{(k)}|\,,
\end{align}
for some $k \in \llbracket 1, n_0 -1 \rrbracket$, we have
\begin{align}
\p \left( \frac{1}{N} \sum_{i:i \neq k}^N \frac{1}{|\lambda v_{(i)} - z - m_{fc}(z)|^2} < \mathfrak{c} \right) \geq 1 - C (\log N)^{1+2\b} N^{-\epsilon}.
\end{align}
\end{lem}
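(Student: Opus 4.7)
My plan is to work on the intersection of the high-probability events provided by Lemmas~\ref{lem:app_1} and~\ref{lem:app_2}, which by a union bound has probability at least $1 - C(\log N)^{1+2\b}N^{-\epsilon}$, matching the claimed bound. On this event both the spacings of the top-$n_0$ order statistics are controlled and the empirical Stieltjes-type transform
\[
\caM(z) \deq \frac{1}{N}\sum_{i=1}^N \frac{1}{\lambda v_i - z - m_{fc}(z)}
\]
is within $N^{-1/2+3\epsilon/2}$ of $m_{fc}(z)$ uniformly on $\caD_\epsilon$. The condition~\eqref{condition app3} combined with the linear approximation from Lemma~\ref{mfc estimate} places $z$ near the upper edge, with $\kappa = L_+ - \re z = \caO((\log N)\kappa_0)$. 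Applying Lemma~\ref{im mfc bound} gives $\im m_{fc}(z) \sim \kappa^\b + \eta$, and since we may take $\epsilon < \fb = (\b-1)/(2(\b+1))$ without loss of generality, the edge power dominates and $\im m_{fc}(z) \sim \eta$; Lemma~\ref{R2 estimate} then yields $R_2(z) \le c_0 < 1$ for some absolute constant~$c_0$.

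Next I would exploit the identity
\[
\frac{1}{N}\sum_{i=1}^N \frac{1}{|\lambda v_i - z - m_{fc}(z)|^2} = \frac{\im \caM(z)}{\eta + \im m_{fc}(z)}\,,
\]
obtained by taking imaginary parts term by term, together with Lemma~\ref{lem:app_2}, to relate the full empirical sum (which equals the sum over order statistics since $\{v_{(i)}\}=\{v_i\}$ as multisets) to $R_2$ plus a concentration error. To pass to the target sum $S$, I split $\{i:i\ne k\}$ into the \emph{near} block $\{i\le n_0,\,i\ne k\}$ and the \emph{far} block $\{i>n_0\}$. For the near block, the spacing bound $|v_{(i)}-v_{(k)}|\ge N^{-\epsilon}\kappa_0$ from Lemma~\ref{lem:app_1}, combined with the consequence $|\tau-\lambda v_{(i)}|\ge |\lambda v_{(i)}-\lambda v_{(k)}|/2$ of~\eqref{condition app3}, bounds each summand by $\caO(N^{2\epsilon-2\fb}/\lambda^2)$; summing the $n_0-1$ such terms yields $o(1)$ for $\epsilon<\fb$. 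For the far block, a supplementary one-shot estimate proved by a union bound in the spirit of claim~(2) of Lemma~\ref{lem:app_1} gives $|1-v_{(n_0+1)}|\ge c\kappa_0$ with probability $1-\caO(N^{-\epsilon})$, so the far sum is estimated via comparison with the truncated integral $\int_{v\le v_{(n_0+1)}}|\lambda v-z-m_{fc}|^{-2}\,d\mu(v)$, which is strictly smaller than $R_2\le c_0$.

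The main technical obstacle is that the naive error term $|\caM-m_{fc}|/(\eta+\im m_{fc})$ coming from step two is $\caO(N^{5\epsilon/2})$ in the worst-case regime $\eta\sim N^{-1/2-\epsilon}$ and therefore cannot simply be absorbed into the constant $\mathfrak{c}$. The decomposition above must be executed with the understanding that the ``peak'' contribution to $R_2$ from $v$ close to $\lambda v_{(k)}$ is essentially captured by the removed $k$-th term (together with the near block), so that what survives in $S$ is the ``tail'' portion of $R_2$, which is bounded by a constant strictly less than one. Making this cancellation rigorous against the fluctuations of~$\caM$ at scale $N^{-1/2+3\epsilon/2}$, by choosing $\epsilon$ suitably small relative to $\fb$ and carefully tracking the contributions in both the near and far blocks, is the crux of the proof; once it is in place, one obtains $S<\mathfrak{c}$ for some $\mathfrak{c}\in(c_0,1)$ on the good event, yielding the stated probability bound.
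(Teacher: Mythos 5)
There is a genuine gap, and you have in fact put your finger on it yourself without resolving it. Your route passes through the first-power empirical transform $\caM(z) = N^{-1}\sum_i (\lambda v_i - z - m_{fc}(z))^{-1}$ and the identity $N^{-1}\sum_i |\lambda v_i - z - m_{fc}(z)|^{-2} = \im\caM(z)/(\eta+\im m_{fc}(z))$. But Lemma~\ref{lem:app_2} only gives $|\caM - m_{fc}| \lesssim N^{-1/2+3\epsilon/2}$, while on the relevant part of $\caD_\epsilon$ one has $\eta+\im m_{fc}(z) \sim \eta$ which can be as small as $N^{-1/2-\epsilon}$. After dividing, the error is $\caO(N^{5\epsilon/2})$, i.e., much larger than $1$, and there is no way to absorb it into $\mathfrak{c}<1$. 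You flag this as ``the crux,'' and your proposed cure is a cancellation between the $\caM$-fluctuation and the removed $k$-th term; but Lemma~\ref{lem:app_2} is a one-sided modulus bound and carries no information about the sign or phase of $\caM-m_{fc}$, so the cancellation cannot be extracted from it. Your near/far split does not help either, because controlling the far block ``by comparison with the truncated integral'' is itself the concentration statement you are trying to prove, not a consequence of anything already established.

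The paper's proof avoids the first-power detour entirely and proves concentration directly for the second-power quantities $Y_i(z) = |\lambda v_i - z - m_{fc}(z)|^{-2}$. The key missing ideas in your proposal are (i) a truncation $\tilde Y_i = \min(Y_i, N^{2\epsilon}\kappa_0^{-2})$ to tame the heavy tail of $Y_i$ (which is necessary because $Y_i$ can be as large as $\eta^{-2} = N^{1+2\epsilon}$), coupled with a second-moment (Chebyshev) bound, first established only at the anchor point $E=L_+$; and (ii) a separate stability argument (via the set $\Sigma_\epsilon = \{v_i : |1-v_i| \le N^{3\epsilon}\kappa_0\}$ and a Chernoff bound on $|\Sigma_\epsilon|$) showing that moving $E$ across the scales allowed by~\eqref{condition app3} changes the sum by at most $(1 + \caO(N^{-\epsilon}))$ plus a $\caO(N^{-\epsilon})$ additive correction. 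Your proposal has neither of these, and replacing them by the $\im\caM$ identity cannot work because the quantity you would then need to estimate, $\im(\caM - m_{fc})/\eta$, is not $o(1)$.
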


\begin{proof}
Without loss of generality, we only prove the case $k=1$;  the general case can easily be shown by using the same argument. In the following, we assume that $N^{-\epsilon} \kappa_0 < |1 - v_{(1)}| < (\log N) \kappa_0$, and $|v_{(1)} - v_{(2)}| > N^{-\epsilon} \kappa_0$. 

Recall the definition of $R_2$ in~\eqref{definition of R2 without hat}. For $i\in\llbracket 1,N\rrbracket$, let $Y_i \equiv Y_i(z)$ be the random variable
$$
Y_i(z) \deq \frac{1}{|\lambda v_i - z - m_{fc}(z)|^2}\,, \qquad\quad(z\in\C^+)\,,
$$
and observe that $\E Y_i = R_2 < 1$, ($z\in\C^+$). Moreover, combining Lemma \ref{R2 estimate} and Lemma \ref{im mfc bound}, we find that there exists a constant $c < 1$, independent of $N$, such that that $R_2 (z) < c$ uniformly for all $z \in \caD_{\epsilon}$ satisfying \eqref{condition app3}. Since $\im (z + m_{fc}(z)) \ge \eta$, we also have that $Y_i(z) \leq \eta^{-2}$.

We first consider the special choice $E = L_+$. Let $\wt Y_i$ be the truncated random variable defined by
$$
\wt Y_i \deq
	\begin{cases}
	Y_i\,, & \text{ if  } Y_i < N^{2\epsilon} \kappa_0^{-2}\,, \\
	N^{2\epsilon} \kappa_0^{-2}\,, & \text{ if  } Y_i \geq N^{2\epsilon} \kappa_0^{-2}\,.
	\end{cases}
$$
Notice that, using the estimate \eqref{mu tail}, we have for $z = L_+ + \ii \eta \in \caD_{\epsilon}$ that
$$
\p ( Y_i \neq \wt Y_i ) \le C N^{-1 - (\b+1)\epsilon}\,.
$$
Let
$$
S_N \deq \sum_{i=1}^N Y_i\,, \qquad \wt S_N \deq \sum_{i=1}^N \wt Y_i\,.
$$
Then,
\begin{align}
\p (S_N \neq \wt S_N) \leq C N^{- (\b+1)\epsilon}.
\end{align}

We now estimate the mean and variance of $\wt Y_i$. From the trivial estimate $\p (Y_i \geq x) \leq \p (Y_i \neq \wt Y_i)$ for $x \geq N^{2\epsilon} \kappa_0^{-2}$, we find that 
\begin{align}
\E Y_i - \E \wt Y_i \leq \int_{N^{2\epsilon} \kappa_0^{-2}}^{\eta^{-2}} \p ( Y_i \neq \wt Y_i ) \dd x \leq C' N^{-(\b-1)\epsilon}\,,
\end{align}
for some $C' > 0$. Hence, we get
\begin{align}
\E \wt Y_i^2 \leq N^{2\epsilon} \kappa_0^{-2} \E \wt Y_i \leq N^{2\epsilon} \kappa_0^{-2} \E Y_i \leq N^{2\epsilon} \kappa_0^{-2}.
\end{align}
We thus obtain that
\begin{align} \begin{split}
&\p \left( \left| \frac{S_N}{N} - \E Y_i \right| \geq C' N^{-(\b-1)\epsilon} + N^{-\epsilon} \right) \leq \p \left( \left| \frac{\wt S_N}{N} - \E \wt Y_i \right| \geq N^{-\epsilon} \right) + \p (S_N \neq \wt S_N) \\
&\qquad\leq \frac{N^{2\epsilon} \E \wt Y_i^2}{N} + C N^{- (\b+1)\epsilon} \leq C N^{-\frac{\b-1}{\b+1} + 4\epsilon} = C N^{-2\fb + 4\epsilon},
\end{split} \end{align}
hence, for a constant $c$ satisfying $R_2 + C' N^{-(\b-1)\epsilon} + N^{-\epsilon} < c < 1$,
$$
\p \left( \frac{1}{N} \sum_{i=1}^N \frac{1}{|\lambda v_i - z - m_{fc}(z)|^2} < c \right) \geq 1 - \p \left( \left| \frac{S_N}{N} - \E Y_i \right| \geq C' N^{-(\b-1)\epsilon} + N^{-\epsilon} \right) \geq 1 - C N^{-2\fb + 4\epsilon}.
$$
This proves the desired lemma for $E = L_+$.

Before we extend the result to general $z \in \caD_{\epsilon}$, we estimate the probabilities for some typical events we want to assume. Consider the set
$$
\Sigma_{\epsilon} \deq \{ v_i : |1 - v_i| \leq N^{3\epsilon} \kappa_0 \}\,,
$$
and the event
$$
\Omega_{\epsilon} \deq \{ |\Sigma_{\epsilon}| < N^{3\epsilon (\b+2)} \}\,.
$$
Since we have from the estimate \eqref{mu tail} that
$$
\p (|1 - v_i| \geq N^{3\epsilon} \kappa_0) \leq C N^{-1 + 3(\b+1)\epsilon}\,,
$$
we find, using a Chernoff bound, that
$$
\p (\Omega_{\epsilon}^c) \leq \exp \left( -C \epsilon (\log N) N^{3\epsilon} N^{3(\b+1) \epsilon} \right)\,,
$$
for some constant $C$. Notice that we have, for $v_i \notin \Sigma_{\epsilon}$,
\begin{align} \label{eq:v_i notin Sigma}
L_++\re m_{fc}(L_++\ii\eta)-\lambda v_i > N^{3\epsilon} \kappa_0 \gg \eta + \im m_{fc}(L_+ + \ii \eta)\,,
\end{align}
where we used Lemma \ref{mfc estimate}, i.e., $|L_+ + \ii \eta + m_{fc}(L_+ +\ii\eta) - \lambda| = \caO (\eta)$, and that $\lambda>1$. We now assume that~$\Omega_{\epsilon}$ holds and that
$$
\frac{1}{N} \sum_{ i=1}^N \frac{1}{|\lambda v_i - (L_+ + \ii \eta) - m_{fc}(L_+ + \ii \eta) |^2} < c < 1\,.
$$
Further, we recall that the condition \eqref{condition app3} implies
$$
\re (z + m_{fc}(z)) \geq \lambda v_{(n_0)},
$$
which yields, together with Lemma \ref{mfc estimate} and Lemma \ref{lem:app_1}, that $E \geq L_+ - N^{\epsilon} \kappa_0$ with probability higher than $1 - C (\log N)^{1+2\b} N^{-\epsilon}$. We therefore assume in the following that $E \geq L_+-N^{\epsilon}\kappa_0$.

Consider the following two choices for such energies $E$:
\begin{enumerate}
\item[$(1)$] When $L_+ - N^{\epsilon} \kappa_0 \leq E \leq L_+ + N^{2\epsilon} \kappa_0$, we have that
$$
|\lambda v_i - z - m_{fc}(z)| = |\lambda v_i - (L_+ + \ii \eta) - m_{fc}(L_+ + \ii \eta)| + \caO(N^{2\epsilon} \kappa_0)\,,
$$
where we used Lemma \ref{mfc estimate}. Hence, using \eqref{eq:v_i notin Sigma}, we obtain for $v_i \notin \Sigma_{\epsilon}$ that
\begin{align} \begin{split}
\frac{1}{|\lambda v_i - z - m_{fc}(z)|^2} &\leq \frac{1}{|\lambda v_i - (L_+ + \ii \eta) - m_{fc}(L_+ + \ii \eta)|^2} + \frac{N^{2\epsilon} \kappa_0}{|\lambda v_i - (L_+ + \ii \eta) - m_{fc}(L_+ + \ii \eta)|^3} \\
&\leq \frac{1 + C N^{-\epsilon}}{|\lambda v_i - (L_+ + \ii \eta) - m_{fc}(L_+ + \ii \eta)|^2}\,.
\end{split} \end{align}
We thus have that
\begin{align} \begin{split}
&\frac{1}{N} \sum_{i=2}^N \frac{1}{|\lambda v_{(i)} - z - m_{fc}(z)|^2} \\
&\quad\leq \frac{N^{3\epsilon (\b+2)}}{N} \frac{1}{(N^{-\epsilon} \kappa_0)^2} + \frac{1}{N} \sum_{i: v_i \notin \Sigma_{\epsilon}} \frac{1 + C N^{-\epsilon}}{|\lambda v_i - (L_+ + \ii \eta) - m_{fc}(L_+ + \ii \eta)|^2} \\
&\quad\leq N^{-\epsilon} + \frac{1}{N} \sum_{i=1}^N \frac{1 + C N^{-\epsilon}}{|\lambda v_i - (L_+ + \ii \eta) - m_{fc}(L_+ + \ii \eta)|^2} < c < 1\,,
\end{split} \end{align}
where we also used the assumption that $|v_{(2)}-v_{(1)}|\ge N^{-\epsilon} \kappa_0$. 

\item[$(2)$] When $E > L_+ + N^{2\epsilon} \kappa_0$, we have
$$
(E - L_+) + \left( \re m_{fc}(E + \ii \eta) - \re m_{fc}(L_+ + \ii \eta) \right) \gg \eta + \im m_{fc}(E + \ii \eta)\,,
$$
where we again used Lemma \ref{mfc estimate}, hence, from \eqref{eq:v_i notin Sigma} we obtain that
$$
|\lambda v_i - z - m_{fc}(z)| \geq |\lambda v_i - (L_+ + \ii \eta) - m_{fc}(L_+ + \ii \eta)|\,.
$$
We may now proceed as in $(1)$ to find that
\begin{align} \begin{split}
&\frac{1}{N} \sum_{i=1}^N \frac{1}{|\lambda v_i - z - m_{fc}(z)|^2} \\
&\quad\leq N^{-\epsilon} + \frac{1}{N} \sum_{i=1}^N \frac{1}{|\lambda v_i - (L_+ + \ii \eta) - m_{fc}(L_+ + \ii \eta)|^2} < c < 1\,.
\end{split} \end{align}

\end{enumerate}

Since we proved in Lemma \ref{lem:app_1} that the assumptions $N^{-\epsilon} \kappa_0 < |1 - v_{(1)}| < (\log N) \kappa_0$ and $|v_{(1)} - v_{(2)}| > N^{-\epsilon} \kappa_0$ hold with probability higher than $1 - C (\log N)^{1+2\b} N^{-\epsilon}$, we find that the desired lemma holds for any $z\in\caD_{\epsilon}'$.
\end{proof}

To conclude this appendix, we prove Lemma~\ref{im mfc bound}.
 
\begin{proof}[Proof of Lemma~\ref{im mfc bound}]
We start with the claim $(1)$: Notice that
\begin{equation} \label{im mfc}
\im m_{fc}(z) = \im \int \frac{\dd\mu_{fc}(x)}{x-z} = \eta \int \frac{\dd \mu_{fc}(x) }{(x-L_++\kappa)^2 + \eta^2}\,.
\end{equation}
When $\eta \geq 1/2$, we may use the trivial bound
$$
\im m_{fc}(z) \sim \eta \int \frac{\dd \mu_{fc}(x) }{\eta^2} = \eta^{-1} \sim 1 \sim (\kappa^{\beta} + \eta)\,.
$$
When $\kappa \geq 1/2$, we can easily see from \eqref{im mfc} that $\im m_{fc}(z) \sim 1 \sim (\kappa^{\beta} + \eta)$. Thus, in the following, we only consider the case $\kappa, \eta < 1/2$.

To prove the lower bound, we notice that
$$
\im m_{fc}(z) \geq \eta \int_{L_+ - \kappa - 2\eta}^{L_+ - \kappa - \eta} \frac{\dd \mu_{fc}(x) }{(x-L_++\kappa)^2 + \eta^2} \geq C \eta \int_{L_+ - \kappa - 2\eta}^{L_+ - \kappa - \eta} \frac{(\kappa+\eta)^{\beta} \dd x }{\eta^2} \geq C (\kappa+\eta)^{\beta} \geq C \kappa^{\beta}\,.
$$
We also have that
$$
\im m_{fc}(z) \geq \eta \int_{0}^{1} \frac{\dd \mu_{fc}(x) }{(x-L_++\kappa)^2 + \eta^2} \geq C \eta \int_{0}^{1} \dd x \geq C \eta\,.
$$
Thus, we find that $\im m_{fc}(z) \geq C(\kappa^{\beta} + \eta)$.

For the upper bound, we first consider the case $\kappa \geq \eta$, where we have
\begin{equation*} \begin{split}
\im m_{fc}(z)& \leq C \eta \int_{L_-}^{L_+ - \kappa - \eta} \frac{(L_+ -x)^{\beta} \dd x}{(x-L_++\kappa)^2} + C \eta \int_{L_+ - \kappa - \eta}^{L_+ - \kappa + \eta} \frac{(\kappa+\eta)^{\beta} \dd x }{\eta^2} + C \eta \int_{L_+-\kappa+\eta}^{L_+ } \frac{\kappa^{\beta} \dd x}{(x-L_++\kappa)^2} \\
&\leq C \eta \int_{\eta}^{L_+ - L_- - \kappa} \frac{(y+\kappa)^{\beta} \dd y}{y^2} + C (\kappa+\eta)^{\beta} + C \eta \int_{\eta}^{\kappa} \frac{\kappa^{\beta}}{y^2} \dd y\\
& \leq C (\kappa+\eta)^{\beta} + C \eta\\& \leq C(\kappa^{\beta} + \eta)\,.
\end{split} \end{equation*}
Here, we used that $(y+\kappa)^{\beta} \leq C(y^{\beta} + \kappa^{\beta})$. The calculation for the case $\kappa < \eta$ is similar, in fact, easier.

We now prove the claim $(2)$. As in the proof of the statement $(1)$, we only consider the case $\kappa, \eta < 1/2$. We have, for the lower bound, that
$$
\im m_{fc}(z) \geq \eta \int_{0}^{1} \frac{\dd \mu_{fc}(x) }{(x-L_+-\kappa)^2 + \eta^2} \geq C \eta \int_{0}^{1} \dd x \geq C \eta\,,
$$
and, for the upper bound,
$$
\im m_{fc}(z) \leq C \eta \int_{L_-}^{L_+} \frac{(L_+ -x)^{\beta} \dd x}{(x-L_+-\kappa)^2} \leq C \eta \int_{\kappa}^{L_+ -L_- +\kappa} \frac{y^{\beta}}{y^2} \dd y \leq C \eta\,.
$$
This completes proof of the desired lemma.
\end{proof}

\section{}\label{appendix to fluctuation average lemma}
In this appendix, we prove Lemma~\ref{Zlemma 1} and Lemma~\ref{Jensen lemma}.
\begin{proof}[Proof of Lemma~\ref{Zlemma 1}]
Set for $l,l'\in\llbracket 0, p\rrbracket$ and for $A=\llbracket n_0,N\rrbracket$,
\begin{align}
 \Gamma_{l,l'}(z)\equiv \Gamma_{l,l'}\deq\max\{|F_{ab}^{(\T,\T')}(z)|\,:\, a,b\in A,\, a\not=b, \T,\T'\subset A,\,|\T|\le l,\,|\T'|\le l'  \}\,.
\end{align}
For simplicity we drop the $z$-dependence from the notation and always work on $\Omega_V$. We first consider $\Gamma_{l,0}$, i.e., we set \mbox{$\T'
=\emptyset$}. Recalling that $\lone(\Xi)|F_{ab}(z)|\le (\varphi_N)^{c\xi}N^{-\fb}N^{\epsilon}$, we obtain $p\,\Gamma_{0,0}\le p(\varphi_N)^{c\xi}N^{-\fb+\epsilon}\ll 1$ on $\Xi$. From~\eqref{Zlemma expand 1}, we get, for $c\in A$,
\begin{align}
 F_{ab}^{(\T c,\emptyset)}=F_{ab}^{(\T,\emptyset)}-F_{ac}^{(\T,\emptyset)}F_{cb}^{(\T,\emptyset)}\,,
\end{align}
 and we obtain on $\Xi$,
\begin{align}\label{rhinozeros}
 \Gamma_{l+1,0}\le \Gamma_{l,0}+\Gamma_{l,0}^2\,.
\end{align}
Iterating~\eqref{rhinozeros}, we obtain
\begin{align}
 \Gamma_{l+1,0}\le \Gamma_{0,0}+\sum_{i=0}^l { \Gamma_{i,0}^2} \le \Gamma_{0,0}+(l \Gamma_{l,0})\Gamma_{l+1,0}\,.
\end{align}
Thus as long as $l\Gamma_{0,0}\le 1/4$, we obtain by induction on $l$, $\Gamma_{l+1,0}\le 2\Gamma_{0,0}$, on $\Xi$, proving~\eqref{Zlemma bound 1} for the special case $\T'=\emptyset$. To prove the claim for $\T'\not=\emptyset$, we fix $l=|\T|$ and observe that { \eqref{Zlemma expand 1}}, together with the assumption $\Gamma_{l,l'}\ll 1$, implies
\begin{align}
 \Gamma_{l,l'+1}\le \Gamma_{l,l'}+C\Gamma_{l,l'}^2\,,
\end{align}
for some numerical constant $C$. Iterating, we find
\begin{align}
 \Gamma_{l,l'+1}\le \Gamma_{l,0}+C\sum_{i'=0}^{l'}\Gamma_{l,i'}^2 { \leq \Gamma_{l,l'} + C l \Gamma_{l, l'} \Gamma_{l, l'+1}\,.}
\end{align}
Thus as long as {$Cl'\Gamma_{l,l'}\le 1/4$}, we obtain on $\Xi$ that { $\Gamma_{l,l'+1} \leq 2\Gamma_{l,0} \leq 4\Gamma_{0,0}$}, where we used that $\Gamma_{l,0}\le 2\Gamma_{0,0} (\le N^{-\fb})$. This proves~\eqref{Zlemma bound 1}.

To prove~\eqref{Zlemma bound 2}, we define, for $l\in\llbracket 1,p\rrbracket$,
\begin{align}
 \widetilde\Gamma_l\deq\max\left\{\left|\frac{F_{ab}^{(\emptyset,a)}}{G_{aa}^{(\T)}}\right|\,:\, a,b\in A,\,a\not=b,\,\T\subset A,\,a\not\in\T,\,|\T|\le l\right\}\,.
\end{align}
Note that $\widetilde\Gamma_0\le (\varphi_N)^{c\xi}N^{-1/2+2\epsilon}$, on $\Xi$. From~\eqref{Zlemma expand basic}, we have on $\Xi$
\begin{align}
 \widetilde\Gamma_{l+1}\le \widetilde\Gamma_l+C\widetilde{\Gamma}_l\Gamma_{l,l}^2\,,
\end{align}
for a numerical constant $C$. Iterating as above, we find
\begin{align}
 \widetilde\Gamma_{l+1}\le\widetilde\Gamma_0+C\sum_{i=0}^l\widetilde\Gamma_i\Gamma_{i,i}^2\le \widetilde\Gamma_0+ { 8} C\Gamma_{0,0}^2\sum_{i=0}^l\widetilde\Gamma_i\,.
\end{align}
Since ${ 8}Cl\Gamma_{0,0}^2\le 1/4$, on $\Xi$, we obtain $\widetilde\Gamma_{l+1}\le 2\widetilde\Gamma_0$, on $\Xi$. Upon using~\eqref{Zlemma bound 1} to bound $|F_{ab}^{(\emptyset,a)}-F_{ab}^{(\T',\T'')}|$, this proves~\eqref{Zlemma bound 2}.

The proof of~\eqref{initial estimate on Q_a} is similar to the proof of~\eqref{Zlemma bound 2} but easier.
\end{proof}

\begin{proof}[Proof of Lemma~\ref{Jensen lemma}]
First, we observe that for a random variables $\caX\equiv \caX(H)$,
\begin{align}
 \E^W|Q_b \caX|^p= \E^W|\caX-\E_b \caX|^p\le 2^{p-1}\E^W|\caX|^p+2^{p-1}\E^W|\E_b \caX|^p\,.
\end{align}
From Jensen's inequality for the partial expectation $\E_b$, we hence obtain
\begin{align}\label{jensen in proof}
 \E^W|Q_b\caX|^p\le 2^p\E^W|\caX|^p\,.
\end{align}
Next, let $h_i\deq 2\lceil\frac{2+p}{1+d_{i}} \rceil$, $i\in\llbracket 1,p\rrbracket$. One checks that $\sum_{i=1}^ph_i^{-1}\le 1$, $2\le h_{i}\le 2p+4$ and $h_i(d_{i}+1)\le 2p+4$. Thus H\"older's inequality gives
\begin{align}
 \left|\E^W\prod_{i=1}^q Q_{i}\caX_{i}\prod_{i=q+1}^p Q_{i}\caY_{i}\right|\le  2^q\prod_{i=1}^q\left(\E^W |\caX_{i}|^{h_{i}}| \right)^{1/{h_{i}}}\prod_{i=q+1}^p\left(\E^W|Q_{i}\caY_{i}|^{h_{i}} \right)^{1/{h_{i}}}\,,
\end{align}
where we used~\eqref{jensen in proof}. By assumption, we have
\begin{align}
 \E^W\left[ |\caX_{i}|^{h_{i}}\lone(\Xi^c)\right]\le (\E^W |\caX_{i}|^{2h_{i}})^{1/2}\mathbb{P}(\Xi^c)^{1/2}\le N^{2K h_i (d_{i}+1) }\mathbb{P}(\Xi^c)^{1/2}
&\ll \mathrm{e}^{-c(\log N)^{\xi}}\,.
\end{align}
Observing that $\frac{1}{h_{i}}\ge \frac{1+d_{i}}{4p}$, we thus get
\begin{align*}
 (\E^W |\caX_{i}|^{h_{i}}\lone(\Xi^c))^{1/h_{i}}
&\le\mathrm{e}^{-c\frac{1+d_{i}}{4p}(\log N)^{\xi}}\\
&\le\left(\mathrm{e}^{-c (\log N)^{3/2}}\right)^{1+d_{i}} \\
&\ll N^{-(1+d_{i})}\,,
\end{align*}
where we used that $p^{-1}\ge (\log N)^{-\xi+3/2}$.
In a similar way, one establishes
\begin{align*}
 (\E^W |\caY_{i}|^{h_{i}}\lone(\Xi^c))^{1/h_{i}} \ll N^{-1}\,.
\end{align*}
Together with the estimates~\eqref{estimates on good event}, valid on the event $\Xi$, we obtain
\begin{align}
 \left|\E^W\prod_{i=1}^q Q_{i}\caX_{i}\prod_{i=q+1}^p Q_{i}\caY_{i}\right|\le (\varphi_N)^{c_0\xi p} N^{-\sum_{i=1}^q (d_{i}-1)(\fb/2-\epsilon)}N^{2p\epsilon}\,.
\end{align}
Recalling that we have set $\sum_{i=1}^qd_{i}-1= s$, the claim follows.
\end{proof}

\section{} \label{Gaussian fluctuation}

In this appendix, we consider the setup $\lambda < \lambda_+$. Recall that $\widehat \mu_{fc} = \widehat \mu \boxplus \mu_{sc}$ denotes the free convolution measure of the empirical measure $\widehat \mu$, which is defined in \eqref{def hat mu}, and the semicircular measure~$\mu_{sc}$. Also recall that we denote by $L_{\pm}$ the endpoints of the support of the measure $\mu_{fc}$.

\begin{lem} \label{lem:gaussian}
Let $\mu$ be a centered Jacobi measure defined in \eqref{jacobi measure} with $\b > 1$. Let $\supp \widehat\mu_{fc} = [ \widehat L_-, \widehat L_+]$, where~$\widehat L_-$ and~$\widehat L_+$ are random variables depending on $(v_i)$. Then, if $\lambda < \lambda_+$, the rescaled fluctuation $N^{1/2} (\widehat L_+ - L_+)$ converges to a Gaussian random variable with mean $0$ and variance $(1 - [m_{fc} (L_+)]^2)$ in distribution, as $N \to \infty$.
\end{lem}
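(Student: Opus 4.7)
The plan is to reduce the problem to a one-dimensional implicit equation for the shifted endpoint $u \deq L_+ + m_{fc}(L_+)$ and then apply a classical central limit theorem. The key algebraic observation is that the edge of $\mu_{fc}$ is characterized by the square-root vanishing of $\mu_{fc}$, which is equivalent to the condition $R_2(L_+)=1$; this is obtained by implicit differentiation of the Pastur relation~\eqref{the functional equation} giving $m_{fc}'(z)(1-R_2(z))=R_2(z)$, so the derivative blows up precisely where $R_2=1$. Setting $u=L_+ + m_{fc}(L_+)$, the Pastur relation and the edge condition decouple into
\begin{align*}
\int \frac{\dd\mu(v)}{(\lambda v-u)^2}=1\,,\qquad m_{fc}(L_+)=\int\frac{\dd\mu(v)}{\lambda v-u}\,,
\end{align*}
and the analogous equations, with $\mu$ replaced by $\widehat\mu$ and $(L_+, m_{fc}(L_+),u)$ by $(\widehat L_+, \widehat m_{fc}(\widehat L_+), \widehat u)$, characterize the random quantities. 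Under the standing assumption $\lambda<\lambda_+$, Lemma~\ref{general case - large lambda} together with the convexity argument used there shows that $\lambda v - u\le -c<0$ uniformly for $v\in[-1,1]$, so the relevant integrands are uniformly bounded, and the same is true for $\widehat u$ on an event of high probability by a simple perturbation argument.

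Next I would establish the closeness $|\widehat u-u|=\mathcal{O}(N^{-1/2})$ via an implicit-function-type estimate, using that the derivative of $F(u)\deq\int(\lambda v-u)^{-2}\dd\mu(v)-1$ at $u$ equals $2R_3(L_+)\neq 0$ (non-degeneracy at the edge). Expanding the random version of the edge equation to second order around $u$ yields
\begin{align*}
0 \;=\; \frac{1}{N}\sum_{i=1}^N Y_i \;+\; 2R_3(L_+)\,(\widehat u-u) \;+\; \mathcal{O}\bigl((\widehat u-u)^2\bigr)\,,
\end{align*}
where $Y_i\deq(\lambda v_i-u)^{-2}-R_2(L_+)$ are bounded, centered i.i.d.\ random variables. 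A bootstrap of the type used in the proof of Lemma~\ref{hat bound} gives $|\widehat u-u|\lesssim N^{-1/2+o(1)}$, after which the second-order remainder is $o(N^{-1/2})$.

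With $\widehat u-u$ controlled, I would similarly Taylor-expand the random Pastur relation evaluated at $\widehat u$. Writing $X_i\deq(\lambda v_i-u)^{-1}-m_{fc}(L_+)$, which are bounded centered i.i.d.\ random variables with variance
\begin{align*}
\var(X_1)=R_2(L_+)-m_{fc}(L_+)^2 = 1-[m_{fc}(L_+)]^2\,,
\end{align*}
one obtains
\begin{align*}
\widehat m_{fc}(\widehat L_+)-m_{fc}(L_+)=\frac{1}{N}\sum_{i=1}^N X_i + R_2(L_+)\,(\widehat u-u) + \mathcal{O}(N^{-1})\,.
\end{align*}
Since $R_2(L_+)=1$, substituting and using $\widehat L_+-L_+=(\widehat u-u)-(\widehat m_{fc}(\widehat L_+)-m_{fc}(L_+))$ yields the cancellation of the $\widehat u-u$ term:
\begin{align*}
\widehat L_+-L_+ \;=\; -\frac{1}{N}\sum_{i=1}^N X_i \;+\; \mathcal{O}(N^{-1})\,.
\end{align*}
The classical central limit theorem applied to the bounded i.i.d.\ sum $N^{-1/2}\sum X_i$ then gives the claimed Gaussian limit with mean zero and variance $1-[m_{fc}(L_+)]^2$.

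The main obstacle is the second step: rigorously justifying the $N^{-1/2}$-closeness of $\widehat u$ to $u$ (and the corresponding closeness of the Stieltjes transform near the edge) with high probability. This requires an edge stability estimate analogous to Lemma~\ref{hat bound} in the square-root regime, together with uniform control of the integrals $R_2, R_3$ along the perturbation; the remainder of the argument is then a routine Taylor expansion followed by the classical CLT for bounded i.i.d.\ random variables.
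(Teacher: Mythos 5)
Your proposal is correct and follows essentially the same approach as the paper's Appendix~C proof: characterize $L_+$ and $\widehat L_+$ via the Pastur and edge ($R_2=1$) equations, set $\tau=L_+ + m_{fc}(L_+)$, $\widehat\tau=\widehat L_+ + \widehat m_{fc}(\widehat L_+)$, Taylor-expand the random Pastur relation around $\tau$, observe that the $(\widehat\tau-\tau)$ contribution cancels precisely because $R_2(L_+)=1$, and apply the classical CLT to the bounded i.i.d.\ sum $\frac1N\sum_i\bigl[(\lambda v_i-\tau)^{-1}-m_{fc}(L_+)\bigr]$ with variance $1-[m_{fc}(L_+)]^2$. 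The only step where you diverge from the paper is in showing $|\widehat\tau-\tau|=\caO(N^{-1/2})$: instead of your Taylor-plus-bootstrap via $R_3(L_+)\neq 0$, the paper uses the exact algebraic identity
\begin{equation*}
\frac{1}{(\lambda v_j-\widehat\tau)^2}-\frac{1}{(\lambda v_j-\tau)^2}=\frac{(-2\lambda v_j+\tau+\widehat\tau)(\tau-\widehat\tau)}{(\lambda v_j-\tau)^2(\lambda v_j-\widehat\tau)^2}\,,
\end{equation*}
whose numerator kernel is uniformly non-negative (since $\tau,\widehat\tau>\lambda\geq\lambda v_j$) and bounded away from zero on a positive fraction of the $v_j$, so the bound on $|\widehat\tau-\tau|$ falls out in one shot, non-perturbatively, without iteration; your version works but is slightly more laborious, and your attribution of the bound $\lambda v - \tau\le -c<0$ to Lemma~\ref{general case - large lambda} is a bit off-target (that lemma as stated treats $\lambda>\lambda_+$) -- the right justification is that $R_2$ is strictly decreasing on $(\lambda,\infty)$ with $R_2(\lambda^+)=(\lambda_+/\lambda)^2>1$, which is the observation the paper makes.
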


\begin{rem}
When $\a > 1$, the analogous statement to Lemma \ref{lem:gaussian} holds at the lower edge. See also Remark \ref{remark about lower edge}.
\end{rem}

\begin{proof}
Following the proof in \cite{S2, LS}, we find that $\widehat L_+$ is the solution to the equations
\begin{align}
\widehat m_{fc} (\widehat L_+) = \frac{1}{N} \sum_{j=1}^N \frac{1}{\lambda v_j - \widehat L_+ - \widehat m_{fc}(\widehat L_+)}\,, \qquad \frac{1}{N} \sum_{j=1}^N \frac{1}{(\lambda v_j - \widehat L_+ - \widehat m_{fc}(\widehat L_+))^2} = 1\,.
\end{align}
Similarly, we find that $L_+$ is the solution to the equations
\begin{align}
 m_{fc} ( L_+) =\int\frac{\dd\mu(v)}{\lambda v -  L_+ -  m_{fc}( L_+)}\,, \qquad\int\frac{\dd\mu(v)}{(\lambda v -  L_+ -  m_{fc}( L_+))^2} = 1\,.
 \end{align}
Let
$$
\tau \deq L_+ + m_{fc}(L_+)\,, \qquad \widehat \tau \deq \widehat L_+ + \widehat m_{fc}(\widehat L_+)\,.
$$
Since $\lambda < \lambda_+$, we may assume that
\begin{align}
\int \frac{\dd \mu(v)}{(\lambda v - \lambda)^2} > 1 + \delta\,, \qquad \frac{1}{N} \sum_{j=1}^N \frac{1}{(\lambda v_j - \lambda)^2} > 1 + \delta\,,
\end{align}
for some $\delta > 0$. Notice that the second inequality holds with high probability on $\Omega_V$. From the assumption, we also find that $\tau, \widehat \tau > \lambda$. Thus we get
\begin{align}\label{equation C3}
0 &= \frac{1}{N} \sum_{j=1}^N \frac{1}{(\lambda v_j - \widehat \tau)^2} - 1 = \frac{1}{N} \sum_{j=1}^N \frac{1}{(\lambda v_j - \widehat \tau)^2} - \frac{1}{N} \sum_{j=1}^N \frac{1}{(\lambda v_j - \tau)^2} + \caO((\varphi_N)^{\xi} N^{-1/2}) \nonumber \\
&= \frac{1}{N} \sum_{j=1}^N \frac{(-2 \lambda v_j + \tau + \widehat \tau)(\tau - \widehat \tau)}{(\lambda v_j - \tau)^2 (\lambda v_j - \widehat \tau)^2} + \caO((\varphi_N)^{\xi} N^{-1/2})\,, 
\end{align}
which holds with high probability. Since $\tau, \widehat \tau > \lambda$, we have
$$
-2 \lambda v_j + \tau + \widehat \tau \geq 0\,.
$$
Moreover, with high probability, $|\{ v_j : v_j < 0 \}| > cN$ for some constant $c > 0$, independent of $N$. In particular,
$$
\frac{1}{N} \sum_{j=1}^N \frac{-2 \lambda v_j + \tau + \widehat \tau}{(\lambda v_j - \tau)^2 (\lambda v_j - \widehat \tau)^2} > c' > 0\,,
$$
for some constant $c'$ independent of $N$. This shows together with~\eqref{equation C3} that
$$
\tau - \widehat \tau = \caO((\varphi_N)^{\xi} N^{-1/2})\,,
$$
with high probability on $\Omega_V$. 

We can thus write
\begin{align}
\widehat m_{fc}(L_+) = \widehat \tau - \widehat L_+ &= \frac{1}{N} \sum_{j=1}^N \frac{1}{\lambda v_j - \widehat \tau} = \frac{1}{N} \sum_{j=1}^N \frac{1}{\lambda v_j - \tau} + \frac{1}{N} \sum_{j=1}^N \frac{\widehat \tau - \tau}{(\lambda v_j - \tau)^2} + \caO((\varphi_N)^{2\xi} N^{-1}) \nonumber \\
&= m_{fc}(L_+) + X + (\widehat \tau - \tau) + \caO((\varphi_N)^{2\xi} N^{-1})\,,
\end{align}
with high probability, where we define the random variable $X$ by
\begin{align}
X \deq \frac{1}{N} \sum_{j=1}^N \frac{1}{\lambda v_j - \tau} - \int \frac{\dd \mu(v)}{\lambda v - \tau} = \frac{1}{N} \sum_{j=1}^N \left( \frac{1}{\lambda v_j - \tau} - \E \left[ \frac{1}{\lambda v_j - \tau} \right] \right)\,.
\end{align}
Notice that, by the central limit theorem, $X$ converges to a Gaussian random variable with mean~$0$ and variance~$N^{-1} (1 - (m_{fc} (L_+))^2)$. Since,
\begin{align}
L_+ - \widehat L_+ = X + \caO((\varphi_N)^{2\xi} N^{-1})\,,
\end{align}
with high probability, the desired lemma follows.
\end{proof}

When $(v_i)$ are fixed, we may follow the proof of Theorem 2.21 in \cite{LS} to get
\begin{align}
|L_+ - \mu_1| \leq (\varphi_N)^{C \xi} N^{-2/3}
\end{align}
with high probability. Since $|\widehat L_+ - L_+| \sim N^{-1/2}$, we find that the leading fluctuation of the largest eigenvalue comes from the Gaussian fluctuation obtained in Lemma \ref{lem:gaussian}. This also shows that there is a sharp transition in the distribution of the largest eigenvalue from a Gaussian law to a Weibull distribution as $\lambda$ crosses $\lambda_+$.
\end{appendices}

\end{document}